\documentclass[12pt]{report}

\let\ssection=\section
\renewcommand{\section}{\setcounter{equation}{0}\ssection}

\setlength{\textwidth}{15truecm} \setlength{\textheight}{22truecm}
\setlength{\voffset}{-1.5truecm}

\usepackage{graphicx}
\usepackage{hyperref}
\usepackage{graphicx}
\usepackage{amsfonts}
\usepackage{amssymb}
\usepackage{amsthm}
\usepackage{newlfont}
\usepackage{amsmath}
\usepackage{fancyhdr}
\usepackage{t1enc}
\usepackage[utf8]{inputenc}
\usepackage[french]{babel}
\usepackage[T1]{fontenc}
\usepackage{hyperref}
\vfuzz2pt 
\hfuzz2pt 

\newtheorem{thm}{Théorème}[section]
\newtheorem{pro}{Proposition}[section]
\newtheorem{lem}{Lemme}[section]
\newtheorem{4}{Propriétés}[section]

\newtheorem{exem}{Exemple}[section]
\newtheorem{exems}{Exemples}[section]
\newtheorem{rem}{Remarque}[section]
\newtheorem{rems}{Remarques}[section]
\newtheorem{dfn}{Définition}[section]
\newtheorem{dfns}{Définitions}[section]
\newtheorem{cor}{Corollaire}[section]




{}

\usepackage{amssymb}
\usepackage{relsize}
\usepackage{layout}
\usepackage{fancyhdr}
\pagestyle{plain}
 \topmargin = -0.5cm
\oddsidemargin=0.8cm \textwidth = 6truein \textheight = 9.3truein
\footskip=1.5cm\evensidemargin=1cm

\newcommand{\ds}{\displaystyle}

\begin{document}

\title{ Espaces d'Orlicz, Orlicz-Sobolev et application \\
       aux E-D-P.}
\author{Sabri Bahrouni et Hichem Ounaies\\
Faculté des sciences de Monastir,  D\'epartement de Math\'ematiques}
\date{}
\maketitle

\newpage
\tableofcontents

\newpage
\chapter*{Introduction}
Les espaces de Lebesgue $L^{p}(\Omega)$ sont définis en utilisant la fonction puissance $|x|^{p}$, où $1\leq p<\infty$. Ces espaces sont des espaces de Banach, et leurs propriétés topologiques et métriques dépendent principalement de la fonction puissance $x\longmapsto |x|^{p}$ présente dans leurs définitions.

Les espaces d'Orlicz $L^{G}(\Omega)$ sont une extension naturelle des espaces de Lebesgue $L^{p}(\Omega)$ et sont définis à partir d'une fonction $G$ appelée $N$-fonction, qui généralise les fonctions puissances. Les propriétés topologiques et métriques de ces espaces sont étroitement liées à la croissance de la $N$-fonction qui les définit. Par conséquent, une étude approfondie des propriétés de croissance de ces fonctions, ainsi que de leurs conjuguées, est présentée dans le premier chapitre. Il convient de noter que le vocabulaire utilisé pour ces fonctions peut varier d'un auteur à l'autre, parfois en les appelant fonctions d'Orlicz et en désignant les fonctions d'Orlicz comme des $N$-fonctions.

La théorie des espaces d'Orlicz a été introduite par Orlicz en 1930 \cite{29} et connaît actuellement un développement important en raison de ses applications dans divers domaines de l'analyse mathématique, notamment l'analyse non linéaire.

Les espaces d'Orlicz et leurs propriétés ont initialement été étudiés dans le cadre d'une $N$-fonction et de la mesure de Lebesgue \cite{5}, puis dans le cas d'un espace mesuré quelconque \cite{26}, \cite{11}, \cite{12}.

Dans une tentative de généraliser les espaces de Lebesgue, on peut envisager l'espace $K^{G}(\Omega)$ défini comme suit:
$$K^{G}(\Omega)=\bigg{\{} u:\Omega\rightarrow \mathbb{R},\ \text{mesurable} \ /\ \rho(u;G)=\int_{\Omega} G(|u(x)|)dx < \infty\bigg{\}}.$$
Cependant, malheureusement, la classe $K^{G}(\Omega)$ n'est pas toujours un espace vectoriel. C'est pourquoi nous définissons l'espace d'Orlicz $L^{G}(\Omega)$ comme le plus petit sous-espace vectoriel contenant $K^{G}(\Omega)$.
\\

Les espaces d'Orlicz sont caractérisés par différentes normes. Dans les années trente, Orlicz a introduit une norme particulière, appelée norme d'Orlicz, définie par l'expression suivante :

$$\|u\|_{G}=\sup\bigg{\{}\int_{\Omega}|u(t)v(t)|dt:\ v\in K^{G^{*}}(\Omega),\ \rho(v\ ;G^{*})\leq 1\bigg{\}},$$

où $G^{*}$ représente la conjuguée de la fonction $G$ et est définie comme suit :
 $$G^{*}(t)=\ds\sup_{s\geq0}\{st-G(s)\}.$$

Par la suite, Nakano \cite{24} et Luxemburg \cite{22} ont introduit une autre norme, parfois appelée norme de Luxemburg-Nakano, mais plus communément connue sous le nom de norme de Luxemburg. Cette norme est définie comme la fonction de Minkowski (ou jauge) de la boule unité pour la modulaire d'Orlicz. En d'autres termes :

$$\|u\|_{(G)}=\inf\bigg{\{}\lambda>0\ :\ \rho\left(\frac{u}{\lambda}\ ;G\right)\leq1\bigg{\}}.$$

Approximativement à la même époque, I. Amemiya (\cite{30}, page 218) a proposé une autre norme, appelée norme d'Amemiya, définie comme suit :

$$\|u\|_{G}^{A}=\ds\inf\bigg{\{}\frac{1}{k}\left[1+\rho(ku;G)\right],\ k>0\bigg{\}}.$$

Dans l'article \cite{27}, les auteurs ont démontré l'égalité entre la norme d'Amemiya et celle d'Orlicz.

Pour l'étude de la réflexivité, de la séparabilité et du dual de l'espace d'Orlicz, un espace noté $E^{G}$ est introduit. Cet espace est défini comme suit :
$$E^{G}(\Omega):=\{u\in L^{G}(\Omega);\ \lambda u\in K^{G}(\Omega)\ \text{pour tout}\ \lambda>0\},$$

sous la condition appelée $\triangle_{2}$ à l'infini imposée sur $G$, c'est-à-dire qu'il existe $k>0$ et $T\geq0$ tels que :
 $$G(2t)\leq kG(t),\ \forall t\geq T.$$

On définit l'espace d'Orlicz-Sobolev $W^{m,G}(\Omega)$, en remplacent dans la définition de l'espace de Sobolev classique $W^{m,p}(\Omega)$; l'espace $L^{p}(\Omega)$ par l'espace d'Orlicz $L^{G}(\Omega)$. Dans l'article \cite{4}, Donaldson et Trudinger ont montré que les espaces d'Orlicz - Sobolev ont les m\^{e}mes propriétées que les espaces de Sobolev classiques. En particulier, ils ont établi des injections continues et des injections compactes.\\

Les espaces d'Orlicz-Sobolev jouent un r\^{o}le important dans de nombreux domaines des mathématiques, comme la théorie de l'approximation, les équations aux dérivées partielles, le calcul des variations, théorie du potentiel non linéaire, théorie des applications quasi-conformes,
géométrie différentielle et théorie des probabilités. L’étude des équations elliptiques non linéaires
à opérateurs de type homogène sont basés sur la théorie des espaces de Sobolev $W^{m,p}(\Omega)$.
Dans le cas d'opérateurs différentiels non homogènes, le cadre naturel de cette étude est les espaces d'Orlicz-Sobolev. On peut citer les travaux de  M. Mihăilescu, V. Rădulescu, D.Repovs \cite{18}, \cite{19}, \cite{20}, \cite{21}, \cite{31}.\\

Notre travail consiste en premier lieu à présenter les espaces d'Orlicz, d'Orlicz-Sobolev et donner leurs propriétés. Une attention particulière est accordée aux injections.\\

En second lieu, on s'intéresse à une application des espaces d'Orlicz-Sobolev sur des équations de dérivées partielles. On considère le problème suivant $$(P)\quad \begin{cases}
                                                            -div(a(|\nabla u|)\nabla u)=\lambda f(x,u) & \mbox{si}\ x\in\Omega  \\
                                                            u=0 & \mbox{si}\ x\in\partial\Omega \\
                                                            u\geq0 & \mbox{si}\ x\in\Omega.
                                                          \end{cases}$$
  où $\Omega$ est un domaine borné régulier de $\mathbb{R}^{N}$, $\lambda$ est un paramètre positif, $f$ est une fonction de Carathéodory et $a$ est une application définie sur $]0,+\infty[$.
  En appliquant le théorème de Mountain pass et la méthode du minimum global, on montre que le problème $(P)$ admet au moins deux solutions dans $W^{1,G}_{0}(\Omega)$.

\chapter{$N$-Fonction et classe d'Orlicz}

\section{$N$-Fonction}

\subsection{Représentation intégrale d'une fonction convexe}

Dans cette sous-section $I$ désigne un intervalle de $\mathbb{R}$.

\begin{dfn}
  On dit qu'une fonction $f:I\rightarrow\mathbb{R}$ est convexe si pour tout $a,c\in I$ et pour tout $t\in[0,1]$ on a
  $$f(ta+(1-t)c)\leq tf(a)+(1-t)f(c).$$
\end{dfn}

\begin{rem}
  Si la  fonction $f:I\rightarrow\mathbb{R}$ est convexe, alors pour tout $a,b,c\in I$ tels que $a<b<c$ on a
  \begin{equation}\label{65}
    \frac{f(b)-f(a)}{b-a}\leq\frac{f(c)-f(a)}{c-a}\leq\frac{f(c)-f(b)}{c-b}.
  \end{equation}
  Il suffit de considérer $t=\frac{b-c}{a-c}$ de telle sorte que $1-t=\frac{a-b}{a-c}$ et $b=ta+(1-t)c$.
\end{rem}

\begin{thm}\label{thm14}
  Soit $f:I\rightarrow\mathbb{R}$ une fonction convexe. Alors $f$ est lipschitzienne sur tout intervalle compact $[a,b]\subset int I$. En particulier $f$ est continue sur $int I$.
\end{thm}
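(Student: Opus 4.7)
L'approche que je suivrai repose entièrement sur l'inégalité de monotonie des pentes des cordes \eqref{65} établie à la remarque précédente. L'idée de départ, purement géométrique, est que $[a,b]$ étant strictement contenu dans $\mathrm{int}\,I$, on peut trouver $\alpha,\beta\in I$ tels que $\alpha<a$ et $b<\beta$. Cet élargissement aux deux extrémités va fournir de façon essentiellement automatique un contrôle bilatère des taux d'accroissement de $f$ sur tout $[a,b]$, indépendant du choix des points.

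Concrètement, pour deux points $x<y$ de $[a,b]$, j'appliquerai deux fois l'inégalité \eqref{65}, d'abord au triplet $\alpha<x<y$ puis au triplet $x<y<\beta$, ce qui fournira l'encadrement
$$\frac{f(x)-f(\alpha)}{x-\alpha}\leq \frac{f(y)-f(x)}{y-x}\leq \frac{f(\beta)-f(y)}{\beta-y}.$$
J'invoquerai ensuite \eqref{65} sur le triplet $\alpha<a<x$ pour minorer le membre de gauche par la constante $m:=\frac{f(a)-f(\alpha)}{a-\alpha}$, et sur le triplet $y<b<\beta$ pour majorer le membre de droite par $M:=\frac{f(\beta)-f(b)}{\beta-b}$. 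En posant $K:=\max(|m|,|M|)$, on en déduira immédiatement
$$|f(y)-f(x)|\leq K\,|y-x|,$$
c'est-à-dire le caractère $K$-lipschitzien de $f$ sur $[a,b]$.

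La continuité de $f$ sur $\mathrm{int}\,I$ en résultera aussitôt, tout point de $\mathrm{int}\,I$ étant intérieur à un segment compact inclus dans $\mathrm{int}\,I$ et toute fonction lipschitzienne étant a fortiori continue. Il n'y a, à mon avis, pas de véritable difficulté conceptuelle dans cette démonstration ; le seul point méritant un peu d'attention est le traitement des cas limites $x=a$ ou $y=b$, où certains triplets auxiliaires cessent d'être strictement ordonnés. Ces cas se règlent toutefois directement, par exemple en appliquant \eqref{65} au triplet $\alpha<a<y$ (qui absorbe en une étape les deux applications précédentes) lorsque $x=a$, et symétriquement lorsque $y=b$.
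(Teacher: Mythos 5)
Votre démonstration est correcte, mais elle suit une route réellement différente de celle du texte. Vous n'utilisez que la monotonie des pentes des cordes \eqref{65} : en encadrant le taux d'accroissement $\frac{f(y)-f(x)}{y-x}$ entre les deux pentes extrêmes $m=\frac{f(a)-f(\alpha)}{a-\alpha}$ et $M=\frac{f(\beta)-f(b)}{\beta-b}$ (avec $\alpha<a\leq x<y\leq b<\beta$), vous obtenez directement la constante de Lipschitz explicite $K=\max(|m|,|M|)$, et votre traitement des cas limites $x=a$ ou $y=b$ est correct puisque \eqref{65} appliquée au triplet $\alpha<a<y$ donne exactement la minoration voulue. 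Le texte procède autrement : il écrit $y$ comme combinaison convexe $y=\lambda z+(1-\lambda)x$ d'un point auxiliaire $z$ situé à distance $\varepsilon$ au-delà de $y$, et majore $f(y)-f(x)$ par $\frac{M-m}{\varepsilon}|y-x|$ où $m$ et $M$ sont les bornes inférieure et supérieure de $f$ sur $[a-\varepsilon,b+\varepsilon]$. Ce que chaque approche apporte : celle du texte est plus géométrique et se transpose telle quelle aux espaces normés, mais elle utilise tacitement que $f$ est bornée (en particulier minorée) sur l'intervalle élargi, fait vrai pour une fonction convexe mais non justifié dans la rédaction ; la vôtre est entièrement autonome à partir de \eqref{65}, fournit des constantes explicites en termes de pentes au bord, et prépare naturellement le théorème suivant sur les dérivées à droite et à gauche, qui repose sur la même monotonie des pentes.
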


\begin{proof}
  Comme $[a,b]\subset int I$, il existe $\varepsilon>0$ tel que $[a-\varepsilon,b+\varepsilon]\subset I$. Notons $m$ et $M$ les bornes inférieures et supérieures de $f$ sur $[a-\varepsilon,b+\varepsilon]$. Pour $x,y\in[a,b]$ avec $x\neq y$ on pose
  $$z=y+\varepsilon\frac{y-x}{|y-x|}\ \text{et}\ \lambda=\frac{|y-x|}{|y-x|+\varepsilon}\in]0,1[.$$
  On a $z\in[a-\varepsilon,b+\varepsilon]$ et $y=\lambda z+(1-\lambda)x$. Alors
  $$f(y)\leq\lambda f(z)+(1-\lambda)f(x)=\lambda(f(z)-f(x))+f(x)$$
  donc $$f(y)-f(x)\leq\frac{|y-x|}{|y-x|+\varepsilon}(M-m)\leq\frac{M-m}{\varepsilon}|y-x|.$$
  \'{E}changeant $x$ et $y$ on obtient $$|f(y)-f(x)|\leq\frac{M-m}{\varepsilon}|y-x|.$$
  La continuité de $f$ sur $int I$ découle du fait que tout $x\in int I$, $x\in[a,b]\subset I$.
\end{proof}

\begin{thm}\label{thm15}
   Soit $f:I\rightarrow\mathbb{R}$ une fonction convexe. Alors les dérivées à droite $f^{'}_{d}(x)$ et à gauche $f^{'}_{g}(x)$ de $f$ existent et sont finies, pour tout $x\in int I$. De plus les fonctions $f^{'}_{d}$ et $f^{'}_{g}$ sont croissantes sur $int I$.
\end{thm}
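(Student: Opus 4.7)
L'idée maîtresse est d'exploiter systématiquement l'inégalité des trois pentes (\ref{65}) établie dans la Remarque précédente. Pour $x \in int I$ fixé, je poserais
$$\varphi_x(h) = \frac{f(x+h)-f(x)}{h}, \qquad h\neq 0,\ x+h\in I.$$
L'objectif central est de montrer que $\varphi_x$ est une fonction \emph{croissante} de $h$ sur son domaine. En distinguant les trois cas $0 < h_1 < h_2$, $h_1 < h_2 < 0$, et $h_1 < 0 < h_2$, l'inégalité (\ref{65}) appliquée aux trois points convenablement ordonnés parmi $\{x+h_1, x, x+h_2\}$ donne à chaque fois $\varphi_x(h_1)\leq\varphi_x(h_2)$ (il faut veiller aux changements de signe dans le deuxième cas, où l'on divise par $h_i<0$).

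Une fois cette monotonie établie, l'existence et la finitude des dérivées latérales en découlent directement. Comme $x\in int I$, il existe $\eta>0$ tel que $[x-\eta,x+\eta]\subset I$. Par monotonie, lorsque $h\to 0^{+}$ la fonction $\varphi_x(h)$ est décroissante et minorée par $\varphi_x(-\eta)\in\mathbb{R}$ ; donc
$$f'_d(x)=\inf_{0<h\leq\eta}\varphi_x(h)$$
existe et est fini. De façon symétrique, $\varphi_x(h)$ est croissante lorsque $h\to 0^{-}$ et majorée par $\varphi_x(\eta)$, d'où $f'_g(x)=\sup_{-\eta\leq h<0}\varphi_x(h)$ existe et est fini, avec de plus $f'_g(x)\leq f'_d(x)$ (toujours par la monotonie de $\varphi_x$).

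Pour la croissance des fonctions $f'_d$ et $f'_g$ sur $int I$, je considèrerais $x<y$ dans $int I$. Pour $h>0$ assez petit tel que $x+h<y$, l'inégalité (\ref{65}) appliquée à $x<x+h<y$ fournit $\varphi_x(h)\leq\frac{f(y)-f(x)}{y-x}$ ; en passant à la limite $h\to 0^{+}$ on obtient $f'_d(x)\leq\frac{f(y)-f(x)}{y-x}$. De façon duale, en appliquant (\ref{65}) à $x<y-k<y$ pour $k>0$ petit et en laissant $k\to 0^{+}$, on tire $\frac{f(y)-f(x)}{y-x}\leq f'_g(y)$. En combinant ces deux inégalités avec $f'_g(y)\leq f'_d(y)$, on obtient la chaîne $f'_d(x)\leq f'_g(y)\leq f'_d(y)$, et de même $f'_g(x)\leq f'_g(y)$, ce qui donne la croissance annoncée. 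L'obstacle principal est purement technique — le suivi rigoureux des inégalités lors du passage de (\ref{65}) aux quotients $\varphi_x(h)$, notamment pour $h<0$ — mais ne soulève pas de difficulté conceptuelle : tout repose sur la monotonie de $\varphi_x$ qui est elle-même une reformulation de l'inégalité des trois pentes.
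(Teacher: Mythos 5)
Votre démonstration est correcte et suit essentiellement la même démarche que celle du papier : exploitation de l'inégalité des trois pentes \eqref{65} pour établir la monotonie du taux d'accroissement, existence des dérivées latérales comme borne inférieure ou supérieure d'une fonction monotone bornée, puis croissance de $f'_{d}$ et $f'_{g}$ par passage à la limite dans la chaîne $f'_{d}(x)\leq\frac{f(y)-f(x)}{y-x}\leq f'_{g}(y)\leq f'_{d}(y)$. La seule différence est un reparamétrage du quotient différentiel en la variable $h$, sans incidence sur le fond de l'argument.
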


\begin{proof}
  Soit $x_{1}\in int I$ et soient $x_{0},x_{2},x_{3},x_{4}\in I$ avec $x_{0}<x_{1}<x_{2}<x_{3}<x_{4}$. Utilisant \eqref{65} on a
  \begin{equation}\label{66}
    \frac{f(x_{1})-f(x_{0})}{x_{1}-x_{0}}\leq\frac{f(x_{2})-f(x_{1})}{x_{2}-x_{1}}\leq\frac{f(x_{3})-f(x_{1})}{x_{3}-x_{1}}\leq\frac{f(x_{4})-f(x_{3})}{x_{4}-x_{3}}.
  \end{equation}
  On déduit que la fonction $$\phi(x)=\frac{f(x)-f(x_{1})}{x-x_{1}}$$ est croissante et minorée sur $]x_{1},+\infty[\cap I$. Il en résulte que $f^{'}_{d}(x_{1})=\ds\lim_{x\rightarrow x_{1}^{+}}\phi(x)$ existe et que
  $$f^{'}_{d}(x_{1})=\ds\lim_{x\rightarrow x_{1}^{+}}\phi(x)=\ds\inf_{x\in]x_{1},+\infty[\cap I}\phi(x).$$
  On a $\phi$ est décroissante et majorée sur $]-\infty,x_{1}[\cap I$, donc  $f^{'}_{g}(x_{1})=\ds\lim_{x\rightarrow x_{1}^{-}}\phi(x)$ existe et
  $$f^{'}_{g}(x_{1})=\ds\lim_{x\rightarrow x_{1}^{-}}\phi(x)=\ds\sup_{]-\infty,x_{1}[\cap I}\phi(x).$$
  Faisont tendre $x_{2}$ vers $x_{1}^{+}$ dans \eqref{66} on a $$\frac{f(x_{1})-f(x_{0})}{x_{1}-x_{0}}\leq f^{'}_{d}(x_{1})$$
  si $x_{0}$ tend vers $x_{1}^{-}$, alors $$f^{'}_{g}(x_{1})\leq f^{'}_{d}(x_{1}).$$
 Si $x_{1}\rightarrow x_{0}^{+}$ dans \eqref{66}, on obtient $$f^{'}_{d}(x_{0})\leq\frac{f(x_{2})-f(x_{1})}{x_{2}-x_{1}};$$
 si $x_{2}\rightarrow x_{1}^{+}$ dans \eqref{66}, on obtient $$f^{'}_{d}(x_{0})\leq f^{'}_{d}(x_{1}).$$
   $f^{'}_{g}$ est croissante se démontre de manière analogue.
\end{proof}

\begin{thm}\label{thm16}
  Soit $f:I\rightarrow\mathbb{R}$ une fonction convexe. Alors pour tout $c$, $x\in]a,b[$, $a,b\in\mathbb{R}$, on a $$f(x)-f(c)=\int_{c}^{x}f^{'}_{d}(t)dt=\int_{c}^{x}f^{'}_{g}(t)dt.$$
\end{thm}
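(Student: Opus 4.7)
La stratégie consiste à encadrer la différence $f(x)-f(c)$ par des sommes de Darboux de $f'_d$ (et de $f'_g$) sur une subdivision de $[c,x]$, puis à passer à la limite. On peut supposer sans perte de généralité que $c<x$; le cas $c>x$ se ramène à celui-ci par changement de signe. Par le Théorème \ref{thm15}, les fonctions $f'_d$ et $f'_g$ sont croissantes sur $int\,I$, donc en particulier bornées et Riemann-intégrables sur le compact $[c,x]$.

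Je considère une subdivision $c=t_{0}<t_{1}<\cdots<t_{n}=x$. L'étape clé est l'encadrement, valide pour chaque indice $i$,
$$f'_{d}(t_{i-1})\;\leq\;\frac{f(t_{i})-f(t_{i-1})}{t_{i}-t_{i-1}}\;\leq\;f'_{g}(t_{i})\;\leq\;f'_{d}(t_{i}),$$
qui découle immédiatement des caractérisations de $f'_d$ et $f'_g$ comme infimum et supremum des taux d'accroissement, obtenues lors de la preuve du Théorème \ref{thm15} à partir de \eqref{65}. En multipliant par $t_{i}-t_{i-1}$, en sommant, et en télescopant la somme du milieu, j'obtiens
$$\sum_{i=1}^{n}f'_{d}(t_{i-1})(t_{i}-t_{i-1})\;\leq\;f(x)-f(c)\;\leq\;\sum_{i=1}^{n}f'_{d}(t_{i})(t_{i}-t_{i-1}).$$
Comme $f'_{d}$ est croissante, la somme de gauche est la somme de Darboux inférieure de $f'_{d}$ relative à la subdivision considérée, et celle de droite est la somme supérieure. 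En prenant une suite de subdivisions dont le pas tend vers zéro, les deux membres convergent vers $\int_{c}^{x}f'_{d}(t)\,dt$, ce qui fournit la première égalité.

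Le même raisonnement appliqué avec $f'_{g}$, en utilisant l'encadrement $f'_{g}(t_{i-1})\leq f'_{d}(t_{i-1})\leq\frac{f(t_{i})-f(t_{i-1})}{t_{i}-t_{i-1}}\leq f'_{g}(t_{i})$, donne la seconde égalité. Alternativement, on peut observer que $f'_{d}$ et $f'_{g}$ étant croissantes, elles ne diffèrent qu'en un ensemble au plus dénombrable (les points de discontinuité), de sorte que leurs intégrales de Riemann sur $[c,x]$ coïncident automatiquement.

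L'obstacle principal est en réalité très modéré: il s'agit essentiellement de justifier l'intégrabilité au sens de Riemann des fonctions monotones $f'_{d}$ et $f'_{g}$ et la convergence des sommes de Darboux vers l'intégrale. Il faut néanmoins veiller à la rédaction de l'étape d'encadrement, en explicitant bien que les inégalités sur les taux d'accroissement résultent directement du caractère infimum/supremum des dérivées latérales, et non d'un simple argument de monotonie.
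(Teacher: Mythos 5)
Votre démonstration est correcte et suit essentiellement la même démarche que celle du texte : même encadrement des taux d'accroissement par $f'_{d}(t_{i-1})\leq\frac{f(t_{i})-f(t_{i-1})}{t_{i}-t_{i-1}}\leq f'_{g}(t_{i})\leq f'_{d}(t_{i})$, même télescopage donnant des sommes de Darboux, et même passage à la limite lorsque le pas de la subdivision tend vers zéro. La remarque finale sur la coïncidence des intégrales de $f'_{d}$ et $f'_{g}$ (qui ne diffèrent que sur un ensemble dénombrable) est un raccourci valable que le texte n'explicite pas, mais l'argument est le même.
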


\begin{proof}
  Notons que les fonctions croissantes $f^{'}_{d}$ et $f^{'}_{g}$ sont Riemann intégrables sur tout intervalle compact. Soit $[c,b]\subset I$ et $x\in [c,b] $. Considérons une subdivision $$c=x_{0}<x_{1}<...<x_{n}=x,\ \text{avec}\ x_{k+1}-x_{k}=\frac{1}{n}$$ Pour tout $k\in[1,n]$ on a
  $$f^{'}_{g}(x_{k-1})\leq f^{'}_{d}(x_{k-1})\leq\frac{f(x_{k})-f(x_{k-1})}{x_{k}-x_{k-1}}\leq f^{'}_{g}(x_{k})\leq f^{'}_{d}(x_{k})$$
  d'où $$\sum_{k=1}^{n}f^{'}_{d}(x_{k-1})(x_{k}-x_{k-1})\leq\sum_{k=1}^{n}f(x_{k})-f(x_{k-1})\leq\sum_{k=1}^{n}f^{'}_{d}(x_{k})(x_{k}-x_{k-1}).$$
   $$|x_{k+1}-x_{k}|=\frac{1}{n}\rightarrow0,\ n\rightarrow+\infty,$$
  on obtient
  $$\int_{c}^{x}f^{'}_{d}(t)dt\leq f(x)-f(c)\leq\int_{c}^{x}f^{'}_{d}(t)dt$$
  d'où le résultat (m\^{e}me démonstration pour $f^{'}_{g}$).
\end{proof}

\subsection{Définition et propriétés de $N$-Fonction}

Une généralisation de la fonction définie par $G(t)=|t|^{p}$, est donnée par la classe des fonctions suivantes.

\begin{dfn}
Soit $g$ une fonction réelle définie sur $[0,+\infty[$ vérifiant les propriétés suivantes:
 \begin{enumerate}
   \item[(a)] $g(0)=0$, $\ds\lim_{t\rightarrow\infty}g(t)=\infty$;
   \item[(b)] $g(t)>0$ si $t>0$;
   \item[(c)] $g$ est croissante;
   \item[(d)] $g$ est continue à droite.
 \end{enumerate}
 La fonction $G$ définie sur $[0,+\infty[$ par
   \begin{equation}\label{1}
   G(t)=\int_{0}^{t}g(\tau)d\tau
   \end{equation}
 est appelée une $N$-Fonction.
\end{dfn}

Les propriétés des $N$-Fonctions sont données dans le lemme suivant.

\begin{lem}
 Une $N$-Fonction $G$ est continue, strictement croissante, et convexe sur $[0,+\infty[$, de plus,
 \begin{equation}\label{2}
   G(0)=0,\ G(\infty):=\ds\lim_{t\rightarrow\infty}G(t)=\infty,
 \end{equation}
 \begin{equation}\label{3}
   \ds\lim_{t\rightarrow 0^{+}}\frac{G(t)}{t}=0,
 \end{equation}
 \begin{equation}\label{4}
   \ds\lim_{t\rightarrow\infty}\frac{G(t)}{t}=\infty,
 \end{equation}
 \begin{equation}\label{5}
   G(\alpha t)\leq\alpha G(t),\ \forall\alpha\in[0,1]\ \text{et}\ t\geq0,
 \end{equation}
 et
 \begin{equation}\label{6}
     G(\beta t)\geq\beta G(t),\ \forall\beta>1\ \text{et}\ t\geq0.
 \end{equation}
\end{lem}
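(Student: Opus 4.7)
Le plan est de déduire chaque propriété directement de la représentation intégrale $G(t) = \int_0^t g(\tau)\,d\tau$ combinée aux hypothèses (a)--(d) sur $g$. La continuité de $G$ vient de ce que $g$, étant croissante, est bornée sur tout compact $[0,T]$, ce qui rend $G$ lipschitzienne (donc continue) sur $[0,T]$. La stricte croissance se lit comme suit : pour $0 \le s < t$, on choisit $\tau_0 \in \,]s,t[$ ; puisque $g$ est croissante et strictement positive sur $\,]0,+\infty[$, on a $g(\tau) \ge g(\tau_0) > 0$ pour $\tau \in [\tau_0,t]$, d'où $G(t) - G(s) \ge (t-\tau_0)g(\tau_0) > 0$. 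Pour la convexité, j'invoquerais le fait classique qu'une fonction dont la dérivée à droite existe et est croissante est convexe : comme $g$ est continue à droite, on a $G'_d = g$, qui est croissante par hypothèse.

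Pour les limites, $G(0) = 0$ est immédiat. Pour $G(\infty) = \infty$, on fixe $M > 0$, on choisit $T$ tel que $g(\tau) \ge M$ pour $\tau \ge T$ grâce à (a), et alors $G(t) \ge M(t-T) \to \infty$. Pour \eqref{3}, la croissance de $g$ donne $G(t) \le t g(t)$ et donc $G(t)/t \le g(t)$ ; la continuité à droite de $g$ en $0$ avec $g(0) = 0$ entraîne $g(t) \to 0$, d'où $G(t)/t \to 0$. Pour \eqref{4}, on fixe $M > 0$ et un $T$ avec $g(\tau) \ge 2M$ pour $\tau \ge T$ ; alors pour $t \ge 2T$, $G(t)/t \ge 2M(t-T)/t \ge M$, ce qui donne la conclusion.

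Les inégalités \eqref{5} et \eqref{6} découleraient de la seule convexité. Pour \eqref{5}, l'écriture $\alpha t = \alpha \cdot t + (1-\alpha) \cdot 0$ combinée à $G(0) = 0$ donne $G(\alpha t) \le \alpha G(t) + (1-\alpha)G(0) = \alpha G(t)$. Pour \eqref{6}, on pose $\alpha = 1/\beta \in \,]0,1[$ et l'on applique \eqref{5} au point $\beta t$ : $G(t) = G(\alpha \cdot \beta t) \le \alpha G(\beta t) = G(\beta t)/\beta$, ce qui se réécrit $G(\beta t) \ge \beta G(t)$.

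Je n'anticipe pas d'obstacle majeur ; chaque propriété est une conséquence directe soit de la convexité, soit d'estimations élémentaires de l'intégrale. Le point le plus délicat est sans doute la justification de la convexité à partir de la croissance de $G'_d = g$ : on peut soit invoquer le théorème standard, soit la démontrer directement en comparant, pour $a<c$ et $t \in [0,1]$, les intégrales $\int_0^{ta+(1-t)c} g$ à $t\int_0^a g + (1-t)\int_0^c g$ via la monotonie de $g$.
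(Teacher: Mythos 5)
Votre démonstration est correcte et suit pour l'essentiel la même démarche que celle du texte : tout est déduit de la représentation intégrale $G(t)=\int_0^t g(\tau)\,d\tau$ et de la monotonie de $g$, les inégalités \eqref{5} et \eqref{6} étant obtenues de la convexité exactement comme dans le texte. La seule variante mineure est que vous invoquez le critère standard « dérivée à droite croissante $\Rightarrow$ convexe » là où le texte fait le découpage d'intégrales à la main, et que vous minorez $G(t)/t$ par $2M(t-T)/t$ plutôt que par $\tfrac12 g(t/2)$ ; les deux conduisent au même résultat.
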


\begin{proof}
  D’après la représentation \eqref{1}, chaque $N$-Fonction est continue et strictement croissante. Examinons la convexitée: soient $\lambda\in]0,1[$ et $0\leq s\leq t$,
  \begin{align*}
   G(\lambda s+(1-\lambda)t)&=\int_{0}^{\lambda s+(1-\lambda)t}g(r)dr\\
   &=\int_{0}^{s}g(r)dr+\int_{s}^{\lambda s+(1-\lambda)t}g(r)dr\\
   &=\lambda\int_{0}^{s}g(r)dr+(1-\lambda)\int_{0}^{s}g(r)dr+\int_{s}^{\lambda s+(1-\lambda)t}g(r)dr.
  \end{align*}
  Comme $g$ est croissante et continue à droite on a $$\int_{s}^{\lambda s+(1-\lambda)t}g(r)dr\leq(1-\lambda)(t-s)g(\lambda s+(1-\lambda)t)$$ et
  $$\int_{\lambda s+(1-\lambda)t}^{t}g(r)dr\geq \lambda(t-s)g(\lambda s+(1-\lambda)t),$$
  donc $$\lambda\int_{s}^{\lambda s+(1-\lambda)t}g(r)dr\leq(1-\lambda)\int_{\lambda s+(1-\lambda)t}^{t}g(r)dr,$$et
  \begin{align*}
  \int_{s}^{\lambda s+(1-\lambda)t}g(r)dr&=\lambda\int_{s}^{\lambda s+(1-\lambda)t}g(r)dr+(1-\lambda)\int_{s}^{\lambda s+(1-\lambda)t}g(r)dr\\
  &\leq(1-\lambda)\int_{s}^{t}g(r)dr.
  \end{align*}
  D'où \begin{align*}
          G(\lambda s+(1-\lambda)t)&\leq\lambda\int_{0}^{s}g(r)dr+(1-\lambda)\int_{s}^{t}g(r)dr+(1-\lambda)\int_{0}^{s}g(r)dr\\
          &\leq\lambda\int_{0}^{s}g(r)dr+(1-\lambda)\int_{0}^{t}g(r)dr\\
          &=\lambda G(s)+(1-\lambda)G(t).
       \end{align*}
  De plus, $$\ds\lim_{t\rightarrow 0^{+}}\frac{G(t)}{t}=\ds\lim_{t\rightarrow 0^{+}}\frac{1}{t}\int_{0}^{t}g(s)ds=g(0)=0.$$ d'où \eqref{3}.\\Pour $t>0$ on a $$\frac{G(t)}{t}=\frac{1}{t}\int_{0}^{t}g(s)ds\geq\frac{1}{t}\int_{\frac{t}{2}}^{t}g(s)ds\geq\frac{1}{t}\frac{t}{2}g(\frac{t}{2})=\frac{1}{2}g(\frac{t}{2})$$
  et puisque $\ds\lim_{t\rightarrow\infty}g(t)=\infty$ on déduit \eqref{4}.\\
  Pour $s,t\in\geq0$ et $\forall\alpha\in[0,1]$, $$G((1-\alpha)s+\alpha t)\leq (1-\alpha)G(s)+\alpha G(t),\ 0\leq\alpha\leq1,$$
  prenons $s=0$ on obtient \eqref{5}.\\
  Soit $\beta>1$, $$G\bigg{(}\frac{1}{\beta}\beta s\bigg{)}\leq \frac{1}{\beta}G(\beta s),$$ d'où \eqref{6}.
\end{proof}

Nous pouvons définir la fonction $G(t)$ d'une manière équivalente.

\begin{thm}
$G$ est une $N$-Fonction si et seulement si $G$ est une fonction positive, continue, convexe vérifiant  \eqref{2},\eqref{3} et \eqref{4}.
\end{thm}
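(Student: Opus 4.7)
Le plan consiste à traiter les deux directions séparément. Le sens direct est essentiellement ce que le lemme précédent établit : une $N$-Fonction est continue, convexe, vérifie \eqref{2}--\eqref{4}, et est positive (conséquence de $G(0)=0$ combiné à la stricte croissance déjà prouvée). Je disposerais de ce sens en une seule ligne en citant ce lemme.

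La partie substantielle est la réciproque. Étant donné $G:[0,+\infty[\to\mathbb{R}$ positive, continue, convexe et vérifiant \eqref{2}--\eqref{4}, le candidat naturel pour la densité est $g(t):=G^{'}_{d}(t)$ pour $t>0$, prolongé par $g(0):=0$. La représentation intégrale $G(t)=\int_{0}^{t}g(\tau)\,d\tau$ découle alors immédiatement du Théorème \ref{thm16} (combiné à $G(0)=0$). Le vrai contenu est donc de vérifier que $g$ satisfait les quatre propriétés (a)--(d) de la définition d'une $N$-Fonction.

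Je vérifierais (a) et (c) conjointement. L'existence et la croissance de $G^{'}_{d}$ sur $]0,+\infty[$ sont fournies par le Théorème \ref{thm15}. L'inégalité clé, extraite de \eqref{65} avec $a=0$, $b=t$, $c=2t$, s'écrit
$$\frac{G(t)}{t}\leq g(t)\leq \frac{G(2t)-G(t)}{t}=2\frac{G(2t)}{2t}-\frac{G(t)}{t};$$
combinée à \eqref{3} elle donne $g(t)\to 0$ quand $t\to 0^{+}$, si bien que la convention $g(0)=0$ respecte continuité et monotonie en zéro. Pour $\lim_{t\to\infty}g(t)=\infty$, je raisonnerais par l'absurde : si $g$ était bornée par une constante $L$, alors $G(t)\leq Lt$ contredirait \eqref{4}. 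La propriété (b) résulte alors immédiatement de $g(t)\geq G(t)/t$ et de la stricte positivité de $G$ sur $]0,+\infty[$.

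L'étape la plus délicate sera (d), la continuité à droite de $g$. Pour $t\geq 0$ et $t_{n}\downarrow t$, la monotonie donne $\lim g(t_{n})\geq g(t)$ ; pour l'inégalité inverse, je fixerais $s>t$, appliquerais l'inégalité de convexité $g(t_{n})\leq (G(s)-G(t_{n}))/(s-t_{n})$ pour $n$ assez grand, passerais à la limite en utilisant la continuité de $G$, puis ferais tendre $s\to t^{+}$ pour retrouver $g(t)=G^{'}_{d}(t)$. Le cas $t=0$ se traite par le même encadrement que pour (a). La seule véritable subtilité est que \emph{positive} dans l'hypothèse doit se lire $G(t)>0$ pour $t>0$ (sinon $G$ pourrait s'annuler sur un intervalle initial, ce qui briserait (b)) ; le reste n'est qu'une application routinière des théorèmes précédents.
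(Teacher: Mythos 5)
Votre démonstration est correcte et suit essentiellement la même démarche que le texte : seule la réciproque demande un travail, on pose $g=G^{'}_{d}$, on invoque le théorème \ref{thm16} pour la représentation intégrale, puis on utilise l'encadrement $\frac{G(t)}{t}\leq g(t)\leq\frac{G(2t)-G(t)}{t}$ joint à \eqref{3} et \eqref{4} pour obtenir $g(0)=0$ et $\lim_{t\to\infty}g(t)=\infty$. Vous êtes même plus soigneux que le texte, qui affirme la continuité à droite de $G^{'}_{d}$ en l'attribuant au théorème \ref{thm16} sans la démontrer, alors que vous en donnez une preuve correcte via la monotonie et le passage à la limite dans les quotients différentiels.
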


\begin{proof}
Soit $G$ une fonction continue, convexe vérifiant  \eqref{2},\eqref{3} et \eqref{4}. D'après le théorème \ref{thm16}, $G$ admet la représentation \eqref{1} où $g=G_{d}^{'}$ est croissante et continue à droite. Comme $g(u)\geq\frac{G(u)}{u}$ pour $u>0$, alors $g(u)>0$ et en vertu de \eqref{4}, $\ds\lim_{u\rightarrow \infty}g(u)=\infty$.\\
D'autre part, si $u>0$, $$G(2u)=\int_{0}^{2u}g(t)dt>\int_{u}^{2u}g(t)dt>ug(u),$$ donc $g(u)<\frac{G(2u)}{u}$. Ainsi, en vertu de \eqref{3} on a
$g(0)=\ds\lim_{u\rightarrow0}g(u)=0$.
\end{proof}

\begin{exems}
\begin{enumerate}
  \item  $G_{1}(t)=t^{p}$, $1<p<\infty$.
  \item  $G_{2}(t)=e^{t}-t-1$.
  \item $G_{3}(t)=e^{t^{p}}-1$, $1<p<\infty$.
  \item $G_{4}(t)=(1+t)log(1+t)-t$, sont des $N$-Fonctions.
\end{enumerate}
\end{exems}

\begin{pro}
 \begin{enumerate}
   \item Si $G_{1}$ et $G_{2}$ sont deux $N$-fonctions, alors $G_{1}\circ G_{2}$ est une $N$-fonction.
   \item Si $G_{1},...,G_{m}$ sont des $N$-fonctions et $a_{1},...,a_{m}\geq 0$, alors $G=\sum_{i=1}^{m}a_{i}G_{i}$ est une $N$-fonction.
 \end{enumerate}
\end{pro}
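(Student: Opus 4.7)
Le plan consiste à exploiter la caractérisation équivalente des $N$-fonctions obtenue dans le théorème précédent: il suffira de vérifier, pour chacune des deux fonctions considérées, la positivité, la continuité, la convexité, et les trois conditions aux limites \eqref{2}, \eqref{3} et \eqref{4}. Je traiterais d'abord le cas le plus simple (la combinaison linéaire à coefficients positifs), puis la composition.

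Pour le point 2, chaque fonction $a_{i}G_{i}$ est positive, continue et convexe (la convexité étant préservée par multiplication par un scalaire positif, et la somme d'une famille finie de fonctions convexes restant convexe). La somme $G=\sum_{i=1}^{m}a_{i}G_{i}$ hérite donc de ces trois propriétés. Les conditions \eqref{2}, \eqref{3} et \eqref{4} résultent alors directement de la linéarité des limites, sous l'hypothèse implicite qu'au moins un $a_{i}$ soit non nul (sans quoi $G\equiv0$ ne satisferait pas \eqref{2}). Par exemple, pour \eqref{4} il suffit de minorer $G(t)/t\geq a_{j}G_{j}(t)/t$ pour un indice $j$ tel que $a_{j}>0$.

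Pour le point 1, la positivité et la continuité de $G_{1}\circ G_{2}$ sont évidentes. La convexité utilise de manière cruciale que $G_{1}$ est croissante: pour $\lambda\in[0,1]$ et $s,t\geq0$,
$$G_{1}(G_{2}(\lambda s+(1-\lambda)t))\leq G_{1}(\lambda G_{2}(s)+(1-\lambda)G_{2}(t))\leq\lambda G_{1}(G_{2}(s))+(1-\lambda)G_{1}(G_{2}(t)),$$
la première inégalité provenant de la convexité de $G_{2}$ et de la croissance de $G_{1}$, la seconde de la convexité de $G_{1}$. Les relations $(G_{1}\circ G_{2})(0)=0$ et $\ds\lim_{t\to\infty}(G_{1}\circ G_{2})(t)=\infty$ découlent de \eqref{2} appliquée à $G_{1}$ et $G_{2}$, combinée à la continuité. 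Pour les limites \eqref{3} et \eqref{4}, la factorisation
$$\frac{G_{1}(G_{2}(t))}{t}=\frac{G_{1}(G_{2}(t))}{G_{2}(t)}\cdot\frac{G_{2}(t)}{t}\qquad(t>0)$$
est licite puisque $G_{2}(t)>0$ pour $t>0$; elle ramène le calcul au fait que \eqref{3} appliquée à $G_{1}$ et à $G_{2}$ donne, lorsque $t\to0^{+}$ (et donc $G_{2}(t)\to0^{+}$), la convergence des deux facteurs vers $0$; symétriquement, \eqref{4} donne, lorsque $t\to\infty$, la convergence des deux facteurs vers $+\infty$. L'unique point délicat tient à cette gestion des limites composées, rendue transparente par la factorisation ci-dessus.
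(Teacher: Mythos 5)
Votre démonstration est correcte et suit essentiellement la même démarche que celle du texte : convexité de $G_{1}\circ G_{2}$ via la croissance de $G_{1}$ et la convexité des deux fonctions, puis la factorisation $\frac{G_{1}(G_{2}(t))}{t}=\frac{G_{1}(G_{2}(t))}{G_{2}(t)}\cdot\frac{G_{2}(t)}{t}$ pour les limites en $0$ et en $+\infty$, et linéarité des limites pour la somme. Vous ajoutez d'ailleurs à juste titre la précision, omise dans le texte, qu'au moins un $a_{i}$ doit être strictement positif pour que $G=\sum a_{i}G_{i}$ ne soit pas identiquement nulle.
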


\begin{proof}
  $\mathbf{1)}$ Comme $G_{2}$ est croissante et $G_{1}$,$G_{2}$ sont convexe alors $G_{1}\circ G_{2}$ est convexe, de plus
  $$\ds\lim_{t\rightarrow+\infty}\frac{G_{1}\circ G_{2}(t)}{t}=\ds\lim_{t\rightarrow+\infty}\frac{G_{2}(t)}{t}\frac{G_{1}\circ G_{2}(t)}{G_{2}(t)}=+\infty,$$ et  $$\ds\lim_{t\rightarrow0}\frac{G_{1}\circ G_{2}(t)}{t}=\ds\lim_{t\rightarrow0}\frac{G_{2}(t)}{t}\frac{G_{1}\circ G_{2}(t)}{G_{2}(t)}=0,$$ donc $G_{1}\circ G_{2}$ est une $N$-fonction.\\
  $\mathbf{2)}$ On sait que la somme de deux fonctions convexes est convexe donc $G=\sum_{i=1}^{m}a_{i}G_{i}$ est convexe, et
  $$\ds\lim_{t\rightarrow+\infty}\frac{G(t)}{t}=\ds\lim_{t\rightarrow+\infty}\sum_{i=1}^{m}a_{i}\frac{G_{i}(t)}{t}=+\infty,$$
   $$\ds\lim_{t\rightarrow0}\frac{G(t)}{t}=\ds\lim_{t\rightarrow0}\sum_{i=1}^{m}a_{i}\frac{G_{i}(t)}{t}=0$$
   donc $G$ est une $N$-Fonction.\\
\end{proof}

\subsection{Fonction complémentaire ou conjugué de $N$-Fonction}

 Soit $G$ une $N$-Fonction. On associe à $G$ une autre fonction convexe $G^{*}:\mathbb{R}^{+}\rightarrow\mathbb{R}^{+}$ qui admet les m\^{e}mes propriétés que $G$.
\begin{dfn}
Soient $G^{*}:\mathbb{R}^{+}\rightarrow\mathbb{R}^{+}$ et $g^{*}:\mathbb{R}^{+}\rightarrow\mathbb{R}^{+}$ définies par
\begin{equation}\label{7}g^{*}(s)=\ds\sup_{g(t)\leq s}t\end{equation}et\begin{equation}\label{8} G^{*}(s)=\int_{0}^{s}g^{*}(\sigma)d\sigma,\ s\in[0,+\infty[.\end{equation}La fonction $G^{*}$ est appelée fonction compl\'{e}mentaire  ou conjugué de $G$ et on dit que $(G,G^{*})$ est un couple de $N$-Fonctions.
\end{dfn}
\begin{pro}\label{pro11}
\begin{enumerate}
\item La fonction $g^{*}$ possède les m\^{e}mes propriétés que la fonction $g$; elle est positive, continue à droite, croissante sur $]0,+\infty[$ et vérifiant \begin{equation}\label{62}
                                 g^{*}(0)=0,\ \ds\lim_{s\rightarrow+\infty}g^{*}(s)=+\infty.
                               \end{equation}
\item Si $g$ est continue et strictement croissante sur $[0,+\infty[$ alors $g^{*}$ est l'inverse de $g$.
\end{enumerate}
\end{pro}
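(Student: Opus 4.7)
My plan is to treat the two parts separately, deriving all properties directly from the definition $g^{*}(s)=\ds\sup\{t\geq 0: g(t)\leq s\}$ and the four properties (a)--(d) satisfied by $g$ in its definition.

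For Part~1, I would handle the easier properties first. Monotonicity of $g^{*}$ is immediate from the inclusion of sublevel sets: if $s_{1}\leq s_{2}$, then $\{t:g(t)\leq s_{1}\}\subset\{t:g(t)\leq s_{2}\}$, so the supremum increases. The equality $g^{*}(0)=0$ uses property~(b): since $g(t)>0$ for every $t>0$, the only $t\geq 0$ with $g(t)\leq 0$ is $t=0$. For the limit $\ds\lim_{s\rightarrow +\infty}g^{*}(s)=+\infty$, I fix an arbitrary $M>0$; the value $g(M)$ is finite, so for $s\geq g(M)$ the point $M$ belongs to the sublevel set, giving $g^{*}(s)\geq M$. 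Positivity of $g^{*}$ on $]0,+\infty[$ follows from right-continuity of $g$ at $0$: for any $s>0$ some $t>0$ satisfies $g(t)\leq s$, hence $g^{*}(s)\geq t>0$.

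The main obstacle is the right-continuity of $g^{*}$, which I expect to need the most care. Since $g^{*}$ is increasing, the right-limit $L:=\ds\lim_{s\rightarrow s_{0}^{+}}g^{*}(s)$ exists and satisfies $L\geq g^{*}(s_{0})$. For the reverse inequality, I would fix any $t^{*}<L$ together with a sequence $s_{n}\downarrow s_{0}$. For $n$ large enough one has $g^{*}(s_{n})>t^{*}$, so $t^{*}$ fails to be an upper bound of $\{t:g(t)\leq s_{n}\}$; this produces $t_{n}>t^{*}$ with $g(t_{n})\leq s_{n}$. Monotonicity of $g$ yields $g(t^{*})\leq g(t_{n})\leq s_{n}$, and letting $n\rightarrow +\infty$ gives $g(t^{*})\leq s_{0}$, so $t^{*}\leq g^{*}(s_{0})$. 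Since $t^{*}<L$ was arbitrary, we conclude $L\leq g^{*}(s_{0})$, and equality follows.

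For Part~2, assume in addition that $g$ is continuous and strictly increasing on $[0,+\infty[$. Together with $g(0)=0$ and $\ds\lim_{t\rightarrow +\infty}g(t)=+\infty$, this makes $g$ a homeomorphism of $[0,+\infty[$ onto itself. For any $s\geq 0$, strict monotonicity then gives $\{t\geq 0:g(t)\leq s\}=[0,g^{-1}(s)]$, whose supremum equals $g^{-1}(s)$. This identifies $g^{*}$ with $g^{-1}$ and completes the proof.
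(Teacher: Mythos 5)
Your proposal is correct and follows essentially the same route as the paper: monotonicity and $g^{*}(0)=0$ from the sublevel-set definition, the limit at infinity by exhibiting $M$ in the sublevel set for $s\geq g(M)$, right-continuity via the monotone right-limit, and Part~2 by identifying $\{t:g(t)\leq s\}$ with $[0,g^{-1}(s)]$. Your direct argument for right-continuity (showing every $t^{*}<L$ satisfies $g(t^{*})\leq s_{0}$, hence $t^{*}\leq g^{*}(s_{0})$) is a cleaner rendering of the paper's proof by contradiction, but it rests on the same use of the supremum's defining property.
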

\begin{proof}
$\mathbf{(1)}$ Soit $0\leq u\leq v,$ $$g^{*}(u)=\ds\sup_{g(t)\leq u}t\leq \ds\sup_{g(t)\leq v}t=g^{*}(v),$$ donc $g^{*}$ est croissante.\\
 $g^{*}(0)=\sup\{t,\ g(t)\leq0\}$, puisque $g(s)>0,\ s>0$ donc $g^{*}(0)=0$.\\
 $\ds\lim_{u\rightarrow+\infty}g^{*}(u)=\ds\lim_{u\rightarrow+\infty}\sup_{g(t)\leq u}t=\ds\lim_{u\rightarrow+\infty}\sup\{t,\ g^{-1}([0,u[)\}=+\infty$.\\
 Soient $y_{0}\geq0$ et $(y_{n})$ une suite telle que $y_{n}\downarrow y_{0}$,  $$x_{0}:= g^{*}(y_{0})\leq g^{*}(y_{n}):=x_{n},\ \text{et}\ x_{n}\downarrow x\geq x_{0},\ \text{quand}\ n\rightarrow+\infty.$$ Par définition de $g^{*}$, pour tout $\epsilon>0$ et $n\in\mathbb{N}$,
 $$g(x_{n}-\epsilon)<y_{n}\leq g(x_{n}+\epsilon).$$ Supposons que $x>x_{0}$ et soit $\epsilon=\frac{x-x_{0}}{2}$ donc
 $$y_{0}\leq g(x_{0}+\epsilon)\leq g(x_{n}-\epsilon)<y_{n},$$ alors $$y_{0}\leq g(x_{0}+\epsilon) \leq g(x_{0}-\epsilon)<y_{0},$$ contradiction. D'où $x=x_{0}$ et $g^{*}$ continue à droite.\\ \\
 $\mathbf{(2)}$ On a; \begin{align*}
         g^{*}(u)&=\sup\{t;\ g(t)\leq u\}\\
         &= \sup\{t;\ t\in g^{-1}([0,u])\}\\
         &=\sup\{t;\ t\in[0,g^{-1}(u)]\}=g^{-1}(u),
       \end{align*}
       donc
       \begin{align*}
        g^{*}(g(u))&=\sup\{t;\ g(t)\leq g(u)\}\\
        &=\sup\{t;\ t\in[0,u]\}=u,
       \end{align*}

\end{proof}
\begin{exems}
  Soient $G_{1}$ et $G_{2}$ deux $N$-Fonctions définies par
  \begin{enumerate}
    \item $G_{1}(u)=\frac{|u|^{\alpha}}{\alpha},\ g_{1}(\tau)=G^{'}_{1}(\tau)=\tau^{\alpha-1},\ \tau\geq0,\ donc\ g^{*}_{1}(s)=s^{\beta-1},\ s\geq0$, avec $\frac{1}{\alpha}+\frac{1}{\beta}=1,\ \alpha\geq1$, alors la fonction complémentaire ou conjuguée associée est $$G_{1}^{*}(v)=\int_{0}^{v} g^{*}_{1}(s)ds=\frac{|v|^{\beta}}{\beta}$$.
    \item $G_{2}(u)=e^{u}-u-1$, où $g_{2}(t)=G_{2}^{'}(t)=e^{t}-1$, $t\geq0$ donc $g_{2}^{*}(s)=ln(s+1)$, $s\geq0$. La fonction complémentaire ou conjuguée associée est
    $$G_{2}^{*}(v)=\int_{0}^{v}g_{2}^{*}(s)ds=(1+v)ln(1+v)-v.$$
  \end{enumerate}
\end{exems}
Nous allons maintenant présenter l'inégalité de Young généralisée.
\begin{thm}
 Soit $(G,G^{*})$ un couple de $N$-Fonctions. Alors, $\forall a,b\in [0,+\infty[$ \\on a \begin{equation}\label{9} ab\leq G(a)+G^{*}(b). \end{equation}
 On a égalité dans \eqref{9} si et seulement si $b=g(a)$ ou $a=g^{*}(b)$.
\end{thm}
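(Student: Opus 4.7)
Je prévois d'interpréter géométriquement $G(a)$ et $G^{*}(b)$ comme des aires planes, puis de comparer leur somme à l'aire du rectangle $[0,a]\times[0,b]$ via Fubini--Tonelli. Les deux régions pertinentes du premier quadrant sont
$$A=\{(\tau,\sigma):0\leq\tau\leq a,\ 0\leq\sigma\leq g(\tau)\},\qquad B=\{(\tau,\sigma):0\leq\sigma\leq b,\ 0\leq\tau\leq g^{*}(\sigma)\}.$$
Les fonctions croissantes $g$ et $g^{*}$ étant boréliennes, on obtient directement $|A|=G(a)$ et $|B|=G^{*}(b)$.

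\textbf{Étape clé : couverture du rectangle.} L'ingrédient central consiste à établir l'inclusion $[0,a]\times[0,b]\subset A\cup B$. Soit $(\tau,\sigma)$ dans ce rectangle ; si $\sigma\leq g(\tau)$ alors $(\tau,\sigma)\in A$, sinon $g(\tau)<\sigma$, donc $\tau$ appartient à $\{t:g(t)\leq\sigma\}$ et la définition \eqref{7} de $g^{*}$ fournit $\tau\leq g^{*}(\sigma)$, d'où $(\tau,\sigma)\in B$. L'inégalité \eqref{9} en découlerait alors par inclusion-exclusion :
$$G(a)+G^{*}(b)=|A|+|B|=|A\cup B|+|A\cap B|\geq|A\cup B|\geq ab.$$

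\textbf{Cas d'égalité.} La fonction $g$ étant croissante, son graphe (plateaux horizontaux et segments verticaux issus des sauts inclus) est de mesure plane nulle, donc $|A\cap B|=0$ de façon systématique. L'égalité dans \eqref{9} équivaut par conséquent à $A\cup B\subset[0,a]\times[0,b]$, c'est-à-dire aux deux conditions simultanées $g(a^{-})\leq b$ et $g^{*}(b^{-})\leq a$. En exploitant la continuité à droite de $g$, je déduirais de $g^{*}(b^{-})\leq a$ que $g(t)\geq b$ pour tout $t>a$, donc $g(a)\geq b$ ; combinée à $g(a^{-})\leq b$ cela donne l'encadrement $g(a^{-})\leq b\leq g(a)$. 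Si $g(a)=b$ on obtient la première alternative ; sinon un saut de $g$ en $a$ force, via \eqref{7}, l'identité $g^{*}(b)=a$, c'est-à-dire la seconde alternative.

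\textbf{Difficulté principale.} L'obstacle se concentre dans cette dernière traduction, où il faut gérer proprement les discontinuités de $g$ (sauts) et les intervalles sur lesquels $g$ est constante : dans ces deux configurations, $g$ et $g^{*}$ ne sont plus exactement des inverses l'une de l'autre, et seule une analyse fine via les limites latérales $g(a^{\pm})$ et $g^{*}(b^{\pm})$ permet de relier les conditions géométriques obtenues sur les aires à la disjonction « $b=g(a)$ ou $a=g^{*}(b)$ » annoncée par l'énoncé.
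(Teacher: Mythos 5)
Votre preuve est correcte et repose sur la même idée géométrique que celle du papier (découpage d'aires via Fubini--Tonelli autour du graphe de $g$), mais elle est organisée de façon sensiblement différente et, sur un point, plus complète. Le papier pose $b_{0}=g(a)$, suppose « sans perte de généralité » $b\geq b_{0}$, traite analytiquement l'excès $a(b-b_{0})$ grâce à $g^{*}(s)\geq a$ pour $s\geq g(a)$, puis réduit tout à l'identité $ab_{0}=G(a)+G^{*}(b_{0})$, démontrée par le même échange de variables sur la région $E$ comprise entre le graphe de $g$ et la droite $v=b_{0}$. Votre version recouvre directement le rectangle $[0,a]\times[0,b]$ par $A\cup B$ : cela évite à la fois la dichotomie $b\gtrless g(a)$ (et le « sans perte de généralité » du papier, qui utilise implicitement la dualité $g^{**}=g$ non établie à ce stade) et le lemme préliminaire sur $g^{*}(s)\geq a$ ; c'est plus symétrique en $(G,G^{*})$. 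Surtout, vous démontrez réellement l'équivalence du cas d'égalité dans les deux sens, alors que le papier ne vérifie l'égalité que pour $b=g(a)$ et n'aborde pas la réciproque. Deux points de rédaction à soigner : l'égalité $|A|+|B|=|A\cup B|+|A\cap B|$ exige de dire que $A$ et $B$ sont de mesure finie (c'est le cas, $A\subset[0,a]\times[0,g(a)]$) ; et l'égalité dans \eqref{9} équivaut non pas à l'inclusion ensembliste $A\cup B\subset[0,a]\times[0,b]$ mais à $\bigl|(A\cup B)\setminus([0,a]\times[0,b])\bigr|=0$ — ce que vous traduisez néanmoins correctement par les conditions $g(a^{-})\leq b$ et $g^{*}(b^{-})\leq a$, grâce à la monotonie ; la fin de votre analyse (droite-continuité de $g$ donnant $g(a)\geq b\geq g(a^{-})$, puis la disjonction $b=g(a)$ ou $g^{*}(b)=a$) est juste.
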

\begin{proof}
Soit $b_{0}=g(a)$, sans pert de généraliter on peut supposer que $b\geq b_{0}$.
Donc $$G^{*}(b)=\int_{0}^{b}g^{*}(s)ds=\int_{0}^{b_{0}}g^{*}(s)ds+\int_{b_{0}}^{b}g^{*}(s)ds.$$
Si $s\geq b_{0}=g(a)$, $g^{*}(s)\geq a$,
 $$G^{*}(b)\geq\int_{0}^{b_{0}}g^{*}(s)ds+\int_{b_{0}}^{b}a ds=\int_{0}^{b_{0}}g^{*}(s)ds+a(b-b_{0}).$$
Donc $$G(a)+G^{*}(b)\geq\int_{0}^{a}g(s)ds+\int_{0}^{b_{0}}g^{*}(s)ds+ab-ab_{0}.$$
Pour achever la démonstration il suffit de montrer que $$G(a)+\int_{0}^{b_{0}}g^{*}(s)ds=ab_{0}.$$
On a $$ab_{0}=G(a)+\displaystyle\int\int_{E}dudv,$$
où $$E=\{(u,v)\ \text{tel que}\ 0\leq u\leq a\ \text{et}\ g(u)\leq v\leq b_{0}\}.$$
En effet \begin{align*}
       \displaystyle\int\int_{E}dudv&=\int_{0}^{a}\int_{g(u)}^{b_{0}}dvdu=\int_{0}^{a}(b_{0}-g(u))du\\
       &=\int_{0}^{a}b_{0}du-\int_{0}^{a}g(u)du=ab_{0}-G(a).
     \end{align*}
 On peut exprimer $E$ comme suit $$E=\{(u,v)\ \text{tel que}\ 0\leq v\leq b_{0}\ \text{et}\ 0\leq u\leq g^{*}(v)\}.$$
 Alors, $$\int\int_{E}dudv=\int_{0}^{b_{0}}\int_{0}^{g^{*}(v)}dudv=\int_{0}^{b_{0}}g^{*}(v)dv=G^{*}(b_{0}).$$
on déduit que $$ab_{0}-G(a)=G^{*}(b_{0})$$ d'où le résultat.

\end{proof}

\begin{rem}
  D'après \eqref{9} on a $G^{*}(t)\geq st-G(s)$, donc $\ds\sup_{s\geq0}\{st-G(t)\}\leq G^{*}(t)$,
 l'égalité dans \eqref{9} aura lieu si et seulement si $g^{*}(t)=s$ c.à.d, $$sg^{*}(t)=G(g^{*}(t))+G^{*}(t)$$
 Par suite, $$G^{*}(t)=\ds\sup_{s\geq0}\{st-G(s)\}.$$
\end{rem}
\begin{pro}
  Soit $(G_{i},G_{i}^{*})_{1,2}$ deux couples de $N$-Fonctions, on suppose q'il existe $x_{0}\geq 0$ telles que $G_{1}(x)\leq G_{2}(x),\ \forall x\geq x_{0}\geq 0$.
Alors $$G_{2}^{*}(y)\leq G_{1}^{*}(y),\ \forall y\geq y_{0}\geq 0$$
où $y_{0}=g_{2}(x_{0})$, $g_{2}$ est la dérivée à droite de $G_{2}$.
\end{pro}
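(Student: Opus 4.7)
L'idée est d'exploiter la représentation par transformée de Legendre
$$G_i^{*}(y)=\sup_{s\geq 0}\bigl\{sy-G_i(s)\bigr\},\qquad i=1,2,$$
établie dans la remarque précédente. La stratégie consiste à évaluer ce sup pour $G_2^{*}$ en un point $s^{*}$ dont on peut garantir qu'il est $\geq x_0$, afin d'utiliser l'hypothèse $G_1(s^{*})\leq G_2(s^{*})$.

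D'abord, je montrerais que pour tout $y\geq y_0=g_2(x_0)$ on a $g_2^{*}(y)\geq x_0$. C'est immédiat à partir de la définition $g_2^{*}(y)=\sup\{t:\ g_2(t)\leq y\}$: puisque $g_2(x_0)=y_0\leq y$, l'élément $x_0$ appartient à l'ensemble dont on prend le sup, d'où $g_2^{*}(y)\geq x_0$.

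Ensuite, posant $s^{*}=g_2^{*}(y)$, le cas d'égalité dans l'inégalité de Young appliqué au couple $(G_2,G_2^{*})$ avec $a=g_2^{*}(y)$ et $b=y$ donne
$$G_2^{*}(y)=y\,g_2^{*}(y)-G_2(g_2^{*}(y))=ys^{*}-G_2(s^{*}).$$
Comme $s^{*}\geq x_0$, l'hypothèse fournit $G_1(s^{*})\leq G_2(s^{*})$, d'où
$$G_2^{*}(y)=ys^{*}-G_2(s^{*})\leq ys^{*}-G_1(s^{*})\leq\sup_{s\geq 0}\{ys-G_1(s)\}=G_1^{*}(y),$$
ce qui est la conclusion voulue.

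Le point le plus délicat est la première étape, car elle dépend de la convention utilisée pour $g_2^{*}$ (sup vs inf, continuité à droite) et du fait que $g_2$ peut ne pas être strictement croissante; il faudra se rabattre sur la définition \eqref{7} plutôt que sur l'inverse ponctuel. Le reste du raisonnement est alors purement formel à partir de la représentation de $G_2^{*}$ comme transformée de Legendre et du cas d'égalité dans \eqref{9}.
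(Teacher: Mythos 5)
Your proof is correct and follows essentially the same route as the paper's: both rest on the equality case of Young's inequality for $(G_2,G_2^{*})$ at the point $a=g_2^{*}(y)$, $b=y$, combined with Young's inequality for $(G_1,G_1^{*})$ — which is exactly your Legendre sup-representation step — and the hypothesis $G_1\leq G_2$ evaluated at $g_2^{*}(y)$. You are in fact slightly more careful than the paper, which writes $g_2(y)$ where it means the generalized inverse $g_2^{*}(y)$ and never verifies that this point satisfies $g_2^{*}(y)\geq x_0$ when $y\geq y_0$; your first step supplies precisely that missing check.
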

\begin{proof}
Par l'égalité dans \eqref{9} on a \begin{equation}\label{33}g_{2}(y)y=G_{2}(g_{2}(y))+G_{2}^{*}(y),\end{equation}
ce qui implique $$G_{2}^{*}(y)=g_{2}(y)y-G_{2}(g_{2}(y)).$$
L'inégalité de Young \eqref{9} pour $(G_{1},G_{1}^{*})$ donne, \begin{equation}\label{34}g_{2}(y)y\leq G_{1}(g_{2}(y))+G_{1}^{*}(y).\end{equation}
Les deux inégalités \eqref{33} et \eqref{34} donnent, \begin{align*}G_{2}^{*}(y)&\leq G_{1}(g_{2}(y))+G_{1}^{*}(y)-G_{2}(g_{2}(y))\\&= G_{1}^{*}(y)+G_{1}(g_{2}(y))-G_{2}(g_{2}(y)).\end{align*}
Comme $G_{1}(x)\leq G_{2}(x)$ alors $G_{1}(g_{2}(y))-G_{2}(g_{2}(y))<0\ \forall y\geq y_{0}.$
Par conséquent, $$G_{2}^{*}(y)\leq G_{1}^{*}(y).$$
\end{proof}
\begin{pro}
  Soit $(G,G^{*})$ un couple de $N$-Fonctions et $G^{-1}$ l'inverse de $G$. Alors
  \begin{enumerate}
    \item $G(a)+G(b)\leq G(a+b),\ \forall a,b\in\mathbb{R}^{+}$.
    \item $G^{-1}(a+b)\leq G^{-1}(a)+G^{-1}(b),\ \forall a,b\in\mathbb{R}^{+}$.
    \item $t<G^{-1}(t)(G^{*})^{-1}(t)\leq 2t,\ t>0$.
  \end{enumerate}
\end{pro}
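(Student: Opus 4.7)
For part (1), the idea is to use convexity of $G$ together with $G(0)=0$. From convexity, for $\lambda\in[0,1]$ we have $G(\lambda s)=G(\lambda s+(1-\lambda)\cdot 0)\leq \lambda G(s)+(1-\lambda)G(0)=\lambda G(s)$. Assuming $a+b>0$ (the case $a=b=0$ is trivial), I write $a=\frac{a}{a+b}(a+b)$ and $b=\frac{b}{a+b}(a+b)$, which yields
\[
G(a)\leq \frac{a}{a+b}\,G(a+b),\qquad G(b)\leq \frac{b}{a+b}\,G(a+b).
\]
Adding these two inequalities gives $G(a)+G(b)\leq G(a+b)$.

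For part (2), I would deduce the result directly from part (1). Set $u=G^{-1}(a)$ and $v=G^{-1}(b)$, which makes sense because $G$ is continuous and strictly increasing with $G(0)=0$ and $G(\infty)=\infty$. Then part (1) gives $a+b=G(u)+G(v)\leq G(u+v)$, and applying the increasing function $G^{-1}$ to both sides yields $G^{-1}(a+b)\leq u+v=G^{-1}(a)+G^{-1}(b)$.

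For part (3), set $u=G^{-1}(t)$ and $v=(G^{*})^{-1}(t)$, so $G(u)=G^{*}(v)=t$. The upper bound is immediate from Young's inequality \eqref{9}: $uv\leq G(u)+G^{*}(v)=2t$. For the strict lower bound, the key ingredient is the strict inequality $G(u)<u\,g(u)$ for $u>0$ (and analogously $G^{*}(v)<v\,g^{*}(v)$), which follows because $g$ is nondecreasing and right-continuous with $g(0)=0$, so $g(s)<g(u)$ on a set of positive measure in $[0,u]$. I then split into two cases:
\begin{itemize}
\item If $v\geq g(u)$, then $uv\geq u\,g(u)>G(u)=t$.
\item If $v<g(u)$, then for every $s\geq u$ monotonicity gives $g(s)\geq g(u)>v$, so $s\notin\{r:g(r)\leq v\}$; therefore $g^{*}(v)=\sup\{r:g(r)\leq v\}\leq u$, which yields $uv\geq v\,g^{*}(v)>G^{*}(v)=t$.
\end{itemize}
In either case $uv>t$, proving the lower bound.

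The main obstacle is the strict lower bound in part (3): parts (1) and (2) are quick consequences of convexity, while for the lower bound in (3) one needs the case dichotomy above, together with the careful verification that $G(u)<u\,g(u)$ is strict (and similarly for $G^{*}$), rather than merely an equality. Everything else reduces to direct applications of Young's inequality and the monotonicity/continuity properties of $g$ and $g^{*}$ established earlier.
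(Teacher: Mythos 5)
Your proof is correct, and while parts (2) and the upper bound of (3) coincide with the paper's argument (substitute $a=G^{-1}(t)$, $b=G^{-1}(s)$ into (1); apply Young's inequality \eqref{9} at $a=G^{-1}(t)$, $b=(G^{*})^{-1}(t)$), you take a genuinely different route elsewhere. For (1), the paper works from the integral representation \eqref{1}: $G(a)+G(b)=\int_{0}^{a}g+\int_{0}^{b}g\leq\int_{0}^{a}g+\int_{a}^{a+b}g=G(a+b)$, using monotonicity of $g$ to shift the second integral; you instead derive superadditivity from \eqref{5} by writing $a=\frac{a}{a+b}(a+b)$ and $b=\frac{b}{a+b}(a+b)$ and adding. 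The two are equally short. The real divergence is the strict lower bound in (3): the paper proves $G^{*}\bigl(\frac{G(t)}{t}\bigr)<G(t)$ via the mean value theorem (writing $\frac{G(t)}{t}=g(t^{*})$ and estimating $\int_{0}^{G(t)/t}g^{*}$) and then applies $(G^{*})^{-1}$; you instead set $u=G^{-1}(t)$, $v=(G^{*})^{-1}(t)$ and dichotomize on $v\geq g(u)$ versus $v<g(u)$, reducing everything to the strict inequalities $G(u)<u\,g(u)$ and $G^{*}(v)<v\,g^{*}(v)$ together with the definition \eqref{7} of $g^{*}$ as a generalized inverse. Your version buys some robustness: it never needs $g$ or $g^{*}$ to be continuous or strictly increasing, whereas the paper's chain $g^{*}(\tilde{t})<g^{*}(g(t))$ is delicate when $g^{*}$ is merely nondecreasing. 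The one point you should make fully explicit is the strictness of $G(u)<u\,g(u)$, which you correctly attribute to $g(0)=0$ and right-continuity forcing $g(s)<g(u)$ on a set of positive measure near $0$; the same remark applies verbatim to $g^{*}$.
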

\begin{proof}
  $(1)$ Soit $0\leq a\leq b$ alors \begin{align*}
                                     G(a)+G(b)&=\int_{0}^{a}g(t)dt+\int_{0}^{b}g(t)dt\\
                                     &\leq\int_{0}^{a}g(t)dt+\int_{a}^{a+b}g(t)dt=\int_{0}^{a+b}g(t)dt=G(a+b).
                                   \end{align*}
 $(2)$ Prenons dans $(1)$, $a=G^{-1}(t)$ et $b=G^{-1}(s)$ on aura $$t+s\leq G(G^{-1}(t)+G^{-1}(s)),$$ on applique $G^{-1}$ on obtient $(2)$.\\
 $(3)$ Comme $G$ et $G^{*}$ sont strictement croissantes, alors elles sont inversibles et l'inégalité \eqref{9} nous donne
$$G^{-1}(t){G^{*}}^{-1}(t)\leq G\bigg{(}G^{-1}(t)\bigg{)}+G^{*}\bigg{(}{G^{*}}^{-1}(t)\bigg{)}=2t$$ et d'après le théorème de la moyenne $\exists t^{*}\in]0,t[$ tel que $$\frac{G(t)}{t}=\frac{1}{t}\int_{0}^{t}g(s)ds=g(t^{*}).$$ Ainsi \begin{equation}\label{91}
                                                                                          \begin{aligned}
                                                                                          G^{*}\bigg{(}\frac{G(t)}{t}\bigg{)}&=\int_{0}^{\frac{G(t)}{t}}g(s)ds=
                                                                                          \frac{G(t)}{t}g^{*}(\tilde{t}),\                                 0<\tilde{t}<\frac{G(t)}{t}=g(t^{*})\\
                                                                                          &< \frac{G(t)}{t}.g^{*}(g(t))\leq \frac{G(t)}{t}.t=G(t).
                                                                                        \end{aligned}
                                                                                        \end{equation}
                                                                                        On pose $G(t)=\alpha$ et appliquant $(G^{*})^{-1}$ à \eqref{91} on obtient $$\frac{\alpha}{G^{-1}(\alpha)}<(G^{*})^{-1}(\alpha)\ \text{ou}\ \alpha<G^{-1}(\alpha)(G^{*})^{-1}(\alpha).$$
                                                                                        Ainsi $$t<G^{-1}(t)(G^{*})^{-1}(t)\leq 2t.$$
\end{proof}
\subsection{La condition $\triangle_{2}$}
Dans cette section, nous allons étudier une classe spéciale des $N$-fonctions. 
\begin{dfn}\label{dfn2}
 Une $N$-Fonction $G$ satisfait la condition-$\triangle_{2}$ (resp: $\triangle_{2,0}$) s'il existe $k>0$ et $T\geq0$ (resp: $T=0$) tels que,\begin{equation}\label{68} G(2t)\leq kG(t),\ \forall t\geq T\ (\text{resp:}\ \forall t\geq0).\end{equation}
\end{dfn}
\begin{exem}
  La fonction $G(t)=ct^{p}$, $c>0$, $p>1$, satisfait la condition $\triangle_{2}$: On peut prendre $T=0$ et $k=2^{p}$.
\end{exem}
\begin{rem}
  Si $G$ satisfait $\triangle_{2}$, alors $k>2$. En effet,  $$G(t)=G\bigg{(}\frac{1}{2}2t\bigg{)}\leq\frac{1}{2}G(2t).$$
\end{rem}

L'assertion suivante est un critère utile pour qu'une fonction satisfaite la condition $\triangle_{2}$.
\begin{thm}\label{thm7}
 Soit $G$ une $N$-Fonction. On a,
 \begin{enumerate}
  \item $G$ satisfait $\triangle_{2}$ si et seulement s'il existe $p>0$ et $t_{0}>0$ tels que \begin{equation}\label{10}\frac{g(a)}{G(a)}\leq\frac{p}{a},\ \forall a>t_{0}.\end{equation}
   \item $G$ satisfait $\triangle_{2}$ si et seulement pour tout $l>1$ il existe une constante $C(l)$ tel que \begin{equation}\label{11}G(lt)\leq C(l) G(t)\ \forall t\geq T.
           \end{equation}
   \item   $G^{*}$ satisfait $\triangle_{2}$ si et seulement si $g$ est inversible et $$\ds\inf_{t>0}\frac{tg(t)}{G(t)}>1.$$
 \end{enumerate}
\end{thm}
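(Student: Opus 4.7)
For (1), the strategy is to exploit the monotonicity of $g$. In the forward direction, write
\[
G(2a)-G(a) \;=\; \int_a^{2a} g(s)\,ds \;\geq\; a\,g(a),
\]
which combined with $G(2a)\leq kG(a)$ gives $a g(a) \leq (k-1) G(a)$, so one takes $p = k-1$ and $t_0 = T$. For the converse, rewrite $g/G$ as the (right-hand) logarithmic derivative of $G$ and integrate from $t$ to $2t$:
\[
\log \frac{G(2t)}{G(t)} \;=\; \int_t^{2t} \frac{g(s)}{G(s)}\,ds \;\leq\; \int_t^{2t} \frac{p}{s}\,ds \;=\; p\log 2,
\]
so $G(2t)\leq 2^p G(t)$ for $t \geq t_0$, which is $\triangle_2$ with constant $2^p$.

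For (2), the case $l=2$ gives one implication immediately. For the other, given $l>1$ pick $n\in\mathbb{N}$ with $2^n\geq l$; iterating $\triangle_2$,
\[
G(lt)\;\leq\; G(2^n t)\;\leq\; k\,G(2^{n-1}t)\;\leq\;\cdots\;\leq\; k^n G(t), \qquad t\geq T,
\]
so one takes $C(l):=k^n$.

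For (3), the idea is to translate $\triangle_2$ on $G^*$ into a growth condition on $G$ via Young's equality. Applying (1) to the pair $(G^*,g^*)$, one has $G^*\in\triangle_2$ iff there exist $p>0$ and $s_0>0$ with $g^*(s)/G^*(s)\leq p/s$ for $s\geq s_0$. The equality case of Young at $b=g(a)$ yields $G^*(g(t)) = t g(t) - G(t)$, and invertibility of $g$ (Proposition \ref{pro11}) gives $g^*(g(t)) = t$. Substituting $s = g(t)$, the inequality becomes $t/(t g(t)-G(t)) \leq p/g(t)$, equivalent to
\[
\frac{t g(t)}{G(t)} \;\geq\; \frac{p}{p-1} \;>\; 1.
\]
Conversely, if $c := \inf_{t>0} t g(t)/G(t) > 1$, then $t g(t) - G(t) \geq \tfrac{c-1}{c}\,t g(t)$, and running the same computation backwards gives $g^*(s)/G^*(s) \leq (c/(c-1))/s$, hence $G^*\in\triangle_2$ by (1).

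The main obstacle is to justify that $G^*\in\triangle_2$ actually forces $g$ to be continuous and strictly increasing, and hence invertible: a jump of $g$ at some $t_0$ would create a constant plateau of $g^*$ on $[g(t_0^-),g(t_0^+)]$ on which the ratio $g^*(s)/G^*(s)$ behaves like $1/(c_1+c_2 s)$ and fails the bound $\leq p/s$ near the left endpoint, while a flat interval of $g$ breaks the clean identification $g^*(g(t))=t$. These pathologies must be ruled out before the Young-equality substitution becomes legitimate, and this is the technical heart of (3).
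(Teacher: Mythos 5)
Your parts (1) and (2) coincide with the paper's proof: the passage from \eqref{10} to $\triangle_{2}$ by integrating the logarithmic derivative $g/G$ over $[a,2a]$, the converse via $kG(a)\geq G(2a)\geq\int_{a}^{2a}g(s)\,ds\geq a\,g(a)$ (your constant $k-1$ is just slightly sharper bookkeeping of the same estimate), and the dyadic iteration $G(lt)\leq G(2^{n}t)\leq k^{n}G(t)$ for (2) are exactly the arguments given there.

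For part (3) you in fact do more than the paper, whose entire proof is a single unjustified chain of equivalences beginning with the assumption that $g$ is invertible. Your route --- applying criterion \eqref{10} to the pair $(G^{*},g^{*})$, then using the equality case of Young, $G^{*}(g(t))=tg(t)-G(t)$, together with $g^{*}(g(t))=t$ and the substitution $s=g(t)$ to convert the bound $g^{*}(s)/G^{*}(s)\leq p/s$ into $tg(t)/G(t)\geq p/(p-1)>1$, and back --- is a correct and explicit version of what the paper merely asserts. The gap you flag, namely that $G^{*}\in\triangle_{2}$ should force $g$ to be invertible, is genuine, but the paper does not close it either: it simply takes invertibility of $g$ as a standing hypothesis, so on this point your proposal is no less complete than the source. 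Two caveats on your discussion of that gap. First, your plateau heuristic is not by itself an obstruction: on a plateau created by a jump of $g$ at $t_{0}$ one has $G^{*}(s)=t_{0}s-G(t_{0})$, so the bound $g^{*}(s)/G^{*}(s)\leq p/s$ fails only for $s<\frac{p}{p-1}\frac{G(t_{0})}{t_{0}}$, and since $\triangle_{2}$ and \eqref{10} are asymptotic conditions a single jump at a finite point is harmless; a genuine counterexample would need jumps accumulating at infinity. Second, criterion \eqref{10} applied to $G^{*}$ only controls $tg(t)/G(t)$ for $t$ large, whereas the statement takes the infimum over all $t>0$; neither your argument nor the paper's bridges that discrepancy.
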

\begin{proof}
 1) On suppose que $\frac{g(a)}{G(a)}\leq\frac{p}{a}$. En intégrant sur $[a,2a]$, on obtient $$\int_{a}^{2a}\frac{g(t)}{G(t)}dt\leq p\int_{a}^{2a}\frac{dt}{t}=p\ln(2)$$
  et par suite $$G(2a)\leq 2^{p }G(a).$$
  Inversement: on suppose que $G(2a)\leq CG(a)$, alors $$CG(a)\geq G(2a)=\int_{0}^{2a}g(t)dt>\int_{a}^{2a}g(t)dt>ag(a).$$
 2) On suppose que $G$ satisfait la condition $\triangle_{2}$. Soit $l>1$, donc $\exists n\in\mathbb{N}$ tel que $2^{n}\geq l$. Alors $$G(lu)\leq G(2^{n}u)\leq k^{n}G(u)=C(l)G(u).$$
 Inversement: Il suffit de prendre $l=2$.\\
 3) Supposons que $g$ est inversible, alors $\ds\inf_{t>0}\frac{tg(t)}{G(t)}>1$ si et seulement si $\displaystyle\frac{tg^{-1}(t)}{G^{*}(t)}<+\infty$ si et seulement si $G^{*}$ satisfait $\triangle_{2}$.
\end{proof}
\begin{exem}
  Soit la fonction $$G(x)=x^{\alpha}(\ln (x)+1),\ \alpha>1,\ x\geq0,$$ alors $G$ satisfait $\triangle_{2}$.\\
  \text{En effet,} $$g(x)=\alpha x^{\alpha-1}(\ln (x)+1)+x^{\alpha-1},$$
  alors $$\frac{xg(x)}{G(x)}=\alpha+\frac{1}{\ln (x)+1}\leq \alpha+1,\ \forall x\geq1.$$
  Donc il existe $p=\alpha+1>0$ et il existe $t_{0}=1$ tels que $\frac{tg(t)}{G(t)}\leq p,\ \forall t\geq t_{0}$.
\end{exem}

Nous allons maintenant nous intéresser à l'étude des relations de comparaison entre les $N$-Fonctions.
\begin{dfn}\label{dfn1}
Soient $G_{1}$ et $G_{2}$ deux $N$-Fonctions. On dit que:
\begin{enumerate}\label{ac}
  \item $G_{1}$ domine $G_{2}$ à l'infini et on note $G_{2}\prec G_{1}$ ou $G_{1}\succ G_{2}$ si et seulement s'il existe $c>0$ et $T>0$ tel que$$G_{2}(x)\leq G_{1}(cx),\ \forall x\geq T.$$
  \item $G_{1}$ est équivalente à $G_{2}$ à l'infini et on note $G_{1}\sim G_{2}$ si et seulement si $G_{1}\prec G_{2}$ à l'infini et $G_{1}\succ G_{2}$ à l'infini. Autrement dit,\\$G_{1}$ et $G_{2}$ sont équivalentes si et seulement s'il existe $a,b\in R_{+}$ tels que $0<a\leq b<\infty$ et il existe $x_{0}>0$ tels que $$G_{1}(ax)\leq G_{2}(x)\leq G_{1}(bx),\ \forall x\geq x_{0}.$$
  \item On dit que $G_{2}$ est à croissance plus lente que $G_{1}$, et on note $G_{2}\prec\prec G_{1}$, si  $\forall\lambda>0,$
  $$\ds\lim_{t\rightarrow\infty}\frac{G_{2}(t)}{G_{1}(\lambda t)}=0.$$
\end{enumerate}
\end{dfn}
\begin{exems}
  \begin{enumerate}
    \item Soient $1<p<\infty$, $\epsilon>0$, $G_{1}(t)=t^{p}(|Log(t)|+1)$ et $G_{2}(t)=t^{p+\epsilon}$. Alors $G_{1}\prec G_{2}$.
    \item Pour toute $N$-Fonction $G$, les deux fonctions $G_{1}(t)=G(kt),\ k>0$ et $G$ sont équivalentes.
    \item Soient $1<p<\infty$ et $\epsilon>0$. Alors $$t^{p}\prec\prec t^{p}(|Log(t)|+1)\prec\prec t^{p+\epsilon}.$$
  \end{enumerate}
\end{exems}
\section{Classe d'Orlicz  $K^{G}(\Omega)$ }
Soient $\Omega$ un ouvert  de $\mathbb{R}^{N}$ et $G$ une $N$-Fonction. On appelle classe d'Orlicz l'ensemble des fonctions mesurables $u$ de $\Omega$ à valeurs réelles telles que $$\rho{(u\ ;G)}=\int_{\Omega}G(|u(x)|)dx<\infty.$$
On note $K^{G}(\Omega)$ la classe d'Orlicz. C'est à dire,
$$K^{G}(\Omega)=\bigg{\{} u:\Omega\rightarrow \mathbb{R},\ \text{mesurable} \  /\  \int_{\Omega} G(|u(x)|)dx < \infty\bigg{\}}.$$
\begin{thm}
 Soit $G$ une $N$-Fonction. Si $mes(\Omega)<\infty$, alors $$K^{G}(\Omega)\subset L^{1}(\Omega).$$
\end{thm}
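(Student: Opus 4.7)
L'idée principale est d'exploiter le fait qu'une $N$-fonction $G$ est surlinéaire à l'infini, c'est-à-dire que $\lim_{t\to\infty} G(t)/t = +\infty$ (propriété \eqref{4}), pour comparer $|u|$ à $G(|u|)$ sur la partie de $\Omega$ où $|u|$ est grand, et utiliser la finitude de $\mathrm{mes}(\Omega)$ là où $|u|$ est borné.

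Voici le plan. Soit $u\in K^{G}(\Omega)$. D'après \eqref{4}, il existe $T>0$ tel que $G(t)\geq t$ pour tout $t\geq T$. Je découperai alors $\Omega$ en deux parties :
$$\Omega_{1}=\{x\in\Omega\,:\,|u(x)|\leq T\}\quad \text{et}\quad \Omega_{2}=\{x\in\Omega\,:\,|u(x)|>T\}.$$
Sur $\Omega_{1}$, on a la majoration immédiate $\int_{\Omega_{1}}|u(x)|\,dx\leq T\,\mathrm{mes}(\Omega)<\infty$, ce qui utilise de façon essentielle l'hypothèse $\mathrm{mes}(\Omega)<\infty$. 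Sur $\Omega_{2}$, le choix de $T$ donne $|u(x)|\leq G(|u(x)|)$, d'où
$$\int_{\Omega_{2}}|u(x)|\,dx\leq\int_{\Omega_{2}}G(|u(x)|)\,dx\leq\rho(u;G)<\infty.$$
En sommant ces deux estimations, on conclut que $u\in L^{1}(\Omega)$.

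Une variante plus concise consisterait à utiliser l'inégalité de Young généralisée \eqref{9} avec $a=|u(x)|$ et $b=1$, ce qui donne ponctuellement $|u(x)|\leq G(|u(x)|)+G^{*}(1)$, puis à intégrer sur $\Omega$ en notant que $G^{*}(1)\,\mathrm{mes}(\Omega)<\infty$.

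Il n'y a pas vraiment d'obstacle technique ici : le seul point à surveiller est de bien invoquer la croissance surlinéaire de $G$ (qui est équivalente à la propriété \eqref{4}) et de ne pas oublier que la mesurabilité de $|u|$ rend les ensembles $\Omega_{1}$ et $\Omega_{2}$ mesurables, de sorte que le découpage est licite.
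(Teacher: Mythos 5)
Votre démonstration est correcte et suit essentiellement la même démarche que celle du texte : on utilise la surlinéarité \eqref{4} pour choisir un seuil $T$ tel que $|u|\leq G(|u|)$ là où $|u|>T$, puis on découpe $\Omega$ en deux parties et on majore l'intégrale sur $\{|u|\leq T\}$ par $T\,mes(\Omega)$, qui est fini par hypothèse. La variante par l'inégalité de Young est également valable mais n'est pas celle retenue par le texte.
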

\begin{proof}
  Soit $u\in K^{G}(\Omega)$. On sait que $\ds\lim_{t\rightarrow\infty}\frac{G(t)}{t}=+\infty$, donc il existe $k>0$ telle que pour $|t|>k$, $\frac{G(t)}{t}>1$. Soit  $$\Omega_{k}=\{x\in\Omega;|u(x)|>k\},$$ alors
  $$\frac{G(|u(x)|)}{|u(x)|}>1,\ \text{c.à.d}\ |u(x)|<G(|u(x)|),\ \forall\ x\in\Omega_{k}.$$
  Ainsi
  \begin{align*}
  \int_{\Omega}|u(x)|dx&=\int_{\Omega_{k}}|u(x)|dx+ \int_{\Omega\setminus\Omega_{k}}|u(x)|dx\\
  &\leq \int_{\Omega_{k}}G(|u(x)|)dx+k\ mes(\Omega\setminus\Omega_{k})\\
  &\leq\rho(u,G)+k\ mes(\Omega)<\infty.
  \end{align*}
  D'où $u\in L^{1}(\Omega)$.
\end{proof}

Dans le théorème suivant, on peut considérer l'espace de Lebesgue $L^{1}(\Omega)$ comme la réunion de toutes les classes d'Orlicz $ K^{G}(\Omega)$ où $G$ varie dans l'ensemble des $N$-Fonctions.
\begin{thm}
 Soit $u\in L^{1}(\Omega)$. On suppose que $mes(\Omega)<\infty$. Alors il existe une $N$-Fonction $G$ telle que $u\in K^{G}(\Omega)$.
\end{thm}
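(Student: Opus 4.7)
The plan is to construct the $N$-function $G$ by defining its derivative $g$ directly from the distribution function of $u$, and then to control $\rho(u;G) = \int_\Omega G(|u(x)|)\, dx$ via the layer-cake formula.

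First, I would introduce the distribution function $\mu(t) = mes\{x \in \Omega : |u(x)| > t\}$. Since $mes(\Omega) < \infty$, the function $\mu$ is finite-valued and nonincreasing on $[0, +\infty[$, and a standard Fubini argument gives
\[
\int_0^\infty \mu(\tau)\, d\tau = \int_\Omega |u(x)|\, dx < \infty.
\]
Consequently the tails $R(t) := \int_t^\infty \mu(\tau)\, d\tau$ decrease to $0$ as $t \to \infty$. This lets me pick recursively a strictly increasing sequence $0 = t_0 < t_1 < t_2 < \cdots$ tending to $+\infty$ such that $R(t_n) \le 2^{-n}$ for every $n \ge 0$.

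Next, I would define $g$ to be the continuous function on $[0, +\infty[$ which is affine on each segment $[t_n, t_{n+1}]$ and satisfies $g(t_n) = n$. By construction $g$ is continuous, strictly increasing, $g(0) = 0$, $g(t) > 0$ for $t > 0$, and $g(t) \to +\infty$, so $G(t) := \int_0^t g(\tau)\, d\tau$ satisfies all the axioms of an $N$-function given in the previous section.

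Finally, I would invoke the layer-cake identity
\[
\int_\Omega G(|u(x)|)\, dx = \int_0^\infty g(t)\mu(t)\, dt,
\]
obtained by writing $G(|u(x)|) = \int_0^{|u(x)|} g(t)\, dt$ and applying Fubini. Splitting the integral along the $t_n$'s and using $g(t) \le n+1$ on each $[t_n, t_{n+1}]$ yields
\[
\int_0^\infty g(t)\mu(t)\, dt \le \sum_{n \ge 0}(n+1)\int_{t_n}^{t_{n+1}}\mu(\tau)\, d\tau \le \sum_{n \ge 0}(n+1)R(t_n) \le \sum_{n \ge 0}(n+1)\,2^{-n} < \infty,
\]
which gives $u \in K^G(\Omega)$. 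The main delicate point will be orchestrating the choice of $(t_n)$ so that the tail-decay $R(t_n) \le 2^{-n}$ and the growth $g(t_n) = n$ fit together both to ensure convergence of the final series and to preserve the regularity required of an $N$-function; everything else reduces to routine verifications.
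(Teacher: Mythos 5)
Your argument is correct, but it is organized differently from the paper's. The paper partitions $\Omega$ into the integer level sets $\Omega_{n}=\{n-1\leq|u|<n\}$, deduces from $u\in L^{1}$ that $\sum_{n}n\,mes(\Omega_{n})<\infty$, invokes the (unproved there) fact that any convergent positive series $\sum a_{n}$ admits $\alpha_{n}\uparrow\infty$ with $\sum\alpha_{n}a_{n}<\infty$, and takes $g$ to be the step function equal to $\alpha_{n}$ on $[n,n+1[$; the estimate $\int_{\Omega}G(|u|)\leq\sum_{n}G(n)\,mes(\Omega_{n})\leq\sum_{n}n\alpha_{n}\,mes(\Omega_{n})$ then closes the proof. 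You do the dual construction: you fix the heights of $g$ at the integers ($g(t_{n})=n$) and instead move the breakpoints $t_{n}$ far enough out that the tails $R(t_{n})=\int_{t_{n}}^{\infty}\mu(\tau)\,d\tau$ decay geometrically, and you close the estimate with the layer-cake identity $\int_{\Omega}G(|u|)\,dx=\int_{0}^{\infty}g(t)\mu(t)\,dt$ rather than by summing over level sets. What your route buys: it bypasses the auxiliary lemma on convergent series, it produces a continuous strictly increasing $g$ (hence a strictly convex $G$) rather than a step function, and in fact it never really uses $mes(\Omega)<\infty$, since $\int_{0}^{\infty}\mu=\|u\|_{1}$ and $\mu(t)<\infty$ for $t>0$ already follow from $u\in L^{1}$; the paper's route is more elementary in that it avoids the distribution function and Fubini. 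One cosmetic slip: your requirement $R(t_{n})\leq2^{-n}$ for \emph{every} $n\geq0$ forces $R(t_{0})=R(0)=\|u\|_{1}\leq1$, which need not hold; simply impose the condition for $n\geq1$ and bound the first term of the sum by $1\cdot\|u\|_{1}<\infty$. This does not affect the validity of the argument.
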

\begin{proof}
  Pour tout $ n\in\mathbb{N}^{*}$, on considère $$\Omega_{n}=\{x\in\Omega;n-1\leq|u(x)|<n,\}.$$
  Alors \begin{align*}
          \int_{\Omega}|u(x)|dx & =\sum_{n=1}^{\infty}\int_{\Omega_{n}}|u(x)|dx\\
          &\geq\sum_{n=1}^{\infty}(n-1)mes(\Omega_{n})\\
          &=\sum_{n=1}^{\infty}n\ mes(\Omega_{n})-mes(\Omega).
        \end{align*}
   La série $\sum_{n=1}^{\infty}n\ mes(\Omega_{n})$ est convergente, car $u\in L^{1}(\Omega)$ et $mes(\Omega)<\infty$. \\
   De plus, il existe une suite croissante $(\alpha_{n})$ telle que $$\alpha_{n}>1,\ \lim\alpha_{n}=\infty\ \text{et}\ \sum_{n=1}^{\infty}\alpha_{n}n\ mes(\Omega_{n})<\infty.$$
 On définit $$g(t)=\begin{cases}
                     t & \mbox{si }\ t\in[0,1[  \\
                     \alpha_{n} & \mbox{si}\ t\in[n,n+1[,\ n\in\mathbb{N}.
                   \end{cases}$$
  La fonction $G(t)=\int_{0}^{t}g(s)ds$ est une $N$-Fonction et $G(n)\leq n\alpha_{n},\ n\in\mathbb{N}.$ Alors
        \begin{align*}
          \int_{\Omega}G(|u(x)|)dx  & =\sum_{n=1}^{\infty}\int_{\Omega_{n}}G(|u(x)|)dx\leq\sum_{n=1}^{\infty}G(n)\ mes(\Omega_{n})\\
          &\leq\sum_{n=1}^{\infty}n\alpha_{n}\ mes(\Omega_{n})<\infty.
        \end{align*}
      D'où $u\in K^{G}(\Omega)$.
\end{proof}
\begin{rem}
La classe $K^{G}(\Omega)$ n'est pas toujours un espace vectoriel.
\end{rem}

Soit $\Omega=]0,1[$ et
$$G(x)=e^{|x|}-1,\ f(x)=\begin{cases}
                                    \frac{n}{2} & \mbox{si } \frac{1}{2^{n}}\leq x\leq\frac{1}{2^{n-1}} \\
                                    0 & \mbox{sinon}.
                                  \end{cases}$$
On a \begin{align*}
       \rho(f; G)&=\int_{\Omega}G(f(x))dx=\int_{\Omega}(e^{\frac{n}{2}}-1)\chi_{[\frac{1}{2^{n}},\frac{1}{2^{n-1}}]}dx \\
       &=\sum_{n\geq1}(e^{\frac{n}{2}}-1)mes([\frac{1}{2^{n}},\frac{1}{2^{n-1}}])\\
       &=\sum_{n\geq1}\frac{1}{2^{n}}(e^{\frac{n}{2}}-1)=\sum_{n\geq1}\bigg{(}\frac{\sqrt{e}}{2}\bigg{)}^{n}-\sum_{n\geq1}\frac{1}{2^{n}}.
     \end{align*}
On a la série $\sum_{n\geq1}\frac{1}{2^{n}}$ converge et $\frac{\sqrt{e}}{2}<1$ donc $\sum_{n\geq1}\bigg{(}\frac{\sqrt{e}}{2}\bigg{)}^{n}<\infty$,  $\rho(f;\ G)<\infty$, $f\in K^{G}(\Omega)$,\\
\begin{align*}
 \rho(2f; G)&=\int_{\Omega}G(2f(x))dx=\int_{\Omega}(e^{n}-1)\chi_{[\frac{1}{2^{n}},\frac{1}{2^{n-1}}]}dx \\
  &=\sum_{n\geq1}(e^{n}-1)mes([\frac{1}{2^{n}},\frac{1}{2^{n-1}}])=\sum_{n\geq1}\frac{1}{2^{n}}(e^{n}-1)
  =\sum_{n\geq1}\bigg{(}\frac{e}{2}\bigg{)}^{n}-\sum_{n\geq1}\frac{1}{2^{n}}.
\end{align*}
On a $\frac{e}{2}>1$ donc la série $\ds\sum_{n\geq1}\bigg{(}\frac{e}{2}\bigg{)}^{n}$ diverge. D'où $2f\notin K^{G}(\Omega)$.\\

Le théorème suivant donne un résultat analogue de l'inégalité de Hölder dans les classes d'Orlicz.
\begin{thm}\label{thm26}
Soient $(G,G^{*})$ un couple de $N$-Fonctions, $u\in K^{G}(\Omega)$ et $v\in K^{G^{*}}(\Omega)$. Alors $$uv\in L^{1}(\Omega)$$ et
\begin{equation}\label{12}\int_{\Omega}|u(x)v(x)|dx\leq\rho(u;G)+\rho(v;G^{*}).\end{equation}
\end{thm}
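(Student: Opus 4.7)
L'approche naturelle consiste à appliquer ponctuellement l'inégalité de Young généralisée \eqref{9}, déjà démontrée pour le couple $(G,G^{*})$, puis à intégrer sur $\Omega$. Le résultat est en réalité un corollaire immédiat de cette inégalité.

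Première étape : pour tout $x\in\Omega$, on pose $a=|u(x)|$ et $b=|v(x)|$ dans \eqref{9}, ce qui fournit l'inégalité ponctuelle
$$|u(x)v(x)|\leq G(|u(x)|)+G^{*}(|v(x)|).$$
Comme $u$ et $v$ sont mesurables, il en est de m\^{e}me de $|u|$ et $|v|$ ; et puisque $G$ et $G^{*}$ sont continues sur $[0,+\infty[$ (étant des $N$-Fonctions, donc convexes), les compositions $G\circ|u|$ et $G^{*}\circ|v|$ sont mesurables, ainsi que le produit $|uv|$. L'inégalité ponctuelle a donc bien un sens dans $[0,+\infty]$ presque partout.

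Deuxième étape : intégrer cette inégalité sur $\Omega$. Par monotonie et linéarité de l'intégrale de Lebesgue,
$$\int_{\Omega}|u(x)v(x)|\,dx \leq \int_{\Omega}G(|u(x)|)\,dx + \int_{\Omega}G^{*}(|v(x)|)\,dx = \rho(u;G)+\rho(v;G^{*}).$$
Les hypothèses $u\in K^{G}(\Omega)$ et $v\in K^{G^{*}}(\Omega)$ entraînent $\rho(u;G)<\infty$ et $\rho(v;G^{*})<\infty$ ; le membre de droite est donc fini, ce qui prouve à la fois que $uv\in L^{1}(\Omega)$ et l'inégalité \eqref{12}.

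Il n'y a pas d'obstacle véritable : toute la difficulté a été absorbée par l'inégalité de Young généralisée. Le seul point technique à surveiller est la mesurabilité de l'intégrande $|uv|$ et des membres $G(|u|)$, $G^{*}(|v|)$, qui découle immédiatement de la continuité de $G$ et $G^{*}$. Aucun argument plus délicat (type convergence dominée ou monotone) n'est requis puisque l'inégalité ponctuelle suffit.
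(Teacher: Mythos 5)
Votre démonstration est correcte et suit exactement la même démarche que celle du papier : on applique l'inégalité de Young généralisée \eqref{9} avec $a=|u(x)|$ et $b=|v(x)|$, puis on intègre sur $\Omega$. Vous ajoutez seulement quelques précisions de mesurabilité que le papier passe sous silence.
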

\begin{proof}
Dans l'inégalité de Young \eqref{9} on remplace $a$ par $|u(x)|$ et $b$ par $|v(x)|$, on obtient \eqref{12}.
\end{proof}
\begin{rem}\label{rem1}
  Si $|v(x)|=g(|u(x)|)$ ou $|u(x)|=g^{*}(|v(x)|)$, on a égalité dans \eqref{12}.
\end{rem}
La proposition suivante nous donne une comparaison entre les classes d'Orlicz.
\begin{pro}\label{pro3}
 Soient $G_{1}$ et $G_{2}$ deux $N$-fonctions et $mes(\Omega)<\infty$. Alors $K^{G_{1}}(\Omega)\subset K^{G_{2}}(\Omega)$\\ si et seulement si, il existe $T>0$ et $a>0$
 telles que \begin{equation}\label{13} G_{2}(u)\leq aG_{1}(u)\ \forall u\geq T.\end{equation}
\end{pro}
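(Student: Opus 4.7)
Le sens réciproque est immédiat: en supposant l'existence de $T>0$ et $a>0$ tels que $G_2(u)\leq aG_1(u)$ pour tout $u\geq T$, et pour $u\in K^{G_1}(\Omega)$, je décomposerais $\Omega=\{|u|\leq T\}\cup\{|u|>T\}$. Sur le premier ensemble, la croissance de $G_2$ donne $\int_{\{|u|\leq T\}}G_2(|u(x)|)\,dx\leq G_2(T)\,\mathrm{mes}(\Omega)<\infty$. Sur le second, l'hypothèse entraîne $\int_{\{|u|>T\}}G_2(|u(x)|)\,dx\leq a\,\rho(u;G_1)<\infty$. D'où $u\in K^{G_2}(\Omega)$.

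Pour le sens direct, je procéderais par contraposée. Si l'inégalité \eqref{13} n'est vérifiée pour aucun couple $(T,a)$ de réels strictement positifs, alors en prenant $T=a=n$ pour chaque $n\in\mathbb{N}^{*}$, on trouve $u_n\geq n$ tel que $G_2(u_n)>nG_1(u_n)$. Je construirais alors un contre-exemple $u\in K^{G_1}(\Omega)\setminus K^{G_2}(\Omega)$ de la forme étagée $u=\sum_{n\geq1}u_n\chi_{E_n}$, où les $E_n$ sont des parties mesurables deux à deux disjointes de $\Omega$ avec $\mathrm{mes}(E_n)=m_n:=\dfrac{c}{n^{2}\,G_1(u_n)}$, la constante $c>0$ étant choisie assez petite pour que $\sum_n m_n\leq\mathrm{mes}(\Omega)$. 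Ceci est possible puisque $u_n\geq 1$ implique $G_1(u_n)\geq G_1(1)>0$, d'où $\sum_n m_n\leq\dfrac{c}{G_1(1)}\sum_n\dfrac{1}{n^{2}}<\infty$. L'existence de tels $E_n$ résulte du caractère non atomique de la mesure de Lebesgue: par récurrence, à l'étape $n$ l'ensemble $\Omega\setminus(E_1\cup\cdots\cup E_{n-1})$ est de mesure $\geq m_n$, et admet donc un sous-ensemble mesurable de mesure exactement $m_n$.

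Un calcul direct donnerait alors
\[
\rho(u;G_1)=\sum_n G_1(u_n)\,m_n=c\sum_n\frac{1}{n^{2}}<\infty,
\qquad
\rho(u;G_2)=\sum_n G_2(u_n)\,m_n>\sum_n n\,G_1(u_n)\,m_n=c\sum_n\frac{1}{n}=+\infty,
\]
ce qui contredit l'inclusion supposée $K^{G_1}(\Omega)\subset K^{G_2}(\Omega)$. La difficulté principale n'est pas conceptuelle mais technique: il s'agit d'articuler soigneusement le choix des $u_n$ (fournis par la négation de l'hypothèse), celui des mesures $m_n$ (réglées pour faire converger la série associée à $G_1$ tout en faisant diverger celle associée à $G_2$), et la construction récursive des sous-ensembles $E_n$ via l'absence d'atomes de la mesure de Lebesgue restreinte à $\Omega$.
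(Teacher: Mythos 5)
Votre démonstration est correcte et suit essentiellement la même stratégie que celle du texte : même découpage $\{|u|\leq T\}\cup\{|u|>T\}$ pour le sens réciproque, et pour le sens direct la même construction d'une fonction étagée sur des ensembles disjoints dont les mesures sont calibrées pour faire converger $\rho(u;G_{1})$ et diverger $\rho(u;G_{2})$. Seule la comptabilité diffère (vous prenez $G_{2}(u_{n})>nG_{1}(u_{n})$ et $\mathrm{mes}(E_{n})=c/(n^{2}G_{1}(u_{n}))$ là où le texte prend $G_{2}(u_{n})>2^{n}G_{1}(u_{n})$ et des mesures en $2^{-n}$), et votre justification de l'existence des $E_{n}$ par non-atomicité est même un peu plus soignée que celle du texte.
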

\begin{proof}
  \text{Condition suffisante.}\\ \\
  Soit $u\in K^{G_{1}}(\Omega)$, on a $$\rho(u;\ G_{2})=\int_{\Omega}G_{2}(|u(x)|)dx\leq G_{2}(T)mes(\Omega)+a\int_{\Omega}G_{1}(|u(x)|)dx<+\infty$$
  \text{Condition nécessaire.}\\ \\
  On suppose que \eqref{13} n'est pas satisfaite, alors il existe une suite $(u_{n})_{n\in\mathbb{N}}$ strictement croissante telle que
  $$G_{2}(u_{n})>2^{n}G_{1}(u_{n})\ \forall n\in\mathbb{N}.$$
  Soit $(\Omega_{n})_{n\in\mathbb{N}}$ une suite de sous-ensembles de $\Omega$ disjoints telle que $$mes(\Omega_{n})=\frac{G_{1}(u_{1})mes(\Omega)}{2^{n}G_{1}(u_{n})},\ n=1,2,...$$
  On considère la fonction suivante
   $u(x)=\begin{cases}
     u_{n} & \mbox{si }\ x\in\Omega_{n} \\
     0 & \mbox{si}\ x\notin\ds\cup_{n=1}^{\infty} \Omega_{n}.
   \end{cases}$
  On a
   \begin{align*}
     \int_{\Omega}G_{1}(u(x))dx & =\sum_{n=1}^{\infty}\int_{\Omega_{n}}G_{1}(u(x))dx=\sum_{n=1}^{\infty}G_{1}(u_{n})mes(\Omega_{n})\\
     &=\sum_{n=1}^{\infty}\frac{G_{1}(u_{1})mes(\Omega)}{2^{n}}<\infty,
   \end{align*}
  d'où $u\in K^{G_{1}}$.\\ Montrons que $u\notin K^{G_{2}}$, en effet
   \begin{align*}
      \int_{\Omega}G_{2}(u(x))dx & =\sum_{n=1}^{\infty}\int_{\Omega_{n}}G_{2}(u(x))dx=\sum_{n=1}^{\infty}G_{2}(u_{n})mes(\Omega_{n})\\&\geq
      \sum_{n=1}^{\infty}G_{1}(u_{1})mes(\Omega)=\infty
   \end{align*}
   donc $$K^{G_{1}}(\Omega)\nsubseteq K^{G_{2}}(\Omega).$$
\end{proof}
\begin{thm}\label{thm15}
 \begin{enumerate}
   \item [(i)] Si $mes(\Omega)<\infty$, alors $K^{G}(\Omega)$ est un espace vectoriel si et seulement si $G$ satisfait $\triangle_{2}$.
   \item [(ii)] Si $mes(\Omega)=+\infty$ et $G$ vérifiant $\triangle_{2,0}$, alors $K^{G}(\Omega)$ est un espace vectoriel.
 \end{enumerate}
\end{thm}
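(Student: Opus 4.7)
Le plan est de traiter séparément les deux implications de (i), puis d'obtenir (ii) en restreignant l'argument de suffisance. L'observation centrale est qu'il suffit d'établir la stabilité de $K^G(\Omega)$ par multiplication scalaire: la stabilité par addition en découle aussitôt grâce à la monotonie et à la convexité qui donnent $G(|u+v|)\leq G(2\max(|u|,|v|))\leq G(2|u|)+G(2|v|)$, ramenant l'intégrabilité de $G(|u+v|)$ à celle de $G(2|u|)$ et $G(2|v|)$.

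Pour la condition suffisante de (i), je fixerais $u\in K^G(\Omega)$ et $\lambda\in\mathbb{R}$, puis je choisirais $n\in\mathbb{N}$ tel que $|\lambda|\leq 2^n$. Une itération immédiate de $\triangle_2$ fournit $G(2^n t)\leq k^n G(t)$ pour $t\geq T$, d'où par croissance $G(|\lambda|t)\leq k^n G(t)$ sur $\{|u|\geq T\}$. Sur le complémentaire $\{|u|<T\}$, la majoration brutale $G(|\lambda|\cdot|u(x)|)\leq G(|\lambda|T)$ reste intégrable grâce à l'hypothèse $\text{mes}(\Omega)<\infty$. En combinant ces deux estimations, on conclut $\lambda u\in K^G(\Omega)$, puis on applique cela à $2u$ et $2v$ dans l'inégalité ci-dessus pour traiter l'addition.

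Pour la condition nécessaire de (i), je raisonnerais par contraposée et je construirais un contre-exemple. La négation de $\triangle_2$ fournit, pour chaque $n$, un réel $t_n\geq n$ avec $G(2t_n)>2^n G(t_n)$; quitte à extraire on peut supposer $(t_n)$ strictement croissante vers $+\infty$, de sorte que $G(t_n)\to+\infty$. Je choisirais ensuite une famille disjointe $(\Omega_n)_{n\geq 1}$ de parties mesurables de $\Omega$ avec $\text{mes}(\Omega_n)=\frac{c}{2^n G(t_n)}$, la constante $c>0$ étant assez petite pour que $\sum_n\text{mes}(\Omega_n)\leq\text{mes}(\Omega)$ (ce qui est possible puisque $G(t_n)\geq 1$ à partir d'un certain rang, et donc la série est comparable à $\sum 2^{-n}$). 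La fonction étagée $u=\sum_n t_n\chi_{\Omega_n}$ vérifiera $\rho(u;G)=\sum_n G(t_n)\text{mes}(\Omega_n)=c<\infty$, tandis que $\rho(2u;G)=\sum_n G(2t_n)\text{mes}(\Omega_n)>\sum_n c=+\infty$, contredisant $2u\in K^G(\Omega)$.

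Pour (ii), l'argument de suffisance de (i) se transpose sans changement, à ceci près que $\triangle_{2,0}$ correspond à $T=0$ et élimine donc la contribution problématique sur $\{|u|<T\}$: l'inégalité $G(2t)\leq kG(t)$ valant pour tout $t\geq 0$, on obtient directement $\int_\Omega G(|\lambda|\cdot|u(x)|)\,dx\leq k^n\int_\Omega G(|u(x)|)\,dx<\infty$, sans aucune hypothèse sur $\text{mes}(\Omega)$. Le principal obstacle sera la construction du contre-exemple dans la condition nécessaire, car il faut simultanément exploiter $\text{mes}(\Omega)<\infty$ pour loger les $\Omega_n$ dans $\Omega$ et la croissance $G(t_n)\to+\infty$ pour équilibrer précisément la pathologie de la suite $G(2t_n)/G(t_n)$, de sorte que $\rho(u;G)$ converge alors que $\rho(2u;G)$ diverge.
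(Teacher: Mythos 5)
Votre preuve est correcte et suit essentiellement la même démarche que le texte : l'argument de suffisance (itération de $\triangle_{2}$, découpage selon $\{|u|\geq T\}$ avec $mes(\Omega)<\infty$, et $T=0$ pour le point (ii)) est identique, et votre contre-exemple pour la nécessité est exactement la construction que le texte utilise dans la démonstration de la proposition \ref{pro3}, qu'il invoque ensuite au lieu de la réécrire. La seule différence cosmétique est que vous ramenez l'addition à la stabilité scalaire via la monotonie ($G(|u+v|)\leq G(2|u|)+G(2|v|)$) là où le texte utilise la convexité.
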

\begin{proof}
 Soit $G$ une $N$-fonction vérifiant $\triangle_{2}$,\\
 $\bullet$ D'après la convexité de $G$, on a $\forall u,v\in K^{G}$,
  \begin{align*}
     \int_{\Omega}G(|u(x)+v(x)|)dx&=\int_{\Omega}G(\frac{1}{2}|2u(x)+2v(x)|)dx\\&\leq
     \frac{1}{2}\int_{\Omega}G(2|u(x)|)dx+\frac{1}{2}\int_{\Omega}G(|2v(x)|)dx <\infty.
  \end{align*} Donc $u+v\in K^{G}.$\\
 $\bullet$ Supposons $mes(\Omega)<\infty$ et $G$ vérifiant $\triangle_{2}$.
    Soient $u\in K^{G}(\Omega)$ et $\gamma\geq0$, il existe $n\in\mathbb{N}$ tel que $\gamma\leq2^{n}$,  on a
   $$G(|\gamma| t)\leq G(2^{n}t)\leq k^{n}G(t),\ \forall t\geq T,$$ donc
   \begin{align*}
      \int_{\Omega}G(|\gamma| |u(x)|)dx&=\int_{\{x,\ |u(x)|\leq T\}}G(|\gamma| |u(x)|)dx+\int_{\{x,\ |u(x)|>T\}}G(|\gamma| |u(x)|)dx\\
      &\leq k^{n}\int_{\{x,\ |u(x)|\leq T\}}G(|u(x)|)dx+k^{n}\int_{\{x,\ |u(x)|>T\}}G(|u(x)|)dx\\
      &\leq k^{n}G(T)mes(\Omega)+k^{n}\int_{\Omega}G(|u(x)|)dx<\infty,
   \end{align*}
 $\gamma u\in  K^{G}(\Omega)$. \\
 $\bullet$  Supposons $mes(\Omega)=\infty$ et $G$ satisfait  $\triangle_{2,0}$, on a
  $$G(|\gamma| t)\leq G(2^{n}t)\leq k^{n}G(t),\ \forall t\geq 0,$$ donc $$  \int_{\Omega}G(|\gamma| u(x))dx\leq k^{n}\int_{\Omega}G(|u(x)|)dx<\infty,$$ et  $\gamma u\in  K^{G}(\Omega)$.\\
$\bullet$ On suppose que $K^{G}(\Omega)$ est un espace vectoriel et $mes(\Omega)<+\infty$. Soit $u\in K^{G}(\Omega)$, $2u\in K^{G}(\Omega)$. Alors $$K^{G}(\Omega)\subset K^{G_{1}}(\Omega),\ \text{avec}\ G_{1}(t)=G(2t).$$ D'après la proposition \ref{pro3} il existe $T>0$, $a>0$ tels que
$$G_{1}(t)=G(2t)\leq aG(t),\ \forall t\geq T,$$ c.à.d, $G$ satisfait la condition $\triangle_{2}$.
\end{proof}

\chapter{Espaces d'Orlicz }
\section{Espaces $L^{G}$ d'Orlicz}
\subsection{Définition et norme d'Orlicz}
\begin{dfn}
  Soient $(G,G^{*})$ un couple de $N$-Fonctions  et $\Omega$ un ouvert de $\mathbb{R}^{N}$. Soit $u$ une fonction mesurable définie presque partout sur $\Omega$. On note par \begin{equation}\label{14}
                                          \|u\|_{G}=\sup\bigg{\{}\int_{\Omega}|u(t)v(t)|dt:\ v\in K^{G^{*}}(\Omega),\ \rho(v\ ;G^{*})\leq 1\bigg{\}}.
                                        \end{equation}
   est appelé  la norme d'Orlicz. L'ensemble $L^{G}(\Omega)$ de toutes les fonctions mesurables $u$ telles que  $\|u\|_{G}<\infty$ est appelé \textbf{l'espace d'Orlicz}. $$L^{G}(\Omega)=\bigg{\{}u:\Omega\rightarrow\mathbb{R},\ \text{mesurable}/\ \|u\|_{G}<+\infty\bigg{\}}.$$
\end{dfn}
Nous allons maintenant établir une relation simple entre la classe d'Orlicz et l'espace d'Orlicz.
\begin{pro}\label{pro6}
  Soit $G$ une $N$-Fonction. Alors \begin{equation}\label{15}
                                     K^{G}(\Omega)\subset L^{G}(\Omega),
                                   \end{equation}
        et \begin{equation}\label{16}
             \|u\|_{G}\leq \rho(u;G)+1.
           \end{equation}
\end{pro}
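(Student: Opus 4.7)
Le plan est d'établir les deux assertions simultanément, car l'inégalité \eqref{16} implique $\|u\|_G < +\infty$ dès que $u \in K^G(\Omega)$ (puisque alors $\rho(u;G) < +\infty$ par définition même de la classe d'Orlicz), et donne donc gratuitement l'inclusion \eqref{15}. Il suffit donc de concentrer tous les efforts sur \eqref{16}.

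Pour établir \eqref{16}, je partirais de la définition \eqref{14} de la norme d'Orlicz et fixerais un $v \in K^{G^*}(\Omega)$ admissible, c'est-à-dire vérifiant $\rho(v;G^*) \leq 1$. L'outil clé est l'inégalité de Young généralisée \eqref{9}, appliquée ponctuellement avec $a = |u(x)|$ et $b = |v(x)|$, qui fournit
$$|u(x)\,v(x)| \leq G(|u(x)|) + G^{*}(|v(x)|) \quad \text{p.p. sur } \Omega.$$
Une simple intégration sur $\Omega$ (voir également le Théorème \ref{thm26} qui énonce déjà cette inégalité) donne alors
$$\int_{\Omega}|u(x)\,v(x)|\,dx \leq \rho(u;G) + \rho(v;G^{*}) \leq \rho(u;G) + 1,$$
où la dernière majoration utilise l'hypothèse $\rho(v;G^{*}) \leq 1$.

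Comme le membre de droite est indépendant de $v$, il suffit de passer au supremum sur l'ensemble des $v$ admissibles pour obtenir $\|u\|_G \leq \rho(u;G) + 1$, ce qui est précisément \eqref{16}. Enfin, pour $u \in K^G(\Omega)$, la quantité $\rho(u;G)$ est finie, donc $\|u\|_G < +\infty$ et $u \in L^G(\Omega)$, d'où \eqref{15}. L'unique ingrédient non trivial étant l'inégalité de Young, déjà établie dans la sous-section précédente, il n'y a pas de véritable obstacle : la démonstration est essentiellement une reformulation directe du Théorème \ref{thm26} combinée avec la définition de la norme comme supremum.
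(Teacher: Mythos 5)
Votre démonstration est correcte et suit exactement la même démarche que celle du texte : inégalité de Young intégrée sur $\Omega$ (c'est-à-dire le Théorème \ref{thm26}), majoration de $\rho(v;G^{*})$ par $1$, puis passage au supremum pour obtenir \eqref{16}, l'inclusion \eqref{15} en découlant immédiatement. Rien à redire.
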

\begin{proof}
 Soit $u\in K^{G}(\Omega)$ et $v\in K^{G^{*}}(\Omega)$ tel que $\rho(v;G^{*})\leq1$, d'après l'inégalité de Young on a $$\int_{\Omega}|u(x)v(x)|dx\leq\rho(u;G)+\rho(v;G^{*})\leq \rho(u;G)+1.$$
 Ceci donne $\|u\|_{G}\leq \rho(u;G)+1$ et par suite $u\in L^{G}(\Omega)$.
\end{proof}
\begin{exem}
 Dans le cas $N=1$, $\Omega=]1,+\infty[$, $G(t)=e^{t}$, $u(x)=-2Log(x)$, on a, $u\notin L^{p}(]1,+\infty[)$, $\forall$ $1\leq p \leq +\infty,$ mais $u\in L^{G}(]1,+\infty[)$. On déduit que les espaces d'Orlicz sont plus grands que les espaces de Lebesgue.
\end{exem}
\begin{thm}
  L'ensemble $(L^{G}(\Omega),\|.\|_{G})$ est un espace vectoriel normé.
\end{thm}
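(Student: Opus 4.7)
Mon plan est de vérifier successivement les axiomes de norme dans l'ordre suivant: positivité et homogénéité (immédiates), inégalité triangulaire (conséquence directe de la définition par supremum), et enfin séparation (point non trivial nécessitant une construction explicite), puis d'en déduire la stabilité par combinaisons linéaires.

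La positivité $\|u\|_G\geq 0$ est évidente car $\|u\|_G$ est un supremum de quantités positives. L'homogénéité $\|\alpha u\|_G=|\alpha|\|u\|_G$ découle en factorisant $|\alpha|$ dans l'intégrande: pour tout $v$ admissible, $\int_\Omega |\alpha u\cdot v|\,dx=|\alpha|\int_\Omega |uv|\,dx$, l'ensemble des $v$ admissibles ne dépendant pas de $\alpha$. Pour l'inégalité triangulaire, je fixe $v\in K^{G^*}(\Omega)$ avec $\rho(v;G^*)\leq 1$; de $|u_1+u_2|\leq |u_1|+|u_2|$ je déduis
$$\int_\Omega |(u_1+u_2)v|\,dx\leq \int_\Omega |u_1 v|\,dx+\int_\Omega |u_2 v|\,dx\leq \|u_1\|_G+\|u_2\|_G,$$
puis je passe au supremum sur $v$ pour obtenir $\|u_1+u_2\|_G\leq \|u_1\|_G+\|u_2\|_G$.

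Le point délicat est la séparation: montrer que $\|u\|_G=0$ entraîne $u=0$ presque partout. Je procède par contraposée. Si $u$ n'est pas nulle p.p., il existe par $\sigma$-finitude de $\Omega\subset\mathbb{R}^N$ et par continuité de la mesure un borélien $A\subset\Omega$ avec $0<mes(A)<+\infty$ et un $\epsilon>0$ tels que $|u(x)|\geq\epsilon$ pour tout $x\in A$. Puisque $G^{*}$ est continue en $0$ avec $G^{*}(0)=0$ (par le lemme sur les propriétés des $N$-fonctions appliqué à $G^{*}$), je peux choisir $c>0$ tel que $G^{*}(c)\,mes(A)\leq 1$. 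En posant $v=c\chi_A$, on obtient $v\in K^{G^{*}}(\Omega)$ avec $\rho(v;G^{*})=G^{*}(c)\,mes(A)\leq 1$, et
$$\int_\Omega |u(x)||v(x)|\,dx\geq c\epsilon\,mes(A)>0,$$
ce qui force $\|u\|_G>0$ et contredit l'hypothèse.

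Finalement, la stabilité de $L^G(\Omega)$ par combinaisons linéaires découle mécaniquement des axiomes établis: pour $u_1,u_2\in L^G(\Omega)$ et $\alpha,\beta\in\mathbb{R}$, l'inégalité triangulaire et l'homogénéité donnent $\|\alpha u_1+\beta u_2\|_G\leq |\alpha|\|u_1\|_G+|\beta|\|u_2\|_G<+\infty$, donc $\alpha u_1+\beta u_2\in L^G(\Omega)$. Le seul véritable obstacle dans ce schéma est la séparation, qui exige de construire une fonction test explicite dans $K^{G^{*}}(\Omega)$ suffisamment localisée là où $|u|$ ne s'annule pas.
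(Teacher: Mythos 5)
Votre démonstration est correcte et suit essentiellement la même démarche que celle du texte : homogénéité par factorisation, inégalité triangulaire en passant au supremum, et séparation en testant contre une fonction $v=c\chi_{A}$ avec $G^{*}(c)\,mes(A)\leq1$ sur un sous-ensemble de mesure finie strictement positive où $u$ ne s'annule pas. La seule différence est cosmétique (contraposée et localisation sur un ensemble de niveau $\{|u|\geq\epsilon\}$ au lieu d'un raisonnement par l'absurde sur $\{|u|>0\}$), ce qui rend la minoration de l'intégrale un peu plus immédiate.
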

\begin{proof}
  Soit $u,\omega\in L^{G}(\Omega)$ (c.à.d $\|u\|_{G}<\infty$, $\|\omega\|_{G}<\infty$).\\
  $(i)$ soit $\lambda\in\mathbb{R}$, on a
  $$\|\lambda u\|_{G}=\ds\sup_{\rho(v;G^{*})\leq1,\ v\in K^{*}(\Omega)}\int_{\Omega}|\lambda u(x)v(x)|dx=|\lambda| \ds\sup_{\rho(v;G^{*})\leq1,\ v\in K^{*}(\Omega)}\int_{\Omega}| u(x)v(x)|dx= |\lambda|\|u\|_{G}<\infty,$$ c.à.d $\lambda u\in L^{G}(\Omega)$.\\
 $(ii)$ \begin{align*}
    \|u+\omega\|_{G} & =\ds\sup_{\rho(v;G^{*})\leq1}\int_{\Omega}|u(x)+\omega(x)||v(x)|dx\\&\leq\ds\sup_{\rho(v;G^{*})\leq1}\int_{\Omega}| u(x)v(x)|dx+\ds\sup_{\rho(v;G^{*})\leq1}\int_{\Omega}|\omega(x)v(x)|dx\\&=\|u\|_{G}+\|\omega\|_{G}<\infty,
  \end{align*}
   donc $u+\omega\in L^{G}(\Omega)$. Ainsi $L^{G}(\Omega)$ est un espace vectoriel.\\
    $(iii)$ Soit $u\in L^{G}(\Omega)$, $\|u\|_{G}=0$. Soit $$A=\{x\in\Omega ,|u(x)|>0\},$$ supposons que $mes(A)>0$. Comme la mesure de Lebesgue possède la propriété du sous-ensemble de mesure fini il existe alors $\Omega_{1}\subset A$ tel que $0<mes(\Omega_{1})<\infty$. On sait que  $\ds\lim_{t\rightarrow0}G^{*}(t)=0$, alors, il existe $k>0$ telle que $G^{*}(k)<\frac{1}{mes(\Omega_{1})}$.\\ On considère la fonction $v_{0}$
  $$v_{0}(x):=\begin{cases}
                k & \mbox{si }\ x\in\Omega_{1} \\
                0 & \mbox{sinon},
              \end{cases}$$
   alors  $$\rho(v_{0},G^{*})=\int_{\Omega}G^{*}(|v_{0}(x)|)dx=\int_{\Omega_{1}}G^{*}(k)dx<1,$$ donc $$\|u\|_{G}\geq\int_{\Omega}|u(x)|v_{0}(x)dx=k\int_{\Omega_{1}}|u(x)|dx,$$ ceci donne $u=0\ p.p$ sur $\Omega_{1}$. Contradiction. Donc $mes(A)=0$ et par suite $u\equiv0$ $p.p$ sur $\Omega$.
\end{proof}
\begin{rem}\label{rem2}
  Si $$u_{1},\ u_{2}\in L^{G}(\Omega)\ \text{et}\ |u_{1}(x)|\leq|u_{2}(x)|$$
  presque partout sur $\Omega$, alors $\|u_{1}\|_{G}\leq\|u_{2}\|_{G}$.
\end{rem}
Peut être il n'est pas facile de calculer précisément la norme  d'une fonction donnée, cependant, il est possible de calculer la norme d'une fonction caractéristique d'un ensemble mesurable donné. Il s'avère que cette information est très utile.
\begin{pro}\label{pro5}
 Soient $G$ une $N$-Fonction et $E$ un sous-ensemble mesurable de $\Omega$ tel que $0<mes(E)<\infty$. Alors \begin{equation}\label{17}
                               \|\chi_{E}\|_{G}=mes(E)(G^{*})^{-1}\bigg{(}\frac{1}{mes(E)}\bigg{)}
                             \end{equation}
\end{pro}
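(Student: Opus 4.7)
Le plan est de majorer et minorer séparément la quantité $\|\chi_E\|_G$ pour obtenir l'égalité. En écrivant la définition \eqref{14} avec $u=\chi_E$, on a
$$\|\chi_E\|_G=\sup\left\{\int_E |v(t)|\,dt\ :\ v\in K^{G^*}(\Omega),\ \int_\Omega G^*(|v(t)|)\,dt\leq 1\right\}.$$
L'idée est de se ramener à des fonctions $v$ à support dans $E$ (ce qui ne peut qu'augmenter la valeur de l'intégrale testée, puisque toute masse de $v$ hors de $E$ consomme inutilement le budget $\rho(v;G^*)\leq 1$) puis d'utiliser la convexité de $G^*$.

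Pour la majoration, on part d'un $v$ admissible quelconque. Comme $G^*\geq 0$, on a
$$\int_E G^*(|v(t)|)\,dt\leq\int_\Omega G^*(|v(t)|)\,dt\leq 1.$$
Puisque $G^*$ est convexe sur $[0,+\infty[$ (propriété d'une $N$-fonction), l'inégalité de Jensen appliquée à la mesure de probabilité $\frac{1}{\mathrm{mes}(E)}\,dt$ sur $E$ donne
$$G^*\!\left(\frac{1}{\mathrm{mes}(E)}\int_E |v(t)|\,dt\right)\leq\frac{1}{\mathrm{mes}(E)}\int_E G^*(|v(t)|)\,dt\leq\frac{1}{\mathrm{mes}(E)}.$$
La fonction $G^*$ étant strictement croissante et continue, son inverse $(G^*)^{-1}$ existe et est croissante, donc
$$\int_E |v(t)|\,dt\leq\mathrm{mes}(E)\,(G^*)^{-1}\!\left(\frac{1}{\mathrm{mes}(E)}\right),$$
ce qui majore le supremum.

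Pour la minoration, on construit explicitement un candidat $v_0$ saturant la contrainte. On pose $c=(G^*)^{-1}\!\left(\frac{1}{\mathrm{mes}(E)}\right)$ et $v_0=c\,\chi_E$. Alors $\rho(v_0;G^*)=G^*(c)\,\mathrm{mes}(E)=1$, donc $v_0\in K^{G^*}(\Omega)$ est admissible, et
$$\int_\Omega |\chi_E(t)v_0(t)|\,dt=c\,\mathrm{mes}(E)=\mathrm{mes}(E)\,(G^*)^{-1}\!\left(\frac{1}{\mathrm{mes}(E)}\right).$$
On obtient ainsi la minoration correspondante, et en combinant les deux inégalités on conclut à \eqref{17}.

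L'étape délicate n'est ni la construction de $v_0$ ni l'application de Jensen, qui sont toutes deux immédiates; c'est plutôt de justifier proprement que $(G^*)^{-1}$ est bien définie en $\frac{1}{\mathrm{mes}(E)}$, ce qui découle du fait que $G^*$ est une $N$-fonction et vérifie donc $G^*(0)=0$ et $\lim_{t\to+\infty}G^*(t)=+\infty$, ainsi que la stricte croissance, d'après les propriétés générales rappelées au chapitre 1.
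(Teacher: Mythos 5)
Votre démonstration est correcte et suit exactement la même démarche que celle du texte : majoration du supremum par l'inégalité de Jensen appliquée à $G^{*}$ sur $E$ avec la mesure de probabilité $\frac{1}{mes(E)}dt$, puis minoration par la fonction test $v_{0}=(G^{*})^{-1}\bigl(\frac{1}{mes(E)}\bigr)\chi_{E}$ qui sature la contrainte $\rho(v_{0};G^{*})=1$. Votre remarque explicitant que $\int_{E}G^{*}(|v|)\,dt\leq\rho(v;G^{*})\leq1$ et que $(G^{*})^{-1}$ est bien définie rend même l'argument légèrement plus complet que celui du texte.
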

\begin{proof}
  On a $$\|\chi_{E}\|_{G}=\ds\sup_{\rho(v;G^{*})\leq1}\int_{E}|v(x)|dx$$ Soit $v\in K^{G^{*}}(\Omega)$ telle que $\rho(v;G^{*})\leq1$, d'après l'inégalité de Jensen on a $$G^{*}\bigg{(}\frac{1}{mes(E)}\int_{E}|v(x)|dx\bigg{)}\leq \frac{1}{mes(E)}\int_{E}G^{*}(|v(x)|)dx,$$
  donc $$\int_{E}|v(x)|dx\leq mes(E)( G^{*})^{-1}(\frac{1}{mes(E)}).$$ Ainsi $$ \|\chi_{E}\|_{G}\leq mes(E) (G^{*})^{-1}(\frac{1}{mes(E)}).$$
  D'autre part, soit $v_{0}(x)=\chi_{E}(x)(G^{*})^{-1}(\frac{1}{mes(E)}),\ x\in\Omega$; on a\\
  $\rho(v_{0};G^{*})=1$, donc $$\|\chi_{E}\|_{G}\geq\displaystyle\int_{E}v_{0}(x)dx=mes(E) (G^{*})^{-1}(\frac{1}{mes(E)}).$$ Alors
  $$\|\chi_{E}\|_{G}=mes(E)(G^{*})^{-1}\bigg{(}\frac{1}{mes(E)}\bigg{)}.$$
\end{proof}
\begin{rem}\label{rem5}
  Il est facile de vérifier que, pour toute $N$-Fonction $G$ et toute
fonction mesurable finie $u$ sur $\Omega$, on a \begin{equation}\label{80}
                                                  \|u\|_{G}=\ds\sup_{\rho(v;G^{*})\leq1}\bigg{|}\int_{\Omega}u(x)v(x)dx\bigg{|}.
                                                \end{equation}
  Il suffit de remarquer que $$\bigg{|}\int_{\Omega}u(x)v(x)dx\bigg{|}\leq\int_{\Omega}|u(x)||v(x)|dx=\int_{\Omega}u(x)\tilde{v}(x)dx,$$
  où $\tilde{v}=|v|sign(u)$ et $\rho(\tilde{v};G^{*})\leq1$ si $\rho(v;G^{*})\leq1$.
\end{rem}

\subsection{Inégalité de H\"{o}lder}

\begin{thm}\label{lem1}
  Soient $G$ une $N$-Fonction et $u\in L^{G}(\Omega)$ tel que $\|u\|_{G}\neq0$. Alors
  \begin{equation}\label{18}
  \int_{\Omega}G\bigg{(}\frac{|u(x)|}{\|u\|_{G}}\bigg{)}dx\leq1.
\end{equation}
\end{thm}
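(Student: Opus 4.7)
Le plan est d'obtenir \eqref{18} en testant le supremum \eqref{14} définissant $\|u\|_G$ contre une suite de fonctions soigneusement normalisées, puis en passant à la limite par convergence monotone. En posant $\lambda:=\|u\|_G>0$ et $f:=|u|/\lambda$, l'énoncé à démontrer devient $\int_{\Omega}G(f)\,dx\leq 1$.

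D'abord je remarquerais que $f<+\infty$ presque partout : sinon, un multiple scalaire convenable de $\chi_A$, où $A\subset\{f=+\infty\}$ est de mesure finie strictement positive, serait admissible dans \eqref{14} tout en donnant un couplage $\int|u|v\,dx$ infini, contredisant $\|u\|_G<\infty$. Je fixerais ensuite une exhaustion $(E_n)$ de $\Omega\cap\{f<\infty\}$ par des ensembles mesurables de mesure finie (par exemple $E_n=\Omega\cap B(0,n)\cap\{f\leq n\}$), et je poserais
\[
f_n:=\min(f,n)\,\chi_{E_n},\qquad v_n:=g(f_n),
\]
où $g$ est la dérivée à droite apparaissant dans \eqref{1}. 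Chaque $f_n$ étant borné et à support de mesure finie, il en va de même de $v_n$, d'où $\rho(v_n;G^*)<\infty$.

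Le cœur de l'argument est la normalisation $\mu_n:=\max\bigl(1,\rho(v_n;G^*)\bigr)$ et $w_n:=v_n/\mu_n$. Par convexité de $G^*$ avec $G^*(0)=0$, on a $G^*(v_n/\mu_n)\leq G^*(v_n)/\mu_n$, donc $\rho(w_n;G^*)\leq 1$ : la fonction $w_n$ est alors admissible dans \eqref{14}, d'où
\[
\int_\Omega |u|\,w_n\,dx\leq\lambda,\qquad\text{c.-à-d.}\qquad\int_\Omega f\,v_n\,dx\leq\mu_n.
\]
J'invoquerais ensuite le \emph{cas d'égalité} de l'inégalité de Young \eqref{9} avec $a=f_n$, $b=g(f_n)$, qui donne ponctuellement $f_n g(f_n)=G(f_n)+G^*(v_n)$. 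Comme $f\geq f_n\geq 0$ et $v_n\geq 0$, l'intégration fournit
\[
\int_\Omega G(f_n)\,dx+\rho(v_n;G^*)\leq\int_\Omega f\,v_n\,dx\leq\mu_n\leq 1+\rho(v_n;G^*),
\]
d'où $\int_\Omega G(f_n)\,dx\leq 1$. Comme $f_n\nearrow f$ et $G$ est continue et croissante, $G(f_n)\nearrow G(f)$ presque partout, et la convergence monotone livre \eqref{18}.

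Je m'attends à ce que la principale subtilité réside précisément dans le choix de la normalisation : diviser par $\mu_n=\max(1,\rho(v_n;G^*))$ plutôt que par $\rho(v_n;G^*)$ (qui n'est pas toujours $\geq 1$), combiné avec la majoration $\mu_n\leq 1+\rho(v_n;G^*)$, est exactement ce qui absorbe le terme surplus $\rho(v_n;G^*)$ provenant de l'égalité de Young. Employer seulement l'inégalité de Young à la place de son cas d'égalité ferait perdre cette compensation ; et tester directement contre une fonction associée à $f$ (plutôt qu'à sa troncature $f_n$) se heurterait aux problèmes d'intégrabilité qui ont motivé l'introduction de $f_n$.
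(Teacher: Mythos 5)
Votre preuve est correcte et suit pour l'essentiel la même démarche que celle du papier : la normalisation par $\max\bigl(1,\rho(v_n;G^{*})\bigr)$ est exactement le contenu du lemme \ref{lem6}, le couplage via le cas d'égalité de Young avec $v=g(\cdot)$ est le lemme \ref{lem7}, et la conclusion par troncature et convergence monotone est identique. La seule différence (cosmétique) est que vous normalisez d'emblée par $\|u\|_{G}$ et testez contre $f$ tout entier, là où le papier applique le lemme \ref{lem7} à $u_{n}$ avec sa propre norme puis utilise $\|u_{n}\|_{G}\leq\|u\|_{G}$.
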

Pour la démonstration du théorème \ref{lem1} on aura besoin les lemmes suivants.
\begin{lem}\label{lem6}
  Soit $u\in L^{G}(\Omega)$ et $v\in K^{G^{*}}(\Omega)$. Alors \begin{equation}\label{64}
                                     \int_{\Omega}|u(x)v(x)|dx\leq\begin{cases}
                                                                    \|u\|_{G} & \mbox{si }\ \rho(v;G^{*})\leq1, \\
                                                                    \|u\|_{G}\rho(v;G^{*})& \mbox{si}\ \rho(v;G^{*})>1.
                                                                  \end{cases}
                                   \end{equation}
\end{lem}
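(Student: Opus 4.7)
Le plan est de traiter les deux cas séparément en utilisant la définition de la norme d'Orlicz et la propriété \eqref{5} de convexité des $N$-Fonctions.

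Pour le premier cas, lorsque $\rho(v;G^{*})\leq 1$, l'inégalité découle directement de la définition \eqref{14} de $\|u\|_{G}$. En effet, $v$ appartient alors à l'ensemble sur lequel on prend le supremum dans \eqref{14}, donc $\int_{\Omega}|u(x)v(x)|dx\leq \|u\|_{G}$.

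Pour le second cas, lorsque $\lambda := \rho(v;G^{*})>1$, je poserais $\tilde{v}(x) := v(x)/\lambda$ afin de se ramener au cas précédent. L'étape clé consiste à vérifier que $\rho(\tilde{v};G^{*})\leq 1$. Pour cela, j'utiliserais la propriété \eqref{5} appliquée à $G^{*}$ (qui est aussi une $N$-Fonction d'après la proposition \ref{pro11}) avec $\alpha = 1/\lambda \in ]0,1[$, ce qui donne
$$G^{*}\!\left(\frac{|v(x)|}{\lambda}\right) \leq \frac{1}{\lambda}\,G^{*}(|v(x)|).$$
En intégrant sur $\Omega$, j'obtiendrais $\rho(\tilde{v};G^{*}) \leq \rho(v;G^{*})/\lambda = 1$. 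En particulier, $\tilde{v}\in K^{G^{*}}(\Omega)$ et le premier cas appliqué à $\tilde{v}$ donne $\int_{\Omega}|u(x)\tilde{v}(x)|dx \leq \|u\|_{G}$. Il suffit ensuite de multiplier par $\lambda$ pour retrouver $\int_{\Omega}|u(x)v(x)|dx \leq \lambda\|u\|_{G} = \rho(v;G^{*})\|u\|_{G}$.

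Le seul point délicat est la vérification de $\rho(\tilde{v};G^{*})\leq 1$ : il faut bien invoquer la convexité (via \eqref{5}) et non une homogénéité linéaire, que $G^{*}$ ne possède évidemment pas. Les autres étapes sont essentiellement des applications directes de la définition de $\|u\|_{G}$.
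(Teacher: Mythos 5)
Votre preuve est correcte et suit essentiellement la même démarche que celle du papier : le premier cas par définition de la norme d'Orlicz, et le second en normalisant $v$ par $\rho(v;G^{*})$ puis en invoquant l'inégalité $G^{*}(\alpha t)\leq\alpha G^{*}(t)$ (c'est-à-dire \eqref{5} appliquée à $G^{*}$) pour vérifier que la fonction normalisée reste dans la boule unité modulaire. Rien à redire.
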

\begin{proof}
  La première partie de l'inégalité \eqref{64} découle de la définition de la norme d'Orlicz.
  Pour la deuxième partie nous utilisons la définition de $N$-Fonction, on sait que $G^{*}(\alpha t)\leq \alpha G^{*}(t)$, pour $t\geq0$ et $\alpha\in]0,1[$.
  Posons, $t=|v(x)|$, $\alpha=\frac{1}{\rho(v;G^{*})}$, on intègre sur $\Omega$, on obtient
  $$\int_{\Omega}G^{*}\bigg{(}\frac{1}{\rho(v;G^{*})}|v(x)|\bigg{)}dx\leq\frac{1}{\rho(v;G^{*})}\rho(v;G^{*})=1$$
  d'après la définition de la norme d'Orlicz, on a
  $$\int_{\Omega}|u(x)|\frac{|v(x)|}{\rho(v;G^{*})}dx\leq\|u\|_{G}.$$
\end{proof}
\begin{lem}\label{lem7}
  Soit $u\in L^{G}(\Omega)$ une fonction bornée telle que $u(x)=0$ si $x\in\Omega\setminus\Omega_{0}$ où $\Omega_{0}\subset\Omega$ et $mes(\Omega_{0})<\infty$. Alors $$\int_{\Omega}G\bigg{(}\frac{|u(x)|}{\|u\|_{G}}\bigg{)}dx\leq1.$$
\end{lem}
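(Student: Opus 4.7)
Le plan est de tester la définition de la norme d'Orlicz avec le choix particulier $v(x) = g\bigl(|u(x)|/\|u\|_{G}\bigr)$, où $g = G'_{d}$, afin d'exploiter le cas d'égalité dans l'inégalité de Young (voir Remarque \ref{rem1}). Posons, pour alléger les notations, $w(x) = |u(x)|/\|u\|_{G}$ et $v = g \circ w$.

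Première étape (la plus délicate): vérifier que $v \in K^{G^{*}}(\Omega)$, afin de pouvoir invoquer le Lemme \ref{lem6}. Comme $u$ est bornée par un certain $M$, on a $w \leq M/\|u\|_{G}$ partout; par croissance de $g$, $v$ est majorée par la constante $g(M/\|u\|_{G})$. Par ailleurs, $u \equiv 0$ hors de $\Omega_{0}$, et puisque $g(0) = 0$, on obtient $v \equiv 0$ hors de $\Omega_{0}$. Ainsi
$$\rho(v\,;G^{*}) \leq G^{*}\bigl(g(M/\|u\|_{G})\bigr)\,\mathrm{mes}(\Omega_{0}) < \infty,$$
et la fonction test $v$ est bien admissible.

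Deuxième étape: l'égalité dans l'inégalité de Young appliquée ponctuellement à $a = w(x)$ et $b = v(x) = g(w(x))$ donne $w(x)\,v(x) = G(w(x)) + G^{*}(v(x))$, puis en intégrant sur $\Omega$,
$$\int_{\Omega} w(x)\,v(x)\,dx = \int_{\Omega} G(w(x))\,dx + \rho(v\,;G^{*}).$$

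Troisième étape: on conclut en appliquant le Lemme \ref{lem6} à $v$. Si $\rho(v\,;G^{*}) \leq 1$, alors $\int_{\Omega}|u|\,v\,dx \leq \|u\|_{G}$ par définition de la norme, ce qui divisé par $\|u\|_{G}$ donne $\int_{\Omega} w\,v\,dx \leq 1$; en reportant dans l'identité ci-dessus on obtient $\int_{\Omega} G(w)\,dx \leq 1 - \rho(v\,;G^{*}) \leq 1$. Si au contraire $\rho(v\,;G^{*}) > 1$, le lemme fournit $\int_{\Omega}|u|\,v\,dx \leq \|u\|_{G}\,\rho(v\,;G^{*})$; après division par $\|u\|_{G}$ et comparaison avec l'égalité de la deuxième étape, il vient $\int_{\Omega} G(w)\,dx \leq 0$, et cette intégrale est donc nulle. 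Dans les deux cas, la conclusion $\int_{\Omega} G(|u|/\|u\|_{G})\,dx \leq 1$ est établie. L'obstacle principal est véritablement la première étape: les hypothèses de bornitude de $u$ et de support dans un ensemble de mesure finie servent précisément à garantir que la fonction $v = g(w)$ appartient à $K^{G^{*}}(\Omega)$, condition sans laquelle le Lemme \ref{lem6} ne pourrait pas être invoqué.
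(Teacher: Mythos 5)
Votre preuve est correcte et suit essentiellement la même démarche que celle du texte : on teste avec $v=g(|u|/\|u\|_{G})$, on exploite le cas d'égalité dans l'inégalité de Young, puis on conclut par le Lemme \ref{lem6} en distinguant selon que $\rho(v;G^{*})\leq1$ ou $\rho(v;G^{*})>1$ (le texte condense ces deux cas dans la majoration par $\max\{\rho(v;G^{*}),1\}$). Votre première étape rend d'ailleurs plus explicite que dans le texte le rôle des hypothèses de bornitude et de support de mesure finie pour garantir $v\in K^{G^{*}}(\Omega)$.
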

\begin{proof}
  On pose $$v(x)=g\bigg{(}\frac{1}{\|u\|_{G}}|u(x)|\bigg{)},$$ où $g$ est la dérivée à droite de $G$. Alors les deux fonctions $G\bigg{(}\frac{1}{\|u\|_{G}}|u(x)|\bigg{)}$ et $G^{*}(|v(x)|)$ sont bornées et intégrable sur $\Omega_{0}$, de plus elles appartiennent à $L^{1}(\Omega)$.
 D'après le théorème \ref{thm26} on a
  $$\int_{\Omega}\frac{1}{\|u\|_{G}}|u(x)v(x)|dx=\rho\bigg{(}\frac{|u|}{\|u\|_{G}};G\bigg{)}+\rho(v;G^{*}).$$
  On déduit $$\max\{\rho(v;G^{*}),1\}\geq\int_{\Omega}G\bigg{(}\frac{1}{\|u\|_{G}}|u(x)|\bigg{)}dx+\rho(v;G^{*}).$$
  Si $\rho(v;G^{*})>1$, alors $$\int_{\Omega}G\bigg{(}\frac{1}{\|u\|_{G}}|u(x)|\bigg{)}dx=0.$$
  Si $\rho(v;G^{*})\leq1$, alors $$\int_{\Omega}G\bigg{(}\frac{1}{\|u\|_{G}}|u(x)|\bigg{)}dx+\rho(v;G^{*})\leq1$$
  D'où le résultat.
\end{proof}
\begin{proof}[\textbf{Démonstration du théorème \ref{lem1}}]
  Soit $u\in L^{G}(\Omega)$. Soit $(\Omega_{n})_{n\in\mathbb{N}^{*}}$ une suite de sous-ensembles de $\Omega$ telle que $\Omega_{n}\subset\Omega_{n+1}$, $mes(\Omega_{n})<\infty$ et $\Omega=\cup_{n=1}^{\infty}\Omega_{n}$.\\
On considère la suite de fonctions suivante $$u_{n}(x)=\begin{cases}
                                               u(x) & \mbox{si }\ x\in\Omega_{n}\ \text{et}\ |u(x)|\leq n, \\
                                               n & \mbox{si }\ x\in\Omega_{n}\ \text{et}\ |u(x)|>n, \\
                                               0 & \mbox{si}\ x\in\Omega\setminus\Omega_{n}.
                                             \end{cases} $$
      On a $ |u_{n}(x)|\leq n,\ \forall x\in\Omega$, d'après le lemme \ref{lem7} on a
      $$\int_{\Omega}G\bigg{(}\frac{1}{\|u_{n}\|_{G}}|u_{n}(x)|\bigg{)}dx\leq1.$$
      De plus, $|u_{n}(x)|\leq|u(x)|$ pour tout $x\in\Omega$, d'après la remarque \ref{rem2} on a $\|u_{n}\|_{G}\leq\|u\|_{G}$, $\forall n\in\mathbb{N}$. Ainsi $$\frac{|u_{n}(x)|}{\|u\|_{G}}\leq\frac{|u_{n}(x)|}{\|u_{n}\|_{G}}\ \text{et}\ G\bigg{(}\frac{|u_{n}(x)|}{\|u\|_{G}}\bigg{)}\leq  G\bigg{(}\frac{|u_{n}(x)|}{\|u_{n}\|_{G}}\bigg{)}$$
      par conséquent $$\int_{\Omega}G\bigg{(}\frac{1}{\|u\|_{G}}|u_{n}(x)|\bigg{)}dx\leq1.$$
       Soit $x\in\Omega$, les deux suites $(u_{n}(x))_{n}$ et $G\bigg{(}\frac{1}{\|u\|_{G}}|u_{n}(x)|\bigg{)}$ sont croissantes, $u_{n}\rightarrow u\ p.p$ sur $\Omega$. D'après le théorème du convergence monotone on a
      $$\int_{\Omega}G\bigg{(}\frac{1}{\|u\|_{G}}|u(x)|\bigg{)}dx=\ds\lim_{n\rightarrow\infty}\int_{\Omega}G\bigg{(}\frac{1}{\|u\|_{G}}|u_{n}(x)|\bigg{)}dx\leq1.$$
\end{proof}
\begin{rem}
 D'après la proposition \ref{pro6}, on a $K^{G}(\Omega)\subset L^{G}(\Omega)$. Cette inclusion peut être généralisée dans le sens suivant: S'il existe $\lambda>0$ tel que $\lambda u\in K^{G}(\Omega)$, alors $u\in L^{G}(\Omega)$, c-à-d,  $$L^{G}(\Omega)\supset\bigg{\{}u: \Omega\rightarrow \bar{\mathbb{R}},\ \text{mesurable}/\ \exists \lambda>0, \int_{\Omega}G(|\lambda u(x)|)dx< +\infty\bigg{\}}.$$
En effet, si $\lambda u\in K^{G}(\Omega)$ on a $\lambda u\in L^{G}(\Omega)$ et comme  $L^{G}(\Omega)$ est un espace vectoriel, alors $\lambda^{-1}(\lambda u)=u\in L^{G}(\Omega)$.
De plus, une déclaration dans un sens inverse est également vraie: Si $u\in L^{G}(\Omega)$, alors il existe $\lambda>0$ tel que $\lambda u\in K^{G}(\Omega)$ (la forme explicite de la constante $\lambda$ est donné dans le théorème \ref{lem1}). Ainsi, on peut redéfinir l'espace d'Orlicz comme suit $$L^{G}(\Omega)=\bigg{\{}u: \Omega\rightarrow \mathbb{R},\ \text{mesurable}/\ \exists \lambda>0, \int_{\Omega}G(\lambda |u(x)|)dx< +\infty\bigg{\}}.$$
\end{rem}
En vertu du théorème \ref{lem1} , nous pourrons caractériser quand est-ce-que  une classe d'Orlicz coïncide
avec l'espace d'Orlicz.
\begin{thm}\label{thm13}
  Si $G$ satisfait la condition $\triangle_{2}$ (avec $G\in\triangle_{2,0}$ si $mes(\Omega)=+\infty$). Alors $$K^{G}(\Omega)=L^{G}(\Omega).$$
\end{thm}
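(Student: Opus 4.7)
Ma proposition de preuve repose sur trois ingrédients déjà disponibles dans les chapitres précédents et se déroule en deux étapes, la première étant triviale.

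L'inclusion $K^{G}(\Omega)\subset L^{G}(\Omega)$ est déjà acquise par la proposition \ref{pro6}, indépendamment de la condition $\triangle_{2}$. Il ne reste donc qu'à établir l'inclusion réciproque $L^{G}(\Omega)\subset K^{G}(\Omega)$. Soit $u\in L^{G}(\Omega)$. Si $\|u\|_{G}=0$, alors $u=0$ presque partout et il n'y a rien à montrer. On peut donc supposer $\|u\|_{G}>0$, et appliquer le théorème \ref{lem1} pour obtenir
$$\int_{\Omega}G\bigg{(}\frac{|u(x)|}{\|u\|_{G}}\bigg{)}dx\leq 1<\infty,$$
ce qui signifie exactement que la fonction renormalisée $v:=u/\|u\|_{G}$ appartient à $K^{G}(\Omega)$.

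La deuxième étape consiste à revenir à $u=\|u\|_{G}\cdot v$. C'est ici qu'intervient l'hypothèse $\triangle_{2}$ (resp. $\triangle_{2,0}$ lorsque $mes(\Omega)=+\infty$) : par le théorème caractérisant $K^{G}(\Omega)$ comme espace vectoriel (théorème \ref{thm15} du chapitre 1), cette hypothèse entraîne que $K^{G}(\Omega)$ est stable par multiplication scalaire. Par conséquent, multiplier $v\in K^{G}(\Omega)$ par la constante $\|u\|_{G}$ donne $u\in K^{G}(\Omega)$, ce qui conclut.

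Il n'y a pas réellement d'obstacle technique: toute la difficulté a déjà été absorbée en amont. Le point crucial est de remarquer que, sans la condition $\triangle_{2}$, seule la version renormalisée $u/\|u\|_{G}$ appartient à $K^{G}(\Omega)$, et rien ne garantit que $u$ elle-même y soit; c'est précisément la stabilité par dilatations fournie par $\triangle_{2}$ (via la caractérisation de $K^{G}(\Omega)$ comme espace vectoriel) qui supprime cette obstruction. L'étape la plus délicate à présenter proprement est ainsi la distinction de cas $mes(\Omega)<\infty$ vs $mes(\Omega)=+\infty$, afin d'invoquer la bonne variante ($\triangle_{2}$ ou $\triangle_{2,0}$) au moment d'appliquer le théorème \ref{thm15}.
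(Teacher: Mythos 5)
Votre démonstration est correcte et suit exactement la même démarche que celle du papier : inclusion $K^{G}(\Omega)\subset L^{G}(\Omega)$ par la proposition \ref{pro6}, appartenance de $u/\|u\|_{G}$ à $K^{G}(\Omega)$ via le théorème \ref{lem1}, puis retour à $u$ par la structure d'espace vectoriel de $K^{G}(\Omega)$ garantie par la condition $\triangle_{2}$ (resp. $\triangle_{2,0}$). Rien à redire.
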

\begin{proof}
  On sait que $K^{G}(\Omega)\subset L^{G}(\Omega)$. Soit $u\in L^{G}(\Omega),\ \|u\|_{G}\neq0$. D'après le théorème \ref{lem1}, $$\omega=\frac{u}{ \|u\|_{G}}\in K^{G}(\Omega).$$
  Comme $K^{G}(\Omega)$ est un espace vectoriel alors $\omega\|u\|_{G}=u\in K^{G}(\Omega)$. Ainsi $K^{G}(\Omega)\supset L^{G}(\Omega).$
\end{proof}
\begin{rem}
  Il découle du théorème \ref{lem1} que l'espace d'Orlicz $L^{G}(\Omega)$ est engendré par la classe $K^{G}(\Omega)$. En effet, si  $u\in L^{G}(\Omega)$ tel que $\|u\|_{G}\neq0$, alors $u=cv$ avec $v\in K^{G}(\Omega)$, $v=\frac{u}{\|u\|_{G}}$ et $c=\|u\|_{G}$.
\end{rem}
Dans l'environnement des espaces de Lebesgue, l'inégalité de H\"{o}lder classique est sous cette forme$$\int_{\Omega}|u(x)v(x)|dx\leq\|u\|_{p}\|v\|_{q}$$ avec $p,q\in[1,+\infty[$ et $\frac{1}{p}+\frac{1}{q}=1$. Le théorème suivant fournit une inégalité analogue pour les espaces d'Orlicz.
\begin{thm}\label{thm1}
  Soit $(G,G^{*})$ un couple de $N$-Fonction, si $u\in L^{G}(\Omega)$ et $v\in L^{G^{*}}(\Omega)$, alors
  \begin{equation}\label{19}
  \int_{\Omega}|u(t)v(t)|dt\leq \|u\|_{G}\|v\|_{G^{*}}.
  \end{equation}
\end{thm}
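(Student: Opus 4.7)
L'idée est de se ramener directement à la définition de la norme d'Orlicz via \eqref{14}, en utilisant le théorème \ref{lem1} pour ``normaliser'' la fonction $v$ de sorte qu'elle devienne admissible dans le supremum définissant $\|u\|_{G}$.

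Je commencerais par écarter les cas dégénérés. Si $\|u\|_{G}=0$, alors par la propriété de séparation de la norme (démontrée en même temps que le fait que $(L^{G}(\Omega),\|\cdot\|_{G})$ est un espace vectoriel normé) on a $u=0$ presque partout sur $\Omega$, donc $\int_{\Omega}|u(t)v(t)|dt=0$ et l'inégalité \eqref{19} est triviale. Même argument si $\|v\|_{G^{*}}=0$. On peut donc supposer $\|u\|_{G}\neq 0$ et $\|v\|_{G^{*}}\neq 0$.

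L'étape clef est d'appliquer le théorème \ref{lem1} à la fonction $v\in L^{G^{*}}(\Omega)$ : on obtient
$$\rho\!\left(\frac{v}{\|v\|_{G^{*}}}\,;G^{*}\right)=\int_{\Omega}G^{*}\!\left(\frac{|v(x)|}{\|v\|_{G^{*}}}\right)dx\leq 1.$$
En particulier, $\widetilde{v}:=v/\|v\|_{G^{*}}$ appartient à $K^{G^{*}}(\Omega)$ et vérifie $\rho(\widetilde{v};G^{*})\leq 1$. La fonction $\widetilde{v}$ est donc admissible dans le supremum \eqref{14} définissant $\|u\|_{G}$, ce qui donne
$$\int_{\Omega}|u(t)\widetilde{v}(t)|dt\leq\|u\|_{G},$$
et en multipliant par $\|v\|_{G^{*}}$ on obtient exactement \eqref{19}.

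Aucun obstacle réel n'est à prévoir : toute la difficulté est déjà absorbée dans le théorème \ref{lem1}, dont la démonstration reposait sur l'inégalité de Young, la troncature et la convergence monotone. Le seul point à soigner est de vérifier que $\widetilde{v}$ appartient bien à $K^{G^{*}}(\Omega)$, ce qui découle immédiatement de la finitude de $\rho(\widetilde{v};G^{*})$. L'argument ne nécessite donc aucune hypothèse supplémentaire (telle que la condition $\triangle_{2}$) sur $G$ ou $G^{*}$.
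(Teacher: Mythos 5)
Votre démonstration est correcte et suit essentiellement la même démarche que celle du texte : on applique le théorème \ref{lem1} (l'inégalité \eqref{18}) à $G^{*}$ pour obtenir $\rho(v/\|v\|_{G^{*}};G^{*})\leq1$, ce qui rend $v/\|v\|_{G^{*}}$ admissible dans le supremum \eqref{14} définissant $\|u\|_{G}$, puis on conclut par homogénéité. Le seul ajout par rapport au texte est le traitement explicite du cas $\|u\|_{G}=0$ et la vérification que $v/\|v\|_{G^{*}}\in K^{G^{*}}(\Omega)$, deux précisions bienvenues mais mineures.
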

\begin{proof}
Si $\|v\|_{G^{*}}=0$ l'inégalité \eqref{19} est triviale. Si $\|v\|_{G^{*}}\neq0$, on applique l'inégalité \eqref{18} à la fonction $G^{*}$ on obtient $\rho(\frac{v}{\|v\|_{G^{*}}};G^{*})\leq1$. Ainsi, l'inégalité \eqref{19} découle  de la définition de la norme d'Orlicz de $u$,
$$\int_{\Omega}|u(t)v(t)|dt=\|v\|_{G^{*}}\int_{\Omega}\bigg{|}u(t)\frac{v(t)}{\|v\|_{G^{*}}}\bigg{|}dt\leq\|u\|_{G}\|v\|_{G^{*}}.$$
\end{proof}
\subsection{La norme de Luxemberg}
 La définition de la norme d'Orlicz $\|u\|_{G}$ nécessite la connaissance de l'expression de la fonction conjuguée $G^{*}$ qui est, en général, difficile à calculer. Une autre norme équivalente exprimée uniquement en termes de $G$ a été introduite par Luxemburg dans \cite{22} .
 \begin{dfn}
   Soient $G$ une $N$-Fonction et $u$ une fonction mesurable définie sur $\Omega$. On définit
   $$\|u\|_{(G)}=\inf\bigg{\{}\lambda>0\ :\ \rho(\frac{u}{\lambda}\ ;G)\leq1\bigg{\}}$$ appelée la norme de Luxemberg de $u$.
 \end{dfn}
  \begin{rem}\label{rem7}
   D'après le lemme \ref{lem1}, si $u\in L^{G}(\Omega)$, alors \begin{equation}\label{20}
                                                                                 \|u\|_{(G)}\leq\|u\|_{G}.
                                                                               \end{equation}
   En faisant tendre $\lambda$ vers $\|u\|_{(G)}$  et appliquant le théorème de Fatou dans $$\rho(\frac{u}{\lambda};G)=\int_{\Omega}G\bigg{(}\frac{|u(x)|}{\lambda}\bigg{)}dx\leq1,$$
   on obtient \begin{equation}\label{21}
                \rho(\frac{u}{\|u\|_{(G)}};G)=\int_{\Omega}G\bigg{(}\frac{|u(x)|}{\|u\|_{(G)}}\bigg{)}dx\leq1.
              \end{equation}
    \end{rem}
 \begin{pro}
 Soit $u\in L^{G}(\Omega)$. Alors l'application $$u\mapsto\|u\|_{(G)}$$ est une norme sur l'espace $ L^{G}(\Omega)$.
 \end{pro}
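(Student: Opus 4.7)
Le plan est de vérifier successivement les trois axiomes d'une norme sur $L^{G}(\Omega)$ : la séparation ($\|u\|_{(G)}=0 \iff u=0$ p.p.), l'homogénéité absolue et l'inégalité triangulaire. La bonne définition (à savoir $\|u\|_{(G)}<+\infty$ pour $u\in L^{G}(\Omega)$) découle de la remarque \ref{rem7}, plus précisément de l'inégalité $\|u\|_{(G)}\leq\|u\|_{G}<+\infty$.

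Pour la séparation, si $u=0$ p.p., alors $\rho(u/\lambda;G)=0\leq 1$ pour tout $\lambda>0$, donc $\|u\|_{(G)}=0$. Réciproquement, si $\|u\|_{(G)}=0$, alors par définition de la borne inférieure on peut choisir, pour chaque $n\in\mathbb{N}^{*}$, un $\lambda_{n}\leq\frac{1}{n}$ tel que $\rho(u/\lambda_{n};G)\leq 1$, et comme $G$ est croissante on obtient $\rho(nu;G)\leq 1$ pour tout $n$. Je raisonnerai alors par l'absurde : si $\mathrm{mes}\{|u|>0\}>0$, il existe $\varepsilon>0$ tel que $A_{\varepsilon}=\{|u|\geq\varepsilon\}$ soit de mesure strictement positive ; par \eqref{4}, $G(n\varepsilon)\to+\infty$, si bien que $\int_{A_{\varepsilon}}G(n|u|)dx\geq G(n\varepsilon)\,\mathrm{mes}(A_{\varepsilon})\to+\infty$, ce qui contredit $\int_{\Omega}G(n|u|)dx\leq 1$. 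Donc $u=0$ p.p.

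L'homogénéité absolue est un simple changement de variable. Pour $\alpha\neq 0$, le changement $\mu=\lambda/|\alpha|$ donne
\[
\|\alpha u\|_{(G)}=\inf\bigl\{\lambda>0:\rho(\alpha u/\lambda;G)\leq 1\bigr\}=\inf\bigl\{|\alpha|\mu>0:\rho(u/\mu;G)\leq 1\bigr\}=|\alpha|\,\|u\|_{(G)},
\]
en utilisant que $\rho$ ne dépend que du module. Le cas $\alpha=0$ est déjà traité.

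Le point le plus substantiel est l'inégalité triangulaire, dont la clef est la convexité de $G$. Pour $u,v\in L^{G}(\Omega)$, soient $\lambda_{u}>\|u\|_{(G)}$ et $\lambda_{v}>\|v\|_{(G)}$ ; par \eqref{21} (ou directement par définition de la borne inférieure et la croissance de $G$), on a $\rho(u/\lambda_{u};G)\leq 1$ et $\rho(v/\lambda_{v};G)\leq 1$. Posant $t=\lambda_{u}/(\lambda_{u}+\lambda_{v})\in(0,1)$, la convexité de $G$ et sa croissance entraînent, ponctuellement,
\[
G\!\left(\frac{|u(x)+v(x)|}{\lambda_{u}+\lambda_{v}}\right)\leq G\!\left(t\,\frac{|u(x)|}{\lambda_{u}}+(1-t)\,\frac{|v(x)|}{\lambda_{v}}\right)\leq t\,G\!\left(\frac{|u(x)|}{\lambda_{u}}\right)+(1-t)\,G\!\left(\frac{|v(x)|}{\lambda_{v}}\right).
\]
En intégrant sur $\Omega$, on obtient $\rho\bigl((u+v)/(\lambda_{u}+\lambda_{v});G\bigr)\leq t+(1-t)=1$, d'où $\|u+v\|_{(G)}\leq\lambda_{u}+\lambda_{v}$. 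Il ne reste qu'à passer à l'infimum sur $\lambda_{u}$ et $\lambda_{v}$ pour conclure $\|u+v\|_{(G)}\leq\|u\|_{(G)}+\|v\|_{(G)}$. L'obstacle principal est donc l'étape ponctuelle où il faut combiner la croissance et la convexité de $G$ avec le bon choix de coefficient $t$ ; une fois ce choix fait, le reste est une intégration immédiate.
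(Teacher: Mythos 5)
Votre démonstration est correcte et suit essentiellement la même démarche que celle du texte : même argument par l'absurde pour la séparation (via une suite $\lambda_n\to 0$), même changement de variable pour l'homogénéité, et même usage de la convexité de $G$ avec les poids $t=\lambda_u/(\lambda_u+\lambda_v)$ pour l'inégalité triangulaire. Le seul raffinement est de travailler avec $\lambda_u>\|u\|_{(G)}$ puis de passer à l'infimum, ce qui évite proprement le cas dégénéré $\|u\|_{(G)}=0$ et dispense du recours au lemme de Fatou (\eqref{21}).
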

 \begin{proof}
 \textbf{1)}
   Il est évident que si $u$ est une fonction nulle presque partout, alors $\|u\|_{(G)}=0$.\\
   Montrons que $\|u\|_{(G)}=0$ implique $u=0$.
   Soit $u\in L^{G}(\Omega)$ telle que  $\|u\|_{(G)}=0$. Il existe alors une suite décroissante de nombres réels strictement positifs $(\lambda_{n})_{n\in\mathbb{N}}$ vérifiant à la fois \begin{equation}\label{35}\ds\lim_{n\rightarrow\infty}\lambda_{n}=0\ \text{et}\ \int_{\Omega}G\bigg{(}\frac{|u(x)|}{\lambda_{n}}\bigg{)}dx\leq 1,\ \forall n\in\mathbb{N}.\end{equation}
   La suite $(v_{n})$ définie par $v_{n}(x):=G\bigg{(}\frac{|u(x)|}{\lambda_{n}}\bigg{)}$ satisfait
   $$\ds\lim_{n\rightarrow\infty}v_{n}(x)=\begin{cases}
                                            +\infty & \mbox{si }\ u(x)\neq0  \\
                                            0 & \mbox{si}\ u(x)=0.
                                          \end{cases}$$
   Supposons maintenant que la mesure de Lebesgue de l’ensemble $\{x\in\mathbb{R}^{N}; |u(x)|>0\}$ est non nulle. En appliquant le théorème de convergence monotone, on trouve que $$\ds\lim_{n\rightarrow\infty}\int_{\{x\in\mathbb{R}^{N}; |u(x)|>0\}}v_{n}(x)dx=+\infty,$$
   ce qui contredit \eqref{35}. Donc $u$ est nulle presque pour tout $x\in\mathbb{R}^{N}$.

 \textbf{2)} Soient $\alpha\ne0$ dans $\mathbb{R}$ et  $f\in L^{G}(\Omega)$; on a
\begin{align*}
\|\alpha f\|_{(G)}&=\inf\{k>0\,;\int_{\Omega}G(\frac{|\alpha f(x)|}{k})dx\leq1\}\\&
=|\alpha|\inf\{\frac{k}{|\alpha|}>0;\ \int_{\Omega}G(\frac{|f(x)|}{\frac{k}{|\alpha|}})dx\leq1\}\\
&=|\alpha|\inf\{\beta>0;\ \int_{\Omega} G(\frac{|f(x)|}{\beta})dx\leq1\}\\&
=|\alpha|\|f\|_{(G)}.\\
\end{align*}

\textbf{3)} Pour achever la démonstration du lemme, on établit l’inégalité triangulaire. Pour cela, il suffit de montrer que
\begin{equation}\label{36}
  \int_{\Omega}G\bigg{(}\frac{u(x)+v(x)}{\|u\|_{(G)}+\|v\|_{(G)}}\bigg{)}dx\leq1.
\end{equation}
En utilisant la convexité de la fonction $G$, on a
\begin{align*}
  \int_{\Omega}G\bigg{(}\frac{u(x)+v(x)}{\|u\|_{(G)}+\|v\|_{(G)}}\bigg{)}dx & \leq\int_{\Omega}G\bigg{(}\frac{\|u\|_{(G)}}{\|u\|_{(G)}+
  \|v\|_{(G)}}\frac{|u(x)|}{\|u\|_{(G)}}+\frac{\|v\|_{(G)}}{\|u\|_{(G)}+\|v\|_{(G)}}\frac{|v(x)|}{\|v\|_{(G)}}\bigg{)}dx\\
  &\leq \frac{\|u\|_{(G)}}{\|u\|_{(G)}+\|v\|_{(G)}}\int_{\Omega}G\bigg{(}\frac{|u(x)|}{\|u\|_{(G)}}\bigg{)}dx \\&+ \frac{\|v\|_{(G)}}{\|u\|_{(G)}+\|v\|_{(G)}}\int_{\Omega}G\bigg{(}\frac{v(x)}{\|v\|_{(G)}}\bigg{)}dx\leq1
\end{align*}
d'où \eqref{36}.
 \end{proof}

 \begin{lem}\label{lem2}
   Soit $G$ une $N$-Fonction et $u\in L^{G}(\Omega)$. Alors
   \begin{enumerate}
     \item [(i)] $\rho(u;G)\leq\|u\|_{(G)}$ si $\|u\|_{(G)}\leq1$,
     \item [(ii)]$\rho(u;G)\geq\|u\|_{(G)}$ si $\|u\|_{(G)}>1$.
   \end{enumerate}
 \end{lem}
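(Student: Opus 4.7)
The plan is to combine two ingredients already available in the text: the scaling estimate \eqref{5} for $N$-functions, namely $G(\alpha t)\leq \alpha G(t)$ for $\alpha\in[0,1]$, and the defining infimum property of the Luxemburg norm, complemented by Remark~\ref{rem7} which states that $\rho(u/\|u\|_{(G)};G)\leq 1$ whenever $u\in L^{G}(\Omega)$ has positive Luxemburg norm. The two parts are proved by similar but distinct calculations, each applying \eqref{5} with a different well-chosen parameter.

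For part (i), the case $\|u\|_{(G)}=0$ is immediate since it forces $u=0$ almost everywhere and hence $\rho(u;G)=0$. Assuming $0<\|u\|_{(G)}\leq 1$, I would apply \eqref{5} pointwise with $\alpha=\|u\|_{(G)}\in(0,1]$ and $t=|u(x)|/\|u\|_{(G)}$, obtaining $G(|u(x)|)\leq \|u\|_{(G)}\, G(|u(x)|/\|u\|_{(G)})$. Integrating over $\Omega$ and invoking Remark~\ref{rem7} to bound the resulting integral by $1$ would then give $\rho(u;G)\leq\|u\|_{(G)}$.

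For part (ii) I would not rely on Remark~\ref{rem7} directly, since its bound $\rho(u/\|u\|_{(G)};G)\leq 1$ goes in the wrong direction here; instead I would use the infimum definition of $\|u\|_{(G)}$. Given $\|u\|_{(G)}>1$, for each small $\varepsilon\in(0,\|u\|_{(G)}-1)$ the value $\lambda_{\varepsilon}:=\|u\|_{(G)}-\varepsilon$ satisfies $1<\lambda_{\varepsilon}<\|u\|_{(G)}$; by definition of the infimum we must have $\rho(u/\lambda_{\varepsilon};G)>1$. Applying \eqref{5} pointwise with $\alpha=1/\lambda_{\varepsilon}\in(0,1)$ and $t=|u(x)|$ yields $G(|u(x)|/\lambda_{\varepsilon})\leq \lambda_{\varepsilon}^{-1}G(|u(x)|)$, and integrating gives $1<\rho(u/\lambda_{\varepsilon};G)\leq \rho(u;G)/\lambda_{\varepsilon}$, hence $\rho(u;G)>\|u\|_{(G)}-\varepsilon$. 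Letting $\varepsilon\to 0^{+}$ completes the argument.

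No substantial obstacle is anticipated; the only real point of care is to apply \eqref{5} with the parameter that actually lies in $(0,1]$ in each case (the norm itself in (i), its reciprocal $1/\lambda_\varepsilon$ in (ii)), and to remember that in (ii) it is the definition of the infimum—not Remark~\ref{rem7}—that supplies the needed strict inequality $\rho(u/\lambda_\varepsilon;G)>1$.
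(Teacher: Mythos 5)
Your proposal is correct and follows essentially the same route as the paper: part (i) is the paper's argument verbatim (apply \eqref{5} with $\alpha=\|u\|_{(G)}$ and use $\rho(u/\|u\|_{(G)};G)\leq1$), and in part (ii) your use of \eqref{5} with $\alpha=1/\lambda_{\varepsilon}$ is just the paper's application of \eqref{6} with $\beta=\|u\|_{(G)}-\varepsilon$ rewritten, both hinging on the infimum definition to get $\rho(u/\lambda_{\varepsilon};G)>1$ before letting $\varepsilon\to0^{+}$. Your explicit remark that the strict inequality comes from the infimum (and not from Remark \ref{rem7}) is a point the paper leaves implicit, but it is the same proof.
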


 \begin{proof}
   \textbf{(i)} Soit $\alpha=\|u\|_{(G)}$ et $t=\frac{u(x)}{\|u\|_{(G)}}$ dans la relation
   $$G(\alpha t)\leq\alpha G(t),\ \alpha\in[0,1],\ t\geq0,$$
   et on intègre sur $\Omega$, sachant que
   $$\rho\bigg{(}\frac{1}{\|u\|_{(G)}}u;G\bigg{)}=\int_{\Omega}G\bigg{(}\frac{|u(x)|}{\|u\|_{(G)}}\bigg{)}dx\leq1,$$
   on obtient $$\int_{\Omega}G(|u(x)|)dx\leq\|u\|_{(G)}\int_{\Omega}G\bigg{(}\frac{|u(x)|}{\|u\|_{(G)}}\bigg{)}dx\leq\|u\|_{(G)},$$
   d'où l'assertion $(i)$.\\
   \textbf{(ii)} On sait que
   $$G(\beta t)\geq\beta G(t),\ \beta>1,\ t\geq0,$$
    si $\|u\|_{(G)}>1$, on pose $\beta=\|u\|_{(G)}-\varepsilon$ et $t=\frac{u(x)}{\|u\|_{(G)}-\varepsilon}$ pour $\varepsilon$ assez petit, on obtient \begin{align*}\int_{\Omega}G(|u(x)|)dx&\geq(\|u\|_{(G)}-\varepsilon)\int_{\Omega}G\bigg{(}\frac{u(x)}{\|u\|_{(G)}-\varepsilon}\bigg{)}dx\\&>\|u\|_{(G)}-\varepsilon.\end{align*}
 \end{proof}

 \begin{thm}\label{thm2}
   La norme d'Orlicz est équivalente à la norme de Luxemburg. Plus précisément,
   \begin{equation}\label{22}
   \|u\|_{(G)}\leq\|u\|_{G}\leq2\|u\|_{(G)}.
   \end{equation}
 \end{thm}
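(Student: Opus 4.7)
The plan is to establish the two inequalities in \eqref{22} separately, and both follow from tools already developed in the excerpt.

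For the left inequality $\|u\|_{(G)}\leq\|u\|_{G}$, I would invoke Theorem \ref{lem1} directly. That theorem states $\int_{\Omega}G(|u(x)|/\|u\|_{G})\,dx\leq 1$, which means the real number $\lambda=\|u\|_{G}$ belongs to the set $\{\lambda>0 : \rho(u/\lambda; G)\leq 1\}$ whose infimum defines $\|u\|_{(G)}$. Hence $\|u\|_{(G)}\leq\|u\|_{G}$. This step is essentially the content of Remarque \ref{rem7}, so nothing new is required.

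For the right inequality $\|u\|_{G}\leq 2\|u\|_{(G)}$, I would argue by duality using Young's inequality. Assuming $\|u\|_{(G)}>0$ (the case $u=0$ is trivial), fix any $v\in K^{G^{*}}(\Omega)$ with $\rho(v;G^{*})\leq 1$. Applying the pointwise Young inequality \eqref{9} with $a=|u(x)|/\|u\|_{(G)}$ and $b=|v(x)|$ gives
\begin{equation*}
\frac{|u(x)v(x)|}{\|u\|_{(G)}}\leq G\!\left(\frac{|u(x)|}{\|u\|_{(G)}}\right)+G^{*}(|v(x)|).
\end{equation*}
Integrating over $\Omega$ and using \eqref{21} from Remarque \ref{rem7} together with the hypothesis $\rho(v;G^{*})\leq 1$ yields
\begin{equation*}
\frac{1}{\|u\|_{(G)}}\int_{\Omega}|u(x)v(x)|\,dx\leq\rho\!\left(\tfrac{u}{\|u\|_{(G)}};G\right)+\rho(v;G^{*})\leq 1+1=2.
\end{equation*}
Taking the supremum over all admissible $v$ then gives $\|u\|_{G}\leq 2\|u\|_{(G)}$.

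No step here is genuinely delicate: the only ingredients are Young's inequality \eqref{9}, the sharpness estimate \eqref{21}, and the definitions of the two norms. The only point that requires a moment of care is handling the degenerate case $\|u\|_{(G)}=0$ (where $u=0$ almost everywhere, so both sides vanish) and ensuring that the supremum in the definition of $\|u\|_{G}$ is taken over precisely the same class of $v$ used in the Young-inequality integration. Once these are dispatched, the two-sided estimate \eqref{22} follows immediately.
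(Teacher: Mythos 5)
Your proposal is correct and follows essentially the same route as the paper: the left inequality is exactly Remarque \ref{rem7} (i.e.\ Théorème \ref{lem1}), and your Young-inequality computation for the right inequality is precisely the argument behind the estimate $\|\omega\|_{G}\leq\rho(\omega;G)+1$ of \eqref{16}, which the paper applies to $\omega=u/\|u\|_{(G)}$ together with \eqref{21}. You have merely inlined the proof of \eqref{16} instead of citing it, so there is no substantive difference.
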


 \begin{proof}
   On sait que $\|u\|_{G}\leq\rho(u;G)+1$. Appliquant cette inégalité à $\omega=\frac{u}{\|u\|_{(G)}}$, on obtient$$\|\omega\|_{G}\leq\rho(\omega;G)+1\ \text{et}\ \rho(\omega;G)\leq1,$$ ainsi $\|u\|_{G}\leq2\|u\|_{(G)}$.
 \end{proof}

 \begin{rem}\label{rem8}
   D'après le lemme \ref{lem2} on a $$\rho(u;G^{*})\leq1\ \text{si et seulement si}\ \|u\|_{(G^{*})}\leq1.$$
    On peut alors redéfinir la norme d'Orlicz sous la forme suivante
   \begin{equation}\label{23}
     \|u\|_{G}=\ds\sup_{\|v\|_{(G^{*})}\leq1}\int_{\Omega}|u(x)v(x)|dx.
   \end{equation}
 \end{rem}
 Par le même argument introduit dans le théorème \ref{thm1}  et en vertu de \eqref{23}, nous pourrons obtenir des versions modifiées de l'inégalité de Hölder.
 \begin{thm}
   Soit $(G,G^{*})$ un couple de $N$-Fonction. Alors \begin{equation}\label{30}
                                                       \int_{\Omega}|u(t)v(t)|dt\leq 2\|u\|_{(G)}\|v\|_{(G^{*})}
                                                     \end{equation}
                                                     \begin{equation}\label{24}
                                                        \int_{\Omega}|u(t)v(t)|dt\leq \|u\|_{G}\|v\|_{(G^{*})}
                                                     \end{equation}
                                                     \begin{equation}\label{25}
                                                        \int_{\Omega}|u(t)v(t)|dt\leq \|u\|_{(G)}\|v\|_{G^{*}}
                                                     \end{equation}
                                                     $\forall$ $u\in L^{G}(\Omega)$ et $v\in L^{G^{*}}(\Omega)$.
 \end{thm}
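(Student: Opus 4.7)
Le plan consiste à reproduire, pour chacune des trois inégalités, l'argument utilisé dans la démonstration du théorème \ref{thm1}, en rempla\c{c}ant la majoration modulaire \eqref{18} par la majoration analogue \eqref{21} consignée dans la remarque \ref{rem7} pour la norme de Luxemburg. Les cas triviaux où l'une des normes du membre de droite s'annule sont évacués dès le départ : la fonction correspondante est alors nulle presque partout et les deux membres de l'inégalité le sont aussi.

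Pour \eqref{24}, on suppose $\|v\|_{(G^{*})}>0$ et on normalise en posant $\tilde v = v/\|v\|_{(G^{*})}$. D'après \eqref{21}, on a $\rho(\tilde v;G^{*})\leq 1$, de sorte que $\tilde v$ est admissible dans la définition \eqref{14} de la norme d'Orlicz de $u$. On en déduit $\int_{\Omega}|u\tilde v|\,dx\leq\|u\|_{G}$, puis \eqref{24} en multipliant par $\|v\|_{(G^{*})}$.

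L'inégalité \eqref{25} s'obtient par l'argument symétrique, en échangeant les rôles de $G$ et $G^{*}$ : on normalise $u$ par $\|u\|_{(G)}$, on utilise \eqref{21} appliquée au couple $(u,G)$ pour garantir l'admissibilité dans la définition de $\|v\|_{G^{*}}$, et on conclut de la m\^{e}me manière. Enfin, \eqref{30} résulte immédiatement de \eqref{24} combinée avec l'équivalence $\|u\|_{G}\leq 2\|u\|_{(G)}$ du théorème \ref{thm2}.

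Aucune difficulté sérieuse n'est attendue au-delà de cette mise en ordre : toute la preuve est un exercice de rééchelonnement, et l'ingrédient technique crucial, à savoir l'inégalité modulaire \eqref{21} pour la norme de Luxemburg, est déjà établi dans la remarque \ref{rem7}. Le seul point exigeant un minimum de vigilance est le passage de la définition par infimum de $\|\cdot\|_{(G^{*})}$ au contrôle modulaire $\rho(\tilde v;G^{*})\leq 1$, contrôle fourni précisément par \eqref{21} via le théorème de Fatou.
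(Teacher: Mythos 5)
Votre preuve est correcte. Pour \eqref{24} et \eqref{25}, vous suivez exactement la démarche du texte : normalisation de $v$ (resp. $u$) par sa norme de Luxemburg, contrôle modulaire $\rho(\tilde v;G^{*})\leq1$ fourni par \eqref{21}, admissibilité dans la définition \eqref{14} de la norme d'Orlicz, puis rééchelonnement ; le traitement préalable des cas où une norme s'annule est même un peu plus soigné que dans le texte. La seule divergence concerne \eqref{30} : le texte la démontre directement en appliquant l'inégalité de Young aux fonctions normalisées $|u|/\|u\|_{(G)}$ et $|v|/\|v\|_{(G^{*})}$, ce qui donne la borne $2$ sans passer par la norme d'Orlicz, tandis que vous la déduisez de \eqref{24} combinée à l'inégalité $\|u\|_{G}\leq2\|u\|_{(G)}$ du théorème \ref{thm2}. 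Les deux routes sont valides et non circulaires (le théorème \ref{thm2} ne dépend pas de l'énoncé présent) ; l'argument du texte est légèrement plus autonome, le vôtre a l'avantage de faire apparaître \eqref{30} comme conséquence mécanique de \eqref{24} et de l'équivalence des deux normes.
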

\begin{proof}
Soient $u\in L^{G}(\Omega)$, $v\in L^{G^{*}}(\Omega)$, d'après l'inégalité de Young et la remarque \ref{rem7} on a
$$\int_{\Omega}\frac{|u(t)v(t)|}{\|u\|_{(G)}\|v\|_{(G^{*})}}dt\leq\int_{\Omega} G\bigg{(}\frac{|u(t)|}{\|u\|_{(G)}}\bigg{)}dt+ \int_{\Omega} G^{*}\bigg{(}\frac{|v(t)|}{\|v\|_{(G^{*})}}\bigg{)}dt\leq 2,$$ d'où \eqref{30}.\\
D'après la remarque \eqref{rem8} on a \begin{align*}
                                          \int_{\Omega}|u(t)v(t)|dt&=\|v\|_{(G^{*})}\int_{\Omega}\frac{|v(t)|}{\|v\|_{(G^{*})}}|u(t)|dt\\
                                          &\leq \|v\|_{(G^{*})}\|u\|_{G}
                                      \end{align*} d'où \eqref{24}. De m\^{e}me pour \eqref{25}.

\end{proof}

 \subsection{Convergence dans $L^{G}(\Omega)$}
 La convergence usuelle d'une suite $(u_{n})$  par rapport à la norme d'Orlicz dans l'espace  $L^{G}(\Omega)$ peut être introduite comme suit:
$$u_{n}\rightarrow u\ \text{dans}\ L^{G}(\Omega)\ \text{si et seulement si}\ \ds\lim_{n\rightarrow\infty}\|u_{n}-u\|_{G}=0.$$
Nous allons maintenant introduire un autre type de convergence dans les espaces d'Orlicz.
\begin{dfn}
  Soient $G$ une $N$-Fonction et $(u_{n})_{n\in\mathbb{N}}$ une suite de $L^{G}(\Omega)$. On dit que $(u_{n})$ converge en module vers $u$ si
  \begin{equation}\label{26}
   \ds\lim_{n\rightarrow\infty}\rho(u_{n}-u;G)= \ds\lim_{n\rightarrow\infty}\int_{G}G(|u_{n}(x)-u(x)|)dx=0.
  \end{equation}
\end{dfn}
\begin{rem}
  D'après le lemme \ref{lem2} et le théorème \ref{thm2}, $\forall\omega\in L^{G}(\Omega)$ tel que $\|\omega\|_{G}\leq1$,
  $$\rho(\omega;G)\leq \|\omega\|_{G}.$$ Ainsi, pour toute suite $(u_{n})$ qui converge vers $u$ dans $ L^{G}(\Omega)$, on a
  $$  \rho(u_{n}-u;G)\rightarrow 0.$$
  En d'autres termes, la convergence en norme est plus forte que celle en module.
\end{rem}
 On déduit la proposition suivante.

 \begin{pro}\label{pro1}
  Soit $G$ une $N$-Fonction. Alors, la convergence dans  $L^{G}(\Omega)$ implique toujours la convergence en module.
 \end{pro}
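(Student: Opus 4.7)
The plan is to exploit the chain of inequalities already assembled in the preceding results: the relation \eqref{20} between the Luxemburg and Orlicz norms, together with Lemma \ref{lem2}(i), which controls the modular by the Luxemburg norm whenever the latter is $\le 1$. Convergence in norm will therefore force the modular to vanish.

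More precisely, I would set $\omega_n = u_n - u$ and observe that the hypothesis $\|u_n - u\|_G \to 0$ means $\|\omega_n\|_G \to 0$. By \eqref{20} of Remarque \ref{rem7} we have $\|\omega_n\|_{(G)} \le \|\omega_n\|_G$, so $\|\omega_n\|_{(G)} \to 0$ as well. In particular, there exists $n_0 \in \mathbb{N}$ such that for every $n \ge n_0$, $\|\omega_n\|_{(G)} \le 1$. Lemma \ref{lem2}(i) then applies and yields
$$\rho(\omega_n; G) \le \|\omega_n\|_{(G)} \le \|\omega_n\|_G$$
for all $n \ge n_0$. Letting $n \to \infty$ on the right-hand side gives $\rho(u_n - u; G) \to 0$, which is exactly the convergence en module required by definition \eqref{26}.

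There is no real obstacle here; the proposition is essentially a direct corollary of Lemma \ref{lem2} combined with the inequality $\|\cdot\|_{(G)} \le \|\cdot\|_G$. The only point requiring a line of justification is the passage from norm convergence to the condition $\|\omega_n\|_{(G)} \le 1$ needed to activate the hypothesis of Lemma \ref{lem2}(i); this is handled by simply going to the tail of the sequence. Note that the converse implication (convergence en module entails norm convergence) is not claimed, and indeed it fails in general without a $\triangle_2$ assumption on $G$, which is consistent with the remark preceding the statement.
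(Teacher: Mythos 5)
Your argument is correct and is essentially the paper's own: the Remarque preceding the proposition combines Lemme \ref{lem2}(i) with the inequality $\|\cdot\|_{(G)}\leq\|\cdot\|_{G}$ to obtain $\rho(\omega;G)\leq\|\omega\|_{G}$ whenever $\|\omega\|_{G}\leq1$, and then applies this to $\omega_n=u_n-u$ along the tail of the sequence exactly as you do. Your explicit remark about passing to the tail so that $\|\omega_n\|_{(G)}\leq1$ is the one small point the paper leaves implicit.
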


 Le sens inverse de la proposition \ref{pro1} est assuré par le théorème suivant.
  \begin{thm}\label{thm4}
 Soit $G$ une $N$-Fonction. Si $G$ satisfait la condition $\triangle_{2}$ ($G$ satisfait $\triangle_{2,0}$ si $mes(\Omega)=+\infty$), alors la convergence en norme est équivalente à la convergence en module.
 \end{thm}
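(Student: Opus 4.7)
La proposition \ref{pro1} fournit déjà l'implication ``norme $\Rightarrow$ module'', donc tout le travail consiste à établir la réciproque sous l'hypothèse $\triangle_{2}$ (ou $\triangle_{2,0}$). Comme les normes $\|\cdot\|_{G}$ et $\|\cdot\|_{(G)}$ sont équivalentes (théorème \ref{thm2}), je raisonnerais avec la norme de Luxemburg. La stratégie est de ramener l'énoncé à l'assertion suivante: si $\rho(u_{n}-u;G)\to 0$, alors $\rho(\lambda(u_{n}-u);G)\to 0$ pour tout $\lambda>0$. En effet, prenant $\lambda=1/\varepsilon$ pour $\varepsilon>0$ arbitraire, ceci entraînera $\rho((u_{n}-u)/\varepsilon;G)\leq 1$ dès que $n$ est assez grand, donc $\|u_{n}-u\|_{(G)}\leq\varepsilon$.

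Pour $\lambda\leq 1$, l'inégalité \eqref{5} donne immédiatement $\rho(\lambda(u_{n}-u);G)\leq \lambda\,\rho(u_{n}-u;G)\to 0$, sans recours à $\triangle_{2}$. Pour $\lambda>1$, j'utiliserais le critère de croissance du théorème \ref{thm7}(2): il existe $C(\lambda)>0$ tel que $G(\lambda t)\leq C(\lambda)\,G(t)$ pour tout $t\geq T$. On décompose alors $\int_{\Omega}G(\lambda|u_{n}-u|)\,dx$ selon les ensembles $\{|u_{n}-u|\geq T\}$ et $\{|u_{n}-u|<T\}$. Sur le premier, la majoration $\triangle_{2}$ donne un terme borné par $C(\lambda)\,\rho(u_{n}-u;G)$, qui tend vers $0$. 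Dans le cas $\triangle_{2,0}$ ($T=0$, incluant $\mathrm{mes}(\Omega)=+\infty$), cette contribution est la seule et la conclusion est immédiate.

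L'obstacle principal apparaît dans le cas $\mathrm{mes}(\Omega)<\infty$ et consiste à contrôler l'intégrale sur $\{|u_{n}-u|<T\}$, zone où l'hypothèse $\triangle_{2}$ ne fournit aucune majoration. Je proposerais d'affiner la décomposition en introduisant $\eta\in\,]0,T[$: sur $\{|u_{n}-u|<\eta\}$, l'intégrande est majorée par $G(\lambda\eta)$, donnant une contribution au plus $G(\lambda\eta)\,\mathrm{mes}(\Omega)$, qui tend vers $0$ quand $\eta\to 0^{+}$ par continuité de $G$ à l'origine; sur $\{\eta\leq|u_{n}-u|<T\}$, l'intégrande est bornée par la constante $G(\lambda T)$ et la mesure de cet ensemble tend vers $0$ par l'inégalité de Markov modulaire
\[
\mathrm{mes}\{|u_{n}-u|\geq\eta\}\leq \frac{1}{G(\eta)}\,\rho(u_{n}-u;G)\xrightarrow[n\to\infty]{}0.
\]
Le point délicat est donc cette convergence en mesure déduite de la convergence modulaire, combinée au découpage à deux échelles. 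En prenant d'abord $\limsup_{n}$ à $\eta$ fixé, puis $\eta\to 0^{+}$, on conclut que $\rho(\lambda(u_{n}-u);G)\to 0$, ce qui achève la preuve par l'application avec $\lambda=1/\varepsilon$.
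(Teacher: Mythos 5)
Votre démonstration est correcte, mais elle suit une route réellement différente de celle du texte. Le texte passe par le lemme quantitatif \ref{pro4} : si $\rho(\omega;G)\leq k^{-m}$ alors $\|\omega\|_{G}\leq c\,2^{-m}$ ; on y dilate $\omega$ par $2^{m}$, on majore $\rho(2^{m}\omega;G)\leq G(T)\,mes(\Omega)+k^{m}\rho(\omega;G)$ en découpant $\Omega$ selon $2^{m}|\omega|\leq T$ ou non, puis on conclut par $\|v\|_{G}\leq\rho(v;G)+1$ et l'homogénéité de la norme. La contribution de la zone des petites valeurs y est seulement majorée par la constante $G(T)\,mes(\Omega)$ : c'est la division finale par $2^{m}$ qui rend la norme petite. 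Vous, au contraire, établissez l'énoncé plus fort que $\rho(\lambda(u_{n}-u);G)\to0$ pour \emph{tout} $\lambda>0$ et concluez directement par la définition de la norme de Luxemburg (puis le théorème \ref{thm2}) ; la zone des petites valeurs est traitée par un découpage à deux échelles $\eta<T$ combiné à l'inégalité de Tchebychev modulaire $mes\{|u_{n}-u|\geq\eta\}\leq\rho(u_{n}-u;G)/G(\eta)$, qui déduit la convergence en mesure de la convergence en module. Chaque approche a son intérêt : celle du texte fournit un taux explicite ($\|u_{n}-u\|_{G}\leq c\,2^{-m}$ dès que $\rho(u_{n}-u;G)\leq k^{-m}$) avec un minimum d'outils, tandis que la vôtre évite la comptabilité dyadique (vous invoquez directement le théorème \ref{thm7}(2) pour un facteur $\lambda$ arbitraire) et dégage au passage un fait utile en soi, à savoir que sous $\triangle_{2}$ la convergence en module est stable par toutes les dilatations. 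Tous les passages au $\limsup$ (d'abord en $n$ à $\eta$ fixé, puis $\eta\to0^{+}$) sont licites, et le cas $\triangle_{2,0}$ avec $mes(\Omega)=+\infty$ est correctement réduit au seul premier terme.
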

 Pour la démonstration nous avons besoin du lemme suivant.
 \begin{lem}\label{pro4}
   Soit $G$ une $N$-Fonction satisfaisant la condition $\triangle_{2}$ ($G$ satisfait $\triangle_{2,0}$ si $mes(\Omega)=+\infty$). Soit $\omega\in L^{G}(\Omega)$; s'il existe $m\in\mathbb{N}$ tel que \begin{equation}\label{67}\rho(\omega;G)\leq\frac{1}{k^{m}},\ k\ \text{fixée par la définition \ref{dfn2} },\end{equation} alors, \begin{equation}\label{69}
            \|\omega\|_{G}\leq\frac{c}{2^{m}},
          \end{equation}
          où $c=2$ si $mes(\Omega)=+\infty$ et $c=G(T)mes(\Omega)+2$ si $mes(\Omega)<\infty$.
 \end{lem}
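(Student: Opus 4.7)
The plan is to reduce the claim, via the homogeneity $\|2^m\omega\|_G=2^m\|\omega\|_G$, to showing $\|2^m\omega\|_G\leq c$, and then to use the bound $\|u\|_G\leq\rho(u;G)+1$ from Proposition \ref{pro6} to reduce further to an estimate on the modular $\rho(2^m\omega;G)=\int_\Omega G(2^m|\omega(x)|)\,dx$. The crucial input will be an iteration of the $\Delta_2$ (or $\Delta_{2,0}$) inequality: by induction on $m$, starting from $G(2t)\leq kG(t)$, one gets $G(2^m t)\leq k^m G(t)$ for every $t\geq T$ (and for every $t\geq 0$ in the $\Delta_{2,0}$ setting, since there $T=0$).

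For the case $mes(\Omega)=+\infty$ under $\Delta_{2,0}$, the pointwise bound $G(2^m t)\leq k^m G(t)$ is valid for all $t\geq 0$, so integration against the hypothesis gives
\[
\rho(2^m\omega;G)\leq k^m\rho(\omega;G)\leq k^m\cdot\frac{1}{k^m}=1.
\]
Proposition \ref{pro6} then yields $\|2^m\omega\|_G\leq 2$, hence $\|\omega\|_G\leq 2/2^m$, that is, $c=2$.

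For the case $mes(\Omega)<\infty$ under $\Delta_2$, I would split $\Omega=A\cup B$ with $A=\{|\omega|\geq T\}$ and $B=\{|\omega|<T\}$. On $A$ the iteration remains valid pointwise, so $\int_A G(2^m|\omega|)\,dx\leq k^m\rho(\omega;G)\leq 1$. On $B$ one cannot iterate $\Delta_2$ down to a comparison with $G(|\omega|)$, and the natural replacement is the monotonicity estimate $G(2^m|\omega|)\leq G(2^m T)$, so $\int_B G(2^m|\omega|)\,dx\leq G(2^m T)\,mes(\Omega)$; this in turn is controlled in terms of $G(T)\,mes(\Omega)$ by a further iteration of $\Delta_2$ applied to $G(2^m T)$, with auxiliary factors absorbed into $c$. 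Assembling the two pieces and adding the $+1$ from Proposition \ref{pro6} gives the desired bound $\|2^m\omega\|_G\leq c$ with $c=G(T)\,mes(\Omega)+2$.

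The main obstacle is precisely the treatment of the set $B=\{|\omega|<T\}$ in the finite-measure case: on $B$ the iterated $\Delta_2$ inequality is unavailable (the intermediate points $2^j|\omega|$ may remain below $T$), so one must trade the modular control for the finiteness of $mes(\Omega)$. This is exactly the mechanism producing the additional term $G(T)\,mes(\Omega)$ in the constant $c$ and the dichotomy between the two cases in the statement.
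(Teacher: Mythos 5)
Your reduction (homogeneity of the norm, then the bound $\|u\|_{G}\leq\rho(u;G)+1$ from Proposition \ref{pro6}, then a modular estimate for $2^{m}\omega$) is exactly the paper's skeleton, and your treatment of the case $mes(\Omega)=+\infty$ under $\triangle_{2,0}$ is correct. The gap is in the finite-measure case, precisely at the step you yourself flag as the main obstacle. You cut at $B=\{|\omega|<T\}$ and estimate $\int_{B}G(2^{m}|\omega|)\,dx\leq G(2^{m}T)\,mes(\Omega)\leq k^{m}G(T)\,mes(\Omega)$, claiming the extra factors can be ``absorbed into $c$''. They cannot: the constant $c=G(T)mes(\Omega)+2$ in the statement is independent of $m$, and this independence is the whole point --- the lemma is applied in Theorem \ref{thm4} by choosing $m$ large so that $c/2^{m}<\varepsilon$. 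Your estimate only yields $\|\omega\|_{G}\leq\bigl(k^{m}G(T)mes(\Omega)+2\bigr)/2^{m}$, and since $k>2$ (see the remark following Definition \ref{dfn2}) the term $(k/2)^{m}G(T)mes(\Omega)$ diverges as $m\to\infty$; the conclusion is lost.

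The paper avoids this by cutting at the other scale: $\Omega_{1}=\{x\in\Omega:\ 2^{m}|\omega(x)|\leq T\}$. On $\Omega_{1}$, monotonicity alone gives $G(2^{m}|\omega(x)|)\leq G(T)$ --- no power of $k$ appears --- while the iterated $\triangle_{2}$ bound $G(2^{m}|\omega(x)|)\leq k^{m}G(|\omega(x)|)$ is used only on the complement, where $2^{m}|\omega(x)|>T$. This gives $\rho(2^{m}\omega;G)\leq G(T)mes(\Omega)+k^{m}\rho(\omega;G)\leq G(T)mes(\Omega)+1=c-1$, hence $\|2^{m}\omega\|_{G}\leq c$ and $\|\omega\|_{G}\leq c/2^{m}$. (You are right that the dyadic iteration formally needs the intermediate points $2^{j}|\omega(x)|$ to stay above $T$, a subtlety the paper glosses over on the complement of $\Omega_{1}$; but that regime costs at most a bounded number of extra factors of $k$, not $k^{m}$. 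The decisive correction to your argument is that the set on which one pays $G(T)$ must be $\{2^{m}|\omega|\leq T\}$, not $\{|\omega|\leq T\}$.)
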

 \begin{proof}
   Soit $m\in\mathbb{N}$ fixé. Si $mes(\Omega)<\infty$, on note $$\Omega_{1}=\{x\in\Omega;\ 2^{m}|\omega(x)|\leq T\}.$$
   Alors
   \begin{equation}\label{70}
     G(2^{m}|\omega(x)|)\leq G(T)\ \forall\ x\in\Omega_{1},
   \end{equation}
   et
   \begin{equation}\label{71}
      G(2^{m}|\omega(x)|)\leq k^{m}G(|\omega(x)|)\ \forall\ x\in\Omega\setminus\Omega_{1}.
   \end{equation}
   Ainsi \begin{align*}
           \int_{\Omega}G(2^{m}|\omega(x)|)dx&= \int_{\Omega_{1}}G(2^{m}|\omega(x)|)dx+ \int_{\Omega\backslash\Omega_{1}}G(2^{m}|\omega(x)|)dx\\
           &\leq\int_{\Omega_{1}}G(T)dx+k^{m}\int_{\Omega\backslash\Omega_{1}}G(|\omega(x)|)dx\\
           &\leq G(T)mes(\Omega_{1})+k^{m}\int_{\Omega}G(|\omega(x)|)dx,
         \end{align*}
         c.à.d \begin{equation}\label{72}
                 \rho(2^{m}\omega;G)\leq G(T)mes(\Omega)+k^{m}\rho(\omega;G).
               \end{equation}
   Si $T=0$ on obtient $$ \rho(2^{m}\omega;G)\leq  k^{m}\rho(\omega;G).$$
   Supposons que $u$ satisfait \eqref{67} on obtient $$\rho(2^{m}\omega;G)\leq c-1,$$ où
    \begin{equation}\label{133}
      c=2\ \text{si}\ mes(\Omega)=+\infty\ \text{et}\ c=G(T)mes(\Omega)+2\ \text{si}\ mes(\Omega)<\infty
    \end{equation}

   D'autre part, pour $v\in K^{G}(\Omega)$ vérifiant $\rho(v;G^{*})\leq1$ on a
   $$  \int_{\Omega}2^{m}|\omega(x)||v(x)|dx\leq\rho(2^{m}\omega;G)+1\leq c.$$
   Ceci donne $$\|2^{m}\omega\|_{G}\leq c$$ et $$\|\omega\|_{G}\leq\frac{c}{2^{m}}.$$
    \end{proof}
\begin{proof}[\textbf{Démonstration du théorème \ref{thm4}}]
En vu de la proposition \ref{pro1} il nous reste à montrer que la convergence en module implique la convergence dans $L^{G}(\Omega)$. Soient $(u_{n})_{n\in\mathbb{N}}\subset L^{G}(\Omega)$ et $u\in L^{G}(\Omega)$ telle que $$\ds\lim_{n\rightarrow+\infty}\rho(u_{n}-u;G)=0.$$
Soit $\varepsilon>0$, $\exists m\in\mathbb{N},$ $\exists n_{0}\in\mathbb{N}$
 tel que $$\rho(u_{n}-u;G)\leq \frac{1}{k^{m}},\ \forall n\geq n_{0}\ \text{et}\ \varepsilon>\frac{c}{2^{m}},$$ où $c$ est définie dans \eqref{133}.
Soit $\omega=u_{n}-u$, d'après \eqref{69}, on a $$\|u_{n}-u\|_{G}\leq\frac{c}{2^{m}}\leq\varepsilon,\ \forall n\geq n_{0}.$$

\end{proof}

 \begin{rem}\label{rem1}
   Soit $u\in K^{G}(\Omega)$.
   On considère la suite de fonction $(u_{n})_{n\in\mathbb{N}}$ définie par
                                 \begin{equation}\label{26}
                                    u_{n}(x)=\begin{cases}
                                                 u(x) & \mbox{si }\ |u(x)|\leq n,\ \text{et}\ |x|\leq n,\ x\in\Omega \\
                                                 0 & \mbox{sinon}.
                                             \end{cases}
                                  \end{equation}
      $(u_{n})$ converge en module vers $u$. \text{En effet}, comme $$G(|u(x)-u_{n}(x)|)\leq G(|u(x)|),$$
     alors d'après le théorème de convergence dominée
           \begin{equation}\label{27}
             \ds\lim_{n\rightarrow\infty}\rho(u_{n}-u;G)=\ds\lim_{n\rightarrow\infty}\int_{\Omega}G(|u_{n}(x)-u(x)|)dx=0.
           \end{equation}
 \end{rem}
 Nous annonçons l'assertion suivante.
 \begin{cor}\label{cor1}
   Soit $G$ une $N$-Fonction. Alors
   \begin{enumerate}
     \item Toute fonction $u$ de la classe $K^{G}(\Omega)$ peut \^{e}tre approchée au sens de la convergence en module par une suite de fonctions bornées et à supports bornés dans $\overline{\Omega}$.
     \item Si $G$ satisfait la condition $\triangle_{2}$, alors l'ensemble des fonctions mesurables bornées sur $\Omega$ et à supports bornés dans $\overline{\Omega}$ est dense dans $L^{G}(\Omega)$.
   \end{enumerate}
 \end{cor}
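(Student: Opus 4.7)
Le plan est de combiner directement la construction de suite d'approximation donnée dans la Remarque \ref{rem1} qui précède immédiatement l'énoncé, avec deux résultats structurels déjà établis sous la condition $\triangle_{2}$ : le Théorème \ref{thm13} (identification $K^{G}(\Omega)=L^{G}(\Omega)$) et le Théorème \ref{thm4} (équivalence entre convergence en module et convergence en norme).

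Pour l'assertion (1), je reprendrais la suite tronquée
\[
u_{n}(x)=\begin{cases} u(x) & \text{si } |u(x)|\leq n \text{ et } |x|\leq n,\ x\in\Omega,\\ 0 & \text{sinon,}\end{cases}
\]
déjà introduite dans la Remarque \ref{rem1}. Chaque $u_{n}$ est bornée par $n$ et son support est inclus dans $\overline{B(0,n)}\cap\overline{\Omega}$, donc borné dans $\overline{\Omega}$. Comme $u\in K^{G}(\Omega)$, on a $G(|u|)\in L^{1}(\Omega)$, et la domination $G(|u_{n}-u|)\leq G(|u|)$ jointe à la convergence simple $u_{n}\to u$ permet d'appliquer le théorème de convergence dominée pour obtenir $\rho(u_{n}-u;G)\to 0$, c'est-à-dire la convergence en module.

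Pour l'assertion (2), soit $u\in L^{G}(\Omega)$. Le Théorème \ref{thm13} garantit que sous l'hypothèse $\triangle_{2}$ (resp.\ $\triangle_{2,0}$ si $\text{mes}(\Omega)=+\infty$), on a $L^{G}(\Omega)=K^{G}(\Omega)$, donc $u\in K^{G}(\Omega)$. L'assertion (1) fournit alors une suite $(u_{n})$ de fonctions mesurables bornées à supports bornés dans $\overline{\Omega}$ telle que $\rho(u_{n}-u;G)\to 0$. Enfin, le Théorème \ref{thm4} assure que sous la même condition $\triangle_{2}$, la convergence en module équivaut à la convergence en norme d'Orlicz, d'où $\|u_{n}-u\|_{G}\to 0$. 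Ceci établit la densité recherchée.

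Il n'y a pas véritablement d'obstacle technique : la difficulté principale, à savoir le passage de la convergence en module à la convergence en norme, est précisément contenue dans le Théorème \ref{thm4}, lui-même démontré via le Lemme \ref{pro4}. Le corollaire se réduit donc à un enchaînement des résultats précédents, la seule vérification à mentionner étant que la suite de troncatures appartient bien à la classe de fonctions décrite (bornées, à support borné dans $\overline{\Omega}$), ce qui est immédiat par construction.
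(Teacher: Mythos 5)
Votre démonstration est correcte et suit exactement la même démarche que celle du texte : l'assertion (1) se réduit à la Remarque \ref{rem1} (troncature et convergence dominée), et l'assertion (2) enchaîne l'identification $K^{G}(\Omega)=L^{G}(\Omega)$ du Théorème \ref{thm13} avec l'équivalence module/norme du Théorème \ref{thm4}. Rien à signaler.
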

 \begin{proof}
   L'assertion $(1)$ découle de la Remarque \ref{rem1}. Si $G$ satisfait la condition $\triangle_{2}$, on a $L^{G}(\Omega)=K^{G}(\Omega)$ et la convergence en module implique la convergence en norme.
    D'après l'assertion $(1)$ l'ensemble des fonctions bornées sur $\Omega$ et à support bornés dans $\overline{\Omega}$ est dense dans $L^{G}(\Omega)$.
 \end{proof}
 Nous allons maintenant introduire la notion d'un ensemble \textit{borné en module}.
 \begin{dfn}
   Soit $G$ une $N$-Fonction. Un sous-ensemble $\Re$ de $L^{G}(\Omega)$ est dit borné en module, s'il existe $c>0$ telle que
   $$\int_{\Omega}G(|u(x)|)dx\leq c,\ \forall u\in\Re.$$
 \end{dfn}
 \begin{rem}\label{rem6}
    Soit $G$ une $N$-Fonction. Alors, il découle de la proposition \ref{pro6}, qu'un sous-ensemble $\Re$ de $L^{G}(\Omega)$ borné en module est automatiquement borné en norme et on a $ \|u\|_{G}\leq \rho(u;G)+1$ donc $$\|u\|_{G}\leq c+1,\ \forall u\in\Re.$$
 \end{rem}
 \begin{pro}
    Si $G$ satisfait la condition $\triangle_{2}$ ($G$ satisfait $\triangle_{2,0}$ si $mes(\Omega)=+\infty$). Alors un ensemble $\Re$ est borné dans $L^{G}(\Omega)$ si et seulement si est borné en module.
 \end{pro}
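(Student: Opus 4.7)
La proposition énonce une équivalence. Un sens est déjà essentiellement fait dans la Remarque \ref{rem6}: si $\Re$ est borné en module, disons $\rho(u;G)\leq c$ pour tout $u\in\Re$, alors par \eqref{16} on a $\|u\|_G\leq \rho(u;G)+1\leq c+1$. Ce sens n'utilise pas la condition $\triangle_{2}$.

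Pour la réciproque, supposons que $\Re$ est borné en norme, c'est-à-dire qu'il existe $M>0$ tel que $\|u\|_G\leq M$ pour tout $u\in\Re$. Le plan est de ramener l'estimation de $\rho(u;G)$ à celle de $\rho(u/M;G)$, qu'on sait contrôler grâce au Lemme \ref{lem2} et au Théorème \ref{thm2}. Précisément, d'après \eqref{22}, $\|u/M\|_{(G)}\leq \|u/M\|_G\leq 1$, et par la Remarque \ref{rem7} (inégalité \eqref{21}) on a donc
\begin{equation*}
\rho\!\left(\frac{u}{M};G\right)=\int_{\Omega}G\!\left(\frac{|u(x)|}{M}\right)dx\leq 1.
\end{equation*}

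Reste à obtenir une majoration uniforme de $\rho(u;G)=\int_{\Omega}G(|u|)\,dx$ à partir de celle de $\rho(u/M;G)$. Ici intervient la condition $\triangle_{2}$: soit $n\in\mathbb{N}$ choisi de sorte que $2^{n}\geq M$. Par itération de \eqref{68}, on a $G(2^{n}t)\leq k^{n}G(t)$ pour tout $t\geq T$, et donc $G(Mt)\leq G(2^{n}t)\leq k^{n}G(t)$ pour $t\geq T$. L'étape clé est alors le découpage de $\Omega$ selon que $|u(x)|/M$ dépasse ou non le seuil $T$, en imitant la preuve du Théorème \ref{thm15}$\,(i)$: en posant $\Omega_{T}=\{x\in\Omega\,;\ |u(x)|/M\leq T\}$, on écrit
\begin{align*}
\rho(u;G)&=\int_{\Omega_{T}}G\!\left(M\cdot \tfrac{|u(x)|}{M}\right)dx+\int_{\Omega\setminus\Omega_{T}}G\!\left(M\cdot \tfrac{|u(x)|}{M}\right)dx\\
&\leq G(MT)\,mes(\Omega)+k^{n}\int_{\Omega}G\!\left(\tfrac{|u(x)|}{M}\right)dx\leq G(MT)\,mes(\Omega)+k^{n}.
\end{align*}

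Si $mes(\Omega)<\infty$, ce membre de droite est une constante $c$ ne dépendant que de $M$, $T$, $k$ et de $mes(\Omega)$, et la majoration $\rho(u;G)\leq c$ est uniforme en $u\in\Re$. Si $mes(\Omega)=+\infty$, l'hypothèse $G\in\triangle_{2,0}$ permet de prendre $T=0$, donc $\Omega_{T}$ est négligeable et le premier terme disparaît, laissant $\rho(u;G)\leq k^{n}$. Dans les deux cas, $\Re$ est borné en module. Je ne vois pas d'obstacle sérieux: toute la difficulté se réduit au choix de $n$ tel que $2^{n}\geq M$ et au découpage $|u|/M\leq T$ vs $|u|/M>T$, déjà exploité dans le Théorème \ref{thm15} et le Lemme \ref{pro4}.
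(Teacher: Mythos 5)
Votre preuve est correcte et suit essentiellement la même démarche que celle du papier : le sens direct via la Remarque \ref{rem6} et l'inégalité \eqref{16}, et la réciproque en choisissant $2^{n}\geq M$, en déduisant $\rho(u/M;G)\leq 1$ de la borne en norme, puis en découpant $\Omega$ selon un seuil lié à $T$ pour appliquer la condition $\triangle_{2}$ itérée, exactement comme dans la démonstration du Lemme \ref{pro4} à laquelle le papier renvoie. Seule différence cosmétique : le papier travaille avec $\omega=u/2^{m}$ et la constante $G(T)mes(\Omega)+k^{m}$, tandis que vous scindez en $|u|/M\leq T$ et obtenez $G(MT)mes(\Omega)+k^{n}$ — votre seuil est d'ailleurs celui qui justifie proprement l'itération $G(2^{n}s)\leq k^{n}G(s)$ pour $s\geq T$.
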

\begin{proof}On suppose que  $\|u\|_{G}\leq a$, $\forall u\in\Re$. On choisit $m\in\mathbb{N}$ tel que $a\leq2^{m}$, $\omega=\frac{u}{2^{m}}$ on a

$$\rho\bigg{(}\frac{u}{2^{m}};G)\bigg{)}\leq\rho\bigg{(}\frac{u}{\|u\|_{G}};G)\bigg{)}\leq 1,$$
par le m\^{e}me argument que dans la démonstration du lemme \ref{pro4} on obtient $$\rho(u;G)\leq\begin{cases}
                       G(T)mes(\Omega)+k^{m} & \mbox{si }\ mes(\Omega)<\infty \\
                       k^{m} & \mbox{si}\ mes(\Omega)=+\infty,
                     \end{cases}
                     ,\ \forall u\in\Re$$
\end{proof}
\subsection{Comparaison des espaces d'Orlicz}
On sait que si $mes(\Omega)<\infty$ et $1<p<q$ on a $$L^{q}(\Omega)\hookrightarrow L^{p}(\Omega).$$
Nous pouvons obtenir un résultat similaire pour les espaces d'Orlicz en utilisant la relation $\prec$ introduite dans la définition \ref{dfn1}.
\begin{thm}\label{thm22}
  Soient $G_{1}$, $G_{2}$ deux $N$-Fonctions et $mes(\Omega)<+\infty$. Alors $$L^{G_{1}}(\Omega)\subset L^{G_{2}}(\Omega)$$ si et seulement si \begin{equation}\label{87}
                                                                                                                        G_{2}\prec G_{1}.
                                                                                                                      \end{equation}
\end{thm}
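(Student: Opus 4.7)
L'approche consiste à traiter séparément les deux implications. La condition suffisante se traite directement en utilisant $mes(\Omega)<+\infty$ pour absorber l'intégrale sur la zone "basse" $\{t<T\}$. La condition nécessaire se démontre par l'absurde, en construisant explicitement une fonction $u$ appartenant à $L^{G_1}(\Omega)$ mais pas à $L^{G_2}(\Omega)$.

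Pour la condition suffisante, supposons qu'il existe $c>0$ et $T>0$ tels que $G_2(x)\leq G_1(cx)$ pour tout $x\geq T$. Soit $u\in L^{G_1}(\Omega)$; il existe alors $\mu>0$ avec $\mu u\in K^{G_1}(\Omega)$, c'est-à-dire $\int_{\Omega}G_1(\mu|u|)\,dx<\infty$. En posant $\nu=\mu/c$ et en séparant $\Omega$ selon $\{\nu|u|\geq T\}$ et son complémentaire, on a $G_2(\nu|u|)\leq G_1(c\nu|u|)=G_1(\mu|u|)$ sur le premier ensemble et $G_2(\nu|u|)\leq G_2(T)$ sur le second, d'où
\[
\int_{\Omega}G_2(\nu|u|)\,dx\leq G_2(T)\,mes(\Omega)+\int_{\Omega}G_1(\mu|u|)\,dx<\infty.
\]
Ainsi $\nu u\in K^{G_2}(\Omega)\subset L^{G_2}(\Omega)$, et puisque $L^{G_2}(\Omega)$ est un espace vectoriel, $u\in L^{G_2}(\Omega)$.

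Pour la condition nécessaire, supposons par l'absurde que $L^{G_1}(\Omega)\subset L^{G_2}(\Omega)$ mais $G_2\not\prec G_1$. Par négation, pour tout $c>0$ et tout $T>0$, il existe $x\geq T$ avec $G_2(x)>G_1(cx)$. En appliquant ceci avec $c=n^2$ et un $T$ choisi assez grand à chaque étape (par exemple $T=\max(n,y_{n-1}+1)$), je construirai par récurrence une suite strictement croissante $(y_n)$ avec $G_2(y_n)>G_1(n^2 y_n)$. En posant $x_n=ny_n$ et en exploitant la convexité de $G_1$ via \eqref{6}, on obtient
\[
G_2(x_n/n)=G_2(y_n)>G_1(n^2 y_n)=G_1(nx_n)\geq n\,G_1(x_n).
\]
Je choisirai ensuite des sous-ensembles disjoints $\Omega_n\subset\Omega$ avec $mes(\Omega_n)=A/(n^2 G_1(x_n))$ pour une constante $A>0$ assez petite afin que $\sum_n mes(\Omega_n)\leq mes(\Omega)$ (possible car la série converge, $G_1(x_n)\geq G_1(1)>0$ pour $n$ grand, et $mes(\Omega)<+\infty$). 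En définissant $u=\sum_n x_n\chi_{\Omega_n}$, on voit que $\int_{\Omega}G_1(u)\,dx=A\sum_n 1/n^2<\infty$, donc $u\in K^{G_1}(\Omega)\subset L^{G_1}(\Omega)$. Puis pour tout $\lambda>0$, en choisissant $N\in\mathbb{N}^*$ avec $\lambda\geq 1/N$ et en utilisant la monotonie de $G_2$ et l'inégalité $x_n/N\geq x_n/n$ valable pour $n\geq N$,
\[
\int_{\Omega}G_2(\lambda u)\,dx\geq \sum_{n\geq N}G_2(x_n/N)\,mes(\Omega_n)\geq \sum_{n\geq N}G_2(x_n/n)\,mes(\Omega_n)\geq A\sum_{n\geq N}\frac{1}{n}=+\infty,
\]
donc $u\notin L^{G_2}(\Omega)$, contradiction.

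L'obstacle principal est d'assurer que $\int_{\Omega}G_2(\lambda u)\,dx=+\infty$ pour \emph{tout} $\lambda>0$, alors que la construction analogue de la proposition \ref{pro3} n'avait à gérer que $\lambda=1$. Cette contrainte justifie l'inégalité renforcée $G_2(x_n/n)>n\,G_1(x_n)$ plutôt que $G_2(x_n)>n\,G_1(x_n)$, obtenue en appliquant la négation de $G_2\prec G_1$ avec $c=n^2$ puis en substituant $x_n=ny_n$: le facteur quadratique laisse un $n$ en réserve pour absorber le changement d'échelle $\lambda\mapsto 1/N$, ce qui est la clé permettant le passage de la comparaison des classes $K^{G_i}$ à celle des espaces d'Orlicz $L^{G_i}$.
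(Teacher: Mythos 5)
Votre démonstration est correcte et suit essentiellement la même démarche que celle du texte : même découpage de $\Omega$ pour la condition suffisante, et pour la condition nécessaire, même construction d'une fonction étagée $u=\sum_n x_n\chi_{\Omega_n}$ dont les valeurs portent un facteur $n$ supplémentaire afin que $\int_\Omega G_2(\lambda u)\,dx$ diverge pour \emph{tout} $\lambda>0$. La seule différence est cosmétique : vous prenez $c=n^2$ et l'inégalité \eqref{6} là où le texte prend $c=2^n$ et l'inégalité \eqref{5}, avec des mesures $mes(\Omega_n)\propto n^{-2}/G_1(x_n)$ au lieu de $2^{-n}/G_1(nt_n)$; votre rédaction est d'ailleurs un peu plus propre sur ce point.
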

\begin{proof}
  On suppose qu'il existe $c>0$ et $T\geq0$ tels que \begin{equation}\label{86}
                                                       G_{2}(x)\leq G_{1}(cx),\ \forall x\geq T.
                                                     \end{equation}
Soit $u\in L^{G_{1}}(\Omega)$. Il existe $\gamma>0$ tel que $$\gamma u\in  K^{G_{1}}(\Omega),$$ c.à.d, $\rho(\gamma u;G_{1})<\infty$. Soit
$$\Omega_{1}=\{x\in\Omega;\ |u(x)|\leq c\frac{T}{\gamma}\}.$$  Pour tout $ x\in\Omega\backslash\Omega_{1}$, on a
$$G_{2}(\frac{\gamma}{c}|u(x)|)\leq G_{1}(\gamma|u(x)|)$$ ceci donne, \begin{align*}
                                                                            \int_{\Omega}G_{2}(\frac{\gamma}{c}|u(x)|)dx & = \int_{\Omega_{1}}G_{2}(\frac{\gamma}{c}|u(x)|)dx+ \int_{\Omega\backslash\Omega_{1}}G_{2}(\frac{\gamma}{c}|u(x)|)dx\\
                                                                            &\leq G_{2}(T)mes(\Omega)+\int_{\Omega\backslash\Omega_{1}}G_{1}(\gamma|u(x)|)dx\leq
                                                                            G_{2}(T)mes(\Omega)+\rho(\gamma u;G)<\infty,
                                                                          \end{align*}
                                       c.à.d, $$ \frac{\gamma}{c}u\in K^{G_{2}}(\Omega)$$ d'où $u\in L^{G_{2}}(\Omega)$.\\ \\
  \text{Inversement}\\
  Supposons que \eqref{87} n'est pas vérifiée, alors il existe une suite $(t_{n})_{n\in\mathbb{N}}$ telle que $$0<t_{1}<t_{2}<...,\ \ds\lim_{n\rightarrow+\infty}t_{n}=+\infty,$$ et \begin{equation}\label{88}
                                                     G_{2}(t_{n})>G_{1}(2^{n}t_{n}),\ n\in\mathbb{N}.
                                                   \end{equation}
     Appliquant la formule \eqref{5} pour  $G_{1}$ et $t=2^{n}nt_{n}$, $\alpha=2^{-n}$ on obtient $$G_{1}(nt_{n})\leq 2^{-n}G_{1}(2^{n}nt_{n})$$
     et
                                  \begin{equation}\label{89}
                                     G_{2}(t_{n})>2^{n}G_{1}(nt_{n}).
                                  \end{equation}
       D'après la propriété des sous-ensembles mesurables de mesure finis, il existe une
      suite de sous-ensembles disjoints $\Omega_{n}$ de $\Omega$, $n\in\mathbb{N}$ telle que
     $$mes(\Omega_{n})=\frac{G_{1}(t_{1})mes(\Omega)}{2^{n}G_{1}(t_{n})}.$$
     On remarque que $$\sum_{n=1}^{\infty}mes(\Omega_{n})<\sum_{n=1}^{\infty}2^{-n}mes(\Omega)=mes(\Omega).$$
     Soit la fonction $u$ définie par $$u(x)=\begin{cases}
                                     nt_{n} & \mbox{si }\ x\in\Omega_{n},\ n\in\mathbb{N}  \\
                                     0 & \mbox{sinon},
                                   \end{cases} $$
          $u\in L^{G_{1}}(\Omega)$, en effet
          \begin{align*}
            \int_{\Omega}G_{1}(|u(x)|)dx&=\sum_{n=1}^{\infty}\int_{\Omega_{n}}G_{1}(nt_{n})dx=\sum_{n=1}^{\infty}G_{1}(nt_{n})mes(\Omega_{n})\\
            &=\sum_{n=1}^{\infty}\frac{1}{2^{n}}G_{1}(t_{1})mes(\Omega)<\infty.
          \end{align*}
          D'autre part, $u\in L^{G_{2}}(\Omega)$. En effet, pour un choix de $\gamma$ arbitraire avec $0<\gamma\leq1$ on a $$\gamma u\notin L^{G_{2}}(\Omega).$$ Pour cela il suffit de choisir $m\in\mathbb{N}$ tel que $\gamma>\frac{1}{m}$, et en vertu de \eqref{89}, on a
          \begin{align*}
            \int_{\Omega}G_{2}(\gamma|u(x)|)dx & =\sum_{n=1}^{\infty}G_{2}(\gamma n t_{n})mes(\Omega_{n})\\
            &\geq\sum_{n=m}^{\infty}G_{2}( t_{n})mes(\Omega_{n})>\sum_{n=m}^{\infty}2^{n}G_{1}(n t_{n})mes(\Omega_{n})\\
            &=\sum_{n=m}^{\infty}G_{1}(t_{1})mes(\Omega)=\infty.
          \end{align*}
          D'où, le résultat.
\end{proof}
Le théorème suivant est une conséquence du théorème \ref{thm22}.
\begin{thm}
  Soient $G_{1}$, $G_{2}$ deux $N$-Fonctions et $mes(\Omega)<\infty$. Alors \begin{equation}\label{90}
                                                      L^{G_{1}}(\Omega)= L^{G_{2}}(\Omega)
                                                    \end{equation} si et seulement si $$G_{2}\sim G_{1}.$$
\end{thm}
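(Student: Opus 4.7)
Le plan est d'obtenir ce résultat comme un corollaire direct du théorème \ref{thm22}, en décomposant l'égalité ensembliste en une double inclusion.

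D'abord, je remarquerais que l'égalité $L^{G_{1}}(\Omega) = L^{G_{2}}(\Omega)$ équivaut trivialement à la conjonction des deux inclusions $L^{G_{1}}(\Omega) \subset L^{G_{2}}(\Omega)$ et $L^{G_{2}}(\Omega) \subset L^{G_{1}}(\Omega)$. Ensuite, j'appliquerais le théorème \ref{thm22} (valide car $\text{mes}(\Omega)<\infty$) séparément à chaque inclusion : la première donne, si et seulement si, $G_{2} \prec G_{1}$, tandis que la seconde donne $G_{1} \prec G_{2}$.

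Il reste à revenir à la définition \ref{dfn1} : la relation $G_{1} \sim G_{2}$ a été définie précisément comme la conjonction $G_{1} \prec G_{2}$ et $G_{1} \succ G_{2}$, c'est-à-dire $G_{2} \prec G_{1}$. Ainsi, la combinaison des deux équivalences ci-dessus donne immédiatement que $L^{G_{1}}(\Omega) = L^{G_{2}}(\Omega)$ si et seulement si $G_{1} \sim G_{2}$.

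Il n'y a pas réellement d'obstacle technique dans cette preuve, puisque tout le travail a été fait dans le théorème \ref{thm22}. Le seul point auquel je ferais attention est de bien rappeler, pour la lisibilité, la forme explicite de la condition d'équivalence via les constantes $a,b$ et le seuil $x_{0}$ données par la définition \ref{dfn1}, afin que le lecteur puisse vérifier que les deux conditions $G_{2}(x) \leq G_{1}(c_{1}x)$ pour $x \geq T_{1}$ et $G_{1}(x) \leq G_{2}(c_{2}x)$ pour $x \geq T_{2}$, fournies par les deux applications du théorème \ref{thm22}, se réécrivent bien sous la forme $G_{1}(ax) \leq G_{2}(x) \leq G_{1}(bx)$ pour $x \geq x_{0}$ (avec $a = 1/c_{2}$, $b = c_{1}$ et $x_{0} = \max(T_{1}, c_{2}T_{2})$).
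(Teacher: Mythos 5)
Votre démonstration est correcte et suit exactement la voie prévue par le texte : le papier énonce ce théorème comme conséquence immédiate du théorème \ref{thm22} sans détailler, et votre décomposition en double inclusion, combinée à la définition \ref{dfn1} de l'équivalence $\sim$ comme conjonction des deux dominations, est précisément l'argument attendu. Votre vérification finale du recollement des constantes et des seuils ($a=1/c_{2}$, $b=c_{1}$, $x_{0}=\max(T_{1},c_{2}T_{2})$) est un soin supplémentaire bienvenu mais ne change rien à l'approche.
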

\begin{rem}
  De l'égalitée \eqref{90}, on peut montrer que les normes des deux espaces considérés sont équivalentes. Ceci découlera du théorème suivant qui fournit que l'inclusion  $L^{G_{1}}(\Omega)\subset L^{G_{2}}(\Omega)$ implique l'injection $$ L^{G_{1}}(\Omega)\hookrightarrow L^{G_{2}}(\Omega).$$
\end{rem}
\begin{thm}
  Si $L^{G_{1}}(\Omega)\subset L^{G_{2}}(\Omega)$, alors il existe $k>0$ tel que
  \begin{equation}\label{121}\|u\|_{G_{2}}\leq k\|u\|_{G_{1}},\ \forall u\in L^{G_{1}}(\Omega).\end{equation}
\end{thm}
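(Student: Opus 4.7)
L'approche consiste à appliquer le théorème du graphe fermé à l'injection canonique $T : L^{G_1}(\Omega) \to L^{G_2}(\Omega)$, $T(u) = u$, qui est bien définie et linéaire par hypothèse. Puisque les espaces d'Orlicz $L^{G_1}(\Omega)$ et $L^{G_2}(\Omega)$ munis de leurs normes respectives sont des espaces de Banach (complétude qui s'établit de manière classique par la complétude des suites de Cauchy en module, combinée avec le lemme de Fatou appliqué à la modulaire), il suffira de vérifier que le graphe de $T$ est fermé pour obtenir la continuité, et donc l'existence de $k > 0$ vérifiant \eqref{121}.

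La première étape consiste à établir le lemme auxiliaire suivant : la convergence en norme d'Orlicz $\|u_n - u\|_{G} \to 0$ entraîne la convergence $\int_E |u_n - u|\,dx \to 0$ pour tout sous-ensemble mesurable $E \subset \Omega$ de mesure finie. En effet, l'inégalité de Hölder \eqref{19} donne
$$\int_E |u_n(x) - u(x)|\,dx \leq \|u_n - u\|_{G}\,\|\chi_E\|_{G^*},$$
et la norme $\|\chi_E\|_{G^*}$ est finie d'après la proposition \ref{pro5}. On en déduit, par un argument classique d'extraction de sous-suites, qu'une sous-suite converge presque partout sur toute partie de mesure finie, donc presque partout sur $\Omega$ (en recouvrant $\Omega$ par une suite croissante de parties de mesure finie).

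Pour la deuxième étape, supposons que $u_n \to u$ dans $L^{G_1}(\Omega)$ et $u_n \to v$ dans $L^{G_2}(\Omega)$. D'après le lemme précédent appliqué successivement aux deux normes, il existe des sous-suites convergeant presque partout vers $u$ et vers $v$ respectivement; par une nouvelle extraction diagonale, on peut supposer qu'une même sous-suite $(u_{n_k})$ converge à la fois vers $u$ et vers $v$ presque partout, d'où $u = v$ presque partout. Le graphe de $T$ est donc fermé.

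La principale difficulté technique réside dans la justification rigoureuse du passage à la convergence presque partout par extraction : il faut s'assurer que l'on peut, à la fois pour $L^{G_1}$ et $L^{G_2}$, extraire une unique sous-suite commune convergeant presque partout sur $\Omega$ tout entier, ce qui nécessite l'argument d'épuisement de $\Omega$ par une suite croissante d'ensembles de mesure finie. Une fois ceci établi, le théorème du graphe fermé fournit immédiatement l'existence d'une constante $k > 0$ telle que $\|Tu\|_{G_2} = \|u\|_{G_2} \leq k\,\|u\|_{G_1}$ pour tout $u \in L^{G_1}(\Omega)$, ce qui achève la démonstration.
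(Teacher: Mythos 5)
Votre démonstration est correcte, mais elle suit une route réellement différente de celle du texte. Vous invoquez le théorème du graphe fermé pour l'injection canonique $T:L^{G_{1}}(\Omega)\to L^{G_{2}}(\Omega)$, en vérifiant la fermeture du graphe via l'inégalité de Hölder \eqref{19}, la finitude de $\|\chi_{E}\|_{G^{*}}$ (proposition \ref{pro5}) et l'extraction de sous-suites convergeant presque partout après épuisement de $\Omega$ par des ensembles de mesure finie; tous ces ingrédients sont disponibles dans le texte et l'argument d'extractions successives (d'abord pour $L^{G_{1}}$, puis, de la sous-suite obtenue, pour $L^{G_{2}}$) identifie bien les deux limites. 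Le texte procède autrement: par l'absurde, il suppose l'existence de $u_{n}\geq0$ avec $\|u_{n}\|_{G_{1}}\leq1$ et $\|u_{n}\|_{G_{2}}>n^{3}$, forme la série normalement convergente $\sum n^{-2}u_{n}$ qui converge (propriété de Riesz--Fischer, c'est-à-dire la complétude) vers un $u\in L^{G_{1}}(\Omega)\subset L^{G_{2}}(\Omega)$, puis exploite la positivité des $u_{n}$ et la monotonie de la norme d'Orlicz pour obtenir $\|u\|_{G_{2}}\geq n^{-2}\|u_{n}\|_{G_{2}}>n$, contradiction. Votre approche est plus «abstraite» et se généralise sans effort à tout couple d'espaces de Banach de fonctions mesurables dans lesquels la convergence en norme entraîne la convergence en mesure locale; celle du texte évite le théorème du graphe fermé au prix de l'utilisation de la structure de treillis (positivité et monotonie de la norme), qui est précisément ce qui permet de court-circuiter la discussion sur l'identification des limites. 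Les deux preuves reposent in fine sur la complétude des deux espaces et sur un passage à la convergence presque partout; la vôtre est complète sous réserve d'expliciter, comme vous le signalez vous-même, l'argument diagonal d'extraction sur $\Omega=\bigcup_{k}\Omega_{k}$ avec $mes(\Omega_{k})<\infty$.
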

\begin{proof}
Supposons que \eqref{121} n'est pas vérifiée, il existe une suite de fonction $(u_{n})$ de $L^{G_{1}}(\Omega)$ telle que $$u_{n}\geq0,\ \|u_{n}\|_{G_{1}}\leq 1\ \text{et}\ \|u_{n}\|_{G_{2}}>n^{3},\ \forall n\in\mathbb{N}.$$
D'aprés la propriétée de Riesz-Fischer on a $\sum\frac{u_{n}}{n^{2}}$ converge dans $L^{G_{1}}(\Omega)$ vers une fonction $u\in L^{G_{1}}(\Omega)$, comme  $L^{G_{1}}(\Omega)\subset L^{G_{2}}(\Omega)$ alors $u\in L^{G_{2}}(\Omega)$. Pour $n$ assez grand on a $0\leq n^{-2}u_{n}\leq u$ donc
$\|u\|_{G_{2}}\geq n^{-2}\|u_{n}\|_{G_{2}}>n$, contradiction.

\end{proof}

\section{L'espace $E^{G}(\Omega)$}
Dans cette section on va introduire un nouveau espace de fonctions qui joue un r\^{o}le très important dans l'étude de quelques propriétés des espaces d'Orlicz.
\begin{dfn}
    Soit $G$ une $N$-Fonction.
    $$E^{G}(\Omega):=\{u\in L^{G}(\Omega);\ \lambda u\in K^{G}(\Omega)\ \text{pour tout}\ \lambda>0\}.$$
\end{dfn}
\begin{rem}
  Il est clair que $E^{G}(\Omega)$ est un espace vectoriel et on a les inclusions suivantes
  \begin{equation}\label{28}
    E^{G}(\Omega)\subset K^{G}(\Omega)\subset L^{G}(\Omega).
  \end{equation}
\end{rem}
 Les inclusions réciproques dans \eqref{28} ne sont pas validées en général.
 \begin{pro}\label{pro2}
  Soit $G$ une $N$-Fonction vérifiant la condition $\triangle_{2}$ ($G$ satisfait $\triangle_{2,0}$ si $mes(\Omega)=+\infty$). Alors
  \begin{equation}\label{29}
     E^{G}(\Omega)= K^{G}(\Omega)= L^{G}(\Omega).
  \end{equation}
 \end{pro}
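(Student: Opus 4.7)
Le plan consiste à observer que, par la remarque précédant l'énoncé, on dispose déjà des inclusions $E^{G}(\Omega)\subset K^{G}(\Omega)\subset L^{G}(\Omega)$. Il suffit donc d'établir l'inclusion réciproque $L^{G}(\Omega)\subset E^{G}(\Omega)$, car les inclusions $K^{G}(\Omega)\subset E^{G}(\Omega)$ et $L^{G}(\Omega)\subset K^{G}(\Omega)$ en découleront automatiquement.

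Pour cela, je procéderais en deux temps. D'abord, j'invoquerais le théorème \ref{thm13}, qui sous la condition $\triangle_{2}$ (respectivement $\triangle_{2,0}$ lorsque $mes(\Omega)=+\infty$) fournit directement l'égalité $K^{G}(\Omega)=L^{G}(\Omega)$. Ensuite, pour montrer $L^{G}(\Omega)\subset E^{G}(\Omega)$, je prendrais $u\in L^{G}(\Omega)$ et un réel $\lambda>0$ arbitraires, et j'utiliserais le fait que, d'après le théorème \ref{thm15}, $K^{G}(\Omega)$ est un espace vectoriel sous l'hypothèse $\triangle_{2}$ (ou $\triangle_{2,0}$). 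Puisque $u\in L^{G}(\Omega)=K^{G}(\Omega)$ et que $K^{G}(\Omega)$ est stable par multiplication scalaire, on aurait $\lambda u\in K^{G}(\Omega)$ pour tout $\lambda>0$, ce qui est exactement la définition de l'appartenance à $E^{G}(\Omega)$.

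La conclusion serait alors immédiate : les trois ensembles coïncident. Il n'y a en réalité pas d'obstacle technique puisque l'ensemble du travail a déjà été réalisé dans les théorèmes \ref{thm13} et \ref{thm15} ; la proposition \ref{pro2} n'en est qu'une reformulation synthétique. Le seul soin à apporter est de bien distinguer les deux cas $mes(\Omega)<+\infty$ et $mes(\Omega)=+\infty$, afin d'invoquer la bonne version ($\triangle_{2}$ ou $\triangle_{2,0}$) des résultats précédents, chacun de ces théorèmes traitant explicitement les deux situations.
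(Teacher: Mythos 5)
Votre démonstration est correcte : sous $\triangle_{2}$ (resp. $\triangle_{2,0}$), le théorème \ref{thm13} donne $K^{G}(\Omega)=L^{G}(\Omega)$, le théorème \ref{thm15} assure que $K^{G}(\Omega)$ est stable par multiplication scalaire, et donc tout $u\in L^{G}(\Omega)$ vérifie $\lambda u\in K^{G}(\Omega)$ pour tout $\lambda>0$, c'est-à-dire $u\in E^{G}(\Omega)$. La route diffère toutefois de celle du papier : celui-ci n'invoque pas le théorème \ref{thm13}. Il part directement de la caractérisation de $L^{G}(\Omega)$ (pour $u\in L^{G}(\Omega)$ il existe \emph{un} $\lambda_{0}>0$ tel que $\lambda_{0}u\in K^{G}(\Omega)$), puis utilise la condition $\triangle_{2}$ sous la forme $G(Kt)\leq C\,G(\lambda_{0}t)$ pour promouvoir ce $\lambda_{0}$ particulier en un scalaire arbitraire, d'où $u\in E^{G}(\Omega)$ sans passer par l'égalité $K^{G}=L^{G}$. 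Votre version est plus synthétique et a l'avantage de déléguer au théorème \ref{thm15} le découpage de $\Omega$ suivant que $|u(x)|\leq T$ ou non (découpage que la preuve du papier passe d'ailleurs sous silence), mais elle s'appuie sur une machinerie plus lourde, puisque le théorème \ref{thm13} repose lui-même sur l'inégalité $\rho(u/\|u\|_{G};G)\leq 1$ du théorème \ref{lem1} ; l'argument du papier est de ce point de vue plus élémentaire et autonome. Les deux chemins aboutissent au même endroit et votre distinction des cas $mes(\Omega)<+\infty$ et $mes(\Omega)=+\infty$ est bien celle qu'il faut faire.
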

 \begin{proof}
   Soit $u\in L^{G}(\Omega)$, $\exists\lambda>0$ tel que $\lambda u\in K^{G}(\Omega)$. Soit $K>0$, d'après la condition $\triangle_{2}$, il existe $C>0$ tel que
   $$G(K|u(x)|)\leq C G(\lambda|u(x)|),$$
    donc $$\int_{\Omega}G(K|u(x)|)dx\leq C\int_{\Omega}G(|\lambda u(x)|)dx<\infty.$$

     Ainsi $u\in E^{G}(\Omega)$.
 \end{proof}
 \begin{rems}
   $\bullet$ Si $G$ est une $N$-Fonction telle que $G$ ne vérifie pas $\triangle_{2}$, alors  $$E^{G}(\Omega)\varsubsetneq K^{G}(\Omega)\varsubsetneq L^{G}(\Omega),$$
   car $E^{G}$ et $L^{G}$ sont des espaces vectoriels mais $K^{G}$ ne l'est pas en général.\\
   $\bullet$ $E^{G}$ est le plus grand sous-espace de $L^{G}$ contenu dans $K^{G}$.
 \end{rems}
 \textbf{Notations:} On note par $S(\Omega)$ l'ensemble des fonctions simples (voir Appendice) définies sur $\Omega$.\\

 Voici une autre caractérisation de l'espace $E^{G}$.
 \begin{thm}\label{thm21}
   Soit $G$ une $N$-Fonction.
     Alors $E^{G}$ est la fermeture dans $L^{G}(\Omega)$  de $S(\Omega)$, c.à.d, $E^{G}=\overline{S(\Omega)}$.
 \end{thm}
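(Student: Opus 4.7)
The plan is to prove $E^{G}(\Omega) = \overline{S(\Omega)}$ by two inclusions. For $\overline{S(\Omega)} \subset E^{G}(\Omega)$, first observe that $S(\Omega) \subset E^{G}(\Omega)$: any simple function $s = \sum_{i=1}^{m} c_{i} \chi_{A_{i}}$ with $mes(A_{i}) < \infty$ satisfies $\rho(\lambda s; G) \leq G(\lambda \max_{i} |c_{i}|) \sum_{i} mes(A_{i}) < \infty$ for every $\lambda > 0$. Next, I would show that $E^{G}(\Omega)$ is closed in $L^{G}(\Omega)$: for $(u_{n}) \subset E^{G}$ with $u_{n} \to u$ in $L^{G}$ and any $\lambda > 0$, the convexity and monotonicity of $G$ give $G(\lambda|u|) \leq \tfrac{1}{2} G(2\lambda|u - u_{n}|) + \tfrac{1}{2} G(2\lambda|u_{n}|)$; the second term integrates finitely since $2\lambda u_{n} \in K^{G}$, and for the first we pick $n$ large enough that $\|2\lambda(u-u_{n})\|_{(G)} \leq 1$ (using $\|\cdot\|_{(G)} \leq \|\cdot\|_{G}$ from Remark \ref{rem7}), whence Lemma \ref{lem2}(i) gives $\rho(2\lambda(u - u_{n}); G) \leq 1$. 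Thus $\lambda u \in K^{G}$ for every $\lambda > 0$, so $u \in E^{G}$.

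For the reverse inclusion $E^{G}(\Omega) \subset \overline{S(\Omega)}$, given $u \in E^{G}$ I would proceed in two stages. First, define the truncations
$$u_{n}(x) = \begin{cases} u(x) & \text{if } |u(x)| \leq n \text{ and } |x| \leq n, \\ 0 & \text{otherwise,} \end{cases}$$
and establish $u_{n} \to u$ in $L^{G}$. Indeed, fix $\varepsilon > 0$; since $u \in E^{G}$, the function $G(|u|/\varepsilon)$ lies in $L^{1}(\Omega)$. Because $|u - u_{n}| \leq |u|$ and $u_{n} \to u$ pointwise, dominated convergence yields $\int_{\Omega} G(|u - u_{n}|/\varepsilon)\,dx \to 0$, hence is $\leq 1$ for $n$ large, i.e. $\|u - u_{n}\|_{(G)} \leq \varepsilon$; the equivalence of norms (Theorem \ref{thm2}) then gives $\|u - u_{n}\|_{G} \to 0$. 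Second, each $u_{n}$ is bounded by $n$ with support in a set $B_{n}$ of finite measure, so standard measure theory produces simple functions $s_{n,k} \in S(\Omega)$ with $\|u_{n} - s_{n,k}\|_{\infty} \to 0$ and $\mathrm{supp}\, s_{n,k} \subset B_{n}$. For any $f$ supported on $B_{n}$ with $|f| \leq M$ one checks $\rho(f/\lambda; G) \leq G(M/\lambda)\, mes(B_{n}) \leq 1$ as soon as $\lambda \geq M / G^{-1}(1/mes(B_{n}))$, so $\|f\|_{(G)} \leq M / G^{-1}(1/mes(B_{n}))$; thus uniform convergence on $B_{n}$ implies $L^{G}$-convergence, and a diagonal extraction produces a sequence in $S(\Omega)$ converging to $u$ in $L^{G}$.

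The main obstacle is the density step: the full hypothesis $\lambda u \in K^{G}$ for \emph{every} $\lambda > 0$ (not merely $\lambda = 1$) is exactly what provides integrable majorants $G(|u|/\varepsilon)$ at an arbitrary scale $\varepsilon$, which is precisely what converts the pointwise convergence of the truncations into convergence in the Luxemburg norm. Without this hypothesis, as the remarks following Proposition \ref{pro2} observe, a function in $K^{G} \setminus E^{G}$ cannot be approximated in $L^{G}$-norm by simple functions, consistent with $E^{G}$ being the largest subspace of $L^{G}$ contained in $K^{G}$.
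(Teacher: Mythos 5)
Your proof is correct and follows essentially the same route as the paper: the inclusion $\overline{S(\Omega)}\subset E^{G}(\Omega)$ uses exactly the paper's convexity decomposition $\lambda u=\tfrac{1}{2}(2\lambda(u-u_{0}))+\tfrac{1}{2}(2\lambda u_{0})$ together with Lemme \ref{lem2}, and the inclusion $E^{G}(\Omega)\subset\overline{S(\Omega)}$ rests, as in the paper, on dominated convergence with the majorant $G(|u|/\varepsilon)$ supplied by the hypothesis $u\in E^{G}(\Omega)$. The only deviation is organizational: you approximate $u$ first by truncations and then uniformly by simple functions on a set of finite measure (with a diagonal extraction), whereas the paper approximates $u$ directly by a monotone sequence of simple functions; both variants work.
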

 \begin{proof}
   Soit $u\in E^{G}(\Omega)$. Sans perte de généralité, on peut supposer que $u\geq0$. Par conséquent, il existe une suite de fonctions simples $(u_{n})_{n\in\mathbb{N}}$ telle que $$0\leq u_{n}\uparrow u,\ p.p\ \text{sur}\ \Omega.$$
    Pour tout $\lambda>0$ et $n\in\mathbb{N}$ on a $$\lambda u_{n}\in E^{G}(\Omega)$$ et $$G(\lambda u)\geq G(\lambda(u-u_{n})).$$
   D'après le théorème du convergence dominée, on a, $$\int_{\Omega}G(\lambda(u-u_{n}))dx\rightarrow0,\ n\rightarrow+\infty.$$
   Fixons $\epsilon>0$ et prenons $\lambda>\frac{2}{\epsilon}$. Il existe $n_{0}\in\mathbb{N}$ telle que $\forall n>n_{0}$, on a
   $$\int_{\Omega}G(\lambda(u-u_{n}))dx\leq1.$$ D'après \eqref{16} on a
   \begin{align*}
     \|u-u_{n}\|_{G} & =\frac{1}{\lambda}\|\lambda(u-u_{n})\|_{G}\\
     &\leq\frac{1}{\lambda}\bigg{(}\int_{\Omega}G(\lambda|u-u_{n}|)dx+1\bigg{)}\\
     &\leq\frac{2}{\lambda}<\epsilon.
   \end{align*}
   D'où, $u\in \overline{S(\Omega)}$.\\
   \text{Inversement}:\\
   Soit $u\in\overline{S(\Omega)}$ et $\lambda>0$. D'après le théorème \ref{thm2}, la norme d'Orlicz est équivalente à la norme de Luxemberg, donc, il existe une fonction étagée $u_{0}$ telle que $$\|2\lambda(u-u_{0})\|_{(G)}\leq1.$$
   D'après le lemme \ref{lem2} on obtient $$\rho(2\lambda(u-u_{0});G)\leq1.$$ En particulier, on a $$2\lambda(u-u_{0})\in K^{G}(\Omega).$$ D'autre part, $S(\Omega)\subset K^{G}(\Omega)$, donc $$2\lambda u_{0}\in K^{G}(\Omega).$$
   Finalement, par la convexité de la classe d'Orlicz, on a $$\lambda u=\frac{2\lambda(u-u_{0})+2\lambda u_{0}}{2}\in K^{G}(\Omega).$$
   Alors $u\in E^{G}(\Omega)$. D'où le résultat.
 \end{proof}
 \section{Propriétés de l'espace d'Orlicz}
 \subsection{Complétude de $L^{G}(\Omega)$}
 \begin{thm}

   L'espace d'Orlicz $L^{G}(\Omega)$ est un espace de Banach.
 \end{thm}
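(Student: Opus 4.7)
Le plan est de reprendre la stratégie classique de Riesz--Fischer adaptée au cadre des normes d'Orlicz, en exploitant intensivement la définition de $\|\cdot\|_G$ par dualité avec $K^{G^*}(\Omega)$ ainsi que les résultats obtenus dans la proposition \ref{pro6} et le théorème \ref{thm2}.

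Je partirais d'une suite de Cauchy $(u_n)\subset L^G(\Omega)$ et, en extrayant une sous-suite, je construirais $(u_{n_k})$ telle que $\|u_{n_{k+1}}-u_{n_k}\|_G\leq 2^{-k}$. En posant $v_N=\sum_{k=1}^{N}|u_{n_{k+1}}-u_{n_k}|$ et $v=\sum_{k=1}^{\infty}|u_{n_{k+1}}-u_{n_k}|$, l'inégalité triangulaire donne $\|v_N\|_G\leq 1$ pour tout $N$. Pour toute $w\in K^{G^*}(\Omega)$ avec $\rho(w;G^*)\leq 1$, la définition \eqref{14} de la norme d'Orlicz fournit $\int_\Omega v_N|w|\,dx\leq\|v_N\|_G\leq 1$, et le théorème de convergence monotone appliqué à $v_N\uparrow v$ entraîne $\int_\Omega v|w|\,dx\leq 1$. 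En prenant le supremum sur $w$, on obtient $\|v\|_G\leq 1$, donc $v\in L^G(\Omega)$ et en particulier $v(x)<\infty$ presque partout.

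La finitude presque partout de $v$ assure que la série téléscopique $u_{n_1}+\sum_{k\geq 1}(u_{n_{k+1}}-u_{n_k})$ converge absolument presque partout ; je noterai $u$ sa limite. On a alors $|u|\leq |u_{n_1}|+v$ presque partout, et la remarque \ref{rem2} combinée à l'inégalité triangulaire donne $\|u\|_G\leq\|u_{n_1}\|_G+\|v\|_G<\infty$, si bien que $u\in L^G(\Omega)$.

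Il reste à établir que $\|u_{n_k}-u\|_G\to 0$, ce qui est le point le plus délicat puisqu'il faut remplacer l'habituel lemme de Fatou pour les intégrales par un argument valable pour la norme d'Orlicz définie comme supremum dualitaire. Pour $m>k$ on a $|u_{n_m}-u_{n_k}|\leq\sum_{j=k}^{m-1}|u_{n_{j+1}}-u_{n_j}|$, donc pour toute $w\in K^{G^*}(\Omega)$ avec $\rho(w;G^*)\leq 1$,
$$\int_\Omega |u_{n_m}-u_{n_k}||w|\,dx\leq\sum_{j=k}^{m-1}\|u_{n_{j+1}}-u_{n_j}\|_G\leq\sum_{j\geq k}2^{-j}=2^{-k+1}.$$
Par convergence dominée (la majoration $|u_{n_m}-u_{n_k}||w|\leq(|u_{n_k}|+v)|w|$ est intégrable car $(|u_{n_k}|+v)\in L^G$ et $w\in K^{G^*}$, via théorème \ref{thm26}) on fait $m\to\infty$ pour obtenir $\int_\Omega|u-u_{n_k}||w|\,dx\leq 2^{-k+1}$, puis on passe au supremum sur $w$ pour conclure $\|u-u_{n_k}\|_G\leq 2^{-k+1}\to 0$. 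L'argument standard de suite de Cauchy admettant une sous-suite convergente permet alors de conclure que $(u_n)$ elle-même converge vers $u$ dans $L^G(\Omega)$. La principale difficulté sera de justifier proprement le passage à la limite dans l'intégrale contre $w$, c'est-à-dire de s'assurer que la domination utilisée est intégrable sur $\Omega$ tout entier ; l'inégalité de Hölder du théorème \ref{thm26} rend cette étape routinière une fois le bon majorant choisi.
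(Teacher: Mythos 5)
Votre démonstration est correcte, mais elle suit une route sensiblement différente de celle du texte. Vous adaptez l'argument classique de Riesz--Fischer : extraction d'une sous-suite à décroissance géométrique, série télescopique absolument convergente, contrôle de $\|v\|_{G}$ par convergence monotone testée contre les $w\in K^{G^{*}}(\Omega)$ avec $\rho(w;G^{*})\leq1$, puis passage à la limite dans $\int_{\Omega}|u-u_{n_{k}}||w|\,dx$. Le texte, lui, décompose $\Omega$ en morceaux de mesure finie, montre que la suite est de Cauchy dans chaque $L^{1}(\Omega_{k})$ en testant contre des multiples de fonctions caractéristiques, extrait une sous-suite diagonale convergeant presque partout, et conclut par le lemme de Fatou appliqué à la forme dualitaire de la norme. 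Votre approche évite l'exhaustion de $\Omega$ et l'extraction diagonale, et elle établit au passage la propriété de Riesz--Fischer (toute série normalement convergente converge), que le texte invoque d'ailleurs plus loin ; l'approche du texte, elle, s'appuie sur la complétude déjà connue de $L^{1}$ sur les morceaux. Deux retouches mineures : la domination finale se justifie par le lemme \ref{lem6} (ou directement par la définition \eqref{14}) plutôt que par le théorème \ref{thm26}, qui suppose $u\in K^{G}(\Omega)$ ; et le passage de $\|v\|_{G}\leq1$ à $v<\infty$ presque partout mérite une ligne (si $v=\infty$ sur un ensemble de mesure strictement positive, la propriété des sous-ensembles de mesure finie fournit un $w=c\chi_{B}$ admissible rendant $\int_{\Omega}v|w|\,dx$ infini, ce qui contredit la borne).
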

 \begin{proof}
Soit $(u_{n})_{n\in\mathbb{N}}$ une suite de Cauchy de $L^{G}(\Omega)$, c.à.d, $\forall\epsilon>0$, $\exists n_{0}=n_{0}(\epsilon)\in\mathbb{N}$
tel que pour $v\in L^{G^{*}}(\Omega)$, $\rho(v;G^{*})\leq1$ et pour tout $n>n_{0}$, $m>n_{0}$,
\begin{equation}\label{76}
      \int_{\Omega}|u_{m}(x)-u_{n}(x)||v(x)|dx<\epsilon.
    \end{equation}
    Nous décomposons $\Omega$ en des sous-ensembles $\Omega_{n}$ tels que $\Omega=\cup_{n=1}^{\infty}\Omega_{n}$, où $\Omega_{i}\neq\Omega_{j}$ si $i\neq j$.\\ On choisit $K>0$ tel que $$G^{*}(K)\leq \frac{1}{mes(\Omega_{1})}$$ On considère la fonction $$v_{1}(x)=\begin{cases}
                                                                                                                       K & \mbox{si }\ x\in\Omega_{1}\\
                                                                                                                       0 & \mbox{si}\ x\in\Omega\backslash\Omega_{1}.
                                                                                                                     \end{cases}$$
  Alors $$\rho(v_{1};G^{*})=\int_{\Omega}G^{*}(|v_{1}(x)|)dx=\int_{\Omega_{1}}G^{*}(K)dx=G^{*}(K)mes(\Omega_{1})\leq1,$$ remplaçons $v_{1}$ dans \eqref{76} on obtient $$\int_{\Omega_{1}}|u_{m}(x)-u_{n}(x)|dx<\frac{\epsilon}{K}\ \text{pour}\ n>n_{0},\ m>n_{0}.$$
  Cela signifie que $(u_{n})_{n\in\mathbb{N}}$ est de Cauchy dans $L^{1}(\Omega_{1})$, et comme cet espace est complet, il existe une sous-suite
   $(u_{n,1})$ de $(u_{n})$ qui converge vers une fonction $u_{1}$ dans $ L^{1}(\Omega_{1})$ et presque partout sur $\Omega_{1}$.
   Nous répétons la m\^{e}me procédure avec $\Omega_{2}$ et $(u_{n,1})$, il existe une sous-suite $(u_{n,2})$ de $(u_{n,1})$  qui converge vers une fonction $u_{2}$ dans $ L^{1}(\Omega_{2})$ et presque partout sur $\Omega_{2}$. \\
   Répétant cette procédure, nous obtenons une suite de sous suites de $(u_{n})$,
    toute sous-suite $(u_{n,k})$ converge vers $u_{k}$  dans $ L^{1}(\Omega_{k})$ presque partout sur $\Omega_{k}$.\\
    Soit $$u(x)=u_{k}(x)\chi_{\Omega_{k}}(x)$$ on a $$u_{n,k}(x)\rightarrow u(x)\ p.p\ \text{sur}\ \Omega,\ \text{quand}\ n\rightarrow+\infty$$ On remplace $u_{n}$ par $u_{n,k}$ dans \eqref{76}, d'après le lemme de Fatou on a $$\|u-u_{m}\|_{G}\leq\epsilon,\ \ \forall\ m\geq n_{0}.$$ Cela signifie que
   \begin{enumerate}
     \item [(i)] $u_{m}-u\in L^{G}(\Omega)$ et par conséquent $$u=u_{m}-(u_{m}-u)\in L^{G}(\Omega);$$
     \item [(ii)] $\ds\lim_{n\rightarrow+\infty}\|u_{n}-u\|_{G}=0$.
   \end{enumerate}
   \end{proof}
 \subsection{Séparabilitée de $L^{G}(\Omega)$}
 Dans ce paragraphe, nous caractérisons quand est ce que l'espace d'Orlicz est séparable.
 \begin{thm}
   Soit $G$ une fonction qui satisfait la condition $\triangle_{2}$. Alors l'espace d'Orlicz  $L^{G}(\Omega)$ est séparable.
 \end{thm}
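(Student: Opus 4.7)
L'idée est de construire explicitement un sous-ensemble dénombrable de $L^G(\Omega)$ qui soit dense. Comme $\Omega\subset\mathbb{R}^N$, je fixe d'abord une famille dénombrable $\mathcal{F}$ de parties boréliennes de $\Omega$ de mesure finie, par exemple toutes les unions finies de cubes dyadiques à sommets rationnels intersectés avec $\Omega$; par régularité de la mesure de Lebesgue, cette famille est dense au sens suivant: pour tout borélien $E\subset\Omega$ avec $mes(E)<\infty$ et tout $\delta>0$, il existe $F\in\mathcal{F}$ avec $mes(E\triangle F)<\delta$. La candidate naturelle pour l'ensemble dénombrable dense est alors
$$\mathcal{D}=\bigg\{\sum_{i=1}^n q_i\chi_{F_i}\,:\, n\in\mathbb{N},\ q_i\in\mathbb{Q},\ F_i\in\mathcal{F}\bigg\}.$$

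Soit $u\in L^G(\Omega)$ et $\varepsilon>0$. Comme $G$ satisfait $\triangle_2$, le corollaire \ref{cor1}(2) fournit une fonction mesurable bornée $v$, avec $|v|\leq M$ et $\text{supp}(v)\subset B$ pour une boule $B\subset\Omega$, telle que $\|u-v\|_G<\varepsilon/3$. Ensuite, par approximation uniforme des fonctions mesurables bornées, il existe une fonction étagée $s=\sum_{i=1}^n q_i\chi_{E_i}$ avec $q_i\in\mathbb{Q}$ et $E_i\subset B$ mesurables disjoints, telle que $|v-s|\leq\eta$ partout; par la remarque \ref{rem2} et la proposition \ref{pro5}, $\|v-s\|_G\leq\eta\|\chi_B\|_G$, que l'on rend inférieur à $\varepsilon/3$ en choisissant $\eta$ assez petit. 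Enfin, pour chaque $i$, on prend $F_i\in\mathcal{F}$ tel que $mes(E_i\triangle F_i)$ soit arbitrairement petite; l'estimation clé est alors
$$\rho(\chi_{E_i}-\chi_{F_i};G)=G(1)\,mes(E_i\triangle F_i),$$
qui tend vers $0$, et par le théorème \ref{thm4} (équivalence entre convergence en module et convergence en norme sous $\triangle_2$), on déduit $\|\chi_{E_i}-\chi_{F_i}\|_G\to 0$. On obtient ainsi $s'=\sum_i q_i\chi_{F_i}\in\mathcal{D}$ avec $\|s-s'\|_G<\varepsilon/3$, puis par inégalité triangulaire $\|u-s'\|_G<\varepsilon$, ce qui établit la densité de $\mathcal{D}$.

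Le principal obstacle se situe à la dernière étape: passer d'une petite différence symétrique $mes(E_i\triangle F_i)$ à une petite norme $\|\chi_{E_i}-\chi_{F_i}\|_G$. Sans la condition $\triangle_2$, rien ne garantit cette implication, car une suite bornée en module n'est pas nécessairement bornée en norme; c'est précisément le théorème \ref{thm4} (et le lemme \ref{pro4} qui le sous-tend) qui la fournit. C'est donc ici que l'hypothèse $G\in\triangle_2$ intervient de manière essentielle.
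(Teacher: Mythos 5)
Votre démonstration est correcte et suit essentiellement la même démarche que celle du texte : réduction à une fonction bornée à support borné via le corollaire \ref{cor1}, approximation par une fonction étagée à valeurs rationnelles, puis approximation des ensembles mesurables par des réunions finies de cubes rationnels en utilisant l'identité $\rho(\chi_M;G)=G(1)\,mes(M)$ et le théorème \ref{thm4} pour passer du module à la norme. La seule variante, mineure, est votre étape intermédiaire par approximation uniforme ($|v-s|\leq\eta$, d'où $\|v-s\|_G\leq\eta\|\chi_B\|_G$), là où le texte utilise la convergence monotone puis l'équivalence module/norme; votre version est même un peu plus directe sur ce point.
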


 \begin{proof}
   Soit $S$ l'ensemble de tous les cubes ouverts dans $\mathbb{R}^{N}$ d'arrêtes parallèles aux axes de coordonnées et des sommets de coordonnées rationnels.\\L'ensemble $S$ est évidemment dénombrable. On note par $\mathfrak{F}$ l'ensemble des fonctions de la forme
   $$f(x)=\sum_{i=1}^{\sigma}r_{i}\chi_{C_{i}}(x),\ x\in\mathbb{R}^{N},$$
   où $\sigma=\sigma(f)\in\mathbb{N}$, $r_{i}$ est un nombre rationnel, $C_{1},C_{2},...,C_{\sigma}$ sont des cubes de $S$ deux à deux disjoints. $\mathfrak{F}$ est aussi dénombrable. On montre que $\mathfrak{F}$ dense dans  $L^{G}(\Omega)$.\\ \\
   \textbf{\'{E}tape 1}\\ \\
   Soient $\varepsilon>0$, $u\in L^{G}(\Omega)$.
     D'après le corollaire \ref{cor1} il existe une fonction $u_{0}$ définie sur $\Omega$ telle que
   \begin{enumerate}
     \item $u_{0}(x)=u(x)$ si $x\in\Omega_{0}$, $\Omega_{0}\subset\Omega$, $mes(\Omega_{0})<+\infty$,
     \item $u_{0}(x)=0$ si  $x\in\Omega\backslash\Omega_{0}$,

   \end{enumerate}
et

    \begin{equation}\label{73}
          \|u_{0}-u\|_{G}\leq\frac{\varepsilon}{3}.
    \end{equation}
    \textbf{\'{E}tape 2}\\ \\
    On écrit $u_{0}$ comme suit $$u_{0}=u_{1}-u_{2},$$ où $u_{1},u_{2}$ sont deux fonctions positives, nulles sur $\Omega\backslash\Omega_{0}$. Il existe une suite de fonctions  $(f_{n})_{n\in\mathbb{N}}$ étagées à valeurs rationnels, croissantes et positives telle que $f_{n}$ est nulle en dehors de $\Omega_{0}$, $\forall n\in\mathbb{N}$ et $$u_{1}(x)=\ds\lim_{n\rightarrow+\infty}f_{n}(x),\ p.p\ \text{sur}\ \Omega_{0}.$$

    Comme $$G(|u_{1}(x)-f_{n}(x)|)\leq G(|u_{1}(x)|)$$
    et $$\ds\lim_{n\rightarrow+\infty} G(|u_{1}(x)-f_{n}(x)|)=0,$$
    d'après le théorème de convergence dominée on a$$\ds\lim_{n\rightarrow+\infty}\int_{\Omega} G(|u_{1}(x)-f_{n}(x)|)dx=0.$$ Utilisant la proposition \ref{pro1}, il existe $n_{1}\in\mathbb{N}$ tel que $\forall\ n\geq n_{1}$, $$\|u_{1}-f_{n}\|_{G}\leq\frac{\varepsilon}{6}.$$
    On procéde de la m\^{e}me manière avec $u_{2}$, il existe une suite de fonctions  $(g_{n})_{n\in\mathbb{N}}$ étagées à valeurs rationnels, croissantes et positives telle que $g_{n}$ est nulle en dehors de $\Omega_{0}$, $\forall n\in\mathbb{N}$ et il existe $n_{2}\in\mathbb{N}$ tels que $\forall\ n\geq n_{2}$,  $$\|u_{2}-g_{n}\|_{G}\leq\frac{\varepsilon}{6}.$$
    Soit $n_{3}=\max(n_{1},n_{2})$, on a $\forall n\geq n_{3}$, $$\|u_{0}-(f_{n}-g_{n})\|_{G}\leq\frac{\varepsilon}{3}$$
    prenons $v_{0}=f_{n_{3}+1}-g_{n_{3}+1}$ on obtient \begin{equation}\label{74}
                                              \|v_{0}-u_{0}\|_{G}\leq\frac{\varepsilon}{3}.
                                            \end{equation}
     La fonction $v_{0}$ a la forme $$v_{0}(x)=\sum_{i=1}^{p}r_{i}\chi_{\Omega_{i}}(x),$$ où $\Omega_{i}\subset\Omega$ $(i=1,2,...,p)$ tels que $\Omega_{i}\cap\Omega_{j}=\varnothing$ si $i\neq j$ et $\cup_{i=1}^{p}\Omega_{i}=\Omega_{0}$; $r_{i}\in\mathbb{Q}$, $r_{i}\neq0$.\\ \\
     \textbf{\'{E}tape 3}\\ \\
    Soit $M\subset\Omega$ un ensemble. Il est clair que $$\rho(\chi_{M};G)=\int_{M}G(1)dx=G(1)mes(M).$$
     Si $mes(M)\rightarrow0$ alors $\rho(\chi_{M};G)\rightarrow0$ et par suite d'après le théorème \ref{thm4} on a $\|\chi_{M}\|_{G}\rightarrow0$. Pour chaque ensemble $\Omega_{i}$ de l'étape 2, il existe un ensemble $S_{i}$ réunion d'un nombre fini de cubes de $S$ tels que $$\|\chi_{\Omega_{i}}-\chi_{S_{i}}\|_{G}\leq\frac{\varepsilon}{3p|r_{i}|},\ i=1,2,...,p.$$
     Soit $$\omega_{0}(x)=\sum_{i=1}^{p}r_{i}\chi_{S_{i}}$$ alors $\omega_{0}\in\mathfrak{F}$ et
     \begin{equation}\label{75}
       \|\omega_{0}-v_{0}\|_{G}=\sum_{i=1}^{p}|r_{i}|\|\chi_{\Omega_{i}}-\chi_{S_{i}}\|_{G}\leq\frac{\varepsilon}{3}.
     \end{equation}
     \textbf{\'{E}tape 4}\\ \\
     De \eqref{73}, \eqref{74}, \eqref{75} on déduit que $$\|u-\omega_{0}\|_{G}\leq\|u-u_{0}\|_{G}+\|u_{0}-v_{0}\|_{G}+\|v_{0}-\omega_{0}\|_{G}<\varepsilon,$$
     d'où le résultat.

 \end{proof}

 \subsection{Dualité et réflexivité du $L^{G}(\Omega)$}

\begin{thm}\label{thm18}
   Soient $(G,G^{*})$ un couple de $N$-Fonctions et $v\in L^{G^{*}}(\Omega)$ fixé. Alors la fonction

  \begin{equation}\label{77}
   F(u)=\int_{\Omega}u(t)v(t)dt,\ \forall u\in L^{G}(\Omega),
  \end{equation}

 est linéaire continue, de plus, \begin{equation}\label{78}\frac{1}{2}\|v\|_{G^{*}}\leq\|F\|\leq\|v\|_{G^{*}}.\end{equation}
\end{thm}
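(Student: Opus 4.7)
The plan is to split the statement into three pieces: linearity, the upper bound $\|F\|\leq \|v\|_{G^{*}}$ (which automatically yields continuity), and the lower bound $\tfrac{1}{2}\|v\|_{G^{*}}\leq \|F\|$. Linearity is a direct consequence of the linearity of the integral, so I would dispatch it in one sentence.

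For the upper bound, I would apply the Hölder inequality for Orlicz spaces (Theorem \ref{thm1}) to get, for every $u\in L^{G}(\Omega)$,
$$|F(u)|\leq \int_{\Omega}|u(t)v(t)|\,dt\leq \|u\|_{G}\|v\|_{G^{*}}.$$
This immediately shows that $F$ is bounded (hence continuous) and that $\|F\|\leq \|v\|_{G^{*}}$.

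For the lower bound, I would exploit the dual characterisation of the Orlicz norm. Swapping the roles of $G$ and $G^{*}$ in Remark \ref{rem5} (using $G^{**}=G$) yields
$$\|v\|_{G^{*}}=\sup_{\rho(u;G)\leq 1}\left|\int_{\Omega}u(t)v(t)\,dt\right|.$$
Fix $\varepsilon>0$ and choose $u\in K^{G}(\Omega)$ with $\rho(u;G)\leq 1$ and $|F(u)|>\|v\|_{G^{*}}-\varepsilon$. The contrapositive of part (ii) of Lemma \ref{lem2} (applied to $G$) gives $\|u\|_{(G)}\leq 1$ whenever $\rho(u;G)\leq 1$; then Theorem \ref{thm2} yields $\|u\|_{G}\leq 2\|u\|_{(G)}\leq 2$. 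Consequently
$$\|v\|_{G^{*}}-\varepsilon<|F(u)|\leq \|F\|\,\|u\|_{G}\leq 2\|F\|,$$
and letting $\varepsilon\to 0$ gives $\tfrac{1}{2}\|v\|_{G^{*}}\leq \|F\|$.

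The argument is largely a bookkeeping exercise combining Hölder, the modular–Luxemburg comparison (Lemma \ref{lem2}), and the two-norm equivalence (Theorem \ref{thm2}). The only step where one must be careful is the lower bound: the natural sup-characterisation produces test functions controlled in the \emph{modular} sense, and one must pay the factor $2$ coming from $\|\cdot\|_{G}\leq 2\|\cdot\|_{(G)}$ to convert this into control with respect to the Orlicz norm that defines $\|F\|$. That factor of $2$ is precisely what appears in the stated inequality, so no further refinement is needed.
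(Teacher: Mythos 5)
Your proof is correct and follows essentially the same route as the paper: Hölder's inequality for the upper bound, and for the lower bound the dual characterisation of $\|v\|_{G^{*}}$ as a supremum over the modular unit ball $\{\rho(u;G)\leq1\}$ combined with the fact that such $u$ satisfy $\|u\|_{G}\leq2$. The only (immaterial) difference is that the paper gets $\|u\|_{G}\leq2$ directly from $\|u\|_{G}\leq\rho(u;G)+1$ (inequality \eqref{16}), whereas you route through the Luxemburg norm via Lemme \ref{lem2} and Théorème \ref{thm2}.
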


\begin{proof}
  D'après l'inégalité de H\"{o}lder, on a \begin{equation}\label{79}
                                           |F(u)|\leq\|u\|_{G}\|v\|_{G^{*}}\ \ \text{alors}\ \ \|F\|\leq\|v\|_{G^{*}}.
                                         \end{equation}

    Soit $u\in L^{G}(\Omega)$ tel que $\rho(u;G)\leq1$, d'après \eqref{16}, $\|u\|_{G}\leq\rho(u;G)+1\leq2$ et
   $$\|v\|_{G^{*}}=\ds\sup_{\rho(u;G)\leq1}\bigg{|}\int_{\Omega}u(x)v(x)dx\bigg{|}\leq\ds\sup_{\|u\|_{G}\leq2}|F(u)|=\ds\sup_{\|u\|_{G}\leq1}|F(2u)|\leq
   2\|F\|_{G}.$$
\end{proof}

On peut considérer l'espace $L^{G^{*}}(\Omega)$ comme un sous-ensemble de $\bigg{[}L^{G}(\Omega)\bigg{]}^{*}$. Mais en général $$L^{G^{*}}(\Omega) \varsubsetneqq\bigg{[}L^{G}(\Omega)\bigg{]}^{*}.$$

\begin{thm}\label{thm19}
  Supposons que $G$ ne satisfait pas la condition $\triangle_{2}$. Alors il existe une forme linéaire continue sur $L^{G}(\Omega)$ qui ne s'exprime pas sous la forme \eqref{77}.
\end{thm}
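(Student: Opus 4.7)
Le plan est de construire, via le théorème de Hahn-Banach, une forme linéaire continue sur $L^{G}(\Omega)$ qui s'annule sur $E^{G}(\Omega)$ mais pas partout, puis d'observer qu'une forme de type \eqref{77} qui s'annule sur $E^{G}(\Omega)$ est nécessairement nulle.

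Première étape : exhiber un élément $u_{0}\in L^{G}(\Omega)\setminus E^{G}(\Omega)$. Puisque $G\notin\triangle_{2}$, on dispose d'une suite strictement croissante $(t_{n})$ tendant vers $+\infty$ telle que $G(2t_{n})>2^{n}G(t_{n})$ pour tout $n$. On choisit une famille disjointe $(\Omega_{n})$ de sous-ensembles mesurables de $\Omega$ avec $mes(\Omega_{n})=\frac{1}{2^{n}G(t_{n})}$, puis on pose $u_{0}=\sum_{n}t_{n}\chi_{\Omega_{n}}$. Alors $\rho(u_{0};G)=\sum_{n}G(t_{n})mes(\Omega_{n})=\sum_{n}2^{-n}<\infty$, donc $u_{0}\in K^{G}(\Omega)\subset L^{G}(\Omega)$, mais $\rho(2u_{0};G)\geq\sum_{n}2^{n}G(t_{n})mes(\Omega_{n})=+\infty$, donc $2u_{0}\notin K^{G}(\Omega)$, et par conséquent $u_{0}\notin E^{G}(\Omega)$.

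Deuxième étape : par le théorème \ref{thm21}, $E^{G}(\Omega)=\overline{S(\Omega)}$ est un sous-espace fermé de $L^{G}(\Omega)$, donc $d:=d(u_{0},E^{G}(\Omega))>0$. Sur le sous-espace $M:=E^{G}(\Omega)\oplus\mathbb{R}u_{0}$, la forme linéaire $\phi_{0}(w+tu_{0})=t$ est bien définie et continue car $\|w+tu_{0}\|_{G}\geq|t|\,d$ pour tout $w\in E^{G}(\Omega)$ et $t\in\mathbb{R}$. Le théorème de Hahn-Banach fournit un prolongement $\phi\in\bigl[L^{G}(\Omega)\bigr]^{*}$ vérifiant $\phi|_{E^{G}(\Omega)}=0$ et $\phi(u_{0})=1$.

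Troisième étape : supposons par l'absurde que $\phi(u)=\int_{\Omega}u(t)v(t)\,dt$ pour un certain $v\in L^{G^{*}}(\Omega)$. Pour tout borélien $A\subset\Omega$ de mesure finie, la fonction $\chi_{A}$ est une fonction simple, donc appartient à $S(\Omega)\subset E^{G}(\Omega)$ ; plus généralement, $\operatorname{sign}(v)\chi_{A}\in E^{G}(\Omega)$. L'annulation de $\phi$ sur $E^{G}(\Omega)$ donne alors $\int_{A}|v(x)|\,dx=0$ pour tout tel $A$, d'où $v=0$ presque partout. Mais ceci contredit $\phi(u_{0})=1\neq 0$. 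Donc $\phi$ ne s'écrit pas sous la forme \eqref{77}.

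Le point délicat est la construction explicite de $u_{0}\in L^{G}(\Omega)\setminus E^{G}(\Omega)$ à partir de la négation de $\triangle_{2}$ (le choix de la suite $(t_{n})$ et la vérification simultanée que $\rho(u_{0};G)<\infty$ et $\rho(2u_{0};G)=\infty$) ; le reste suit des arguments standards de Hahn-Banach et de la densité des fonctions simples dans $E^{G}(\Omega)$.
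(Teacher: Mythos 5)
Votre démonstration est correcte et suit essentiellement la même stratégie que celle du papier : on prend $u_{0}\in L^{G}(\Omega)\setminus E^{G}(\Omega)$ (existence garantie par l'échec de $\triangle_{2}$), on sépare par Hahn-Banach pour obtenir $\phi$ nulle sur $E^{G}(\Omega)$ avec $\phi(u_{0})=1$, puis on montre qu'une forme du type \eqref{77} nulle sur $E^{G}(\Omega)$ force $v=0$ p.p. Vous explicitez en plus la construction de $u_{0}$ et la fermeture de $E^{G}(\Omega)$ (que le papier laisse implicites), et vous testez contre $\operatorname{sign}(v)\chi_{A}$ là où le papier utilise les troncatures $v_{n}$ de $v$ ; ce sont des variantes mineures du même argument.
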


\begin{proof}
  Sous les hypothèses du théorème ci-dessus, $E^{G}(\Omega)$ est un sous-espace propre de $L^{G}(\Omega)$, donc il existe une fonction $u_{0}\in L^{G}(\Omega)$ et $u_{0}\notin E^{G}(\Omega)$. Soit $F$ une forme linéaire continue sur $L^{G}(\Omega)$ tel que
  \begin{equation}\label{81}
    F(u_{0})=1,\ F(u)=0\ \text{si}\ u\in E^{G}(\Omega),\ \text{(voir le théorème de Hahn Banach)}.
  \end{equation}
  Supposons que $F$ s'exprime sous la forme \eqref{77} avec $v\in L^{G^{*}}(\Omega)$. Considérons la suite de fonctions $(v_{n})_{n\in\mathbb{N}}$ par $$v_{n}(x)=\begin{cases}
                                              v(x) & \mbox{si }\ |v(x)|\leq n, \\
                                              0 & \mbox{si}\ |v(x)|>n.
                                            \end{cases}$$
   $v_{n}\in E^{G}(\Omega)$, et $$F(v_{n})=\int_{\Omega}v_{n}(x)v(x)dx=0,\ \forall n\in\mathbb{N}.$$
   Ceci donne $v(x)=0$ presque partout $x\in\Omega$ et donc $F(u_{0})=0$
   ce qui contredit \eqref{81}.
\end{proof}

Le théorème suivant est un analogue au théorème de représentation de Riesz dans l'espace de Lebesgue classique.

\begin{thm}\label{thm20}
  Soit $F$ une forme linéaire continue sur $E^{G}(\Omega)$. Alors il existe une unique fonction $v\in L^{G^{*}}(\Omega)$ telle que
  \begin{equation}\label{82}
    F(u)=\int_{\Omega}u(x)v(x)dx,\ \forall u\in E^{G}(\Omega),
  \end{equation}
  c.à.d,  \begin{equation}\label{84}
    L^{G^{*}}(\Omega)=\bigg{[}E^{G}(\Omega)\bigg{]}^{*}.
  \end{equation}
\end{thm}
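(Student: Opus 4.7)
Je décompose la preuve en trois étapes : construction de $v$ par le théorème de Radon--Nikodym, démonstration que $v$ appartient bien à $L^{G^*}(\Omega)$, puis extension par densité.

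\textbf{Étape 1 (construction de $v$).} J'écris $\Omega = \bigcup_{n\geq 1}\Omega_n$ avec $(\Omega_n)$ croissante et $\mathrm{mes}(\Omega_n)<\infty$. Pour toute partie mesurable $A\subset\Omega_n$, on a $\chi_A\in S(\Omega)\subset E^G(\Omega)$ (théorème \ref{thm21}), et je pose $\nu_n(A) = F(\chi_A)$. L'additivité finie résulte de la linéarité de $F$. Pour la $\sigma$-additivité, si $(A_k)$ est une suite disjointe dans $\Omega_n$, alors $\chi_{\bigcup A_k}-\sum_{k=1}^N\chi_{A_k}=\chi_{\bigcup_{k>N}A_k}$; la proposition \ref{pro5} combinée au fait que $t\,(G^*)^{-1}(1/t)\to 0$ quand $t\to 0^+$ (conséquence directe de $G^*(s)/s\to\infty$) entraîne $\|\chi_{\bigcup_{k>N}A_k}\|_G\to 0$, et la continuité de $F$ conclut. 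Par ailleurs $\nu_n$ est absolument continue par rapport à la mesure de Lebesgue (si $\mathrm{mes}(A)=0$, alors $\chi_A=0$ p.p., donc $F(\chi_A)=0$). Le théorème de Radon--Nikodym fournit $v_n\in L^1(\Omega_n)$ avec $F(\chi_A)=\int_A v_n\,dx$, et la compatibilité des $v_n$ sur $\Omega_n\cap\Omega_m$ (unicité) donne une fonction mesurable $v$ sur $\Omega$ telle que, par linéarité, $F(u)=\int_\Omega uv\,dx$ pour toute $u\in S(\Omega)$.

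\textbf{Étape 2 ($v\in L^{G^*}(\Omega)$).} C'est le point délicat. Je tronque $v$ en posant $v_n = v\,\chi_{E_n}$ avec $E_n=\{|v|\leq n\}\cap\Omega_n$, de sorte que $v_n\in L^\infty(\Omega)$ à support dans $\Omega_n$, donc $v_n\in E^{G^*}(\Omega)$. Pour $w\in S(\Omega)$ avec $\rho(w;G)\leq 1$, en posant $\tilde w = |w|\,\mathrm{sign}(v_n)\in S(\Omega)$, on obtient $\rho(\tilde w;G)=\rho(w;G)\leq 1$ et
\[
\int_\Omega |w\,v_n|\,dx = \int_\Omega \tilde w\,v_n\,dx = F(\tilde w\,\chi_{E_n}),
\]
car $\tilde w\,\chi_{E_n}$ est étagée. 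La remarque \ref{rem2} et la proposition \ref{pro6} donnent $\|\tilde w\,\chi_{E_n}\|_G\leq\|\tilde w\|_G\leq \rho(\tilde w;G)+1\leq 2$, d'où $\int_\Omega|w\,v_n|\,dx\leq 2\|F\|$. En approchant $|w|$ pour $w\in K^G(\Omega)$ avec $\rho(w;G)\leq 1$ par une suite croissante de fonctions étagées et en utilisant le théorème de convergence monotone, cette majoration s'étend à toute telle $w$; par définition de la norme d'Orlicz, on obtient $\|v_n\|_{G^*}\leq 2\|F\|$. Le théorème \ref{thm2} donne alors $\|v_n\|_{(G^*)}\leq 2\|F\|$, soit $\int_\Omega G^*(|v_n|/(2\|F\|))\,dx\leq 1$. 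Comme $|v_n|\uparrow|v|$, la convergence monotone fournit $\int_\Omega G^*(|v|/(2\|F\|))\,dx\leq 1$, donc $v\in L^{G^*}(\Omega)$ avec $\|v\|_{(G^*)}\leq 2\|F\|$.

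\textbf{Étape 3 (extension et unicité).} La forme $u\mapsto\int_\Omega uv\,dx$ est continue sur $L^G(\Omega)$ par l'inégalité de Hölder (théorème \ref{thm1}) et coïncide avec $F$ sur $S(\Omega)$. Puisque $S(\Omega)$ est dense dans $E^G(\Omega)$ (théorème \ref{thm21}), cette coïncidence s'étend à $E^G(\Omega)$ tout entier. Pour l'unicité, si $v_1,v_2\in L^{G^*}(\Omega)$ représentent toutes deux $F$, alors $\int_\Omega(v_1-v_2)w\,dx=0$ pour tout $w\in E^G(\Omega)$; en choisissant $w=\mathrm{sign}(v_1-v_2)\,\chi_A\in E^G(\Omega)$ pour $A\subset\Omega_n$, il vient $\int_A|v_1-v_2|\,dx=0$ et donc $v_1=v_2$ p.p. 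Le principal obstacle sera la deuxième étape : il faut jongler avec précision entre les troncatures (pour que les fonctions tests $\tilde w\,\chi_{E_n}$ restent étagées et donc dans le domaine où $F$ est explicitement représenté par $v$), l'approximation monotone pour étendre la majoration au cône $\{\rho(w;G)\leq 1\}$, et un passage à la limite final via la norme de Luxemburg, la norme d'Orlicz étant a priori difficile à manipuler directement par convergence monotone.
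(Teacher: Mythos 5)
Votre démonstration est correcte et suit pour l'essentiel la même route que celle du papier : continuité absolue de $\nu(M)=F(\chi_{M})$ via la proposition \ref{pro5}, théorème de Radon--Nikodym, représentation sur les fonctions étagées, estimation de norme donnant $v\in L^{G^{*}}(\Omega)$, puis extension par densité de $S(\Omega)$ dans $E^{G}(\Omega)$ (théorème \ref{thm21}). La seule divergence est dans l'estimation de norme : le papier approche la fonction test $u$ par des fonctions étagées $|u_{n}|\leq|u|$ et applique Fatou à $\int_{\Omega}|u_{n}v|\,dx=F(|u_{n}|\,\mathrm{sign}(v))\leq\|F\|\,\|u\|_{G}$, tandis que vous tronquez $v$ lui-même et bornez $\|v_{n}\|_{G^{*}}$ uniformément avant de passer à la limite dans la modulaire de Luxemburg --- manœuvre duale mais équivalente.
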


\begin{proof}
  L'unicité de $v$ est évidente, il nous reste à montrer l'existence. Soit $M$ un sous-ensemble mesurable de $\Omega$ de mesure finie et
  $$\nu(M)=F(\chi_{M}).$$ D'après la proposition \ref{pro5} on a
  $$|\nu(M)|=|F(\chi_{M})|\leq\|F\|\|\chi_{M}\|_{G}=\|F\|mes(M)(G^{*})^{-1}\bigg{(}\frac{1}{mes(M)}\bigg{)},$$ donc $$\ds\lim_{mes(M)\rightarrow0}\nu(M)=0.$$ Alors $\nu$ est une mesure absolument continue par rapport à la mesure de Lebesgue. D'après le théorème de Radon–Nikod\'{y}m, il existe une fonction $v\in L^{1}(\Omega)$ telle que \begin{equation}\label{83}
                                                                                        \nu(M)=\int_{M}v(x)dx.
                                                                                      \end{equation}
Soit $u$ est une fonction étagée, c.à.d, $u(x)=\sum_{i=1}^{m}\alpha_{i}\chi_{M_{i}}$ avec $$M_{i}\subset\Omega,\ M_{i}\cap M_{j}=\varnothing\ \text{si}\ i\neq j,$$ d'après \eqref{83}, on a \begin{equation}\label{85}
                                        \begin{aligned}
                                          F(u)&= \sum_{i=1}^{m}\alpha_{i}F(\chi_{M_{i}})=\sum_{i=1}^{m}\alpha_{i}\nu(\chi_{M_{i}})=\sum_{i=1}^{m}\alpha_{i}
                                          \int_{M_{i}}
                                          v(x)dx \\
                                          & =\sum_{i=1}^{m}\alpha_{i}\int_{\Omega}v(x)\chi_{M_{i}}(x)dx=\int_{\Omega}u(x)v(x)dx.
                                        \end{aligned}
                                        \end{equation}
   Soit $u\in E^{G}(\Omega)$. Il existe une suite de fonctions étagées $(u_{n})_{n\in\mathbb{N}}$ telle que
   $$u(x)=\ds\lim_{n\rightarrow+\infty}u_{n}(x),\ |u_{n}(x)|\leq|u(x)|,\ p.p\ \text{sur}\ \Omega.$$  Ainsi $$\|u_{n}\|_{G}\leq\|u\|_{G},\ \text{et}\ \ds\lim_{n\rightarrow+\infty}|u_{n}(x)v(x)|=|u(x)v(x)|,\ p.p\ \text{sur}\ \Omega.$$  D'après le théorème de Fatou on a
   \begin{align*}
     \bigg{|}\int_{\Omega}u(x)v(x)dx\bigg{|} & \leq\ds\sup_{n\in\mathbb{N}}\int_{\Omega}|u_{n}(x)v(x)|dx\\
     &=\ds\sup_{n\in\mathbb{N}}F(|u_{n}sign(v)|)\leq\|F\|\ds\sup_{n\in\mathbb{N}}\|u_{n}\|_{G}\leq\|F\|\|u\|_{G}.
   \end{align*}
On déduit $v\in L^{G^{*}}(\Omega)$ et par suite \eqref{82} est vérifiée sur l'ensemble des fonctions étagées.
D'après le théorème \ref{thm21} on sait que l'ensemble des fonctions étagées est dense dans $E^{G}(\Omega)$, par conséquent,
 $$F(u):=\int_{\Omega}u(x)v(x)dx,\ u\in E^{G}(\Omega).$$
\end{proof}

\begin{thm}
    Soit $(G,G^{*})$ un couple de $N$-Fonction. Alors l'espace d'Orlicz $L^{G}(\Omega)$ est réflexif si et seulement si $G$ et $G^{*}$ satisfont la condition $\triangle_{2}$.
\end{thm}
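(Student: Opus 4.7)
J'exploiterais d'abord l'hypothèse $G,G^{*}\in\triangle_{2}$ pour identifier, via la Proposition \ref{pro2}, $E^{G}=L^{G}$ et $E^{G^{*}}=L^{G^{*}}$. Le Théorème \ref{thm20} donne alors $(L^{G})^{*}=(E^{G})^{*}=L^{G^{*}}$ et, appliqué à $G^{*}$, $(L^{G^{*}})^{*}=(E^{G^{*}})^{*}=L^{G}$. En composant, $(L^{G})^{**}=L^{G}$, et il resterait à vérifier que sous la dualité $\langle u,v\rangle=\int_{\Omega}uv\,dx$ l'injection canonique $L^{G}\to(L^{G})^{**}$ s'identifie à l'identité de $L^{G}$, ce qui prouverait la réflexivité.

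\textbf{Plan pour le sens réciproque.} Je partirais de la remarque, fournie par le Théorème \ref{thm21}, que $E^{G}=\overline{S(\Omega)}$ est un sous-espace fermé de $L^{G}$; comme tout sous-espace fermé d'un réflexif est réflexif, $E^{G}$ serait alors réflexif. Par le Théorème \ref{thm20}, $(E^{G})^{*}=L^{G^{*}}$, donc $(E^{G})^{**}=(L^{G^{*}})^{*}$. L'étape clé consiste à identifier l'injection canonique $E^{G}\to(E^{G})^{**}$ avec l'application de dualité $J_{E}:E^{G}\to(L^{G^{*}})^{*}$, $u\mapsto(v\mapsto\int_{\Omega}uv\,dx)$, puis à observer que $J_{E}$ se factorise à travers l'injection continue $J_{L}:L^{G}\to(L^{G^{*}})^{*}$ (fournie par le Théorème \ref{thm18} appliqué au couple $(G^{*},G)$) selon $E^{G}\subset L^{G}\stackrel{J_{L}}{\longrightarrow}(L^{G^{*}})^{*}$. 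La réflexivité de $E^{G}$ imposerait la surjectivité de $J_{E}$, et l'injectivité de $J_{L}$ forcerait alors simultanément les égalités $E^{G}=L^{G}$ et $L^{G}=(L^{G^{*}})^{*}$.

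\textbf{Point délicat et conclusion.} Il ne me resterait plus qu'à appliquer la remarque suivant la Proposition \ref{pro2} pour déduire que $E^{G}=L^{G}$ entraîne $G\in\triangle_{2}$, et à invoquer la contraposée du Théorème \ref{thm19} appliqué à $G^{*}$: l'égalité $L^{G}=(L^{G^{*}})^{*}$, qui signifie que toute forme linéaire continue sur $L^{G^{*}}$ est représentable par un élément de $L^{G}$, entraîne $G^{*}\in\triangle_{2}$. La difficulté principale sera l'identification soigneuse de l'injection canonique $E^{G}\to(E^{G})^{**}$ avec $J_{E}$ à travers l'isomorphisme $(E^{G})^{*}\cong L^{G^{*}}$ du Théorème \ref{thm20}: une fois ce diagramme posé, la surjectivité se propage mécaniquement le long de la chaîne $E^{G}\subset L^{G}\subset(L^{G^{*}})^{*}$ et conclut la preuve.
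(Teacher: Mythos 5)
Votre plan est correct, et il est en fait plus complet que la démonstration du texte. Pour le sens direct, vous suivez essentiellement la même route que le papier : la Proposition \ref{pro2} donne $E^{G}=L^{G}$ et $E^{G^{*}}=L^{G^{*}}$, le Théorème \ref{thm20} fournit les identifications $(L^{G})^{*}=L^{G^{*}}$ et $(L^{G^{*}})^{*}=L^{G}$, et on conclut $(L^{G})^{**}=L^{G}$; votre ajout — vérifier que l'injection canonique s'identifie bien à l'identité sous la dualité intégrale — est précisément le point que le texte passe sous silence, et il mérite d'être explicité (une égalité isométrique $X\cong X^{**}$ ne suffit pas en général à la réflexivité, c'est la surjectivité de l'injection canonique qu'il faut). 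La vraie différence est sur la réciproque : le papier ne la démontre pas du tout, alors que votre chaîne $E^{G}=\overline{S(\Omega)}$ fermé dans $L^{G}$ réflexif $\Rightarrow$ $E^{G}$ réflexif $\Rightarrow$ $J_{E}:E^{G}\to(L^{G^{*}})^{*}$ surjectif $\Rightarrow$ (par factorisation à travers $J_{L}$ injectif) $E^{G}=L^{G}$ et $L^{G}=(L^{G^{*}})^{*}$, puis la contraposée du Théorème \ref{thm19} (pour $G$, via la remarque suivant la Proposition \ref{pro2}, et pour $G^{*}$, en utilisant $(G^{*})^{*}=G$) est un argument valide et comble cette lacune. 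Veillez seulement à deux détails dans la rédaction : l'isomorphisme $(E^{G})^{*}\cong L^{G^{*}}$ n'est qu'un isomorphisme à normes équivalentes (inégalité \eqref{78}), ce qui suffit pour transporter la réflexivité mais doit être dit; et l'injectivité de $J_{L}$ se justifie en testant contre des fonctions $v=\mathrm{sign}(u)\chi_{E}$ avec $mes(E)<\infty$, qui appartiennent bien à $L^{G^{*}}$.
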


\begin{proof}
    D'après la proposition \ref{pro2} si $G$ satisfait $\triangle_{2}$ on a $$L^{G^{*}}(\Omega)=\bigg{[}L^{G}(\Omega)\bigg{]}^{*}$$
  et si $G^{*}$ satisfait $\triangle_{2}$ alors $$L^{G}(\Omega)=\bigg{[}L^{G^{*}}(\Omega)\bigg{]}^{*}.$$ D'où $$L^{G}(\Omega)=\bigg{[}L^{G}(\Omega)\bigg{]}^{**}.$$
\end{proof}

\section{Critère de compacité}

Dans cette section, nous donnons des conditions nécessaires et suffisantes pour qu'un sous-ensemble $\Re$ de $L^{G}(\Omega)$ soit relativement
compact. Nous introduisons d'abord quelques notions.

\begin{dfn}
  Un sous-ensemble $\Re$ est dit équicontinu en moyenne, si $\forall\varepsilon>0$, $\exists\gamma=\gamma(\varepsilon)$ telle que $\forall h\in\mathbb{R}^{N}$ avec $|h|<\gamma$ on a \begin{equation}\label{44}
                           \|u-u_{h}\|_{G}<\varepsilon,\ \forall u\in\Re,
                         \end{equation}
          où $u_{h}$ est la fonction définie sur $\mathbb{R}^{N}$ par
                                                 \begin{equation}\label{45}
                                                   u_{h}(x):=\begin{cases}
                                                               u(x+h) & \mbox{si } x\in\Omega\ \text{et}\ x+h\in\Omega \\
                                                                0 & \mbox{sinon}.
                                                             \end{cases}
                                                 \end{equation}
\end{dfn}

\textbf{Notation:} Soit $r>0$ et $x\in\mathbb{R}^{N}$. On note $B_{r}(x)$ la boule de centre $x$ et de rayon $r$ et $m_{r}$ sa mesure.

 \begin{dfn}
 Soit $u\in L^{G}(\Omega)$.
 On définit la fonction $S_{r}(u)$ dite fonction de Steklov comme suit
 \begin{equation}\label{46}
   S_{r}(u)(x):=\frac{1}{m_{r}}\int_{B_{r}(x)}u(y)dy=\frac{1}{m_{r}}\int_{|y|<r}u(x+y)dy.
 \end{equation}
 \end{dfn}

\textbf{Notation:} Soit $\Re$ un sous-ensemble borné de $L^{G}(\Omega)$,

pour $r>0$ fixé on note $\Re_{r}$ l'ensemble des fonctions de Steclov $S_{r}(u)$ correspond à $u\in\Re$.
\begin{pro}\label{pro8}
  Soit $u\in L^{G}(\Omega)$. Alors \begin{equation}\label{48}
                                     \|S_{r}(u)\|_{G}\leq\|u\|_{G}.
                                   \end{equation}
\end{pro}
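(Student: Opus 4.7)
The plan is to exploit the dual definition $\|S_r(u)\|_G=\sup\{\int_\Omega |S_r(u)v|\,dx:v\in K^{G^*}(\Omega),\ \rho(v;G^*)\leq 1\}$ together with the averaging structure of the Steklov function. I would fix an admissible test function $v$ and aim at the pointwise-in-$v$ bound $\int_\Omega |S_r(u)(x)||v(x)|\,dx\leq \|u\|_G$; passing to the supremum will then close the argument.

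First, extend $u$ and $v$ by zero outside $\Omega$ so that $S_r(u)(x)=\frac{1}{m_r}\int_{|y|<r}u(x+y)\,dy$ is meaningful on all of $\mathbb{R}^N$ and so that integrals on $\Omega$ and on $\mathbb{R}^N$ agree whenever one of the factors vanishes off $\Omega$. Combining the triangle inequality inside the defining integral of $S_r(u)$ with Fubini, I would write
\begin{equation*}
\int_\Omega |S_r(u)(x)||v(x)|\,dx \leq \frac{1}{m_r}\int_{|y|<r}\int_{\mathbb{R}^N} |u(x+y)||v(x)|\,dx\,dy.
\end{equation*}
For each fixed $y$ with $|y|<r$, the change of variables $z=x+y$, together with $u\equiv 0$ off $\Omega$, recasts the inner integral as $\int_\Omega |u(z)||\widetilde v_y(z)|\,dz$, where $\widetilde v_y(z):=v(z-y)$.

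The key step, and the one I expect to require the most care, is verifying that the translated test function $\widetilde v_y$ remains admissible on $\Omega$. Because $v$ has been extended by zero, translation invariance of Lebesgue measure gives
\begin{equation*}
\rho(\widetilde v_y;G^*)=\int_\Omega G^*(|v(z-y)|)\,dz\leq \int_{\mathbb{R}^N} G^*(|v(x)|)\,dx=\rho(v;G^*)\leq 1,
\end{equation*}
so $\widetilde v_y\in K^{G^*}(\Omega)$ with modular at most $1$. The defining supremum of the Orlicz norm therefore yields $\int_\Omega |u(z)||\widetilde v_y(z)|\,dz\leq \|u\|_G$ uniformly in $y$. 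Averaging over $|y|<r$ preserves this bound, producing $\int_\Omega |S_r(u)||v|\,dx\leq \|u\|_G$, and the supremum over admissible $v$ yields $\|S_r(u)\|_G\leq \|u\|_G$. The only genuine subtlety is the bookkeeping with the zero extensions — one must be sure that no mass of $v$ escapes from $\Omega$ under translation — but this is automatic since $G^*(|\cdot|)\geq 0$ and both $u$ and $v$ are taken to vanish off $\Omega$.
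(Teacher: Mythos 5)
Votre démonstration est correcte et suit essentiellement la même démarche que celle du texte : majoration via la définition duale de la norme d'Orlicz, théorème de Fubini--Tonelli, changement de variable $z=x+y$, puis invariance par translation pour constater que la fonction test translatée reste admissible ($\rho(\widetilde v_y;G^{*})\leq 1$), avant de moyenner sur $|y|<r$. Votre rédaction est même un peu plus soigneuse que celle du texte sur le prolongement par zéro et sur l'admissibilité de $\widetilde v_y$, que la preuve du texte utilise implicitement dans sa dernière inégalité.
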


\begin{proof}
 On a
  \begin{align*}
      \|S_{r}(u)\|_{G}&\leq \frac{1}{m_{r}}\ds\sup_{\rho(v;G^{*})\leq1,\ v\in K^{G^{*}}}\int_{\Omega}\int_{B_{r}(x)}|u(y)v(x)|dydx\\
      &=\frac{1}{m_{r}}\ds\sup_{\rho(v;G^{*})\leq1,\ v\in K^{G^{*}}}\int_{\Omega}\int_{|y|<r}|u(x+y)v(x)|dydx.
  \end{align*}
  D'après le théorème de Fubini-Tonelli, on a
  \begin{align*}
    \|S_{r}(u)\|_{G}&\leq\frac{1}{m_{r}}\int_{|y|<r}\bigg{(}\ds\sup_{\rho(v;G^{*})\leq1}\int_{\Omega}|u(x+y)v(x)|dx\bigg{)}dy\\
    &=\ds\sup_{\rho(v;G^{*})\leq1}\int_{\Omega}|u(x+y)v(x)|dx\\&\leq\ds\sup_{\rho(v;G^{*})\leq1}\int_{\Omega}|u(x)v(x-y)|dx\leq\|u\|_{G}.
  \end{align*}
\end{proof}

\begin{thm}[de Vallée-Poussin]\label{thm31}
  Soient $G$ une $N$-Fonction et $\Re$ un sous-ensemble de $L^{G}(\Omega)$ borné en module. Alors pour tout $\epsilon>0$,
  $\exists\delta>0$ tels que $\forall E\subset\Omega$, $mes(E)<\delta$ on a $$\int_{E}|u(x)|dx\leq\epsilon,\ \forall u\in\Re.$$
\end{thm}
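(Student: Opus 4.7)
Le plan est d'exploiter la propriété fondamentale \eqref{4} des $N$-Fonctions, à savoir que $\lim_{t\to\infty} G(t)/t = +\infty$. Cette propriété signifie précisément que $|u|$ devient négligeable devant $G(|u|)$ lorsque $|u|$ est grand, ce qui permettra de dominer uniformément l'intégrale de $|u|$ sur un petit ensemble en utilisant la borne modulaire sur $\Re$.

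Notons $c>0$ la constante telle que $\rho(u;G) = \int_\Omega G(|u(x)|)\,dx \leq c$ pour tout $u\in\Re$. Première étape : étant donné $\epsilon>0$, utiliser \eqref{4} pour choisir un seuil $M>0$ suffisamment grand, puis un réel $T=T_M>0$ tel que $G(t)/t \geq M$ pour tout $t\geq T$, autrement dit $t \leq G(t)/M$ pour $t\geq T$. Concrètement, on prendra $M$ tel que $c/M \leq \epsilon/2$.

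Deuxième étape : soit $E\subset\Omega$ mesurable et $u\in\Re$. On découpe $E$ en $E_1 = E\cap\{|u|\leq T\}$ et $E_2 = E\cap\{|u|>T\}$, puis on estime
\begin{align*}
\int_E |u(x)|\,dx &= \int_{E_1} |u(x)|\,dx + \int_{E_2} |u(x)|\,dx \\
&\leq T\cdot mes(E) + \frac{1}{M}\int_{E_2} G(|u(x)|)\,dx \\
&\leq T\cdot mes(E) + \frac{c}{M}.
\end{align*}
Le premier terme est contrôlé par la taille de $E$ uniformément en $u$, et le second est indépendant de $E$ grâce à la borne modulaire.

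Troisième étape : on choisit $\delta = \epsilon/(2T)$, de sorte que $mes(E)<\delta$ entraîne $T\cdot mes(E) < \epsilon/2$, ce qui joint à $c/M\leq \epsilon/2$ donne $\int_E |u|\,dx \leq \epsilon$ uniformément pour $u\in\Re$. Je ne prévois pas d'obstacle technique sérieux : la seule subtilité est l'ordre des choix (d'abord $M$ en fonction de $\epsilon$ et $c$, ce qui fixe $T$, puis $\delta$ en fonction de $\epsilon$ et $T$), car $T$ dépend de $M$ et donc ne peut être fixé avant.
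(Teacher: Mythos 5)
Votre démonstration est correcte, mais elle suit une route réellement différente de celle du texte. Vous exploitez directement la propriété \eqref{4} ($\lim_{t\to\infty}G(t)/t=+\infty$) par un découpage de $E$ selon le seuil $T$ : sur $\{|u|\leq T\}$ l'intégrale est contrôlée par $T\cdot mes(E)$, sur $\{|u|>T\}$ par $\frac{1}{M}\rho(u;G)\leq c/M$ ; c'est l'argument classique du critère de de la Vallée-Poussin, élémentaire et autonome, qui n'utilise ni la fonction conjuguée $G^{*}$, ni l'inégalité de Hölder. Le texte procède autrement : il passe de la borne modulaire à la borne en norme $\|u\|_{G}\leq C+1$ (remarque \ref{rem6}), calcule $\|\chi_{E}\|_{G^{*}}=mes(E)\,G^{-1}\bigl(\tfrac{1}{mes(E)}\bigr)$ via la proposition \ref{pro5}, observe que cette quantité tend vers $0$ avec $mes(E)$ (ce qui repose en définitive sur la même propriété \eqref{4}, puisque $tG^{-1}(1/t)=s/G(s)$ avec $s=G^{-1}(1/t)$), et conclut par Hölder $\int_{E}|u|\,dx\leq\|u\|_{G}\|\chi_{E}\|_{G^{*}}$. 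L'approche du texte est plus courte une fois la machinerie des espaces d'Orlicz en place et met en évidence la structure de dualité ; la vôtre a l'avantage d'être indépendante de cette machinerie et de rendre explicite le rôle de la croissance sur-linéaire de $G$. Votre gestion de l'ordre des choix ($M$ en fonction de $\epsilon$ et $c$, puis $T$ en fonction de $M$, puis $\delta=\epsilon/(2T)$) est exactement la bonne ; aucun point ne fait défaut.
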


\begin{proof}
 Soit $\Re$ un sous-ensemble de $L^{G}(\Omega)$ borné en module donc il existe $C>0$ tel que $\rho(u;G)\leq C$, $\forall u\in\Re$, d'après la remarque
 \ref{rem6} on a $$\|u\|_{G}\leq C+1,\ \forall u\in\Re.$$
 Soit $E\subset\Omega$, d'après la proposition \ref{pro5} on a $$\|\chi_{E}\|_{G^{*}}=mes(E)(G)^{-1}(\frac{1}{mes(E)}).$$
  Comme$\ds\lim_{t\rightarrow0^{+}}tG^{-1}(\frac{1}{t})=0$, alors $\forall\epsilon>0$, $\exists\delta>0$ tels que $\forall E\subset\Omega$, $mes(E)<\delta$, $$\|\chi_{E}\|_{G^{*}}<\epsilon.$$ D'après l'inégalité de H\"{o}lder on a
 $$\int_{E}|u(x)|dx\leq \|u\|_{G}\|\chi_{E}\|_{G^{*}}\leq(C+1)\epsilon,$$ d'où le résultat.
\end{proof}

\begin{pro}\label{pro7}
  Soient $G$ une $N$-Fonction et $r>0$. On suppose que $\Re$ un sous-ensemble borné de $L^{G}(\Omega)$ et $mes(\Omega)<\infty$. Alors $\Re_{r}$ est relativment compact dans $C(\bar{\Omega})$.
\end{pro}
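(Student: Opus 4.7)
Le plan est d'appliquer le théorème d'Ascoli-Arzelà à $\Re_r$ dans $C(\bar{\Omega})$ (compact, puisque $\Omega$ est supposé borné). Il suffit d'établir trois choses~: que chaque $S_r(u)$ est continue sur $\bar{\Omega}$, que $\Re_r$ est uniformément borné, et que $\Re_r$ est équicontinu. Les deux dernières estimations reposent sur le même schéma~: inégalité de Hölder \eqref{19} combinée avec la proposition \ref{pro5}. Posons $M:=\sup_{u\in\Re}\|u\|_G<\infty$ et prolongeons chaque $u\in\Re$ par $0$ hors de $\Omega$.

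Pour la borne uniforme, on écrit pour tout $x\in\bar{\Omega}$
\begin{equation*}
|S_r(u)(x)|\leq\frac{1}{m_r}\int_{B_r(x)\cap\Omega}|u(y)|\,dy\leq\frac{\|u\|_G\,\|\chi_{B_r(x)}\|_{G^*}}{m_r}=\|u\|_G\,G^{-1}\!\left(\frac{1}{m_r}\right),
\end{equation*}
où l'on a utilisé $\|\chi_{B_r(x)}\|_{G^*}=m_r\,G^{-1}(1/m_r)$, conséquence de la proposition \ref{pro5} appliquée au couple $(G^*,G)$ (puisque $(G^*)^*=G$). Comme $r$ est fixé, cela fournit $\|S_r(u)\|_\infty\leq M\,G^{-1}(1/m_r)$ pour tout $u\in\Re$.

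Pour l'équicontinuité, la même méthode donne pour $x,x'\in\bar{\Omega}$
\begin{equation*}
|S_r(u)(x)-S_r(u)(x')|\leq\frac{1}{m_r}\int_{B_r(x)\triangle B_r(x')}|u(y)|\chi_\Omega(y)\,dy\leq\frac{M}{m_r}\,m_{x,x'}\,G^{-1}\!\left(\frac{1}{m_{x,x'}}\right),
\end{equation*}
où $m_{x,x'}:=mes(B_r(x)\triangle B_r(x'))$. Deux ingrédients achèvent alors la preuve~: d'une part, un argument géométrique élémentaire donne $m_{x,x'}\to 0$ uniformément quand $|x-x'|\to 0$ (avec une majoration du type $m_{x,x'}\leq C_N r^{N-1}|x-x'|$); d'autre part, la fonction $s\mapsto s\,G^{-1}(1/s)$ tend vers $0$ quand $s\to 0^+$~: en posant $t=G^{-1}(1/s)$, on a $s\,G^{-1}(1/s)=t/G(t)\to 0$ par la propriété \eqref{4} des $N$-Fonctions. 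La composition livre l'équicontinuité uniforme en $u\in\Re$, et le même calcul pour $u$ fixé prouve $S_r(u)\in C(\bar{\Omega})$.

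Le point délicat est précisément cette uniformité en $u\in\Re$~: elle découle du fait clé que la borne $\|\chi_E\|_{G^*}=mes(E)\,G^{-1}(1/mes(E))$ ne dépend que de $mes(E)$ et tend vers $0$ avec $mes(E)$, traduisant \eqref{4}. Ces deux estimations uniformes étant acquises, Ascoli-Arzelà conclut à la relative compacité de $\Re_r$ dans $C(\bar{\Omega})$.
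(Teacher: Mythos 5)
Votre démonstration est correcte, mais elle ne suit pas tout à fait le chemin du texte. Le texte obtient la borne uniforme de $\Re_{r}$ dans $C(\bar{\Omega})$ par l'inégalité de Young, via $\int_{\Omega}\frac{|u(y)|}{C}dy\leq\rho\big(\frac{u}{C};G\big)+\rho(1;G^{*})$, puis l'équicontinuité en invoquant le théorème de la Vallée-Poussin \ref{thm31} (intégrabilité uniforme de la famille) appliqué à l'ensemble $B_{x,y}=(B(x,r)\cup B(y,r))\setminus(B(x,r)\cap B(y,r))$. Vous traitez au contraire les deux estimations par un seul et même calcul : Hölder \eqref{19} contre $\|\chi_{E}\|_{G^{*}}=mes(E)\,G^{-1}\big(\frac{1}{mes(E)}\big)$ (proposition \ref{pro5} appliquée au couple $(G^{*},G)$), puis le fait que $s\mapsto s\,G^{-1}(1/s)$ tend vers $0$ en $0^{+}$ grâce à \eqref{4}. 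C'est précisément le contenu de la démonstration du théorème \ref{thm31}, que vous réinjectez directement ; votre version est donc plus autonome et fournit en prime un module de continuité explicite, de l'ordre de $m_{x,x'}G^{-1}(1/m_{x,x'})$ avec $m_{x,x'}\leq C_{N}r^{N-1}|x-x'|$, là où le texte se contente de recycler un lemme déjà établi (et évite ainsi de redémontrer la limite $tG^{-1}(1/t)\to0$). Un seul micro-détail : l'intégrale portant sur $B_{r}(x)\cap\Omega$, la norme pertinente est $\|\chi_{B_{r}(x)\cap\Omega}\|_{G^{*}}$, qui est seulement majorée par $m_{r}G^{-1}(1/m_{r})$ (et non égale), puisque $s\mapsto sG^{-1}(1/s)$ est croissante ; cela ne change rien à la conclusion. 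Les deux approches aboutissent au même usage final d'Arzelà--Ascoli.
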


\begin{proof}
Soit $C>0$ tel que $\forall u\in\Re$, $\|u\|_{G}\leq C$, on a
                        \begin{equation}\label{115}
                                  \int_{\Omega}G\bigg{(}\frac{|u(x)|}{C}\bigg{)}dx\leq\int_{\Omega}G\bigg{(}\frac{|u(x)|}{\|u\|_{G}}\bigg{)}dx \leq1
                                 \end{equation}
                                 d'après l'inégalité de Young on a
   \begin{align*}
      |S_{r}(u)(x)|&\leq\frac{1}{m_{r}}\int_{B_{r}(x)}|u(y)|dy=\frac{C}{m_{r}}\int_{B_{r}(x)}\frac{|u(y)|}{C}dy\\
      &\leq\frac{C}{m_{r}}\int_{\Omega}\frac{|u(y)|}{C}dy\leq\frac{C}{m_{r}}\bigg{(}\rho\bigg{(}\frac{u}{C};G\bigg{)}+\rho(1;G^{*})\bigg{)}\\
      &\leq\frac{C}{m_{r}}(1+G^{*}(1)mes(\Omega))<+\infty
   \end{align*}
   donc $\Re_{r}$ est borné dans $C(\bar{\Omega})$ et donc $\Re_{r}$ est borné en module. D'après le théorème de la Vallée-Poussin on a, pour tout $\epsilon>0$, $\exists\delta>0$ telle que pour tout $E\subset\Omega$, $mes(E)<\delta$, $$\int_{E}|u(x)|dx<\epsilon,\ \forall u\in\Re_{r}.$$
   Soit $$B_{x,y}=(B(x,r)\cup B(y,r))\setminus(B(x,r)\cap B(y,r))$$ et choisissons $\eta>0$ tel que $mes(B_{x,y})<\delta$ pour $|x-y|<\eta$, donc
   $$|S_{r}(u)(x)-S_{r}(u)(y)|\leq\frac{1}{m_{r}}\int_{B_{x,y}}|u(z)|dz\leq\frac{\epsilon}{m_{r}}$$  $\Re_{r}$ est équicontinue et d'après le théorème d'Arzel\`{a}-Ascoli $\Re_{r}$ est relativement compact dans $C(\bar{\Omega})$.
\end{proof}

Nous sommes maintenant en mesure de donner un critère de compacité dans $E^{G}(\Omega)$.

\begin{thm}\label{thm10}
  Soit $\Omega$ un ouvert borné de $\mathbb{R}^{N}$. Un sous-ensemble $\Re$ de $E^{G}(\Omega)$ est relativement compact si et seulement si les conditions suivantes sont satisfaites:
  \begin{enumerate}
    \item $\Re$ est borné dans $E^{G}(\Omega)$,
    \item $\forall\varepsilon>0$ il existe $\gamma=\gamma(\varepsilon)>0$ telle que
    \begin{equation}\label{49}
      \|u-S_{r}(u)\|_{G}<\varepsilon,
    \end{equation}
    $\forall0<r<\gamma$ et $u\in\Re$.
  \end{enumerate}
\end{thm}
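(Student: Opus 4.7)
Le plan est de ramener les deux implications à un lemme auxiliaire central: pour tout $u\in E^{G}(\Omega)$, $\|u-S_{r}(u)\|_{G}\to 0$ quand $r\to 0^{+}$. Combiné à la proposition \ref{pro7} (qui fournit la compacité relative de $\Re_{r}$ dans $C(\bar{\Omega})$) et à l'estimation de contraction $\|S_{r}(w)\|_{G}\leq\|w\|_{G}$ de la proposition \ref{pro8}, ce lemme alimente les deux directions. Pour l'établir, j'utiliserais la densité des fonctions simples dans $E^{G}(\Omega)$ (théorème \ref{thm21}): en approchant $u$ par une fonction simple $\sigma$ avec $\|u-\sigma\|_{G}<\eta$ et en appliquant l'inégalité triangulaire avec la proposition \ref{pro8}, on se ramène à prouver $\|\chi_{M}-S_{r}(\chi_{M})\|_{G}\to 0$ pour $M\subset\Omega$ mesurable de mesure finie. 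Le théorème de différentiation de Lebesgue donne $S_{r}(\chi_{M})\to\chi_{M}$ p.p., et la borne uniforme $|S_{r}(\chi_{M})-\chi_{M}|\leq 2$ combinée avec la convergence dominée donne $\rho((\chi_{M}-S_{r}(\chi_{M}))/\lambda\,;G)\to 0$ pour tout $\lambda>0$, d'où la convergence vers zéro dans la norme de Luxemburg (par sa définition comme infimum) et donc dans la norme d'Orlicz grâce au théorème \ref{thm2}.

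Pour la nécessité, la compacité totale de $\Re$ fournit un $\varepsilon/3$-réseau fini $u_{1},\ldots,u_{n}\in\Re$. Le lemme donne $\gamma>0$ tel que $\|u_{i}-S_{r}(u_{i})\|_{G}<\varepsilon/3$ pour tout $i$ et $0<r<\gamma$. Pour $u\in\Re$ quelconque, on choisit $u_{i}$ proche de $u$ et on applique la proposition \ref{pro8} au reste:
\[
\|u-S_{r}(u)\|_{G}\leq\|u-u_{i}\|_{G}+\|u_{i}-S_{r}(u_{i})\|_{G}+\|S_{r}(u-u_{i})\|_{G}<\varepsilon,
\]
ce qui établit la condition (2).

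Pour la suffisance, je fixerais $\varepsilon>0$ et utiliserais (2) pour choisir $r$ tel que $\|u-S_{r}(u)\|_{G}<\varepsilon/2$ pour tout $u\in\Re$. La proposition \ref{pro7} rend $\Re_{r}$ relativement compact dans $C(\bar{\Omega})$; l'injection continue $C(\bar{\Omega})\hookrightarrow L^{G}(\Omega)$ (issue de $\|v\|_{G}\leq\|v\|_{\infty}\|\chi_{\Omega}\|_{G}$ avec $mes(\Omega)<\infty$) promeut cette compacité dans $L^{G}(\Omega)$. Un $\varepsilon/2$-réseau fini de $\Re_{r}$ dans $L^{G}(\Omega)$ fournit alors, par inégalité triangulaire, un $\varepsilon$-réseau fini de $\Re$; $\Re$ est donc totalement borné dans $L^{G}(\Omega)$, et comme $E^{G}(\Omega)$ est fermé dans $L^{G}(\Omega)$ (théorème \ref{thm21}) donc complet, $\Re$ est relativement compact dans $E^{G}(\Omega)$. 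L'obstacle principal réside dans le lemme auxiliaire lorsque $G\notin\triangle_{2}$: la convergence en module n'entraîne pas la convergence en norme, et c'est précisément la borne uniforme sur $S_{r}(\chi_{M})-\chi_{M}$ qui permet, via l'application du théorème de convergence dominée à $G(\cdot/\lambda)$ pour tout $\lambda>0$, de convertir la convergence ponctuelle en contrôle véritable de la jauge de Luxemburg.
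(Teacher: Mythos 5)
Votre démonstration est correcte, mais elle suit, pour la condition nécessaire, un chemin réellement différent de celui du texte. Le texte choisit un $\varepsilon/3$-réseau fini formé de fonctions \emph{continues} $\omega^{1},\dots,\omega^{s}$ et exploite leur continuité uniforme sur $\overline{\Omega}$ pour majorer $\|S_{r}(\omega^{i})-\omega^{i}\|_{G}$ par $\frac{\varepsilon}{3c}\|\chi_{\Omega}\|_{G}$; vous remplacez cet argument par un lemme général --- $\|u-S_{r}(u)\|_{G}\rightarrow0$ pour tout $u\in E^{G}(\Omega)$ --- obtenu par densité des fonctions simples (théorème \ref{thm21}), contraction $\|S_{r}(w)\|_{G}\leq\|w\|_{G}$ (proposition \ref{pro8}), différentiation de Lebesgue et convergence dominée appliquée à $G(\cdot/\lambda)$ pour chaque $\lambda>0$ fixé, ce qui donne bien la convergence en norme de Luxemburg (puis d'Orlicz via le théorème \ref{thm2}) sans aucune hypothèse $\triangle_{2}$. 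Votre variante a l'avantage de ne pas supposer que le réseau fini peut être choisi parmi des fonctions continues \emph{appartenant à} $\Re$ --- affirmation que le texte ne justifie pas, les éléments de $\Re$ n'étant a priori que des fonctions de $E^{G}(\Omega)$ --- et elle isole un énoncé réutilisable ($S_{r}$ tend vers l'identité fortement sur $E^{G}(\Omega)$). La direction suffisante est, chez vous comme dans le texte, essentiellement la même: compacité relative de $\Re_{r}$ dans $C(\overline{\Omega})$ (proposition \ref{pro7}), injection continue dans $L^{G}(\Omega)$, et approximation de $\Re$ par un ensemble relativement compact; le texte invoque le théorème de Fréchet, tandis que vous explicitez l'argument de précompacité par réseaux finis en utilisant le fait que $E^{G}(\Omega)=\overline{S(\Omega)}$ est fermé dans l'espace complet $L^{G}(\Omega)$.
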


\begin{proof}
\text{Condition suffisante:}\\ \\
On a $\Re$ est borné dans  $E^{G}(\Omega)$, donc d'après la proposition \ref{pro7}, $\Re_{r}$  est relativment compact dans $C(\bar{\Omega})$, donc dans
  $E^{G}(\Omega)$. La condition $(2)$ montre que $\Re$ est approché par un ensemble relativement compact dans $E^{G}(\Omega)$. D'après le théorème de Fréchet $\Re$ est relativement compact dans $E^{G}(\Omega)$.\\ \\
  \text{Condition nécessaire:}\\ \\
    Soit $\Re$ un sous-ensemble de $E^{G}(\Omega)$ relativement compact, donc $\Re$ est précompact, c.à.d, pour tout $\epsilon>0$ il existe des fonctions continues
    $\omega^{1},\omega^{2},...,\omega^{s}\in\Re$ tels que $\Re\subset\cup_{i=1}^{s}B(\omega^{i},\frac{\epsilon}{3})$.
   Soit $c=\|\chi_{\Omega}\|_{G}$, comme $\omega^{i}$ est continue sur $\overline{\Omega}$ alors $\exists r>0$ tels que $\forall x,y\in \Omega$ ,$|x-y|<r,$ on a
                              \begin{equation}\label{116}
                                |\omega^{i}(x)-\omega^{i}(y)|<\frac{\epsilon}{3c},\ (i=1,...,s)
                              \end{equation}donc
                              \begin{align*}
                                 |S_{r}(\omega)^{i}(x)-\omega^{i}(x)|&\leq\bigg{|}\frac{1}{m_{r}}\int_{B_{r}(x)}\omega^{i}(y)dy-\frac{1}{m_{r}}
                                 \int_{B_{r}(x)}\omega^{i}(x)dy\bigg{|}\\
                                 &=\frac{1}{m_{r}}\bigg{|}\int_{B_{r}(x)}(\omega^{i}(y)-\omega^{i}(x))dy\bigg{|}<\frac{\epsilon}{3c}
                              \end{align*}
                              d'après la remarque \ref{rem2} on a
                              \begin{equation}\label{117}
                                \|S_{r}(\omega)^{i}-\omega^{i}\|_{G}< \|\frac{\epsilon}{3c}\chi_{\Omega}\|_{G}=\frac{\epsilon}{3}.
                              \end{equation}
                              Soit $u\in\Re$, il existe $\sigma$, $1\leq\sigma\leq s$ tel que
                              \begin{equation}\label{118}
                                \|u-\omega^{\sigma}\|_{G}<\frac{\epsilon}{3}.
                              \end{equation}
                              d'après la proposition \ref{pro8} on a
                              \begin{equation}\label{119}
                                 \|S_{r}(u-\omega^{\sigma})\|_{G}=\|S_{r}(u)-S_{r}(\omega^{\sigma})\|_{G}<\frac{\epsilon}{3}.
                              \end{equation}
                              En vertu de \eqref{117},\eqref{118} et \eqref{119} on obtient
       $$\|u-S_{r}u\|_{G}\leq\|u-\omega^{\sigma}\|_{G}+\|\omega^{\sigma}-S_{r}(\omega^{\sigma})\|_{G}+\|S_{r}(\omega^{\sigma})-S_{r}(u)\|_{G}<\epsilon$$
       d'où le résultat.
\end{proof}

\begin{rem}
  On sait que si $G$ vérifie la condition $\triangle_{2}$, $E^{G}(\Omega)$ co\"{\i}ncide avec $L^{G}(\Omega)$
  et qu'un ensemble borné en norme est borné en module, donc on peut reformuler le théorème \ref{thm10} comme suit.
\end{rem}

\begin{thm}
  On suppose que $G$ satisfait la condition $\triangle_{2}$. Alors un sous-ensemble $\Re$ de $E^{G}(\Omega)$ est relativement compact si et seulement si les conditions suivantes sont satisfaites:
  \begin{enumerate}
    \item Il existe $C>0$ telle que $$\int_{\Omega}G(|u(x)|)dx\leq C,\ \forall u\in\Re,$$
    \item $\forall\varepsilon>0$, $\exists\gamma=\gamma(\varepsilon)>0$ telle que $$\int_{\Omega}G(|u(x)-S_{r}(u)(x)|)dx<\varepsilon$$
    $\forall0<r<\gamma$ et $u\in\Re$.
  \end{enumerate}
\end{thm}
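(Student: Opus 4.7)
The plan is to derive this corollary directly from Theorem \ref{thm10} by using the $\triangle_2$ condition as a translation device between the norm topology and the modular topology. Three bridges built earlier in the chapter will do all the work: Proposition \ref{pro2} gives $E^G(\Omega) = K^G(\Omega) = L^G(\Omega)$, so the ambient space in Theorem \ref{thm10} is the right one; the proposition following Remark \ref{rem6} identifies norm-bounded subsets of $L^G(\Omega)$ with modularly bounded ones; and Theorem \ref{thm4}, in its quantitative forms given by Lemma \ref{pro4} and Lemma \ref{lem2}, provides a \emph{uniform} equivalence between $\|u\|_G$ being small and $\rho(u;G)$ being small.

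For the direct implication, I would start from a relatively compact $\Re \subset E^G(\Omega)$ and apply Theorem \ref{thm10}. Its condition (1), namely boundedness of $\Re$ in $E^G(\Omega)$, gives the modular bound in (1) through the boundedness proposition recalled above. For (2), fix $\varepsilon>0$ and apply Theorem \ref{thm10}(2) with a parameter $\varepsilon' < \min(\varepsilon,1)$, so there exists $\gamma>0$ with $\|u-S_r(u)\|_G < \varepsilon'$ for all $0<r<\gamma$ and $u \in \Re$. Then $\|u-S_r(u)\|_{(G)} \leq \|u-S_r(u)\|_G < 1$ by Theorem \ref{thm2}, and Lemma \ref{lem2}(i) gives $\rho(u-S_r(u);G) \leq \|u-S_r(u)\|_{(G)} < \varepsilon$, uniformly in $u \in \Re$.

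For the converse, I would start from (1) and (2) and recover the hypotheses of Theorem \ref{thm10}. Condition (1) together with Remark \ref{rem6} yields $\|u\|_G \leq C+1$, so $\Re$ is bounded in $E^G(\Omega)$. For the Steklov condition, given $\varepsilon>0$, use Lemma \ref{pro4}: choose $m \in \mathbb{N}$ large enough so that $c/2^m < \varepsilon$, where $c$ is the constant of that lemma. By (2) applied with threshold $1/k^m$ (here $k$ is the constant of the $\triangle_2$ condition), there exists $\gamma>0$ such that $\rho(u-S_r(u);G) \leq 1/k^m$ for all $0<r<\gamma$ and $u \in \Re$. Lemma \ref{pro4} then yields $\|u-S_r(u)\|_G \leq c/2^m < \varepsilon$, uniformly in $u \in \Re$. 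Theorem \ref{thm10} now applies and gives relative compactness.

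The only genuinely delicate point is the uniformity of the modular-to-norm passage in the backward direction; the pointwise equivalence of the two convergences is not by itself enough, and this is precisely what Lemma \ref{pro4} is designed to remedy. Everything else is a direct substitution of the $\triangle_2$-equivalences into the statement of Theorem \ref{thm10}.
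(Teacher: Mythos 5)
Votre démonstration est correcte et suit exactement la voie que l'article se contente d'esquisser dans la remarque précédant l'énoncé (réduction au théorème \ref{thm10} via l'identification $E^{G}=K^{G}=L^{G}$ et l'équivalence borné en norme/borné en module sous $\triangle_{2}$). Vous avez de plus raison de souligner que le seul point non trivial est l'uniformité en $u\in\Re$ du passage module--norme dans le sens réciproque, et le recours au lemme \ref{pro4} (plutôt qu'à la seule équivalence qualitative des convergences du théorème \ref{thm4}) est précisément ce qu'il faut pour le régler.
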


Nous terminons cette section par donner un autre critère de compacité en utilisant l'équicontinuité.

\begin{thm}
 Soit $G$ une $N$-fonction et soit $\Re$ un sous-ensemble de\\ $E^{G}(\Omega)$. Alors,\\
 $\Re$ est relativement compact dans $E^{G}(\Omega)$ si et seulement si
 \begin{enumerate}
   \item[(a)] $\Re$ est borné dans $E^{G}(\Omega)$,
   \item[(b)] $\Re$ est équicontinu en moyenne.
 \end{enumerate}
\end{thm}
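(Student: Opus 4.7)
Mon plan consiste à déduire ce critère du théorème \ref{thm10} en montrant l'équivalence entre l'équicontinuité en moyenne et la condition d'approximation par les fonctions de Steklov.

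Pour la condition suffisante, en partant de l'identité
$$S_r(u)(x) - u(x) = \frac{1}{m_r}\int_{|y|<r}(u_y(x) - u(x))\,dy,$$
j'établirais d'abord une inégalité de Minkowski intégrale pour la norme d'Orlicz:
$$\|S_r(u) - u\|_G \leq \frac{1}{m_r}\int_{|y|<r}\|u_y - u\|_G \, dy,$$
en utilisant la caractérisation duale \eqref{14}, le théorème de Fubini-Tonelli, et l'interversion du supremum avec l'intégrale extérieure. L'équicontinuité en moyenne donnera alors directement $\|S_r(u) - u\|_G \leq \varepsilon$ pour $r < \gamma(\varepsilon)$, ce qui vérifie la condition (2) du théorème \ref{thm10} et permet de conclure à la compacité relative de $\Re$ dans $E^{G}(\Omega)$.

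Pour la condition nécessaire, la bornitude découle immédiatement de la compacité relative. L'équicontinuité en moyenne s'obtient par précompacité: étant donné $\varepsilon>0$, je couvrirais $\Re$ par un nombre fini de boules $B(\omega^{i},\varepsilon/3)$, $i=1,\ldots,s$, centrées en des fonctions continues à support compact dans $\bar{\Omega}$ (obtenues en combinant la précompacité de $\Re$ avec la densité des fonctions simples fournie par le théorème \ref{thm21}, puis une régularisation). L'uniforme continuité de chaque $\omega^{i}$ sur $\bar{\Omega}$ donnera $\|\omega^{i}-\omega^{i}_{h}\|_{G}<\varepsilon/3$ pour $|h|$ petit, via la majoration $|\omega^{i}-\omega^{i}_{h}|\leq\|\omega^{i}-\omega^{i}_{h}\|_{\infty}\chi_{\Omega}$ et la remarque \ref{rem2}. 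Pour $u\in\Re$ quelconque, on choisit $\sigma$ tel que $\|u-\omega^{\sigma}\|_{G}<\varepsilon/3$, et l'inégalité triangulaire
$$\|u - u_{h}\|_G \leq \|u - \omega^{\sigma}\|_G + \|\omega^{\sigma} - \omega^{\sigma}_{h}\|_G + \|\omega^{\sigma}_{h} - u_{h}\|_G$$
permet de conclure, à condition de contrôler le dernier terme.

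L'obstacle principal sera précisément d'établir la propriété de contraction par translation $\|\varphi_{h}\|_G \leq \|\varphi\|_G$ (appliquée à $\varphi = u - \omega^{\sigma}$), indispensable pour majorer le troisième terme de l'inégalité triangulaire. Ceci repose sur un changement de variables dans la représentation duale \eqref{14} de la norme d'Orlicz, en tenant compte du fait que $\varphi_{h}$ est, par convention, nulle en dehors de l'ensemble $\{x\in\Omega : x+h\in\Omega\}$: à chaque $v$ admissible s'associe un translaté $\tilde{v}$ qui reste admissible par le même changement de variables. De même, l'inégalité de Minkowski intégrale pour la norme d'Orlicz utilisée dans le sens suffisant requiert une application soigneuse de Fubini-Tonelli dans la formulation duale, ce qui constitue le second point technique délicat.
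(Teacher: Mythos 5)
Votre démarche coïncide pour l'essentiel avec celle du texte : même réduction au critère du théorème \ref{thm10} via l'inégalité $\|u-S_{r}(u)\|_{G}\leq \frac{1}{m_{r}}\int_{|y|<r}\|u_{y}-u\|_{G}\,dy$ obtenue par dualité et Fubini-Tonelli pour la condition suffisante, et même argument de précompacité avec recouvrement fini par des boules centrées en des fonctions continues, inégalité triangulaire et contraction par translation $\|\varphi_{h}\|_{G}\leq\|\varphi\|_{G}$ pour la condition nécessaire. Les deux points techniques que vous signalez (l'inégalité de Minkowski intégrale et la contraction par translation) sont précisément ceux que le texte utilise en les admettant sans détail, et votre façon de les justifier par changement de variables dans la représentation duale est la bonne.
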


\begin{proof}
Soit $u\in\Re$, on a \begin{align*}
        |u(x)-S_{r}(u)(x)|&=\bigg{|}\frac{1}{m_{r}}\int_{|y|<r}(u(x)-u(x+y))dy\bigg{|}\\
        &\leq\frac{1}{m_{r}}\int_{|y|<r}|u(x)-u(x+y)|dy.
     \end{align*}
     Soit $v\in K^{G^{*}}(\Omega)$ telle que $\rho(v;G^{*})\leq1$ donc \begin{align*}
                                                                          \int_{\Omega}|u(x)-S_{r}(u)(x)||v(x)|dx&\leq
                                                                          \frac{1}{m_{r}}\int_{|y|<r}\bigg{[}\int_{\Omega}|u(x+y)-u(x)||v(x)|dx\bigg{]}dy\\
                                                                          &\leq \frac{1}{m_{r}}\int_{|y|<r}\|u_{y}-u\|_{G}dy,
                                                                       \end{align*}
alors \begin{equation}\label{120}
                  \|u-S_{r}(u)\|_{G}\leq \frac{1}{m_{r}}\int_{|y|<r}\|u_{y}-u\|_{G}dy.
                \end{equation}
                Puisque $\Re$ est équicontinu, la deuxième condition du théorème \ref{thm10} est vérifiée, alors $\Re$ est relativement compact.\\ \\
 \textbf{Inversement}\\ \\Par le m\^{e}me argument utilisé dans la démonstration du théorème \ref{thm10}, pour tout $\epsilon>0$ il existe des fonctions continues
    $\omega^{1},\omega^{2},...,\omega^{s}\in\Re$ tels que $\Re\subset\cup_{i=1}^{s}B(\omega^{i},\frac{\epsilon}{3})$, $\exists r>0$ tels que
 $$|\omega^{i}(x+h)-\omega^{i}(x)|<\frac{\epsilon}{3c}\ \text{pour}\ h\in\mathbb{R}^{N},\ |h|<\delta,\ i=1,...,s$$ avec $c=\|\chi_{\Omega}\|_{G}$. Alors $\|(\omega^{i})_{h}-\omega^{i}\|_{G}<\frac{\epsilon}{3}$ et $\|u_{h}\|_{G}\leq\|u\|_{G},$ donnent
 $$\|u-u_{h}\|_{G}\leq \|u-\omega^{\sigma}\|_{G}+\|\omega^{\sigma}-(\omega^{\sigma})_{h}\|_{G} +\|(\omega^{\sigma}-u)_{h}\|_{G}<\epsilon.$$
\end{proof}







\chapter{Espace d'Orlicz-Sobolev}

\section{Définition et propriétés élémentaires des espaces d'Orlicz-Sobolev}

Soit $\Omega$ un ouvert de $\mathbb{R}^{N}$ et soient $G$ une $N$-Fonction et $m\geq1$ un entier.

\begin{dfn}
 On appelle espace d'Orlicz-Sobolev noté $W^{m,G}(\Omega)$ défini par
 $$W^{m,G}(\Omega)=\Biggm{\{} u\in L^{G}(\Omega)
  \left |
   \begin{array}{r c l}
       \forall\alpha\in\mathbb{N}^{N}\, |\alpha|\leq m,\exists g_{\alpha}\in  L^{G}(\Omega)\ \text{tel que}\\
        \displaystyle\int_{\Omega}u D^{\alpha}\varphi=(-1)^{|\alpha|}\displaystyle\int_{\Omega}g_{\alpha}\varphi,\emph{}\ \forall\varphi\in C_{c}^{\infty}(\Omega)\\
   \end{array}
   \right \}.$$
 On définit de la m\^{e}me manière l'espace  $W^{m,G}\big{(}E(\Omega)\big{)}$ en remplaçant $L^{G}(\Omega)$ par $E^{G}(\Omega)$.
\end{dfn}

On note $g_{\alpha}=D^{\alpha}u$.\\
On munit $W^{m,G}(\Omega)$ de la norme $$\|u\|_{ W^{m,G}(\Omega)}=\ds\max_{0\leq|\alpha|\leq m}\|D^{\alpha}u\|_{L^{G}(\Omega)}$$
ou de la norme $\sum_{0\leq|\alpha|\leq m}\|D^{\alpha}u\|_{L^{G}(\Omega)}$, ces deux normes sont équivalentes.

\begin{pro}
 \begin{enumerate}
   \item $(W^{m,G}(\Omega),\ \|.\|_{W^{m,G}(\Omega)})$ est un espace de Banach.
   \item $W^{m,G}\big{(}E(\Omega)\big{)}$ co\"{\i}ncide avec  $W^{m,G}(\Omega)$ si et seulement si $G$ satisfait la condition $\triangle_{2}$ ($G$ satisfait $\triangle_{2,0}$ si $mes(\Omega)=+\infty$).
   \item L'espace $W^{m,G}\big{(}E(\Omega)\big{)}$ est séparable.
   \item Si $G$ et $G^{*}$ satisfaient $\triangle_{2}$, alors $W^{m,G}(\Omega)$ est réflexif.
 \end{enumerate}
\end{pro}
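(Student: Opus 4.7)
Le plan consiste à ramener chacune des quatre assertions à des propriétés déjà établies pour $L^G(\Omega)$ et $E^G(\Omega)$, via le plongement naturel $u\mapsto(D^\alpha u)_{|\alpha|\leq m}$. Pour (1), je prendrai une suite de Cauchy $(u_n)$ dans $W^{m,G}(\Omega)$. Par définition de la norme, chaque suite $(D^\alpha u_n)_n$ avec $|\alpha|\leq m$ est de Cauchy dans $L^G(\Omega)$, complet, et converge donc en norme vers un certain $u_\alpha\in L^G(\Omega)$. Posant $u:=u_0$, il suffira de montrer que $u_\alpha=D^\alpha u$ au sens faible, en passant à la limite dans l'identité $\int_\Omega u_n\, D^\alpha\varphi\,dx=(-1)^{|\alpha|}\int_\Omega D^\alpha u_n\,\varphi\,dx$ pour $\varphi\in C_c^\infty(\Omega)$: l'inégalité de Hölder d'Orlicz (théorème \ref{thm1}) fournit la majoration $\big|\int_\Omega(u_n-u)D^\alpha\varphi\,dx\big|\leq\|u_n-u\|_G\|D^\alpha\varphi\|_{G^*}\to 0$, puisque $D^\alpha\varphi$ est bornée à support compact, donc dans $L^{G^*}(\Omega)$.

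Pour (2), l'implication $G\in\triangle_2\Rightarrow W^{m,G}(E(\Omega))=W^{m,G}(\Omega)$ est immédiate, car la proposition \ref{pro2} donne $E^G(\Omega)=L^G(\Omega)$ et les deux définitions coïncident. Pour la réciproque, je procéderai par contraposée: si $G\notin\triangle_2$, il existe $v\in L^G(\Omega)\setminus E^G(\Omega)$; en construisant une primitive itérée localisée de $v$ (via un cut-off lisse à support compact dans $\Omega$ assurant l'appartenance à $L^G$), on obtient un élément de $W^{m,G}(\Omega)$ dont une dérivée faible d'ordre $\leq m$ n'appartient pas à $E^G(\Omega)$, ce qui prouve $W^{m,G}(\Omega)\neq W^{m,G}(E(\Omega))$.

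Pour (3) et (4), je considérerai l'application linéaire $T:W^{m,G}(E(\Omega))\to\prod_{|\alpha|\leq m}E^G(\Omega)$ donnée par $T(u)=(D^\alpha u)_{|\alpha|\leq m}$. Munie de la norme max, $T$ est une isométrie dont l'image est fermée grâce à (1). Pour (3), la séparabilité de $E^G(\Omega)$ se déduit de la caractérisation $E^G=\overline{S(\Omega)}$ (théorème \ref{thm21}): les fonctions simples à coefficients rationnels et à supports réunions finies de cubes à sommets rationnels forment une partie dénombrable dense, par un argument analogue à la démonstration de séparabilité de $L^G$ sous $\triangle_2$, mais évitant la condition $\triangle_2$ grâce à la densité des fonctions simples dans $E^G$. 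Le produit fini $\prod_{|\alpha|\leq m}E^G(\Omega)$ est alors séparable, ainsi que son sous-espace fermé, et par isométrie $W^{m,G}(E(\Omega))$ également. Pour (4), sous $\triangle_2$ pour $G$ et $G^*$, $L^G(\Omega)$ est réflexif; le plongement isométrique analogue $W^{m,G}(\Omega)\hookrightarrow\prod_{|\alpha|\leq m}L^G(\Omega)$ identifie $W^{m,G}(\Omega)$ à un sous-espace fermé d'un produit fini d'espaces réflexifs, donc lui-même réflexif.

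L'obstacle principal se situera au niveau de la construction explicite dans la réciproque de (2) et de l'adaptation du schéma de séparabilité du théorème correspondant pour $L^G$ au cadre de $E^G$ sans condition $\triangle_2$: la première demande de contrôler conjointement la régularité et l'appartenance à $L^G$ d'une primitive localisée, tandis que la seconde repose de façon essentielle sur l'identification $E^G=\overline{S(\Omega)}$ qui fournit précisément le pont nécessaire vers une famille dénombrable dense.
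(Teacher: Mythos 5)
Un point de contexte d'abord : le texte énonce cette proposition sans en donner de démonstration, il n'y a donc pas d'argument de référence auquel confronter le vôtre. Votre schéma général --- identifier isométriquement $W^{m,G}(\Omega)$ (resp. $W^{m,G}\big(E(\Omega)\big)$) à un sous-espace fermé du produit fini $\prod_{|\alpha|\leq m}L^{G}(\Omega)$ (resp. $\prod_{|\alpha|\leq m}E^{G}(\Omega)$) via $u\mapsto(D^{\alpha}u)_{|\alpha|\leq m}$ --- est la voie standard et fonctionne sans accroc pour (1), (3) et (4). Pour (1), le passage à la limite dans l'identité de dualité est licite car $D^{\alpha}\varphi$ est bornée à support compact, donc dans $L^{G^{*}}(\Omega)$, et l'inégalité de Hölder du théorème \ref{thm1} s'applique. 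Pour (3), votre observation que la séparabilité de $E^{G}(\Omega)$ ne requiert pas $\triangle_{2}$ est juste : grâce à $E^{G}=\overline{S(\Omega)}$ (théorème \ref{thm21}) et à la formule $\|\chi_{E}\|_{G}=mes(E)(G^{*})^{-1}(1/mes(E))$ (proposition \ref{pro5}), qui tend vers $0$ avec $mes(E)$, on approche toute fonction simple par des combinaisons rationnelles de caractéristiques de réunions finies de cubes rationnels sans invoquer l'équivalence norme/module. Pour (4), un sous-espace fermé d'un produit fini d'espaces réflexifs est réflexif.

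Le sens réciproque de (2) est en revanche un vrai trou, que vous signalez vous-même sans le combler. L'existence de $v\in L^{G}(\Omega)\setminus E^{G}(\Omega)$ ne suffit pas : il faut produire $u\in W^{m,G}(\Omega)$ dont $u$ ou l'une de ses dérivées d'ordre $\leq m$ sort de $E^{G}(\Omega)$, et la \emph{primitive itérée localisée} n'a de sens qu'en dimension $1$ ; en dimension $N>1$ il n'y a pas de primitive canonique, et un cut-off appliqué à $v$ ne régularise rien. La construction qui aboutit consiste plutôt à reprendre le contre-exemple de la proposition \ref{pro3} et du théorème \ref{thm22} en remplaçant les fonctions caractéristiques par des bosses lisses $u_{n}\phi_{n}$ portées par des boules disjointes $B(x_{n},r_{n})\subset\Omega$, avec $|D^{\beta}\phi_{n}|\leq C\,r_{n}^{-|\beta|}$ : il faut alors choisir simultanément $u_{n}$, $r_{n}$ et les centres de sorte que $\sum_{n}G(\lambda u_{n})\,mes(B(x_{n},r_{n}/2))=\infty$ pour un certain $\lambda>0$ (échec de l'appartenance à $E^{G}$) tandis que $\sum_{n}G(C u_{n}r_{n}^{-m})\,mes(B(x_{n},r_{n}))<\infty$ (dérivées dans $K^{G}$). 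C'est cette double contrainte, à vérifier explicitement à partir de la négation de $\triangle_{2}$ (une suite $t_{n}\to\infty$ avec $G(2t_{n})>2^{n}G(t_{n})$), qui constitue le c{\oe}ur de la réciproque ; tant qu'elle n'est pas menée à bien, (2) n'est démontré que dans le sens direct, lequel découle bien de la proposition \ref{pro2}.
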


\begin{exem}
 Si $1<p<\infty$ et $G_{p}(t)=t^{p}$, alors  $W^{m,G_{p}}(\Omega)= W^{m,G_{p}}\big{(}E(\Omega)\big{)}= W^{m,p}(\Omega)$.
\end{exem}

\begin{dfn}
  $W^{m,G}_{0}(\Omega)$ désigne la fermeture de $C^{\infty}_{c}(\Omega)$ dans  $W^{m,G}(\Omega)$.
\end{dfn}

\begin{thm}\label{thm23}
    $C^{\infty}_{c}(\mathbb{R}^{N})$ dense dans $W^{m,G}\big{(}E(\mathbb{R}^{N})\big{)}$.
\end{thm}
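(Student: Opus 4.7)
Le plan est de procéder par le schéma classique en deux étapes: troncature puis régularisation par convolution. L'outil technique central, qui distingue la preuve dans $W^{m,G}(E(\mathbb{R}^N))$ de celle dans $W^{m,G}(\mathbb{R}^N)$ lorsque $\triangle_2$ n'est pas supposée, est la \emph{continuité absolue de la norme} sur $E^G$: si $(v_n)\subset E^G(\mathbb{R}^N)$ vérifie $v_n\to 0$ presque partout avec $|v_n|\le w$ pour un certain $w\in E^G(\mathbb{R}^N)$, alors $\|v_n\|_G\to 0$. En effet, pour tout $\lambda>0$, $\lambda w\in K^G(\mathbb{R}^N)$, donc $G(\lambda w)\in L^1(\mathbb{R}^N)$ domine $G(\lambda|v_n|)$; le théorème de convergence dominée donne $\rho(\lambda v_n;G)\to 0$ pour tout $\lambda>0$, et en prenant $\lambda>2/\varepsilon$ on obtient $\|v_n\|_G\le\varepsilon$ à partir d'un certain rang par le théorème \ref{thm2} et la remarque \ref{rem7}.

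\textbf{\'Etape 1 (troncature).} Soit $u\in W^{m,G}(E(\mathbb{R}^N))$. On fixe $\zeta\in C^\infty_c(\mathbb{R}^N)$ avec $\zeta\equiv 1$ sur $B(0,1)$, $\mathrm{supp}\,\zeta\subset B(0,2)$, $0\le\zeta\le 1$, et on pose $\zeta_k(x)=\zeta(x/k)$. Alors $u_k:=\zeta_k u\in W^{m,G}(E(\mathbb{R}^N))$ est à support compact et, par la formule de Leibniz,
$$D^\alpha u_k-D^\alpha u=(\zeta_k-1)D^\alpha u+\sum_{0<\beta\le\alpha}\binom{\alpha}{\beta}D^\beta\zeta_k\cdot D^{\alpha-\beta}u.$$
Le premier terme tend vers $0$ presque partout et est dominé par $|D^\alpha u|\in E^G$; chaque terme de la somme est borné en module par $C_\beta k^{-|\beta|}|D^{\alpha-\beta}u|\le C_\beta k^{-1}|D^{\alpha-\beta}u|$ et tend aussi vers $0$ presque partout. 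La continuité absolue ci-dessus fournit $\|D^\alpha u_k-D^\alpha u\|_G\to 0$ pour tout $|\alpha|\le m$, donc $u_k\to u$ dans $W^{m,G}(E(\mathbb{R}^N))$.

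\textbf{\'Etape 2 (régularisation).} Soit $v\in W^{m,G}(E(\mathbb{R}^N))$ à support compact et $(\rho_\varepsilon)_{\varepsilon>0}$ une suite régularisante standard. Alors $\rho_\varepsilon * v\in C^\infty_c(\mathbb{R}^N)$ pour $\varepsilon$ assez petit et $D^\alpha(\rho_\varepsilon*v)=\rho_\varepsilon*D^\alpha v$. Il suffit donc de montrer que $\rho_\varepsilon*w\to w$ dans $E^G$ pour tout $w\in E^G$ à support compact. L'inégalité intégrale de Minkowski appliquée à la norme d'Orlicz donne
$$\|\rho_\varepsilon*w-w\|_G\le\int_{\mathbb{R}^N}\rho_\varepsilon(y)\|w(\cdot-y)-w\|_G\,dy\le\sup_{|y|\le\varepsilon}\|w(\cdot-y)-w\|_G,$$
et il reste à établir la continuité des translations dans $E^G$. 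Pour cela, on commence par approcher $w$ dans $E^G$ par une fonction $\varphi\in C_c(\mathbb{R}^N)$: d'après le théorème \ref{thm21}, les fonctions simples sont denses dans $E^G$; pour $\chi_M$ avec $\mathrm{mes}(M)<\infty$, on choisit $K\subset M\subset V$ avec $K$ compact, $V$ ouvert, $\mathrm{mes}(V\setminus K)<\delta$, puis par le lemme d'Urysohn une fonction continue $\varphi$, $\chi_K\le\varphi\le\chi_V$; la proposition \ref{pro5} donne
$$\|\chi_M-\varphi\|_G\le\|\chi_{V\setminus K}\|_G=\mathrm{mes}(V\setminus K)(G^*)^{-1}\!\left(\tfrac{1}{\mathrm{mes}(V\setminus K)}\right)\to 0$$
quand $\delta\to 0$, car $G^*(s)/s\to\infty$. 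L'estimation standard $\|w(\cdot-y)-w\|_G\le 2\|w-\varphi\|_G+\|\varphi(\cdot-y)-\varphi\|_G$, couplée à la continuité uniforme de $\varphi$ sur son support compact, conclut.

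L'obstacle principal est l'absence éventuelle de $\triangle_2$: la convergence en module n'équivaut plus à la convergence en norme sur $L^G$, et les majorations du type $\rho(u_k-u;G)\to 0$ ne suffisent plus. C'est précisément la stabilité de $E^G$ par dilatation arbitraire — $\lambda u\in K^G$ pour tout $\lambda>0$ — qui fournit la majoration dominante uniforme en $\lambda$ nécessaire pour boucler simultanément les estimations de troncature et de régularisation, en ramenant la convergence en norme à un énoncé de convergence dominée classique.
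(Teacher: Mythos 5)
Votre démonstration est correcte et suit le même schéma global que celle du texte (troncature puis régularisation par convolution, avec réduction à la continuité des translations dans $E^{G}$), mais elle est organisée autour d'un lemme que le texte n'explicite pas : la continuité absolue de la norme sur $E^{G}$ (convergence presque partout plus domination par un élément de $E^{G}$ entraîne la convergence en norme), dont votre preuve par convergence dominée appliquée à $G(\lambda|v_{n}|)$ pour tout $\lambda>0$, suivie du passage $\rho(\lambda v_{n};G)\leq1\Rightarrow\|v_{n}\|_{G}\leq 2/\lambda$, est juste. Ce lemme vous permet de justifier deux points que le texte affirme sans démonstration. D'une part l'étape de troncature : le texte pose $\nu_{r}=u\,\varphi_{r}(|x|)$ et affirme $\|\nu_{r}-u\|_{W^{m,G}}\rightarrow0$ sans contrôler les dérivées de la fonction de coupure, alors que vous traitez explicitement les termes de Leibniz $D^{\beta}\zeta_{k}\,D^{\alpha-\beta}u$, qui sont d'ailleurs en $O(1/k)$ en norme. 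D'autre part la continuité des translations $y\mapsto\|w(\cdot-y)-w\|_{G}$, que le lemme \ref{lem9} du texte utilise sans la démontrer et que vous établissez par densité de $C_{c}(\mathbb{R}^{N})$ dans $E^{G}$ (fonctions simples via le théorème \ref{thm21}, lemme d'Urysohn, et la formule \eqref{17} combinée à $t\,(G^{*})^{-1}(1/t)\rightarrow0$ quand $t\rightarrow0^{+}$). L'ordre des deux étapes est inversé par rapport au texte, ce qui est sans importance. En résumé : même route, mais votre version comble les lacunes de la preuve du texte et identifie correctement que c'est la stabilité de $E^{G}$ par dilatation arbitraire, et non la condition $\triangle_{2}$, qui fait fonctionner l'argument.
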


  Pour la démonstration du théorème \ref{thm23} nous avons besoin du lemme suivant.

\begin{lem}\label{lem9}
 Soit $\rho\in C^{\infty}_{c}(\mathbb{R}^{N})$ telle que $\rho\geq0$ et $\displaystyle\int_{\mathbb{R}^{N}}\rho(t)dt=1$. On considère la suite $(\rho_{k})_{k\in\mathbb{N}}$ de
 $C^{\infty}_{c}(\mathbb{R}^{N})$, définie par $\rho_{k}(t)=k\rho(kt)$. Soit $f\in E^{G}$, alors  $$\rho_{k}\ast f\in E^{G}\ \text{et}\ \|\rho_{k}\ast f-f\|_{G}\rightarrow0\ \text{quand}\ k\rightarrow\infty.$$
\end{lem}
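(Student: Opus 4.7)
The plan is to follow the classical mollification scheme, but adapted to the Orlicz setting using Jensen's inequality together with the Luxemburg norm. The backbone is a contraction estimate $\|\rho_k\ast h\|_{(G)}\leq\|h\|_{(G)}$ for every $h\in L^G$. Since $\rho_k(y)\,dy$ is a probability measure and $G$ is convex, Jensen's inequality gives, for any $\lambda>0$,
$$G\!\left(\frac{|\rho_k\ast h(x)|}{\lambda}\right)\leq\int_{\mathbb{R}^N}\rho_k(y)\,G\!\left(\frac{|h(x-y)|}{\lambda}\right)dy.$$
Integrating in $x$, Fubini--Tonelli and the translation-invariance of Lebesgue measure yield $\rho\!\left(\tfrac{\rho_k\ast h}{\lambda};G\right)\leq\rho\!\left(\tfrac{h}{\lambda};G\right)$, so any $\lambda$ admissible in the Luxemburg infimum for $h$ is admissible for $\rho_k\ast h$. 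Combining with Theorem \ref{thm2} gives $\|\rho_k\ast h\|_{G}\leq 2\|h\|_{G}$.

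From this estimate, the first assertion follows immediately: if $f\in E^G$, then for every $\mu>0$ we have $\mu f\in K^G(\mathbb{R}^N)$, and the same Jensen/Fubini calculation applied to $\mu f$ gives
$$\int_{\mathbb{R}^N}G\!\left(\mu\,|\rho_k\ast f(x)|\right)dx\leq\int_{\mathbb{R}^N}G(\mu\,|f(x)|)\,dx<\infty,$$
so $\mu(\rho_k\ast f)\in K^G$ for every $\mu>0$, that is, $\rho_k\ast f\in E^G$.

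For the convergence, I would first treat a test function $\varphi\in C_c^\infty(\mathbb{R}^N)$. Since $\rho_k$ has compact support shrinking to the origin, the supports of $\rho_k\ast\varphi$ lie in a fixed compact set $K'$ (for $k$ large), and uniform continuity of $\varphi$ gives $\|\rho_k\ast\varphi-\varphi\|_\infty\to 0$. Using Remark \ref{rem2}, $\|\rho_k\ast\varphi-\varphi\|_G\leq\|\rho_k\ast\varphi-\varphi\|_\infty\,\|\chi_{K'}\|_G\to 0$. For general $f\in E^G$, given $\varepsilon>0$ pick $\varphi\in C_c^\infty$ with $\|f-\varphi\|_G<\varepsilon$; then the triangle inequality together with the contraction estimate yields
$$\|\rho_k\ast f-f\|_G\leq\|\rho_k\ast(f-\varphi)\|_G+\|\rho_k\ast\varphi-\varphi\|_G+\|\varphi-f\|_G\leq 3\varepsilon+\|\rho_k\ast\varphi-\varphi\|_G,$$
and the last term tends to zero as $k\to\infty$.

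The main obstacle is the density step: Theorem \ref{thm21} only asserts that the simple functions $S(\Omega)$ are dense in $E^G$, whereas the argument above needs density of $C_c^\infty(\mathbb{R}^N)$ in $E^G(\mathbb{R}^N)$. To bridge this I would approximate a simple function $\sum_i r_i\chi_{M_i}$ (with $\mathrm{mes}(M_i)<\infty$) by first replacing each $M_i$ by a finite union of cubes (as in Step 3 of the separability proof) and then smoothing each $\chi_{M_i}$ by a continuous function with compact support, using that $\|\chi_E\|_G\to 0$ as $\mathrm{mes}(E)\to 0$ (which follows from Proposition \ref{pro5} and the property $t(G^{*})^{-1}(1/t)\to 0$ as $t\to 0^{+}$). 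Finally, a further convolution with any fixed mollifier turns a continuous compactly supported approximant into a $C_c^\infty$ one, closing the argument.
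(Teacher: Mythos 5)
Your proof is correct, but it follows a genuinely different route from the paper's. The paper bounds $\int|\rho_k\ast f-f||v|\,dx$ for $\rho(v;G^{*})\leq 1$ by a Minkowski-type integral inequality in the Orlicz norm, obtaining $\|\rho_k\ast f-f\|_G\leq 2\int_{\mathbb{R}^N}\|f_{t/k}-f\|_G\,\rho(t)\,dt$, and then concludes by invoking the continuity of translations $t\mapsto f_t$ on $E^{G}$ — a fact it asserts without proof. You instead prove a contraction estimate $\rho(\rho_k\ast h/\lambda;G)\leq\rho(h/\lambda;G)$ via Jensen and Fubini (which also gives you $\rho_k\ast f\in E^G$ cleanly, a point the paper does not really address), reduce by density to $\varphi\in C^{\infty}_c$, and finish by uniform convergence on a fixed compact set. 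The trade-off: the paper's argument is shorter on the page but hides the real work inside the unproved translation-continuity claim, which itself is normally established by exactly the kind of density argument you carry out; your version is longer but self-contained, and you correctly identified and bridged the gap that Theorem \ref{thm21} only gives density of simple functions, not of $C^{\infty}_c$. Your bridging chain (simple functions $\to$ finite unions of cubes $\to$ continuous compactly supported $\to$ smooth, using $\|\chi_E\|_G=mes(E)(G^{*})^{-1}(1/mes(E))\to 0$ as $mes(E)\to 0$, which follows from $t<G^{-1}(t)(G^{*})^{-1}(t)\leq 2t$) is sound and contains no circularity, since the final mollification step for a continuous compactly supported function only needs uniform convergence, not the lemma being proved.
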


\begin{proof}
  Soit $G^{*}$ la conjuguée de $G$ et $v\in L^{G^{*}}(\mathbb{R}^{N})$ tel que $\|v\|_{G^{*}}=1$,
  \begin{align*}
  \int_{\mathbb{R}^{N}}|\rho_{k}\ast f(x)-f(x)||v(x)|dx & \leq\int_{\mathbb{R}^{N}}\bigg{(}\int_{\mathbb{R}^{N}}|f(x-t)-f(x)||v(x)|dx\bigg{)}\rho_{k}(t)dt \\
    &\leq 2\int_{\mathbb{R}^{N}}\|f_{t}-f\|_{G}\|v\|_{G^{*}}\rho_{k}(t)dt\\
    &=2\int_{\mathbb{R}^{N}}\|f_{t}-f\|_{G}\rho_{k}(t)dt,
  \end{align*}
  où $f_{t}(x)=f(x-t)$. Alors
  \begin{align*}
    \|\rho_{k}\ast f-f\|_{G} & \leq  2\int_{\mathbb{R}^{N}}\|f_{t}-f\|_{G}\rho_{k}(t)dt\\
     & = 2\int_{\mathbb{R}^{N}}\|f_{\frac{t}{k}}-f\|_{G}\rho(t)dt.
  \end{align*}
  Comme $f\in E^{G}(\Omega)$ et $\rho$ est à support compact, alors,  pour tout $\varepsilon>0$, il existe $k$ assez grand tel que $$\|f_{\frac{t}{k}}-f\|_{G}\leq\frac{\varepsilon}{2}$$ donc
  $$2\int_{\mathbb{R}^{N}}\|f_{\frac{t}{k}}-f\|_{G}\rho(t)dt\leq\varepsilon\int_{\mathbb{R}^{N}}\rho(t)dt=\varepsilon$$
  et par suite $$\|\rho_{k}\ast f-f\|_{G}\rightarrow0,\ \ k\rightarrow\infty.$$
  D'où le résultat.
\end{proof}

\begin{proof}[\textbf{Démonstration du théorème \ref{thm23}}]
 Soit $u\in W^{m,G}E(\Omega)$.\\ \\ \textbf{\'{E}tape 1}\\ \\ On suppose que $u$ est à support compact. Soient $(\rho_{k})$ la suite définie dans le lemme \ref{lem9} et $u_{k}(x)=u\ast\rho_{k}(x)=\displaystyle\int_{\mathbb{R}^{N}} u(t)\rho_{k}(x-t)dt$.

  Comme $\forall\alpha, |\alpha|\leq m,\ D^{\alpha}u\in E^{G}$ et $\rho_{k}\in C^{\infty}_{c}$, alors
  \begin{align*}
    D^{\alpha}u_{k}(x) & =D^{\alpha}\int_{\mathbb{R}^{N}} u(t)\rho_{k}(x-t)dt \\
     & =\int_{\mathbb{R}^{N}} u(t) D^{\alpha}\rho_{k}(x-t)dt\\&=\int_{\mathbb{R}^{N}} D^{\alpha}u(t)\rho_{k}(x-t)dt.
  \end{align*}
   et pour tout $\alpha$ tel que $|\alpha|\leq m$ $$\|D^{\alpha}u_{k}-D^{\alpha}u\|_{G}=\|D^{\alpha}u\ast\rho_{k}-D^{\alpha}u\|_{G}\rightarrow0,\ k\rightarrow\infty,\ \text{(d'après le lemme \ref{lem9}}).$$
  D'où $$\ds\lim_{k\rightarrow\infty}\|u_{k}-u\|_{W^{m,G}(\Omega)}=0.$$
  \textbf{\'{E}tape 2}\\ \\
  Soit $u\in W^{m,G}\big{(}E(\mathbb{R}^{N})\big{)}$, on considère la fonction $$\varphi_{r}(t)=\begin{cases}
                                                                                               1 & \mbox{si }\ 0\leq t\leq r  \\
                                                                                               0 & \mbox{si}\ t\geq 2r.
                                                                                             \end{cases}$$
         La fonction $\nu_{r}(x)=u(x).\varphi_{r}(|x|)$ est à support compact et  $$\ds\lim_{r\rightarrow\infty}\|\nu_{r}-u\|_{W^{m,G}(\Omega)}=0.$$
         On applique l'étape 1 à la fonction $\nu_{r}$, $$\nu_{k,r}=\nu_{r}\ast\rho_{k}\in C_{c}^{\infty}(\mathbb{R}^{N}),\ \
         \|\nu_{k,r}-u\|_{W^{m,G}(\Omega)}\rightarrow0,\ \ k,r\rightarrow+\infty.$$

\end{proof}

\begin{thm}[Inégalité de Poincaré \cite{7}]\label{thm37}
  Soit $\Omega$ un ouvert borné de $ \mathbb{R}^{N}$. Alors il existe deux constantes $c_{m}$ et $c_{m,\Omega}$ telles que
  $$\int_{\Omega}\sum_{|\alpha|<m}G(D^{\alpha}u)dx\leq c_{m}\int_{\Omega}\sum_{|\alpha|=m}G(c_{m,\Omega}D^{\alpha}u)dx,\ \forall u\in W_{0}^{m,G}(\Omega),$$ et
 $$ \int_{\Omega}G(|u|)dx\leq  \int_{\Omega}G(d|\nabla u|)dx, \forall u\in   W_{0}^{1,G}(\Omega),$$
 où $d=2diam(\Omega)$.
\end{thm}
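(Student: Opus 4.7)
The plan is to prove the second (base-case) inequality on $W_0^{1,G}(\Omega)$ first, then derive the first inequality by iteration over multi-indices.

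For the base case I start with $u\in C_c^\infty(\Omega)$ (dense in $W_0^{1,G}(\Omega)$ by definition). After translating $\Omega$ into a box $\prod_{i=1}^N[0,L_i]$ with $L_i\leq D:=\operatorname{diam}(\Omega)$ and extending $u$ by zero, the fundamental theorem of calculus in the first coordinate gives $|u(x_1,x')|\leq \int_0^{L_1}|\partial_1 u(t,x')|\,dt$, which I rewrite as $\tfrac{1}{L_1}\int_0^{L_1}L_1|\partial_1 u(t,x')|\,dt$. Jensen's inequality for the convex $G$ with the uniform probability measure $\tfrac{1}{L_1}\,dt$ on $[0,L_1]$ yields $G(|u(x_1,x')|)\leq \tfrac{1}{L_1}\int_0^{L_1}G(L_1|\partial_1 u(t,x')|)\,dt$. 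Integrating in $x_1$ (which cancels $1/L_1$), then in $x'$ via Fubini, and using $|\partial_1 u|\leq|\nabla u|$ with $L_1\leq D$, produces $\int_\Omega G(|u|)\,dx\leq \int_\Omega G(D|\nabla u|)\,dx$ for smooth $u$.

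For general $u\in W_0^{1,G}(\Omega)$ I pick $u_n\in C_c^\infty(\Omega)$ with $\|u_n-u\|_{W^{1,G}}\to 0$ and extract an a.e.\ convergent subsequence. Fatou gives $\int G(|u|)\leq \liminf \int G(D|\nabla u_n|)$. Convexity of $G$ with $G(0)=0$ yields
$$G(D|\nabla u_n|)\leq G\bigl(D|\nabla u|+D|\nabla u_n-\nabla u|\bigr)\leq \tfrac{1}{2}G(2D|\nabla u|)+\tfrac{1}{2}G(2D|\nabla u_n-\nabla u|),$$
and the lemma \ref{lem2}(i) applied to $2D(\nabla u_n-\nabla u)$, whose Luxemburg norm tends to $0$, forces $\int G(2D|\nabla u_n-\nabla u|)\,dx\to 0$. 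Hence $\int G(|u|)\leq \tfrac{1}{2}\int G(2D|\nabla u|)\leq \int G(d|\nabla u|)$ with $d=2D$, which is the second inequality. The factor $2$ built into $d=2\operatorname{diam}(\Omega)$ is exactly the cost of this density argument.

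For the first inequality, I iterate. For $|\alpha|<m$, the base case applied to $D^\alpha u\in W_0^{1,G}(\Omega)$ gives $\int G(|D^\alpha u|)\,dx\leq \int G(d|\nabla D^\alpha u|)\,dx$. Combined with $|\nabla D^\alpha u|\leq \sum_i|D^{\alpha+e_i}u|$ and the Jensen identity $G(\sum_{i=1}^N t_i)\leq \tfrac{1}{N}\sum_i G(Nt_i)$ (uniform weights on $N$ atoms), this gives $\int G(|D^\alpha u|)\leq \tfrac{1}{N}\sum_i\int G(Nd|D^{\alpha+e_i}u|)\,dx$. Since the base case, applied to $Cv$ in place of $v$, reads $\int G(C|v|)\leq \int G(Cd|\nabla v|)$ for any $C>0$, the same reduction can be iterated $m-|\alpha|$ times until every derivative is of order exactly $m$; summing over $|\alpha|<m$ yields the first inequality with a combinatorial constant $c_m$ depending on $m,N$ and $c_{m,\Omega}=(Nd)^m$. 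The main obstacle is precisely the density passage in the second inequality: without $\triangle_2$, $\int G(|u_n|)\to\int G(|u|)$ is not guaranteed by $\|u_n-u\|_{W^{1,G}}\to 0$, and the convexity trick above is what allows the limit to be transferred into $\int G(2D|\nabla u_n-\nabla u|)\,dx\to 0$, at the price of replacing $D$ by $2D$ in the constant.
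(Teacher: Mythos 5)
Le texte ne démontre pas ce théorème~: il l'énonce comme résultat connu en renvoyant à Gossez \cite{7}, sans aucune preuve. Votre proposition fournit donc quelque chose que le document ne contient pas, et elle est correcte. L'argument de base (théorème fondamental du calcul dans une direction, réécriture en moyenne, inégalité de Jensen pour $G$ convexe avec la mesure uniforme sur $[0,L_{1}]$, puis Fubini) est la démonstration standard et donne bien $\int_{\Omega}G(|u|)\,dx\leq\int_{\Omega}G(D|\nabla u|)\,dx$ pour $u\in C_{c}^{\infty}(\Omega)$ avec $D=diam(\Omega)$. Le point réellement délicat est le passage par densité à $W_{0}^{1,G}(\Omega)$ sans supposer $\triangle_{2}$, et vous le traitez correctement~: Fatou le long d'une sous-suite convergeant p.p. (extraite via $L^{G}\hookrightarrow L^{1}$), puis la majoration de convexité $G(a+b)\leq\frac{1}{2}G(2a)+\frac{1}{2}G(2b)$ combinée au lemme \ref{lem2}(i) (la convergence en norme entraîne la convergence en module, valable sans $\triangle_{2}$) pour faire disparaître le terme d'erreur $\int G(2D|\nabla u_{n}-\nabla u|)\,dx$; c'est exactement ce qui justifie le facteur $2$ dans $d=2\,diam(\Omega)$. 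L'itération pour la première inégalité est également valide~: $D^{\alpha}u\in W_{0}^{1,G}(\Omega)$ pour $|\alpha|\leq m-1$, le cas de base appliqué à $Cv$ donne $\int G(C|v|)\leq\int G(Cd|\nabla v|)$, et la convexité $G(\sum_{i}t_{i})\leq\frac{1}{N}\sum_{i}G(Nt_{i})$ permet de redistribuer sur les dérivées d'ordre supérieur; les constantes $(Nd)^{m-|\alpha|}$ s'unifient en $c_{m,\Omega}$ par croissance de $G$. Seule remarque mineure~: il serait bon d'expliciter que l'inégalité finale reste triviale lorsque $\int_{\Omega}G(d|\nabla u|)\,dx=+\infty$, ce qui dispense de toute hypothèse d'intégrabilité sur le membre de droite.
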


\section{Injections des espace d'Orlicz-Sobolev dans les espaces $L^{G}(\Omega)$}

Commençons par le cas borné:

\subsection{Cas où $\Omega$ est un ouvert borné de $\mathbb{R}^{N}$}

Soit $G$ une $N$-Fonction. On suppose que \begin{equation}\label{41}
                                            \int_{0}^{1}\frac{G^{-1}(t)}{t^{\frac{N+1}{N}}}dt<\infty,
                                          \end{equation}
et \begin{equation}\label{42}\int_{1}^{\infty}\frac{G^{-1}(t)}{t^{\frac{N+1}{N}}}dt=\infty.\end{equation}
On considère la fonction $G_{*}$, \textbf{conjugué de Sobolev} de $G$, par:
   \begin{equation}\label{43}
      G_{*}^{-1}(t)= \int_{0}^{t}\frac{G^{-1}(\tau)}{\tau^{\frac{N+1}{N}}}d\tau,\ t\geq0.
   \end{equation}

\begin{pro}
  Soit $G$ une $N$-Fonction telle que  $G_{*}$ donnée par \eqref{43}. Alors  $G_{*}$ est une  $N$-Fonction.
\end{pro}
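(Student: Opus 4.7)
Pour établir que $G_{*}$ est une $N$-fonction, le plan est d'appliquer la caractérisation déjà démontrée dans la sous-section précédente: il suffit de vérifier que $G_{*}$ est positive, continue, convexe, avec $G_{*}(0)=0$, $G_{*}(\infty)=+\infty$, $\ds\lim_{t\to 0^{+}} G_{*}(t)/t = 0$ et $\ds\lim_{t\to +\infty} G_{*}(t)/t = +\infty$. Je pose $h(t):=G_{*}^{-1}(t)=\int_{0}^{t}G^{-1}(\tau)\tau^{-(N+1)/N}\,d\tau$. L'hypothèse \eqref{41} assure que $h$ est bien définie sur $[0,+\infty[$; elle est continue, strictement croissante (intégrande strictement positive sur $]0,+\infty[$), nulle en $0$, et tend vers $+\infty$ par \eqref{42}. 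Donc $h$ est une bijection continue strictement croissante de $[0,+\infty[$ sur lui-même, et son inverse $G_{*}$ est automatiquement positive, continue, strictement croissante, avec $G_{*}(0)=0$ et $G_{*}(+\infty)=+\infty$.

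Le cœur de la démonstration est la convexité de $G_{*}$. L'idée est d'établir plutôt la concavité de $h$, car l'inverse d'une fonction concave strictement croissante nulle en $0$ est convexe: si $y_{i}=h(x_{i})$, alors $\lambda y_{1}+(1-\lambda)y_{2}\leq h(\lambda x_{1}+(1-\lambda)x_{2})$ par concavité, puis on applique $G_{*}$ (croissant) pour obtenir $G_{*}(\lambda y_{1}+(1-\lambda)y_{2})\leq \lambda G_{*}(y_{1})+(1-\lambda)G_{*}(y_{2})$. Or $h'(t)=G^{-1}(t)/t^{(N+1)/N}=\bigl(G^{-1}(t)/t\bigr)\cdot t^{-1/N}$, et chacun des deux facteurs est positif et décroissant en $t>0$: pour $t^{-1/N}$ c'est évident, et pour $G^{-1}(t)/t$ on utilise que la convexité de $G$ avec $G(0)=0$ rend $s\mapsto G(s)/s$ croissante, donc par inversion $u\mapsto G^{-1}(u)/u$ décroissante sur $]0,+\infty[$. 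Ainsi $h'$ est décroissante, $h$ est concave, et $G_{*}=h^{-1}$ est convexe.

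Il reste à vérifier les deux limites aux bornes. Comme $\ds\lim_{t\to 0^{+}} G(t)/t=0$ entraîne $\ds\lim_{\tau\to 0^{+}}G^{-1}(\tau)/\tau=+\infty$, on a $h'(\tau)\to+\infty$ quand $\tau\to 0^{+}$; $h'$ étant décroissante, $h(s)/s=(1/s)\int_{0}^{s}h'(\tau)\,d\tau\geq h'(s)\to+\infty$, donc en posant $s=G_{*}(t)$, $G_{*}(t)/t=s/h(s)\to 0$ quand $t\to 0^{+}$. De même, $\ds\lim_{t\to+\infty} G(t)/t=+\infty$ entraîne $G^{-1}(\tau)/\tau\to 0$ puis $h'(\tau)\to 0$, et un argument de moyenne (pour tout $\varepsilon>0$, il existe $T>0$ tel que $h'(\tau)<\varepsilon$ pour $\tau\geq T$, d'où $h(s)/s\leq h(T)/s+\varepsilon$ pour $s\geq T$) donne $h(s)/s\to 0$, et par suite $G_{*}(t)/t\to+\infty$. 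La caractérisation permet alors de conclure. Le seul point réellement délicat est la convexité, qui se ramène à l'observation que $G^{-1}(t)/t$ est toujours décroissante pour une $N$-fonction; tout le reste est essentiellement un passage à l'inverse combiné avec les propriétés d'intégration.
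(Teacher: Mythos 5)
Votre démonstration est correcte et suit essentiellement la même démarche que celle du texte : on montre que l'intégrande $G^{-1}(\tau)\tau^{-(N+1)/N}$ est décroissante (produit de $G^{-1}(\tau)/\tau$ et de $\tau^{-1/N}$, toutes deux décroissantes), d'où la concavité de $G_{*}^{-1}$ et donc la convexité de $G_{*}$, puis on obtient les limites aux bornes à partir du comportement de $G^{-1}(\tau)/\tau$. Les seules différences sont de détail : le texte établit la concavité par un découpage d'intégrale et les limites par la règle de l'Hôpital, là où vous utilisez la décroissance de la dérivée et des encadrements de moyenne, ce qui revient au même.
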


\begin{proof} on a $$\frac{G^{-1}(t)}{t^{\frac{N+1}{N}}}=\bigg{(}\frac{G^{-1}(t)}{t}\bigg{)}\bigg{(}\frac{1}{t^{\frac{1}{N}}}\bigg{)}$$
 puisque $\frac{G^{-1}(t)}{t}$ et $t^{-\frac{1}{N}}$ sont strictement décroissantes donc l'application $t\rightarrow\frac{G^{-1}(t)}{t^{\frac{N+1}{N}}}$ est strictement décroissante.\\
Soit $0\leq v_{1}<v_{2}$. D'après \eqref{43} on a
   \begin{align*}
       G_{*}^{-1}\bigg{(}\frac{v_{1}+v_{2}}{2}\bigg{)}&=\bigg{(}\int_{0}^{v_{1}}+\frac{1}{2}\int_{v_{1}}^{\frac{v_{1}+v_{2}}{2}}
       +\frac{1}{2}\int_{v_{1}}^{\frac{v_{1}+v_{2}}{2}}\bigg{)}\frac{G^{-1}(\tau)}{\tau^{\frac{N+1}{N}}}d\tau\\&>
       \bigg{(}\int_{0}^{v_{1}}+\frac{1}{2}\int_{v_{1}}^{\frac{v_{1}+v_{2}}{2}}
       +\frac{1}{2}\int_{\frac{v_{1}+v_{2}}{2}}^{v_{2}}\bigg{)}\frac{G^{-1}(\tau)}{\tau^{\frac{N+1}{N}}}d\tau\\&=
       \frac{1}{2}\bigg{(}\int_{0}^{v_{1}}+\int_{0}^{v_{2}}\bigg{)}\frac{G^{-1}(\tau)}{\tau^{\frac{N+1}{N}}}d\tau\\&=
       \frac{1}{2}[G_{*}^{-1}(v_{1})+G_{*}^{-1}(v_{2})]
   \end{align*}
   donc $  G_{*}^{-1}$ est strictement concave, ainsi $G_{*}$ est convexe.\\
   On a $$\ds\lim_{v\rightarrow0}\frac{G^{-1}(v)}{v}=\ds\lim_{v\rightarrow0}\frac{v}{G(v)}=+\infty,$$ et
        $$\ds\lim_{v\rightarrow+\infty}\frac{G^{-1}(v)}{v}=\ds\lim_{v\rightarrow0}\frac{v}{G(v)}=0.$$
         D'après la règle de l'H\^{o}pital on a
        \begin{align*}
           \ds\lim_{v\rightarrow0}\frac{G_{*}^{-1}(v)}{v}&=\ds\lim_{v\rightarrow0}\frac{1}{v}\int_{0}^{v}\frac{G^{-1}(t)}{t^{\frac{N+1}{N}}}dt\\&=
                                                          \ds\lim_{v\rightarrow0}\bigg{(}\frac{G^{-1}(v)}{v}\bigg{)}\bigg{(}\frac{1}{v^{\frac{1}{N}}}\bigg{)}
                                                          =+\infty
        \end{align*}
         \begin{align*}
           \ds\lim_{v\rightarrow+\infty}\frac{G_{*}^{-1}(v)}{v}&=\ds\lim_{v\rightarrow+\infty}\frac{1}{v}\int_{0}^{v}\frac{G^{-1}(t)}{t^{\frac{N+1}{N}}}dt\\&=
                                                          \ds\lim_{v\rightarrow+\infty}\bigg{(}\frac{G^{-1}(v)}{v}\bigg{)}\bigg{(}\frac{1}{v^{\frac{1}{N}}}\bigg{)}
                                                          =0
        \end{align*}
        d'où $G_{*}$ est une $N$-Fonction.
\end{proof}

\begin{dfns}[la propriété du c\^{o}ne]

\begin{enumerate}
  \item On appelle c\^{o}ne fini de sommet $x$ tout sous-ensemble ouvert de $\mathbb{R}^{N}$ de la forme $B_{1}\cap\{x+\lambda(y-x),\lambda>0,y\in B_{2}\}$ où $B_{1}$ est une boule ouverte de $\mathbb{R}^{N}$ centrée en $x$ et $B_{2}$ une boule ouverte de $\mathbb{R}^{N}$ ne contenant pas $x$.
  \item Deux c\^{o}nes finis seront dits $congruents$ s'ils se déduisent l'un de l'autre par un déplacement.
   \item On dit que $\Omega$ vérifie la propriété du c\^{o}ne s'il existe un c\^{o}ne fini $C$ de sommet $0$ tel que tout point $x$ de $\Omega$ est le sommet d'un c\^{o}ne fini $C_{x}$ $congruent$ à $C$ et contenu dans $\Omega$.
\end{enumerate}
\end{dfns}

\begin{thm}\label{thm9}
  Soient $G$ une $N$-Fonction et $\Omega$ un ouvert borné ayant la propriété du c\^{o}ne. On suppose que \eqref{41} et \eqref{42} sont satisfaites. Alors
  $$W^{1,G}(\Omega)\hookrightarrow L^{G_{*}}(\Omega),$$
  où $G_{*}$ est donnée par \eqref{43}.
\end{thm}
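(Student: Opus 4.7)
Le plan consiste à suivre la stratégie classique due à Donaldson--Trudinger \cite{4} en trois étapes: (i) une représentation ponctuelle d'une fonction régulière par une intégrale de type Riesz faisant intervenir son gradient; (ii) un résultat de continuité du potentiel de Riesz $I_{1}$ entre les espaces d'Orlicz $L^{G}(\mathbb{R}^{N})$ et $L^{G_{*}}(\mathbb{R}^{N})$; et (iii) la réduction du cas du domaine borné $\Omega$ (vérifiant la propriété du c\^one) à celui de $\mathbb{R}^{N}$ via un opérateur de prolongement.

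Tout d'abord, pour $u\in C_{c}^{\infty}(\mathbb{R}^{N})$, j'établirais l'estimation ponctuelle
$$|u(x)|\leq C_{N}\int_{\mathbb{R}^{N}}\frac{|\nabla u(y)|}{|x-y|^{N-1}}\,dy$$
au moyen du théorème fondamental du calcul appliqué le long des rayons, suivi d'une moyenne sur la sphère unité. En posant $I_{1}f(x):=\int_{\mathbb{R}^{N}}\frac{f(y)}{|x-y|^{N-1}}\,dy$, le problème se ramène à majorer $\|I_{1}(|\nabla u|)\|_{G_{*}}$ par une constante fois $\||\nabla u|\|_{G}$.

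Ensuite, et c'est le c\oe ur de l'argument, je prouverais l'estimation $\|I_{1}f\|_{G_{*}}\leq C\|f\|_{G}$ pour $f\in L^{G}(\mathbb{R}^{N})$. L'outil naturel est un argument de réarrangement: en notant $f^{*}$ le réarrangement décroissant de $|f|$ sur $]0,\infty[$, on établit une inégalité de type O'Neil
$$(I_{1}f)^{**}(t)\leq C_{N}\left(t^{1/N-1}\int_{0}^{t}f^{*}(s)\,ds+\int_{t}^{\infty}s^{-1+1/N}f^{*}(s)\,ds\right),$$
combinée avec l'invariance des normes d'Orlicz par réarrangement. Ensuite, en exploitant précisément la définition \eqref{43} de $G_{*}^{-1}$, un calcul via le changement de variable $t=G_{*}^{-1}(s)$ et une intégration par parties dans la modulaire $\int G_{*}(I_{1}f/\lambda)\,dx$ permet de majorer la modulaire de $I_{1}f$ par rapport à $G_{*}$ en fonction de celle de $f$ par rapport à $G$, d'où la continuité de $I_{1}:L^{G}(\mathbb{R}^{N})\to L^{G_{*}}(\mathbb{R}^{N})$.

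Enfin, je transférerais l'inégalité à $\Omega$ au moyen d'un opérateur linéaire de prolongement $E:W^{1,G}(\Omega)\to W^{1,G}(\mathbb{R}^{N})$, dont l'existence sous la propriété du c\^one est classique (construction par partition de l'unité combinée à des réflexions adaptées au c\^one en chaque point de bord). La densité de $C_{c}^{\infty}(\mathbb{R}^{N})$ dans $W^{1,G}\big{(}E(\mathbb{R}^{N})\big{)}$ donnée par le théorème \ref{thm23} permet d'appliquer l'estimation régulière à $Eu$, puis de restreindre à $\Omega$, pour obtenir $\|u\|_{G_{*}}\leq C\|u\|_{W^{1,G}(\Omega)}$. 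La principale difficulté réside à l'étape (ii): traduire l'inégalité ponctuelle de type O'Neil en une estimation de norme d'Orlicz exige de suivre avec soin le changement de variable dicté par la définition de $G_{*}$. Les conditions \eqref{41}--\eqref{42} interviennent précisément ici: la première assure que $G_{*}^{-1}$ est bien défini au voisinage de $0$, et la seconde que $G_{*}(\infty)=+\infty$, garantissant que $G_{*}$ est une $N$-fonction à comportement non dégénéré à toute échelle.
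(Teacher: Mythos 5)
Votre stratégie (représentation ponctuelle par le potentiel de Riesz $I_{1}$, inégalité de réarrangement de type O'Neil pour obtenir $\|I_{1}f\|_{G_{*}}\leq C\|f\|_{G}$, puis prolongement à $\mathbb{R}^{N}$) est une voie connue et légitime, mais elle diffère sensiblement de celle du texte. La démonstration du papier n'utilise ni potentiel de Riesz ni réarrangement~: elle normalise $K=\|u\|_{(G_{*})}$ par $\int_{\Omega}G_{*}(|u|/K)\,dx=1$, introduit $\sigma(t)=(G_{*}(t))^{(N-1)/N}$ et $f=\sigma(|u|/K)$, applique la règle de dérivation en cha\^{i}ne (lemme \ref{lem8}) pour obtenir $f\in W^{1,1}(\Omega)$, puis l'injection classique $W^{1,1}(\Omega)\hookrightarrow L^{N/(N-1)}(\Omega)$ combinée à l'inégalité de H\"{o}lder d'Orlicz~; l'inéquation différentielle $\sigma'(t)\leq\frac{N-1}{N}(G^{*})^{-1}\big((\sigma(t))^{N/(N-1)}\big)$, issue de la définition \eqref{43}, fournit la borne clé $\|\sigma'(|u|/K)\|_{(G^{*})}\leq\frac{N-1}{N}$, et une troncature suivie du lemme de Fatou conclut pour $u$ non bornée. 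L'avantage de cette voie est qu'elle ne requiert aucun opérateur de prolongement~: la propriété du c\^{o}ne n'intervient qu'à travers l'injection $W^{1,1}\hookrightarrow L^{N/(N-1)}$. Votre approche, en revanche, établit au passage la continuité de $I_{1}:L^{G}\to L^{G_{*}}$, résultat plus fort et réutilisable (c'est la voie d'Adams \cite{2}).

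Le point faible concret de votre plan est l'étape (iii). Sous la seule propriété du c\^{o}ne, le bord de $\Omega$ n'est pas nécessairement un graphe lipschitzien, et la construction \og partition de l'unité plus réflexions \fg{} que vous invoquez n'est pas disponible~; l'existence d'un opérateur de prolongement borné $W^{1,G}(\Omega)\to W^{1,G}(\mathbb{R}^{N})$ (version Orlicz du théorème de Calder\'{o}n) n'est pas un fait élémentaire et reposerait sur la continuité d'intégrales singulières sur $L^{G}$, laquelle exige en général des hypothèses supplémentaires sur $G$ (du type $\triangle_{2}$ pour $G$ et $G^{*}$) absentes de l'énoncé. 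Pour rester sous les hypothèses du théorème, il faudrait remplacer le prolongement par la représentation ponctuelle locale sur le c\^{o}ne fini $C_{x}$ fourni par la définition, à savoir $|u(x)|\leq C\int_{C_{x}}|\nabla u(y)|\,|x-y|^{1-N}\,dy+C\int_{C_{x}}|u(y)|\,dy$, et appliquer votre estimation du potentiel directement sur $\Omega$~; le reste de votre argument (étapes (i) et (ii)) est alors cohérent, les conditions \eqref{41} et \eqref{42} jouant bien le r\^{o}le que vous leur attribuez.
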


Commençons par démontrer le lemme suivant.

\begin{lem}\label{lem8}
 Soient $u\in W^{1,1}_{loc}(\Omega)$ et $f$ une fonction satisfait la condition de Lipschitz sur $\mathbb{R}$. Soit $\theta(x)=f(|u(x)|)$, alors $\theta\in W^{1,1}_{loc}(\Omega)$ et $$\frac{\partial\theta}{\partial x_{j}}(x)=f^{'}(|u(x)|)sign(u(x)).\frac{\partial u}{\partial x_{j}}(x).$$
 En particulier, si $u\in W^{1,G}(\Omega)$ alors $f\circ u\in W^{1,G}(\Omega)$.
\end{lem}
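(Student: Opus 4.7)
La stratégie consiste à traiter séparément la valeur absolue puis la composition avec $f$, en approchant dans chaque cas la fonction non régulière par une suite de fonctions lisses pour lesquelles la règle de dérivation composée classique s'applique, puis en passant à la limite. L'ingrédient crucial est le \emph{lemme de Stampacchia} selon lequel $\nabla u = 0$ presque partout sur l'ensemble $\{x \in \Omega : u(x) = 0\}$, ce qui donne un sens à l'expression $\text{sign}(u) \partial_j u$ indépendamment de la valeur attribuée à $\text{sign}(0)$.

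Pour établir $|u| \in W^{1,1}_{loc}(\Omega)$ avec $\partial_j |u| = \text{sign}(u) \partial_j u$, je régulariserais la valeur absolue par $\phi_\varepsilon(s) = \sqrt{s^2+\varepsilon^2}-\varepsilon$, qui est de classe $C^\infty$, vérifie $|\phi_\varepsilon'| \leq 1$, $\phi_\varepsilon \to |\cdot|$ et $\phi_\varepsilon' \to \text{sign}$ ponctuellement sauf en $0$. La règle de dérivation classique appliquée à $\phi_\varepsilon(u)$ (obtenue par mollification de $u$ et passage à la limite) donne, pour toute $\varphi \in C_c^\infty(\Omega)$,
$$\int_\Omega \phi_\varepsilon(u)\, \partial_j \varphi\, dx = -\int_\Omega \phi_\varepsilon'(u)\, \partial_j u \cdot \varphi\, dx,$$
et le théorème de convergence dominée, avec les majorations $|\phi_\varepsilon(u)\, \partial_j \varphi| \leq |u|\,|\partial_j\varphi|$ et $|\phi_\varepsilon'(u)\, \partial_j u \cdot \varphi| \leq |\partial_j u|\,|\varphi|$, fournit le résultat à la limite $\varepsilon \to 0$.

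Pour la composition avec $f$, je régulariserais par convolution, en posant $f_\delta = f \ast \rho_\delta$ où $\rho_\delta$ est un noyau régularisant standard. Alors $f_\delta \in C^\infty(\mathbb{R})$, $|f_\delta'| \leq L$ où $L$ désigne la constante de Lipschitz de $f$, $f_\delta \to f$ uniformément sur les compacts, et $f_\delta' \to f'$ presque partout (puisque $f$ est différentiable p.p. par le théorème de Rademacher). Appliquant la règle de dérivation composée à $f_\delta \circ |u|$, combinée avec l'étape précédente,
$$\partial_j(f_\delta(|u|)) = f_\delta'(|u|)\, \text{sign}(u)\, \partial_j u \quad \text{p.p. sur } \Omega,$$
et le passage à la limite $\delta \to 0$ s'effectue par convergence dominée (dominations $|f_\delta(|u|)| \leq |f(0)|+L|u|$ et $|f_\delta'(|u|)\, \partial_j u| \leq L|\partial_j u|$, fonctions intégrables sur tout compact de $\Omega$), ce qui établit la formule annoncée.

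Pour la dernière assertion, si $u \in W^{1,G}(\Omega)$, l'estimation Lipschitzienne $|f(|u|)| \leq |f(0)|+L|u|$ assure que $f(|u|) \in L^G(\Omega)$ (pour $\Omega$ borné, ou sous l'hypothèse $f(0)=0$), et $|\partial_j(f(|u|))| \leq L|\partial_j u|$ combinée à la remarque \ref{rem2} donne $\partial_j(f(|u|)) \in L^G(\Omega)$, d'où $f \circ |u| \in W^{1,G}(\Omega)$. Le point le plus délicat sera la justification soigneuse de la règle de dérivation composée dans $W^{1,1}_{loc}$ pour une fonction $C^1$ à dérivée bornée (qui repose sur la mollification de $u$ et le passage à la limite dans le formule de Leibniz) ainsi que l'articulation propre des deux passages à la limite successifs ($\varepsilon \to 0$ puis $\delta \to 0$), dont le bon contr\^ole dépend de fa\c con essentielle du lemme de Stampacchia.
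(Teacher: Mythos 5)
Votre démonstration est correcte pour l'essentiel, mais elle suit une voie réellement différente de celle du texte. La preuve du texte se ramène d'emblée au cas $u\geq 0$ (en admettant sans démonstration la formule de dérivation de $|u|$) puis travaille directement avec les quotients différentiels : elle reporte le quotient $\frac{\phi(x)-\phi(x-he_{j})}{h}$ sur $f(u)$, introduit le quotient lipschitzien $Q(x,h)$ borné par la constante de Lipschitz, et passe à la limite $h\to 0$. Vous régularisez au contraire deux fois, par $\sqrt{s^{2}+\varepsilon^{2}}-\varepsilon$ pour la valeur absolue et par $f\ast\rho_{\delta}$ pour $f$, puis vous concluez par convergence dominée ; c'est l'argument classique des manuels, et il a l'avantage de rendre explicites les deux points que le texte passe sous silence, à savoir la dérivation de $|u|$ et le fait que $f'$ n'existe que presque partout (Rademacher). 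Un seul raffinement à expliciter : pour passer à la limite dans $f_{\delta}'(|u(x)|)$ il faut savoir que $\nabla u=0$ presque partout non seulement sur $\{u=0\}$ mais sur $u^{-1}(N)$ pour tout ensemble $N\subset\mathbb{R}$ de mesure nulle (en particulier celui où $f_{\delta}'$ ne converge pas vers $f'$) ; c'est la forme générale du lemme de Stampacchia, sans laquelle l'écriture $f'(|u(x)|)$ pourrait ne pas avoir de sens sur un ensemble de mesure positive en $x$. Moyennant cela, vos deux passages à la limite sont licites, et votre traitement de la dernière assertion (majoration $|f(t)|\leq|f(0)|+L|t|$ combinée à la remarque \ref{rem2}) couvre une partie de l'énoncé que le texte ne démontre pas du tout.
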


\begin{proof}
   Comme $|u|\in W^{1,1}_{loc}(\Omega)$ et $\frac{\partial}{\partial x_{j}}|u(x)|=sgn(u(x)).\frac{\partial u}{x_{j}}(x)$ on peut supposer que $u\geq0$ et $\theta(x)=f(u(x))$.\\
   Soit $\phi\in \mathcal{D}(\Omega)$ et $(e_{j})_{1\leq j\leq N}$ une base canonique de $\mathbb{R}^{N}$. Alors
   \begin{align*}
      -\int_{\Omega}f(u(x))\frac{\partial\phi}{\partial x_{j}}(x)dx &=-\ds\lim_{h\rightarrow0}\int_{\Omega}f(u(x))\frac{\phi(x)-\phi(x-he_{j})}{h}dx\\
      &=\ds\lim_{h\rightarrow0}\int_{\Omega}\frac{f(u(x+he_{j}))-f(u(x))}{h}\phi(x)dx\\
      &=\ds\lim_{h\rightarrow0}\int_{\Omega}Q(x,h)\frac{u(x+he_{j})-u(x)}{h}\phi(x)dx,
   \end{align*}
   où $$Q(x,h)=\frac{f(u(x+he_{j}))-f(u(x))}{u(x+he_{j})-u(x)}\ \ \ \ \text{si}\ \ \ \ u(x+he_{j})\neq u(x).$$
    Puisque $f$ est Lipschitzienne, il existe une constante $K$ telle que $|Q(.,h)|\leq K$, donc $$\|Q(.,h)\|_{\infty}\leq K.$$
   Alors $Q(.,h)$ converge vers $  f^{'}(u(x))$.
   D'autre part, on a $u\in W^{1,1}(supp(\phi))$ donc
   $$\ds\lim_{h\rightarrow0}\frac{u(x+he_{j})-u(x)}{h}\phi(x)= \frac{\partial u}{\partial x_{j}}(x).\phi(x)\in L^{1}(supp(\phi)).$$ D'où
   $$ -\int_{\Omega}f(u(x))\frac{\partial \phi}{\partial x_{j}}(x)dx =\int_{\Omega} f^{'}(u(x))\frac{\partial u}{\partial x_{j}}(x)\phi(x) dx.$$
\end{proof}

\begin{proof}[\textbf{Démonstration du théorème \ref{thm9}}]
La fonction $s=G_{*}(t)$ satisfait l'equation différentielle suivante \begin{equation}\label{92}
                                                                            G^{-1}(s)\frac{ds}{dt}=s^{N+1/N},
                                                                          \end{equation}
 comme $s\leq G^{-1}(s)(G^{*})^{-1}(s)$, on obtient $$ \frac{ds}{dt}\leq s^{\frac{1}{N}} (G^{*})^{-1}(s).$$
    Soit $\sigma(t)=(G_{*}(t))^{N-1/N}$, $\sigma$ satisfait l'inéquation différentielle suivante   \begin{equation}\label{93}
                                                                                 \frac{d\sigma}{dt}\leq \frac{N-1}{N}(G^{*})^{-1}\big{(}(\sigma(t))^{N/N-1}\big{)}.
                                                                               \end{equation}
   \textbf{\'{E}tape 1}\\ \\
     Soit $u\in W^{1,G}(\Omega)$.
 On suppose que $u$ est bornée sur $\Omega$ et non identiquement nulle
 alors l'application $\lambda\mapsto \displaystyle\int_{\Omega}G_{*}\bigg{(}\frac{|u(x)|}{\lambda}\bigg{)}dx$
 est continue et décroissante sur $]0,+\infty[$, de plus,
 $$\ds\lim_{\lambda\rightarrow+\infty}\int_{\Omega}G_{*}\bigg{(}\frac{|u(x)|}{\lambda}\bigg{)}dx =0,\ \ds\lim_{\lambda\rightarrow0}\int_{\Omega}G_{*}\bigg{(}\frac{|u(x)|}{\lambda}\bigg{)}dx =+\infty.$$
  Il existe une constante $K$ telle que
 \begin{equation}\label{94}
   \int_{\Omega}G_{*}\bigg{(}\frac{|u(x)|}{K}\bigg{)}dx=1,\ K=\|u\|_{(G_{*})}.
 \end{equation}
 Soit $f(x)=\displaystyle\sigma\bigg{(}\frac{|u(x)|}{K}\bigg{)}$, comme
 $u\in W^{1,1}(\Omega)$ et $\sigma$ satisfait la condition de lipschitz, alors d'après le lemme \ref{lem8}, $f\in W^{1,1}(\Omega)$. d'autre part, on a
  $$ W^{1,1}(\Omega)\hookrightarrow L^{N/N-1}(\Omega),$$
 et \begin{equation}\label{95}
         \begin{aligned}
          \|f\|_{N/N-1}&\leq K_{1}\bigg{(}\sum_{j=1}^{N}\bigg{\|}\frac{\partial f}{\partial x_{j}}\bigg{\|}_{1}+\|f\|_{1}\bigg{)}\\
          &= K_{1}\bigg{[}\sum_{j=1}^{N}\frac{1}{K}\int_{\Omega}\sigma^{'}\bigg{(}\frac{|u(x)|}{K}\bigg{)}\bigg{|}\frac{\partial u}{\partial x_{j}}\bigg{|}dx+\int_{\Omega}\sigma
          \bigg{(}\frac{|u(x)|}{K}\bigg{)}dx\bigg{]}.
         \end{aligned}
       \end{equation}
   En vertu de \eqref{94} et l'inégalité de H\"{o}lder \eqref{30}, on a
   \begin{equation}\label{96}
     \begin{aligned}
      1&=\bigg{(}\int_{\Omega}G_{*}\bigg{(}\frac{|u(x)|}{K}\bigg{)}dx\bigg{)}^{N-1/N}=\|f\|_{N/N-1}\\
      &\leq \frac{2K_{1}}{K}\sum_{j=1}^{N}\bigg{\|}\sigma^{'}\bigg{(}\frac{|u|}{K}\bigg{)}\bigg{\|}_{(G^{*})}\bigg{\|}\frac{\partial u}{\partial x_{j}}\bigg{\|}_{(G)}+K_{1}\int_{\Omega}
      \sigma\bigg{(}\frac{|u(x)|}{K}\bigg{)}dx.
     \end{aligned}
   \end{equation}
   Nous utilisons \eqref{93} on obtient \begin{align*}
                                           \bigg{\|}\sigma^{'}\bigg{(}\frac{|u|}{K}\bigg{)}\bigg{\|}_{(G^{*})}&\leq\frac{N-1}{N}
                                           \bigg{\|}(G^{*})^{-1}\bigg{(}\big{(}\sigma\big{(}\frac{|u|}{K}\big{)}\big{)}^{N/N-1}\bigg{)}\bigg{\|}_{(G^{*})}\\
                                           &=\frac{N-1}{N}\inf\bigg{\{}\lambda>0:\ \int_{\Omega}G^{*}\bigg{(}\frac{(G^{*})^{-1}(G_{*}(|u(x)|/K))}{\lambda}\bigg{)}dx\leq1\bigg{\}}.
                                        \end{align*}
    Si $\lambda>1$, alors $$\int_{\Omega}G^{*}\bigg{(}\frac{(G^{*})^{-1}(G_{*}(|u(x)|/K))}{\lambda}\bigg{)}dx\leq\frac{1}{\lambda}
    \int_{\Omega}G_{*}\bigg{(}\frac{|u(x)|}{K}\bigg{)}dx=\frac{1}{\lambda}<1.$$
    Ainsi, \begin{equation}\label{97}
             \bigg{\|}\sigma^{'}\bigg{(}\frac{|u|}{K}\bigg{)}\bigg{\|}_{(G^{*})}\leq\frac{N-1}{N}.
           \end{equation}
  Soit $\beta(t)=\displaystyle\frac{G_{*}(t)}{t}$ et $\alpha(t)=\displaystyle\frac{\sigma(t)}{t}$, $\alpha$ est bornée sur tout intervalle compact privé de zéro et
  $\ds\lim_{t\rightarrow+\infty}\frac{\beta(t)}{\alpha(t)}=+\infty$. Il existe alors une constante $t_{0}$ telle que $\forall\ t\geq t_{0}$, $$\alpha(t)\leq\frac{\beta(t)}{2K_{1}}.$$ On pose $K_{2}=K_{1}\ds\sup_{0\leq t\leq t_{0}} \alpha(t)$, on a, $\forall t\geq0$,
  $$\sigma(t)\leq\frac{1}{2K_{1}}G_{*}(t)+\frac{K_{2}}{K_{1}}t.$$ Ainsi
  \begin{equation}\label{98}
    \begin{aligned}
     K_{1}\int_{\Omega}\sigma\bigg{(}\frac{|u|}{K}\bigg{)}dx&\leq\frac{1}{2}\int_{\Omega}G_{*}\bigg{(}\frac{|u|}{K}\bigg{)}dx+\frac{K_{2}}{K}\int_{\Omega}
     |u(x)|dx\\&\leq\frac{1}{2}+\frac{K_{3}}{K}\|u\|_{(G)}.
    \end{aligned}
  \end{equation}  où $K_{3}=2K_{2}\|1\|_{(G^{*})}<\infty$ puisque $mes(\Omega)<+\infty$.\\
   combinant \eqref{96} et \eqref{98} on obtient $$1\leq\frac{2K_{1}}{K}(N-1)\|u\|_{W^{1,G}(\Omega)}+\frac{1}{2}+\frac{K_{3}}{K}\|u\|_{(G)},$$
   Donc \begin{equation}\label{99}
          \|u\|_{(G_{*})}=K\leq K_{4}\|u\|_{W^{1,G}(\Omega)},\  K_{4}\ \text{dépend de}\ N,\ G\ \text{et}\ mes(\Omega).
        \end{equation}
  \textbf{\'{E}tape 2}\\
  Soit $u\in W^{1,G}(\Omega)$. On considère la suite de fonction $$u_{n}(x)=\begin{cases}
                                                                              u(x) & \mbox{si }\ |u(x)|\leq n \\
                                                                              n\ sgn(u(x)) & \mbox{si}\ |u(x)|\geq n.
                                                                            \end{cases} $$
   Il est clair que $(u_{n})_{n\in\mathbb{N}}$ est bornée et appartient à $W^{1,G}(\Omega)$. D'après \eqref{99} on a
   $$ \|u_{n}\|_{(G_{*})}\leq K_{4}\|u_{n}\|_{W^{1,G}(\Omega)}\leq K_{4}\|u\|_{W^{1,G}(\Omega)}$$ la suite $(\|u_{n}\|_{(G_{*})})_{n}$ est croissante et bornée, $K=\ds\lim_{n\rightarrow+\infty}\|u_{n}\|_{(G_{*})}$ existe. En utilise le théorème de Fatou on a $$\int_{\Omega}G_{*}\bigg{(}\frac{|u(x)|}{K}\bigg{)}dx\leq\ds\liminf_{n\rightarrow+\infty}
   \int_{\Omega}G_{*}\bigg{(}\frac{|u_{n}(x)|}{\|u_{n}\|_{(G_{*})}}\bigg{)}dx\leq1.$$  $u\in L^{G_{*}}(\Omega)$ et
   $$\|u\|_{(G_{*})}\leq K=\ds\lim_{n\rightarrow+\infty}\|u_{n}\|_{(G_{*})}\leq K_{4}\|u\|_{W^{1,G}(\Omega)},$$ d'où le résultat.
\end{proof}

Considérons maintenant le:

\subsection{Cas où $\Omega$ n'est pas borné}

\begin{thm}\label{thm24}
  Soit $\Omega$ un ouvert de $\mathbb{R}^{N}$ ayant la propriété du c\^{o}ne. On suppose que $$\int_{1}^{+\infty}\frac{G^{-1}(s)}{s^{1+\frac{1}{N}}}ds=\infty,$$ alors
  $$W^{1,G}(\Omega)\hookrightarrow L^{G_{*}}(\Omega).$$
\end{thm}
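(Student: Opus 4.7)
The plan is to mimic the proof of Theorem \ref{thm9}, modifying only the step that relies on $\mathrm{mes}(\Omega)<\infty$. Specifically, the derivation \eqref{98} used the pointwise bound $\sigma(t)\leq\tfrac{1}{2K_1}G_*(t)+\tfrac{K_2}{K_1}t$, whose linear term integrates against $\|1\|_{(G^*)}$ and is therefore infinite on an unbounded domain. The idea is to replace the linear majorant by a multiple of $G(t)$, which still has a finite integral.

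First I would follow the argument of Theorem \ref{thm9} up to inequality \eqref{96}. This relies only on the continuous embedding $W^{1,1}\hookrightarrow L^{N/(N-1)}$ (valid on any $\Omega$ with the cone property, with constant depending only on $N$ and the cone $C$) together with the $\mathrm{mes}(\Omega)$-free bound \eqref{97}. With $K=\|u\|_{(G_*)}$ and $\sigma(t)=G_*(t)^{(N-1)/N}$, this gives
\begin{equation*}
1 \leq \frac{2K_1(N-1)}{K}\,\|u\|_{W^{1,G}(\Omega)} + K_1\int_\Omega \sigma\!\left(\frac{|u(x)|}{K}\right) dx.
\end{equation*}
The key new ingredient is the pointwise majoration
\begin{equation*}
\sigma(t) \;\leq\; c_1\,G_*(t) + c_2\,G(t), \qquad \forall t\geq 0,
\end{equation*}
for some constants $c_1,c_2$ depending only on $N$ and $G$. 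Near infinity, $\sigma(t)/G_*(t)=G_*(t)^{-1/N}\to 0$ by property \eqref{4} applied to $G_*$; near zero, the ratio $\sigma(t)/G(t)$ remains bounded, as one checks from the representation \eqref{43} of $G_*^{-1}$ together with the structural estimate $t < G^{-1}(t)\,(G^*)^{-1}(t) \leq 2t$ proved earlier.

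Assuming as we may that $K=\|u\|_{(G_*)}\geq\|u\|_{(G)}$ (otherwise $\|u\|_{(G_*)}\leq\|u\|_{W^{1,G}}$ is already trivial), the Luxemburg property \eqref{21} applied to both $G$ and $G_*$ yields $\int_\Omega G_*(|u|/K)\,dx\leq 1$ and $\int_\Omega G(|u|/K)\,dx\leq 1$, so the majoration above integrates to $K_1\int_\Omega \sigma(|u|/K)\,dx \leq K_1(c_1+c_2)$, which can be arranged to be $\leq 1/2$ by a calibration of constants. Substituting back gives $K\leq 4K_1(N-1)\|u\|_{W^{1,G}(\Omega)}$, hence $W^{1,G}(\Omega)\hookrightarrow L^{G_*}(\Omega)$ with a constant independent of $\mathrm{mes}(\Omega)$. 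Finally, the truncation and Fatou step of Step 2 in the proof of Theorem \ref{thm9} removes the temporary assumption that $u$ is bounded. The main obstacle is establishing the global pointwise bound $\sigma\leq c_1 G_*+c_2 G$: while its validity at infinity is immediate, the comparison of $G$ and $G_*$ near zero is delicate and hinges on the asymptotics encoded in the hypothesis $\int_1^{+\infty}G^{-1}(s)/s^{1+1/N}\,ds=\infty$ together with the (implicit) integrability of the same integrand near zero that makes $G_*$ an $N$-function in the first place.
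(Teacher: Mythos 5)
Votre stratégie est exactement celle que le texte suit dans le lemme \ref{lem14} : reprendre la démonstration du théorème \ref{thm9} jusqu'à \eqref{96} et remplacer, dans \eqref{98}, le majorant linéaire de $\sigma$ par un majorant de la forme $\epsilon G_{*}(t)+K_{\epsilon}G(t)$, c'est-à-dire le point $(c)$ du lemme \ref{lem11} spécialisé à $p=1$, $q=N/(N-1)$. Deux étapes ne tiennent cependant pas telles quelles. D'une part, l'affirmation selon laquelle $K_{1}(c_{1}+c_{2})$ peut être rendu inférieur à $1/2$ par une calibration des constantes est en défaut : on peut prendre $c_{1}=\epsilon$ arbitrairement petit, mais $c_{2}=K_{\epsilon}$ croît alors, et le produit $K_{1}K_{\epsilon}$ n'a aucune raison d'être petit. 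Il manque un facteur $1/K$ sur le terme en $G$ : on normalise $\|u\|_{W^{1,G}(\Omega)}=1$, on suppose $K\geq1$ (sinon il n'y a rien à prouver) et on utilise \eqref{5} pour écrire $\int_{\Omega}G(|u|/K)dx\leq K^{-1}\int_{\Omega}G(|u|)dx\leq K^{-1}$ ; c'est ce facteur $K^{-1}$ qui permet d'absorber $K_{1}K_{\epsilon}$ dans la constante finale, comme le fait (de façon un peu elliptique) la preuve du lemme \ref{lem14}. D'autre part, la borne ponctuelle de $\sigma/G$ près de zéro --- que vous identifiez vous-même comme l'obstacle principal --- n'est pas établie, et l'inégalité $t<G^{-1}(t)(G^{*})^{-1}(t)\leq2t$ que vous invoquez porte sur la conjuguée de Young $G^{*}$ et non sur la conjuguée de Sobolev $G_{*}$ ; l'argument du texte (lemme \ref{lem11}, point $(b)$) utilise la concavité de $G^{-1}$ et la représentation \eqref{43} pour obtenir $[G_{*}(t)]^{(N-1)/N}\leq q^{-1}G(t)$ pour $t\leq G_{*}^{-1}(1)$, et c'est ce calcul qu'il faudrait reproduire.

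Enfin, votre argument suppose implicitement \eqref{41}, sans quoi $G_{*}$ n'est même pas définie par \eqref{43}. Le théorème \ref{thm24} n'impose que la divergence de l'intégrale à l'infini, et la démonstration du texte consacre les lemmes \ref{lem12}, \ref{lem13} et \ref{lem15} au cas où $\int_{0}^{1}G^{-1}(\tau)\tau^{-1-1/N}d\tau=\infty$ : on remplace $G$ près de zéro par $Kt^{r}$, on forme $\tilde{G}$ puis $\tilde{G_{*}}$, et on recolle une $N$-fonction $C$ qui tient lieu de $G_{*}$, la démonstration du lemme \ref{lem15} exigeant en outre une décomposition $u=u_{1}+u_{2}$ par troncature au niveau $1$. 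Ce cas n'est pas couvert par votre proposition ; il faudrait soit le traiter par un recollement analogue, soit ajouter explicitement \eqref{41} aux hypothèses.
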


La démonstration sera effectuée dans plusieurs lemmes. Le premier lemme établit une estimation de la conjuguée de Sobolev $G_{*}$ définie dans \eqref{43}.

\begin{lem}\label{lem11}
Soit $G$ une $N$-Fonction vérifiant \eqref{41} et \eqref{42}. Soit $1\leq p<N$ tel que la fonction $B$ définie par $B(t)=G(t^{\frac{1}{p}})$ est une
$N$-Fonction. Soient $q=\frac{Np}{N-p}$ et $G_{*}$ la fonction définie dans \eqref{43}. Alors on a les assertions suivantes:
\begin{enumerate}
  \item [(a)] Pour tout $\lambda\geq1$ la fonction $G_{*}^{\frac{\lambda p}{q}}$ est une $N$-Fonction,
  \item [(b)] $[G_{*}(t)]^{\frac{p}{q}}\leq q^{-p}G(t)$ pour tout $t\leq G_{*}^{-1}(1)$,
  \item [(c)] Pour tout $\epsilon>0$ il existe une constante $K_{\epsilon}$ telle que pour tout $t$
  \begin{equation}\label{111}
    [G_{*}(t)]^{\frac{p}{q}}\leq\epsilon G_{*}(t)+K_{\epsilon}G(t).
  \end{equation}
\end{enumerate}
\end{lem}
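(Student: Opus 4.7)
Le plan est de placer (b) au cœur de la démonstration, puis d'en déduire (c) par une séparation en $\epsilon$, et enfin de traiter (a) par vérification directe des axiomes. Pour (b), l'idée repose sur deux encadrements complémentaires exploitant la concavité de $B^{-1}$ (conséquence du fait que $B$ est une $N$-Fonction). Comme $G^{-1}(\tau)=(B^{-1}(\tau))^{1/p}$ et que $B^{-1}(\tau)/\tau$ est décroissante avec $B^{-1}(0)=0$, on a pour $\tau\leq s$ l'inégalité $G^{-1}(\tau)\geq(\tau/s)^{1/p}G^{-1}(s)$. En substituant dans $G_{*}^{-1}(s)=\int_{0}^{s}G^{-1}(\tau)\tau^{-(N+1)/N}d\tau$ et en calculant $\int_{0}^{s}\tau^{1/p-(N+1)/N}d\tau=qs^{1/q}$ (où $1/q=1/p-1/N$), on obtient la minoration
\begin{equation*}
G_{*}^{-1}(s)\geq qG^{-1}(s)\,s^{-1/N}\quad\text{pour tout }s>0.
\end{equation*}
Ensuite, pour $s\leq 1$, on observe que $q^{p}s^{p/q}\geq s$ (puisque $s\leq 1\leq q^{N}$ et $p/q-1=-p/N$); la décroissance de $B^{-1}(\cdot)/\cdot$ fournit alors $B^{-1}(q^{p}s^{p/q})\leq q^{p}s^{-p/N}B^{-1}(s)$, d'où en prenant la racine $p$-ième, $G^{-1}(q^{p}s^{p/q})\leq qG^{-1}(s)\,s^{-1/N}$. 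En composant ces deux inégalités on obtient $G_{*}^{-1}(s)\geq G^{-1}(q^{p}s^{p/q})$, ce qui donne (b) après application de $G$ (croissante) et le changement de variable $s=G_{*}(t)$.

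Pour (c), je séparerai l'axe en trois plages. Sur $\{t\leq G_{*}^{-1}(1)\}$, (b) fournit directement $[G_{*}(t)]^{p/q}\leq q^{-p}G(t)$. Sur $\{t:G_{*}(t)\geq M\}$ pour un $M=M(\epsilon)\geq 1$ à préciser, on écrit $[G_{*}(t)]^{p/q}=G_{*}(t)\,[G_{*}(t)]^{p/q-1}\leq M^{p/q-1}G_{*}(t)$; comme $p/q-1<0$, choisir $M$ assez grand pour que $M^{p/q-1}\leq\epsilon$ donne $[G_{*}(t)]^{p/q}\leq\epsilon G_{*}(t)$. Sur la plage intermédiaire $G_{*}^{-1}(1)\leq t\leq G_{*}^{-1}(M)$, la minoration $G(t)\geq G(G_{*}^{-1}(1))>0$ montre que $[G_{*}(t)]^{p/q}/G(t)$ est borné par $M^{p/q}/G(G_{*}^{-1}(1))$. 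On prend alors $K_{\epsilon}$ égal au maximum des constantes issues des deux premières plages.

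Pour (a), il s'agit de vérifier les propriétés caractérisant une $N$-Fonction pour $h(t)=[G_{*}(t)]^{\lambda p/q}$: positivité, continuité, $h(0)=0$, $h(\infty)=\infty$ et monotonie découlent des propriétés de $G_{*}$. La convexité, lorsque $\lambda p/q\geq 1$ (cas $\lambda\geq q/p$), s'obtient comme composée de $x\mapsto x^{\lambda p/q}$ (convexe croissante) avec $G_{*}$; pour $1\leq\lambda<q/p$, on vérifie $h''\geq 0$ via l'inégalité $G_{*}g_{*}'/g_{*}^{2}\geq 1-\lambda p/q$, qui s'établit à partir des propriétés fines de $G_{*}$ issues de sa représentation intégrale. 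La limite $h(t)/t\to 0$ en $0^{+}$ découle de $G_{*}(t)/t\to 0$; celle en $+\infty$ repose sur le fait que l'hypothèse ``$B$ est une $N$-Fonction'' force $G(t)/t^{p}\to+\infty$ puis, via l'expression de $G_{*}^{-1}$ et la divergence \eqref{42}, $G_{*}(t)/t^{q}\to+\infty$. Le principal obstacle sera la gestion simultanée des directions d'inégalités dans (b): le point-clé est que le seuil $s\leq q^{N}$ requis pour la majoration de $B^{-1}(q^{p}s^{p/q})$ englobe bien la plage $s\leq 1$ de l'énoncé, permettant la composition des deux encadrements précisément sur le bon domaine.
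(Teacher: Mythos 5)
Your parts (b) and (c) are correct and follow essentially the same route as the paper: (b) rests entirely on the concavity of $B^{-1}$, i.e.\ the decrease of $\tau\mapsto B^{-1}(\tau)/\tau$, which you exploit twice to sandwich $G_{*}^{-1}(s)$ against $G^{-1}(q^{p}s^{p/q})$ (the paper does the same thing in a single inequality chain), and (c) is the same comparison of $[G_{*}]^{p/q}/G$ with $G_{*}/G$ at infinity, the boundedness near zero being supplied by (b).

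The genuine gap is in (a), and it sits exactly at the hard case. For $\lambda\geq q/p$ the exponent $\lambda p/q$ is $\geq1$ and convexity is indeed immediate by composition; but for $1\leq\lambda<q/p$ the exponent is $<1$, and for a \emph{general} $N$-Fonction $H$ the power $H^{\alpha}$ with $\alpha<1$ need not be convex (take $H(t)=t^{2}$ and $\alpha<1/2$: $t^{2\alpha}$ is concave). So the specific structure of $G_{*}$ must be used, and your proposal reduces precisely this point to the differential inequality $G_{*}g_{*}'/g_{*}^{2}\geq1-\lambda p/q$, asserted to follow "des propriétés fines de $G_{*}$" without any derivation. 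That is the entire content of the claim, left as a black box; moreover the route through $h''$ presupposes that $g_{*}'$ exists, which a general $N$-Fonction does not grant ($g$ is only croissante and continue à droite). The paper avoids both problems by working with the inverse: setting $Q=G_{*}^{\lambda p/q}$, one has $Q^{-1}(t)=G_{*}^{-1}(t^{q/(\lambda p)})$ and, using \eqref{43} together with $G^{-1}=[B^{-1}]^{1/p}$, $(Q^{-1})'(t)=\frac{q}{\lambda p}\bigl[B^{-1}(t^{q/(\lambda p)})/t^{q/(\lambda p)}\bigr]^{1/p}t^{-\mu}$ with $\mu=1-\frac{1}{\lambda p}\geq0$; the decrease of $B^{-1}(\tau)/\tau$ then shows $(Q^{-1})'$ is decreasing, hence $Q^{-1}$ is concave and $Q$ convex, with no second derivative anywhere. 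You should either carry out this computation on the inverse or actually prove your inequality (it is true — one finds $G_{*}g_{*}'/g_{*}^{2}=\frac{N+1}{N}-\frac{s(G^{-1})'(s)}{G^{-1}(s)}\geq\frac{N+1}{N}-\frac{1}{p}\geq1-\frac{p}{N}$ using concavity of $B^{-1}$ and $(p-1)(N-p)\geq0$ — but only under extra regularity). A smaller slip in the same part: $[G_{*}(t)]^{\lambda p/q}/t\to0$ at $0^{+}$ does \emph{not} "découler de $G_{*}(t)/t\to0$" when the exponent is $<1$ (a small quantity raised to a power $<1$ becomes larger relative to $t$); you need the bound of (b), which gives $[G_{*}(t)]^{p/q}\leq q^{-p}G(t)$ near $0$ and hence the stated limit.
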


\begin{proof}
  $\textbf{(a)}$ On pose $Q(t)=[G_{*}(t)]^{\frac{\lambda p}{q}}$. On a $B^{-1}(t)=[G^{-1}(t)]^{p}$, d'après \eqref{43}
  \begin{align*}
     (Q^{-1})^{'}(t)&=\frac{d}{dt}G_{*}^{-1}(t^{\frac{q}{\lambda p}})=\frac{q}{\lambda p}\frac{{G}^{-1}(t^{\frac{q}{\lambda p}})}{t^{1+\frac{q}{\lambda N p}}}\\&=\frac{q}{\lambda p}\bigg{[}\frac{B^{-1}(t^{\frac{q}{\lambda p}})}{t^{\frac{q}{\lambda p}}}\bigg{]}^{\frac{1}{p}}t^{-\mu},
  \end{align*}
  avec $\mu=1+\frac{q}{\lambda N p}-\frac{q}{\lambda p^{2}}\geq0$. On a $$\ds\lim_{t\rightarrow0^{+}}\frac{B^{-1}(t)}{t}=+\infty\ \text{et}\ \ds\lim_{t\rightarrow+\infty}\frac{B^{-1}(t)}{t}=0$$ de plus $B^{-1}$ est concave, donc pour $0<\tau<\iota$, $\displaystyle\frac{B^{-1}(\tau)}{B^{-1}(\iota)}\geq\frac{\tau}{\iota}$. Ainsi, si $0<t<s$,
  $$\frac{(Q^{-1})^{'}(t)}{(Q^{-1})^{'}(s)}\geq\bigg{(}\frac{s}{t}\bigg{)}^{\mu}\geq1.$$ Alors $(Q^{-1})^{'}$ est positive, décroissante et tend vers zéro à l'infini, donc $Q$ est une $N$-Fonction.\\ \\
  $\textbf{(b)}$ Soit $t\leq1$, donc $t\leq q^{p}t^{\frac{p}{q}}$ et on a
  \begin{align*}
    G_{*}^{-1}(t)&=\int_{0}^{t}\frac{[B^{-1}(\tau)]^{\frac{1}{p}}}{\tau^{1+\frac{1}{N}}}d\tau\\
     &\geq\bigg{[}\frac{B^{-1}( q^{p}t^{\frac{p}{q}})}{ q^{p}t^{\frac{p}{q}}}\bigg{]}^{\frac{1}{p}}\int_{0}^{t}\tau^{\frac{1}{p}-\frac{1}{N}-1}d\tau\\
     &=G^{-1}(q^{p}t^{\frac{p}{q}}).
  \end{align*}
  ainsi $t^{\frac{p}{q}}\leq q^{-p}G(G_{*}^{-1}(t))$, $(b)$ se déduit en remplassant $t$ par $G_{*}(t)$.\\ \\
  $\textbf{(c)}$ Soit $\alpha(t)=\displaystyle\frac{G_{*}(t)}{G(t)}$ et $\beta(t)=\displaystyle\frac{[G_{*}(t)]^{\frac{p}{q}}}{G(t)}$. Il est clair que $$\frac{\alpha(t)}{\beta(t)}\rightarrow\infty,\ t\rightarrow\infty.$$ Alors $\forall\ \epsilon>0$, il existe $t_{0}>0$ tel que $\forall\ t\geq t_{0}$, $$\beta(t)\leq\epsilon\alpha(t),$$ c.à.d $$[G_{*}(t)]^{\frac{p}{q}}\leq\epsilon G_{*}(t),\ \forall\ t\geq t_{0}.$$
  et comme $\beta$ est bornée sur $[0,t_{0}]$ disons par $K_{\epsilon}$,
  $$[G_{*}(t)]^{\frac{p}{q}}\leq\epsilon G_{*}(t)+K_{\epsilon}G(t).$$
\end{proof}

\begin{lem}\label{lem14}
  Soient $\Omega$ un ouvert de $\mathbb{R}^{N}$ vérifiant la propriété du c\^{o}ne, $G$ une $N$-Fonction vérifiant \eqref{41}
et \eqref{42} et $G_{*}$ la conjuguée de Sobolev définie dans \eqref{43}. Alors $$W^{1,G}(\Omega)\hookrightarrow L^{G_{*}}(\Omega).$$
\end{lem}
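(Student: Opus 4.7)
Le plan consiste à ramener le cas non borné au théorème \ref{thm9} par un argument de recouvrement, puis à recoller les estimations locales en utilisant les inégalités techniques du lemme \ref{lem11}. D'abord, j'exploite la propriété du c\^{o}ne de $\Omega$ pour construire une famille dénombrable $\{U_j\}_{j\in\bbN}$ de sous-ensembles ouverts bornés de $\Omega$ vérifiant: chaque $U_j$ est congruent à un ouvert borné fixé $U$ possédant lui-m\^{e}me la propriété du c\^{o}ne; $\Omega=\bigcup_j U_j$; et le recouvrement est de multiplicité finie, c'est-à-dire qu'il existe $N_0\in\bbN$ avec $\sum_j \chi_{U_j}\leq N_0$ sur $\Omega$. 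C'est la construction classique de type Adams basée sur le c\^{o}ne uniforme associé à $\Omega$.

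En appliquant le théorème \ref{thm9} sur chaque $U_j$ et en utilisant l'invariance de la constante par déplacement rigide (les $U_j$ étant tous congruents à $U$), on obtient une constante $K>0$ indépendante de $j$ telle que
$$\|u\|_{(G_*),U_j}\leq K\,\|u\|_{W^{1,G}(U_j)}\quad\text{pour tout }u\in W^{1,G}(U_j).$$
Je fixe $u\in W^{1,G}(\Omega)$, je normalise de sorte que $\|u\|_{W^{1,G}(\Omega)}$ soit petit, et je pose $\alpha_j:=\|u\|_{W^{1,G}(U_j)}$. Cette inégalité combinée à la propriété de convexité \eqref{5} donne déjà l'estimation locale grossière $\int_{U_j}G_*(|u|/K)\,dx\leq \alpha_j$, mais sommer ces inégalités ne suffit pas, car $\sum_j\alpha_j$ peut diverger. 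L'amélioration vient du lemme \ref{lem11}: avec $p\in(1,N)$ choisi de sorte que $B(t)=G(t^{1/p})$ soit une $N$-fonction et $q=Np/(N-p)$, les parties (b) et (c) fournissent, après découpage du domaine d'intégration selon que $G_*(|u|/\lambda)\leq 1$ ou non, une inégalité intégrée de la forme
$$\int_{U_j}G_*(|u|/\lambda)\,dx\leq C\,F_j^{q/p},\qquad F_j:=\rho(u;G,U_j)+\rho(\nabla u;G,U_j),$$
uniformément en $j$, pour un second membre assez petit. Puisque $q/p\geq 1$, on a $\sum_j F_j^{q/p}\leq(\sum_j F_j)^{q/p}$, et la multiplicité finie donne $\sum_j F_j\leq N_0\,[\rho(u;G,\Omega)+\rho(\nabla u;G,\Omega)]<+\infty$. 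La sommation de l'estimation locale sur $j$ conduit alors à $\int_\Omega G_*(|u|/\lambda)\,dx<+\infty$, c'est-à-dire $u\in L^{G_*}(\Omega)$, avec une majoration de $\|u\|_{(G_*),\Omega}$ proportionnelle à $\|u\|_{W^{1,G}(\Omega)}$, d'où l'injection continue.

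L'obstacle principal auquel je m'attends est précisément la dérivation de l'estimation intégrée en puissance $q/p$ sur chaque $U_j$ à partir des inégalités ponctuelles de Lemme \ref{lem11}. Elle nécessite un découpage soigneux de $U_j$ selon la taille de $G_*(|u|/\lambda)$, en utilisant (b) sur le sous-niveau $\{G_*(|u|/\lambda)\leq 1\}$ et (c) avec un $\epsilon$ suffisamment petit pour \^{e}tre absorbé dans le membre de gauche sur le complément, le tout couplé au lemme \ref{lem2} pour passer entre normes de Luxemburg et modulaires. C'est exactement l'étape pour laquelle le lemme \ref{lem11} a été développé, et c'est la seule où l'hypothèse \eqref{41} (qui assure l'existence du paramètre $p$ avec $B$ une $N$-fonction) intervient de façon essentielle.
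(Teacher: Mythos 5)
Votre stratégie (recouvrement de $\Omega$ par des ouverts bornés congruents de multiplicité finie, application du théorème \ref{thm9} sur chaque morceau, puis sommation) est radicalement différente de celle du texte, et elle bute sur un point que vous identifiez vous-même comme « l'obstacle principal » mais que vous ne franchissez pas : l'estimation locale $\int_{U_j}G_{*}(|u|/\lambda)\,dx\leq C\,F_{j}^{q/p}$ avec $F_{j}$ la \emph{modulaire} locale. Le théorème \ref{thm9} ne fournit qu'une inégalité entre \emph{normes}, $\|u\|_{(G_{*}),U_j}\leq K\|u\|_{W^{1,G}(U_j)}$, et dans un espace d'Orlicz général il n'existe aucune relation de type puissance entre la norme de Luxemburg et la modulaire (le lemme \ref{lem2} ne donne que $\rho(v;G)\leq\|v\|_{(G)}$ pour $\|v\|_{(G)}\leq1$, dans un seul sens et sans exposant). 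Le lemme \ref{lem11} (b),(c) compare ponctuellement $[G_{*}(t)]^{p/q}$ à $G_{*}(t)$ et $G(t)$, donc permet de majorer $\int[G_{*}(|u|/\lambda)]^{p/q}$ par la modulaire de $G$ ; mais passer de là à $\bigl(\int G_{*}(|u|/\lambda)\bigr)^{p/q}$ exigerait une inégalité de Jensen inversée, fausse en général. La variante « norme » de la sommation échoue aussi : $\sum_j\|u\|_{W^{1,G}(U_j)}^{q/p}$ peut diverger pour $u\in W^{1,G}(\Omega)$ (déjà pour $G(t)=t^{s}$ avec $s>q/p$, on a $\sum_j\|u\|_{(G),U_j}^{q/p}=\sum_j\bigl(\int_{U_j}|u|^{s}\bigr)^{q/(ps)}$ qui n'est pas contrôlée par la modulaire globale). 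Problème secondaire : vous choisissez $p\in\,]1,N[$ tel que $B(t)=G(t^{1/p})$ soit une $N$-fonction, mais un tel $p>1$ n'existe pas toujours (par exemple $G(t)=t\log(1+t)$ près de l'infini) ; il faut autoriser $p=1$, ce que le lemme \ref{lem11} permet. Enfin, la construction du recouvrement par des ouverts tous congruents à un $U$ fixé, de multiplicité finie et ayant chacun la propriété du cône, est affirmée sans justification ; elle est vraie mais non triviale.

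À titre de comparaison, la démonstration du texte évite complètement le recouvrement : elle observe que la seule étape de la preuve du théorème \ref{thm9} utilisant $mes(\Omega)<\infty$ est \eqref{98}, où le terme $\frac{K_{2}}{K_{1}}t$ dans la majoration de $\sigma(t)$ conduit à $\int_{\Omega}|u|$ et nécessite $\|1\|_{(G^{*})}<\infty$. Le lemme \ref{lem11}(c) avec $p=1$ et $\epsilon=\frac{1}{2K_{1}}$ remplace ce terme par $K_{\epsilon}G(t)$, de sorte que $K_{1}\int_{\Omega}\sigma(|u|/K)\,dx\leq\frac{1}{2}\int_{\Omega}G_{*}(|u|/K)\,dx+K_{1}K_{\epsilon}\int_{\Omega}G(|u|/K)\,dx$, quantité contrôlée sans aucune hypothèse de finitude de la mesure ; le reste de la preuve du théorème \ref{thm9} s'applique alors mot pour mot. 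Si vous tenez à l'argument de recouvrement, il vous faut d'abord établir une inégalité locale modulaire--modulaire (et non norme--norme), ce qui demande de reprendre la preuve du théorème \ref{thm9} au niveau des modulaires — autant alors suivre directement la voie du texte.
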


\begin{proof}
D'aprés le lemme \ref{lem11}, $(c)$ avec $p=1$ et $\epsilon=\frac{1}{2K_{1}}$, et en utilisant l'inégalitée \eqref{98} de la démonstration du théorème \ref{thm9} on a
\begin{align*}
     \int_{\Omega}\sigma\bigg{(}\frac{|u|}{K}\bigg{)}dx&\leq\frac{1}{2K_{1}}\int_{\Omega}G_{*}\bigg{(}\frac{|u|}{K}\bigg{)}dx+K_{2}\int_{\Omega}
     |u(x)|dx\\&\leq\frac{1}{2K_{1}}+\frac{K_{2}}{K}\|u\|_{(G)}.
    \end{align*}
    et on termine la preuve avec le m\^{e}me démarche de la démonstration du théorème \ref{thm24}.
\end{proof}

\begin{lem}\label{lem12}
  Soit $G$ une $N$-Fonction telle que \begin{equation}\label{112}
                                        \int_{0}^{1}\frac{G^{-1}(\tau)}{\tau^{1+\frac{1}{N}}}d\tau=\infty,
                                      \end{equation}
                                      \begin{equation}\label{113}
                                        \int_{1}^{+\infty}\frac{G^{-1}(\tau)}{\tau^{1+\frac{1}{N}}}d\tau=\infty
                                      \end{equation}
    Alors il existe une $N$-Fonction $\tilde{G}$ (non nécessairement unique) telle que
    \begin{enumerate}
      \item [(a)] $\tilde{G}(t)=G(t)$ si $t\geq G^{-1}\bigg{(}\displaystyle\frac{G(1)}{2}\bigg{)}$,
      \item [(b)] $\displaystyle\int_{0}^{1}\frac{\tilde{G}^{-1}(\tau)}{\tau^{1+\frac{1}{N}}}d\tau<\infty$,
      \item [(c)] $\displaystyle\int_{1}^{+\infty}\frac{\tilde{G}^{-1}(\tau)}{\tau^{1+\frac{1}{N}}}d\tau=\infty$.
    \end{enumerate}
\end{lem}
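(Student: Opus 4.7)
The plan is to construct $\tilde{G}$ by gluing a power function near the origin to $G$ away from the origin. Set $t_0 := G^{-1}(G(1)/2)$ and define
$$\tilde{G}(t) = \begin{cases} G(t_0)\left(t/t_0\right)^{p} & \text{si } 0 \leq t \leq t_0,\\ G(t) & \text{si } t \geq t_0, \end{cases}$$
where $p \in (1,N)$ is a parameter to be chosen. By construction $\tilde{G}(t_0)=G(t_0)$, so $\tilde{G}$ is continuous, and property $(a)$ is immediate from the definition of $t_0$.

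First I would verify that $\tilde{G}$ is an $N$-function for a suitable $p$. The right derivative is $\tilde{g}(t)=pG(t_0)t^{p-1}/t_0^p$ on $[0,t_0)$ and $g(t)$ on $[t_0,\infty)$. This is right-continuous, strictly positive on $(0,\infty)$, satisfies $\tilde{g}(0)=0$ and $\tilde{g}(t)\to\infty$, and is non-decreasing on each piece; the only non-trivial condition is matching at $t_0$, i.e.,
$$\frac{pG(t_0)}{t_0}\leq g(t_0),\qquad \text{équivalent à}\qquad p\leq \frac{t_0\,g(t_0)}{G(t_0)}.$$
The key observation is that this upper bound is strictly greater than $1$: because $g$ is non-decreasing with $g(0)=0$ and yet $G(t_0)=\int_0^{t_0}g(s)\,ds>0$, the function $g$ cannot be constant on $[0,t_0]$, so the strict monotonicity of the integral gives $G(t_0)<t_0\,g(t_0)$. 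Hence the interval $\bigl(1,\min(N,\,t_0 g(t_0)/G(t_0))\bigr)$ is non-empty (assuming $N\geq 2$, as in the Sobolev embedding setting), and any $p$ in it makes $\tilde{G}$ an $N$-function.

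Next I would check $(b)$ and $(c)$ by direct calculation. On $[0,G(t_0)]$ we have $\tilde{G}^{-1}(\tau)=t_0\bigl(\tau/G(t_0)\bigr)^{1/p}$, so
$$\int_0^{G(t_0)}\frac{\tilde{G}^{-1}(\tau)}{\tau^{1+1/N}}\,d\tau=\frac{t_0}{G(t_0)^{1/p}}\int_0^{G(t_0)}\tau^{\frac{1}{p}-1-\frac{1}{N}}\,d\tau<\infty,$$
because $p<N$ gives $1/p-1-1/N>-1$. On $[G(t_0),1]$, $\tilde{G}^{-1}=G^{-1}$ is bounded and the weight $\tau^{-1-1/N}$ is bounded away from $0$, so that contribution is finite. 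This yields $(b)$. For $(c)$, since $\tilde{G}\equiv G$ on $[t_0,\infty)$, we have $\tilde{G}^{-1}\equiv G^{-1}$ on $[G(t_0),\infty)\supset[1,\infty)$, and the divergence of $\int_1^{\infty}\tilde{G}^{-1}(\tau)/\tau^{1+1/N}\,d\tau$ follows directly from the hypothesis \eqref{113}.

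The main obstacle is the convexity/matching condition at $t_0$: one must choose $p$ in the non-empty interval $(1,t_0 g(t_0)/G(t_0))\cap(1,N)$. The non-emptiness of this interval is the heart of the argument and relies on the strict inequality $t_0g(t_0)>G(t_0)$, itself a consequence of $g$ being non-constant on $[0,t_0]$ (i.e., $G$ not being affine near the origin), which is guaranteed by the $N$-function properties of $G$. Everything else is then a routine verification.
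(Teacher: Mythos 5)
Your construction is correct, and it reaches the conclusion by a mechanism genuinely different from the paper's. Both proofs replace $G$ below the level $G(1)/2$ by a power $ct^{r}$ with $1<r<N$, but they handle the junction differently. The paper fixes the multiplicative constant $K$ by forcing $Kt^{r}$ to meet $G$ at $t^{*}=G^{-1}(G(1)/2)$, then \emph{uses hypothesis \eqref{112}} to produce a point $t_{0}<t^{*}$ where $Kt_{0}^{r}>G(t_{0})$, and glues at the first crossing $t_{1}\in(t_{0},t^{*}]$; continuity is automatic there and convexity follows because $Kt^{r}-G(t)$ decreases to $0$ at $t_{1}$. You instead glue exactly at $t^{*}$, match the \emph{value} by the choice of constant $G(t^{*})/ (t^{*})^{p}$, and match the \emph{slopes} convexly by shrinking the exponent, which requires the strict inequality $t^{*}g(t^{*})>G(t^{*})$ — correctly derived from $g$ being nondecreasing with $g(0^{+})=0$. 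This buys you something: your argument never invokes \eqref{112}, so it actually proves the lemma for every $N$-function satisfying \eqref{113} alone, whereas the paper's location of the crossing point genuinely depends on \eqref{112}. Two cosmetic points you should fix: the inclusion $[G(t^{*}),\infty)\supset[1,\infty)$ used for $(c)$ (and the interval $[G(t^{*}),1]$ used for $(b)$) presupposes $G(1)\leq 2$; when $G(1)>2$ the same conclusions hold because $\tilde{G}^{-1}$ and $G^{-1}$ differ only on the compact set $[0,G(t^{*})]$ where both are bounded, so the tail integrals have the same nature — state it that way. Finally, your restriction $N\geq 2$ is harmless: for $N=1$ condition $(b)$ is unattainable for any $N$-function (since $\tilde{G}^{-1}(\tau)/\tau\to\infty$ as $\tau\to0^{+}$), and the paper's own choice $1<r<N$ carries the same implicit assumption.
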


\begin{proof}
Soit $1<r<N$, il existe $t_{0}< G^{-1}\bigg{(}\displaystyle\frac{G(1)}{2}\bigg{)}$ tel que
$$G(t_{0})<K t_{0}^{r}\ \ \text{et}\ \ K\bigg{[} G^{-1}\bigg{(}\displaystyle\frac{G(1)}{2}\bigg{)}\bigg{]}^{r}=\frac{G(1)}{2}.$$
Si ceci n'est pas vérifié, c.à.d, $$G(t)\geq K t^{r},\ \forall\ t<G^{-1}\bigg{(}\displaystyle\frac{G(1)}{2}\bigg{)},$$  $$G^{-1}(t)\leq\bigg{(}\frac{t}{K}\bigg{)}^{\frac{1}{r}},\ \forall\ t<\frac{G(1)}{2}$$  ce qui contredit \eqref{112}. Soit $t_{1}=\inf\{t\geq t_{0},\ G(t)=Kt^{r}\}$, on remarque que

$t_{0}<t_{1}\leq  G^{-1}\bigg{(}\displaystyle\frac{G(1)}{2}\bigg{)}.$ Soit $$\tilde{G}(t)=\begin{cases}
                                                                                  K t^{r} & \mbox{si}\ t<t_{1} \\
                                                                                  G(t) & \mbox{si}\ t\geq t_{1}.
                                                                                \end{cases} $$
    $\tilde{G}$ satisfait les conditions $(a)$,$(b)$ et $(c)$.
\end{proof}

\begin{lem}\label{lem13}
  Soient $A$, $B$ deux $N$-Fonctions et $a>0$. Alors il existe $0<b<a$ et $C$ une  $N$-Fonction telles que $$C(t)=A(t),\ \text{si}\ t\geq a,$$ et
  $$C(t)=B(t),\ \text{si}\ t\leq b.$$
\end{lem}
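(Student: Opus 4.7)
Le plan est de construire $C$ via sa dérivée à droite $c := C^{'}_{d}$, en prenant $c$ égale à $B^{'}_{d}$ sur $[0,b[$, égale à $A^{'}_{d}$ sur $[a,+\infty[$, et égale à une constante $m$ bien choisie sur l'intervalle de raccord $[b,a[$. Avec $C(t) = \int_{0}^{t} c(\tau)\,d\tau$ et le choix
$$ m := \frac{A(a)-B(b)}{a-b}, $$
on obtient automatiquement $C(b)=B(b)$ et $C(a)=A(a)$; combiné avec le fait que les dérivées coïncident sur les deux morceaux extrêmes, ceci fournit $C(t)=B(t)$ pour $t\in[0,b]$ et $C(t)=A(t)$ pour $t\in[a,+\infty[$.

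Il reste alors à vérifier que $c$ satisfait les quatre propriétés définissant la dérivée d'une $N$-Fonction, à savoir $c(0)=0$, $c(t)>0$ pour $t>0$, $c(+\infty)=+\infty$, et $c$ croissante et continue à droite sur $[0,+\infty[$. La valeur en zéro, la stricte positivité, la limite à l'infini et la continuité à droite aux points distincts de $b$ et $a$ s'héritent directement de $B^{'}_{d}$ et $A^{'}_{d}$. La continuité à droite en $b$ est immédiate car $c$ est constante sur $[b,a[$, et la continuité à droite en $a$ provient de celle de $A^{'}_{d}$. La seule condition vraiment non triviale est la croissance, qui se réduit à la double inégalité
$$ B^{'}_{d}(b) \;\leq\; m \;\leq\; A^{'}_{d}(a). $$

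Ces deux inégalités s'obtiennent en choisissant $b$ assez petit. Pour la borne supérieure, la convexité de $A$ jointe à $A(0)=0$ donne $A(a)/a \leq A^{'}_{d}(a)$; l'égalité imposerait $A^{'}_{d}\equiv 0$ sur $[0,a]$ (car $A^{'}_{d}$ est croissante, continue à droite et $A^{'}_{d}(0)=0$, et son intégrale sur $[0,a]$ vaut $a\cdot A^{'}_{d}(a)$), d'où $A(a)=0$, contredisant $A(a)>0$. Ainsi $A(a)/a < A^{'}_{d}(a)$ strictement, et puisque $m\to A(a)/a$ quand $b\to 0^{+}$, l'inégalité $m\leq A^{'}_{d}(a)$ est valide pour $b$ suffisamment petit. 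Pour la borne inférieure, la continuité à droite de $B^{'}_{d}$ en $0$ donne $B^{'}_{d}(b)\to 0$ quand $b\to 0^{+}$, alors que $m\to A(a)/a>0$; donc $B^{'}_{d}(b)\leq m$ est également vrai pour $b$ petit. Le choix d'un tel $b$ achève la construction; l'unique obstacle réel était précisément de sécuriser ces deux inégalités de monotonie, ce qui vient d'être ramené à l'estimation de convexité $A(a)/a<A^{'}_{d}(a)$ et à $B^{'}_{d}(0^{+})=0$.
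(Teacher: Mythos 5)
Le texte énonce le lemme \ref{lem13} sans en donner de démonstration ; il n'y a donc pas d'argument du papier auquel comparer le vôtre. Votre construction est correcte et complète : le recollement des dérivées à droite par une constante $m=\frac{A(a)-B(b)}{a-b}$ sur $[b,a[$ est l'argument naturel, et vous isolez bien le seul point délicat, la croissance de la dérivée recollée, que vous ramenez aux inégalités $B^{'}_{d}(b)\leq m\leq A^{'}_{d}(a)$, toutes deux acquises pour $b$ assez petit grâce à $B^{'}_{d}(0^{+})=0$ et à l'inégalité stricte $\frac{A(a)}{a}<A^{'}_{d}(a)$ (laquelle est d'ailleurs réellement nécessaire, puisque $m$ tend vers $\frac{A(a)}{a}$ par valeurs supérieures, et garantit au passage $m>0$, donc la stricte positivité de la dérivée de $C$ sur $]0,+\infty[$). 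C'est le même esprit de troncature que dans la démonstration du lemme \ref{lem12}.
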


\begin{dfn}
  Soit $G$ une $N$-Fonction satisfait \eqref{112} et \eqref{113} et $\tilde{G_{*}}$ définie par
  $$ \tilde{G_{*}}(t)=\int_{0}^{t}\frac{\tilde{G}^{-1}(\tau)}{\tau^{1+\frac{1}{N}}}d\tau,$$ d'après le lemme \ref{lem12}, $\tilde{G_{*}}$ est une $N$-Fonction. On considère la fonction notée $C$ définie par $$C(t)=\begin{cases}
                                                                               \tilde{G_{*}}(t) & \mbox{si }\ t\geq1 \\
                                                                                G(t) & \mbox{si}\ t\leq t^{*},
                                                                           \end{cases} $$
  pour un certain $0<t^{*}<1$. $C$ est une $N$-Fonction (d'après le lemme \ref{lem13}). Il existe $K^{*}\geq1$ tel que \begin{equation}\label{114}
                                                                             C(t)\leq K^{*}G(t),\ \forall t\leq1.
                                                                            \end{equation}
\end{dfn}

\begin{lem}\label{lem15}
Le lemme \ref{lem14} reste valide même si \eqref{41} n'est pas satisfait. Dans ce cas $C$ remplace $G_{*}$.
\end{lem}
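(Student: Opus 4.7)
Since \eqref{41} fails while \eqref{42} still holds, Lemma \ref{lem12} produces an $N$-function $\tilde{G}$ which coincides with $G$ for $t\ge t_{1}$ (with $t_{1}<1$) and satisfies the analogues of \eqref{41}--\eqref{42}; its Sobolev conjugate $\tilde{G}_{*}$ is then a well-defined $N$-function, and Lemma \ref{lem14} applied to $\tilde{G}$ yields $W^{1,\tilde{G}}(\Omega)\hookrightarrow L^{\tilde{G}_{*}}(\Omega)$. By construction $C$ satisfies $C(t)=\tilde{G}_{*}(t)$ for $t\ge 1$, $C(t)=G(t)$ for $t\le t^{*}$, together with the pivotal bound \eqref{114}: $C(t)\le K^{*}G(t)$ for $t\le 1$. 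These three ingredients will let me trade $C$ against $\tilde{G}_{*}$ on large values and against $G$ on small values.

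Given $u\in W^{1,G}(\Omega)$, I would introduce the ``high part'' $v(x):=(|u(x)|-1)_{+}\,\mathrm{sgn}(u(x))$, supported in $\{|u|>1\}$, with $|v|\le|u|$ and distributional gradient $\nabla v=\nabla u\,\chi_{\{|u|>1\}}$. On its support $|u|>1>t_{1}$, so $\tilde{G}(|u|)=G(|u|)$, and monotonicity gives $\tilde{G}(|v|)\le G(|u|)$, whence $\rho(v;\tilde{G})\le\rho(u;G)$. For the gradient I would split $\{|u|>1\}$ according to $\{|\nabla u|\ge t_{1}\}$, where $\tilde{G}(|\nabla u|)=G(|\nabla u|)$ is controlled by $\rho(\nabla u;G)$, and $\{|\nabla u|<t_{1}\}$, where $\tilde{G}(|\nabla u|)=K|\nabla u|^{r}\le Kt_{1}^{r}$; since $u\in L^{G}(\Omega)$ forces $mes(\{|u|>1\})\le\rho(u;G)/G(1)<\infty$, this second piece is bounded by $Kt_{1}^{r}\,mes(\{|u|>1\})$. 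Thus $v\in W^{1,\tilde{G}}(\Omega)$, with quantitative control of its modulars by the $W^{1,G}$ modulars of $u$.

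To conclude, I would reintroduce a scaling parameter $\mu>0$ and apply the previous step to $\mu u$, writing $v_{\mu}:=(\mu|u|-1)_{+}\,\mathrm{sgn}(u)$, so that $\|v_{\mu}\|_{(\tilde{G}_{*})}\to 0$ as $\mu\to 0$ by the embedding for $\tilde{G}$. Splitting $\rho(\mu u;C)$ into the three regions $\{\mu|u|\le 1\}$, $\{1<\mu|u|\le 2\}$ and $\{\mu|u|>2\}$: on the first \eqref{114} yields $C(\mu|u|)\le K^{*}G(\mu|u|)\le K^{*}\rho(\mu u;G)$; on the second, $C(\mu|u|)\le\tilde{G}_{*}(2)$ and the set has finite Lebesgue measure; on the third, $\mu|u|\le 2|v_{\mu}|$ gives $C(\mu|u|)=\tilde{G}_{*}(\mu|u|)\le\tilde{G}_{*}(2|v_{\mu}|)$, whose integral is $\le\rho(2v_{\mu};\tilde{G}_{*})\le 1$ once $\mu$ is chosen small enough that $\|2v_{\mu}\|_{(\tilde{G}_{*})}\le 1$ (using Lemma \ref{lem2}). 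This yields $\rho(\mu u;C)<\infty$ with $1/\mu$ controlled by $\|u\|_{W^{1,G}(\Omega)}$, hence the continuous injection $u\in L^{C}(\Omega)$. The main obstacle is the gradient step $\nabla v\in L^{\tilde{G}}(\Omega)$, because $\tilde{G}$ behaves like the power $Kt^{r}$ near the origin and can be much larger than $G$ there; the decisive observation that saves the argument is that $\nabla v$ vanishes outside $\{|u|>1\}$, a set of finite Lebesgue measure, which automatically absorbs the dangerous contribution $K|\nabla u|^{r}$, and propagating this linearly through the rescaling $\mu$ is the only delicate bookkeeping.
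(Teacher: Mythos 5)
Your proof is correct and follows essentially the same route as the paper: the same truncation of $u$ at level $1$ into a low part controlled by \eqref{114} and a high part $u_{2}=(|u|-1)_{+}\,\mathrm{sgn}(u)$ supported on the finite-measure set $\{|u|>1\}$, treated through $\tilde{G}$ and Lemma \ref{lem14}, then reassembled via the dichotomy $C=\tilde{G}_{*}$ on large values, $C\leq K^{*}G$ on small ones. The only difference is presentational (you work with modulars and a scaling parameter $\mu$ where the paper normalizes $\|u\|_{W^{1,G}}=1$ and works with Luxemburg norms), and your explicit handling of $\nabla u_{2}$ on $\{|\nabla u|<t_{1}\}$ via the finite measure of $\{|u|>1\}$ is in fact slightly more careful than the paper's.
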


\begin{proof}
  Soit $u\in W^{1,G}(\Omega)$, telle que $\|u\|_{ W^{1,G}}=1$. On pose $$u_{1}(x)=\begin{cases}
                                                                                    1 & \mbox{si }\ u(x)>1 \\
                                                                                    u(x) & \mbox{si }\ |u(x)|\leq 1 \\
                                                                                    -1 & \mbox{si}\ u(x)<-1.
                                                                                  \end{cases}$$
   Soit $u_{2}=u-u_{1}$. D'après le lemme \ref{lem8}, $u_{1}\in W^{1,G}(\Omega)$, $\|u_{1}\|_{ W^{1,G}}\leq1$ et $\|u_{2}\|_{ W^{1,G}}\leq1$.
 Comme \eqref{114} est vérifiée pour tout $t\leq1$, alors $$\int_{\Omega}C\bigg{(}\frac{|u_{1}(x)|}{K^{*}}\bigg{)}dx\leq\int_{\Omega}G(|u_{1}(x)|)dx\leq1,$$ et $\|u_{1}\|_{(C)}\leq K^{*}$. De m\^{e}me $\|u_{2}\|_{(C)}\leq K^{*}$. Soit $$\Omega_{+}=\{x\in\Omega:\ |u(x)|>1\}=\{x\in\Omega:\ |u_{2}(x)|>0\}.$$ Alors
 $$1\geq\int_{\Omega}G(|u(x)|)dx\geq\int_{\Omega_{+}}G(|u(x)|)dx>G(1)mes(\Omega^{+}).$$
 Soit $$\Omega_{+}^{'}=\{x\in\Omega_{+}:\ |u_{2}(x)|/2\leq G^{-1}(G(1)/2)\}\ \text{et}\ \Omega_{+}^{''}=\Omega_{+}\setminus \Omega_{+}^{'},$$ on a
 $$\int_{\Omega_{+}^{'}}\tilde{G}\bigg{(}\frac{|u_{2}(x)|}{2}\bigg{)}dx\leq\int_{\Omega_{+}^{'}}G\bigg{(}G^{-1}\bigg{(}\frac{G(1)}{2}\bigg{)}\bigg{)}\leq
 \frac{G(1)}{2}mes(\Omega_{+})<\frac{1}{2},$$ et aussi $$\int_{\Omega_{+}^{''}}\tilde{G}\bigg{(}\frac{|u_{2}(x)|}{2}\bigg{)}dx\leq\frac{1}{2}\int_{\Omega_{+}^{''}}
 G(|u_{2}(x)|)dx\leq\frac{1}{2}.$$
 Ainsi \begin{align*}
          \int_{\Omega}\tilde{G}\bigg{(}\frac{|u_{2}(x)|}{2}\bigg{)}dx&=\int_{\Omega\setminus\Omega_{+}}\tilde{G}\bigg{(}\frac{|u_{2}(x)|}{2}\bigg{)}dx
         +\int_{\Omega_{+}^{'}}\tilde{G}\bigg{(}\frac{|u_{2}(x)|}{2}\bigg{)}dx\\&+
         \int_{\Omega_{+}^{''}}\tilde{G}\bigg{(}\frac{|u_{2}(x)|}{2}\bigg{)}dx\leq
        \frac{1}{2}+\frac{1}{2}=1,
       \end{align*}
   donc $\|u_{2}\|_{(\tilde{G})}\leq2$. Comme les dérivées de $u_{2}$ s'annulent en dehors de $\Omega_{+}$ en déduit que $\|u_{2}\|_{W^{1,\tilde{G}}(\Omega)}\leq2$. D'après le lemme \ref{lem14} il existe $K_{1}\geq0$ indépendent de $u$ telle que $\|u_{2}\|_{(\tilde{G})}\leq K_{1}$. D'autre part on a
 $$\|u_{2}\|_{(C)}\leq K_{2}=K_{1}\bigg{(}1+\frac{C(1)}{G(1)}\bigg{)},$$ en effet, soit
 $$\Omega_{+}^{1}=\{x\in\Omega_{+}:\ |u_{2}(x)|/K_{1}\geq1\}\ \text{et}\ \Omega_{+}^{2}=\Omega_{+}\setminus\Omega_{+}^{1},$$ rappelons que $C(t)=\tilde{G_{*}}(t)$ si $t\geq1,$ alors \begin{align*}
          \int_{\Omega}C\bigg{(}\frac{|u_{2}(x)|}{K_{2}}\bigg{)}dx&\leq\bigg{(}1+\frac{C(1)}{G(1)}\bigg{)}^{-1}\bigg{(}\int_{\Omega_{+}^{1}}
          +\int_{\Omega_{+}^{2}}\bigg{)}C\bigg{(}\frac{|u_{2}(x)|}{K_{2}}\bigg{)}dx\\
          &\leq \bigg{(}1+\frac{C(1)}{G(1)}\bigg{)}^{-1} \bigg{(}\int_{\Omega}\tilde{G_{*}}\bigg{(}\frac{|u_{2}(x)|}{K_{1}}\bigg{)}dx+
          C(1)mes(\Omega_{+})\bigg{)}\\&\leq1.
       \end{align*}
       Donc $\|u\|_{(C)}\leq K^{*}+K_{2}=K_{3}$. 
       et par suite
       $$\|u\|_{(C)}=\bigg{\|}\frac{u}{\|u\|_{W^{1,G}(\Omega)}}\bigg{\|}_{(C)}\|u\|_{W^{1,G}(\Omega)}\leq K_{3}\|u\|_{W^{1,G}(\Omega)}.$$
\end{proof}

\begin{proof}[\textbf{Démonstration du théorème \ref{thm24}}]
 La démonstration découle directement des lemmes \ref{lem14} et \ref{lem15}.
\end{proof}

\section{Injections des espace d'Orlicz-Sobolev dans les espaces $C_{B}(\Omega)$}

Dans cette section nous étudions les injections des espace d'Orlicz-Sobolev dans le cas où $\displaystyle\int_{1}^{+\infty}\frac{G^{-1}(t)}{t^{1+\frac{1}{N}}}dt<\infty.$

\begin{rem}\label{rem9}
Si $G$ est une $N$-Fonction vérifiant $$M=\int_{1}^{+\infty}\frac{G^{-1}(t)}{t^{1+\frac{1}{N}}}dt<\infty,$$ on peut travailler avec la conjuguée de Sobolev  $G_{*}$ dans un sens étendu (fonction de Young), où la fonction $G_{*}(t)$ est convexe, positive sur $]0,M[$, $G_{*}(0)=0$, $G_{*}(M)=+\infty$ et  $G_{*}(t)=+\infty$, $\forall t\geq M$.
\end{rem}

\begin{pro}
 Sous les hypothèses de la remarque \ref{rem9}, on a
  $$L^{\infty}(\Omega)=L^{G_{*}}(\Omega).$$
\end{pro}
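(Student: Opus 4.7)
La stratégie consiste à démontrer l'égalité par double inclusion, en exploitant la propriété caractéristique de la fonction $G_{*}$ étendue~: elle vaut $+\infty$ sur $[M,+\infty[$. Cette propriété joue un rôle analogue à celui d'une singularité et force l'équivalence avec l'espace $L^{\infty}$.

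Pour l'inclusion $L^{G_{*}}(\Omega)\subset L^{\infty}(\Omega)$, je partirais d'une fonction $u\in L^{G_{*}}(\Omega)$ et poserais $\lambda_{0}:=\|u\|_{(G_{*})}<+\infty$. D'après la remarque \ref{rem7}, l'intégrale $\int_{\Omega}G_{*}(|u(x)|/\lambda_{0})\,dx$ est inférieure ou égale à $1$, et en particulier finie~; l'intégrande est donc finie presque partout sur $\Omega$. Comme $G_{*}(t)=+\infty$ dès que $t\geq M$, cela impose $|u(x)|/\lambda_{0}<M$ presque partout, soit $\|u\|_{\infty}\leq M\lambda_{0}$, et par suite $u\in L^{\infty}(\Omega)$, avec au passage l'estimation $\|u\|_{\infty}\leq M\|u\|_{(G_{*})}$.

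Réciproquement, pour $L^{\infty}(\Omega)\subset L^{G_{*}}(\Omega)$, je prendrais $u\in L^{\infty}(\Omega)$ non identiquement nulle, et choisirais $\lambda>\|u\|_{\infty}/M$ afin que $|u(x)|/\lambda\leq\|u\|_{\infty}/\lambda<M$ presque partout~; ceci assure $G_{*}(|u(x)|/\lambda)\leq G_{*}(\|u\|_{\infty}/\lambda)<+\infty$. Puisque $\ds\lim_{t\rightarrow 0^{+}}G_{*}(t)=0$, on peut faire tendre $\lambda$ vers l'infini pour rendre la quantité $G_{*}(\|u\|_{\infty}/\lambda)\cdot mes(\Omega)$ aussi petite que voulue, et en particulier inférieure à $1$, ce qui donne $\|u\|_{(G_{*})}\leq\lambda<+\infty$.

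Le principal obstacle est précisément cette dernière étape lorsque $mes(\Omega)=+\infty$~: les fonctions constantes non nulles sont dans $L^{\infty}$ mais leur modulaire contre $G_{*}$ diverge. L'énoncé doit donc être lu dans le cadre naturel de la section, à savoir un ouvert borné (ou, plus généralement, des fonctions à support de mesure finie), qui est précisément le cadre où se formulent les injections de Sobolev $W^{1,G}(\Omega)\hookrightarrow L^{G_{*}}(\Omega)=L^{\infty}(\Omega)$ visées ici.
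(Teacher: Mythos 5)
Votre démonstration est correcte et suit essentiellement la même démarche que celle du texte~: les deux arguments reposent sur le fait que $G_{*}\equiv+\infty$ sur $[M,+\infty[$, qui force $|u|\leq M\lambda$ p.p. dès que la modulaire $\int_{\Omega}G_{*}(|u|/\lambda)\,dx$ est finie, et réciproquement sur le choix d'un $\lambda$ assez grand pour ramener $|u|/\lambda$ strictement sous $M$ et rendre la modulaire $\leq1$. Vous êtes même un peu plus soigneux que le texte sur l'inclusion $L^{\infty}(\Omega)\subset L^{G_{*}}(\Omega)$, où la finitude de l'intégrale exige bien $mes(\Omega)<\infty$ (hypothèse implicite dans cette section que le texte ne relève pas), et les deux estimations quantitatives que vous dégagez au passage sont précisément le contenu du lemme \ref{lem10} qui suit.
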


\begin{proof}
on sait que $$L^{\infty}(\Omega)=\{f\ :\ \Omega\rightarrow\mathbb{R};\ \exists\alpha>0,\ |f(t)|\leq\alpha\ p.p\ t\in\Omega\}$$
par définition \begin{align*}
                  L^{G_{*}}(\Omega)&=\bigg{\{}f\ :\ \Omega\rightarrow\mathbb{R};\ \exists\lambda>0,\ \int_{\Omega}G_{*}(\lambda f(t))dt<\infty\bigg{\}}\\
                  &=\bigg{\{}f\ :\ \Omega\rightarrow\mathbb{R};\ \exists\lambda>0,\ \int_{\{t\in\Omega:\ |\lambda f(t)|\leq M\}}G_{*}(\lambda f(t))dt+\int_{\{t\in\Omega:\ |\lambda f(t)|> M\}}G_{*}(\lambda f(t))dt<\infty\bigg{\}}.
               \end{align*}
   Comme $$\int_{\{t\in\Omega:\ |\lambda f(t)|> M\}}G_{*}(\lambda f(t))dt<\infty$$
   alors, on a nécessairement $$mes(\{t\in\Omega:\ |\lambda f(t)|> M\})=0.$$ Ce qui donne,
   \begin{align*}
       L^{G_{*}}(\Omega)&=\{f\ :\ \Omega\rightarrow\mathbb{R};\  \exists\lambda>0,\  |\lambda f(t)|\leq M\ p.p\ t\in\Omega\}\\
       &=\{f\ :\ \Omega\rightarrow\mathbb{R};\  \exists\lambda>0,\  | f(t)|\leq \frac{M}{\lambda}\ p.p\ t\in\Omega\}\\
       &=L^{\infty}(\Omega).
   \end{align*}
\end{proof}

\begin{lem}\label{lem10}
  Sous les hypothèses de la remarque \ref{rem9}, on a \begin{equation}\label{101}
                                                      \frac{1}{M}\|u\|_{\infty}\leq\|u\|_{(G_{*})}\leq \frac{1}{c}\|u\|_{\infty},\
                                                      \forall u\in L^{G_{*}}(\Omega),
                                                     \end{equation}
                                                     où $c=\displaystyle G_{*}^{-1}\bigg{(}\frac{1}{mes(\Omega)}\bigg{)}.$
\end{lem}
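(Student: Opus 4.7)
La preuve se décompose en deux inégalités, chacune étant courte mais reposant sur deux propriétés différentes de la fonction de Young étendue $G_{*}$ décrite dans la remarque \ref{rem9}.

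Pour l'inégalité de gauche $\frac{1}{M}\|u\|_{\infty}\leq\|u\|_{(G_{*})}$, je propose d'exploiter le fait que $G_{*}(t)=+\infty$ pour tout $t\geq M$. Soit $\lambda>0$ tel que $\int_{\Omega}G_{*}(|u|/\lambda)dx\leq 1$. Cette intégrale étant finie, on doit avoir $G_{*}(|u(x)|/\lambda)<+\infty$ presque partout, ce qui force $|u(x)|/\lambda < M$ p.p., donc $|u(x)|\leq M\lambda$ p.p., c'est-à-dire $\|u\|_{\infty}\leq M\lambda$. Il ne reste qu'à passer à l'infimum sur l'ensemble des $\lambda$ admissibles pour obtenir $\|u\|_{\infty}\leq M\|u\|_{(G_{*})}$.

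Pour l'inégalité de droite $\|u\|_{(G_{*})}\leq \frac{1}{c}\|u\|_{\infty}$, si $\|u\|_{\infty}=0$ le résultat est trivial. Sinon, l'idée est de tester directement la jauge avec le candidat $\lambda_{0}=\|u\|_{\infty}/c$. Comme $G_{*}$ est continue, strictement croissante sur $[0,M[$ avec $G_{*}(0)=0$ et $G_{*}(t)\to+\infty$ quand $t\to M^{-}$, la constante $c=G_{*}^{-1}(1/\text{mes}(\Omega))$ est bien définie et appartient à $]0,M[$. Pour ce choix de $\lambda_{0}$ on a $|u(x)|/\lambda_{0}\leq c$ p.p. sur $\Omega$, d'où, par croissance de $G_{*}$,
$$\int_{\Omega}G_{*}\bigg{(}\frac{|u(x)|}{\lambda_{0}}\bigg{)}dx \leq G_{*}(c)\,\text{mes}(\Omega)=\frac{1}{\text{mes}(\Omega)}\,\text{mes}(\Omega)=1,$$
ce qui donne $\|u\|_{(G_{*})}\leq\lambda_{0}=\|u\|_{\infty}/c$.

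Aucune étape n'est réellement délicate ici : la seule subtilité est le point de vigilance sur l'hypothèse implicite $\text{mes}(\Omega)<\infty$ (sans laquelle $c$ ne serait pas strictement positif), ainsi que la bonne justification que l'intégrande est finie p.p. lorsque son intégrale l'est, car cette observation est essentielle à la première partie.
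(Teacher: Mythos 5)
Votre démonstration est correcte et suit essentiellement la même démarche que celle du papier : pour l'inégalité de gauche, l'exploitation de $G_{*}(t)=+\infty$ pour $t\geq M$ afin de forcer $|u(x)|\leq M\lambda$ p.p. (le papier applique directement l'argument avec $\lambda=\|u\|_{(G_{*})}$ via la relation \eqref{21}, tandis que vous passez à l'infimum sur les $\lambda$ admissibles, ce qui revient au même) ; pour l'inégalité de droite, le même choix de candidat $s=\|u\|_{\infty}/c$ testé dans la jauge. Vos remarques de vigilance sur $mes(\Omega)<\infty$ et sur la finitude p.p. de l'intégrande sont des précisions bienvenues mais ne changent pas la structure de la preuve.
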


\begin{proof}
Comme $G_{*}(t)=+\infty$, $\forall t\geq M$ et $\|u\|_{(G_{*})}<\infty$, alors nécessairement $$\frac{|u(x)|}{\|u\|_{(G_{*})}}\leq M,\ \text{c.à.d},\ |u(x)|\leq M\|u\|_{(G_{*})},\ p.p\ x\in\Omega$$ d'où la première inégalitée.\\ Soit $s=\displaystyle\frac{\|u\|_{\infty}}{c}$ alors $$\frac{|u(x)|}{s}=c\frac{|u(x)|}{\|u\|_{\infty}}\leq c,\ \text{c.à.d},\ G_{*}\bigg{(}\frac{1}{s}|u(x)|\bigg{)}\leq \frac{1}{mes(\Omega)}.$$  En intègre sur $\Omega$, on obtient
$$\int_{\Omega}G_{*}\bigg{(}\frac{1}{s}|u(x)|\bigg{)}dx\leq1,\ \text{d'où},\ \|u\|_{(G_{*})}\leq s.$$
La deuxième inégalité est vérifiée.
\end{proof}

Nous sommes maintenant en mesure de montrer le théorème suivant.

\begin{thm}
  Soit $\Omega$ un ouvert  ayant la propriété du c\^{o}ne. On suppose que \begin{equation}\label{100}
                                                                                      \int_{1}^{+\infty}\frac{G^{-1}(t)}{t^{1+\frac{1}{N}}}dt<\infty.
                                                                                    \end{equation}
 Alors \begin{equation}\label{102}W^{1,G}(\Omega)\hookrightarrow C_{B}(\Omega)=C(\Omega)\cap L^{\infty}(\Omega).\end{equation}
\end{thm}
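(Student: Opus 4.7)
The plan is to combine the preceding Lemma \ref{lem10} (which identifies the Luxemburg norm of $L^{G_{*}}(\Omega)$ with the sup norm, in the extended sense of Remark \ref{rem9}) with a Morrey-type pointwise estimate that yields simultaneously the $L^{\infty}$ bound and the existence of a continuous representative. The cone property is the decisive geometric ingredient: for each $x\in\Omega$ there is a finite cone $C_{x}\subset\Omega$ of vertex $x$ congruent to a fixed reference cone $C$, so Orlicz integrals of functions of $|x-y|$ over $C_{x}$ are uniformly controlled in $x$.

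First I would work with $u\in C^{1}(\Omega)\cap W^{1,G}(\Omega)$. Fix $x\in\Omega$, introduce polar coordinates $(r,\omega)$ centered at $x$ on $C_{x}$, and with a smooth cutoff $\varphi$ on $[0,h]$ satisfying $\varphi(0)=1$ and $\varphi(h)=0$, write $u(x)=-\int_{0}^{h}\partial_{r}\bigl(\varphi(r)u(x+r\omega)\bigr)\,dr$. Averaging over the angular sector of $C$ and converting back to Cartesian coordinates yields the pointwise representation
\begin{equation*}
|u(x)|\le c_{1}\int_{C_{x}}|u(y)|\,dy+c_{2}\int_{C_{x}}|\nabla u(y)|\,|x-y|^{1-N}\,dy.
\end{equation*}
The Orlicz--Hölder inequality of Theorem \ref{thm1} applied to both terms then gives
\begin{equation*}
|u(x)|\le c_{1}\|u\|_{G}\,\|\chi_{C_{x}}\|_{G^{*}}+c_{2}\|\nabla u\|_{G}\,\bigl\|\,|x-\cdot|^{1-N}\bigr\|_{L^{G^{*}}(C_{x})},
\end{equation*}
with constants independent of $x$ thanks to the congruence of the $C_{x}$.

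The main obstacle is verifying that the weight $y\mapsto|y|^{1-N}$ belongs to $L^{G^{*}}(C)$, i.e.\ that $\int_{C}G^{*}(\lambda|y|^{1-N})\,dy<\infty$ for some $\lambda>0$. In polar coordinates this reduces to $\int_{0}^{R}G^{*}(\lambda r^{1-N})r^{N-1}\,dr<\infty$, which after the change of variable $s=(G^{*})^{-1}(\lambda r^{1-N})$ and the duality bound $t<G^{-1}(t)(G^{*})^{-1}(t)\le 2t$ established in Chapter~1 becomes, up to constants, equivalent to the convergence of $\int_{1}^{\infty}G^{-1}(s)\,s^{-1-1/N}\,ds$. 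This is exactly hypothesis~\eqref{100}, and it supplies the required bound $\|u\|_{\infty}\le c\,\|u\|_{W^{1,G}(\Omega)}$ for every smooth $u$.

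To extend this to general $u\in W^{1,G}(\Omega)$ and to produce a continuous representative, I would regularize via convolutions $u_{n}=\rho_{n}\ast\tilde{u}$ of a suitable extension of $u$, as in Lemma \ref{lem9}, so that each $u_{n}$ is smooth and bounded. Applying the pointwise inequality to $u_{n}-u_{m}$ produces a uniformly Cauchy sequence on $\Omega$ whose uniform limit is a continuous bounded function agreeing with $u$ almost everywhere, and passing to the limit preserves $\|u\|_{\infty}\le c\|u\|_{W^{1,G}(\Omega)}$. Lemma \ref{lem10} then identifies this with the bound $\|u\|_{(G_{*})}\le c'\|u\|_{W^{1,G}(\Omega)}$, completing the continuous embedding $W^{1,G}(\Omega)\hookrightarrow C_{B}(\Omega)$.
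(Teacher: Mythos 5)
Your argument is correct in outline but takes a genuinely different route from the paper's. The paper introduces no potential estimate here: it observes that the extended Sobolev conjugate $G_{*}$ of Remark \ref{rem9} still satisfies the differential equation \eqref{92}, reruns the proof of Theorem \ref{thm9} verbatim to get $\|u\|_{(G_{*})}\leq c_{1}\|u\|_{W^{1,G}(\Omega)}$, converts this into the sup-norm bound \eqref{103} via Lemma \ref{lem10}, and obtains continuity by applying \eqref{103} to the translates $\nu_{h}=u(\cdot+h)-u$ on small balls. You instead go through the classical Morrey/Riesz-potential representation over the cones $C_{x}$ and reduce everything to the membership $|x-\cdot|^{1-N}\in L^{G^{*}}(C)$, which you correctly identify as equivalent, via $t<G^{-1}(t)(G^{*})^{-1}(t)\leq 2t$, to hypothesis \eqref{100}; this is in substance the Adams/Donaldson--Trudinger proof. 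Your route is more self-contained and makes the roles of the cone property and of \eqref{100} completely explicit, whereas the paper's route is shorter on the page only because the analytic work was already done for Theorem \ref{thm9}. One step to tighten: Lemma \ref{lem9} guarantees $\|\rho_{n}\ast u-u\|_{G}\rightarrow 0$ only for $u\in E^{G}$, so for a general $u\in W^{1,G}(\Omega)$ (with $G$ not satisfying $\triangle_{2}$) your mollified sequence need not be Cauchy in the Orlicz norm. You should either truncate and pass to the limit by Fatou for the $L^{\infty}$ bound, as in Step 2 of the proof of Theorem \ref{thm9}, and deduce continuity directly from the smallness of $\bigl\|\,|x-\cdot|^{1-N}\bigr\|_{G^{*}}$ over small cones (the weight lies in $E^{G^{*}}$, so its norm over sets of small measure tends to zero), or accept that the paper's own continuity step, which needs $\|\nu_{h}\|_{W^{1,G}(\Omega)}\rightarrow 0$, rests on the same unstated restriction.
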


\begin{proof}
  On remarque que la fonction $s=G_{*}(t)$ satisfait l'equation différentielle \eqref{92}. Nous pouvons répéter le démarche de la démonstration du théorème \ref{thm9} et conclure l'existence d'une constante $c_{1}>0$ telle que pour tout $u\in W^{1,G}(\Omega)$ on a $$\|u\|_{(G_{*})}\leq c_{1}
  \|u\|_{W^{1,G}(\Omega)},$$ d'après le lemme \ref{lem10},  \begin{equation}\label{103}\|u\|_{\infty}\leq M\|u\|_{(G_{*})}\leq c_{1}\int_{1}^{+\infty}
  \frac{G^{-1}(t)}{t^{1+\frac{1}{N}}}dt.\|u\|_{W^{1,G}(\Omega)}.\end{equation} Ainsi $W^{1,G}(\Omega)\hookrightarrow L^{\infty}(\Omega)$.\\ \\
   Il reste à montrer que $u$ est continue sur $\Omega$. Soient $y\in\Omega$ et $\delta>0$ tels que $B_{2\delta}(\Omega)\subset\Omega$. On pose $$\nu_{h}(x)=u(x+h)-u(x)$$ pour $x\in B_{\delta}(y)$ et $h\in\mathbb{R}^{N}$ telle que $|h|<\delta$. Remplaçant $\nu_{h}$ dans \eqref{103} on obtient $$|\nu_{h}(y)|\leq c_{2}\|\nu_{h}\|_{W^{1,G}(\Omega)}.$$  $\ds\lim_{|h|\rightarrow0}|\nu_{h}(y)|=0$. Alors $u\in C(\Omega)$.
\end{proof}

\section{Injection compacte}

\begin{dfn}[Convergence en mesure]
  Soit $(u_{n})$ une suite de fonctions, on dit que $(u_{n})$ converge en mesure sur $\Omega$ vers $u$ si pour tout $\epsilon,\delta>0$  il existe un entier $n_{0}$ tel que pour tout $j\geq n_{0}$, on a $$mes(\{x\in\Omega:\ |u_{j}(x)-u(x)|\geq\epsilon\})\leq\delta.$$
\end{dfn}

\begin{lem}\label{lem17}
  Soient $\Omega$ un ouvert de mesure fini et $B$ une $N$-Fonction telle que $B\prec\prec G$.  Soit $(u_{n})$ une suite bornée dans  $L^{G}(\Omega)$ et converge en mesure sur $\Omega$ alors $(u_{n})$ converge en norme dans $L^{B}(\Omega)$.
\end{lem}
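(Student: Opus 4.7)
The plan is to reduce the claim to showing that $\rho(\lambda(u_n - u); B) \to 0$ for every $\lambda > 0$, where $u$ denotes the limit in measure. Once this is established, given $\varepsilon > 0$, applying it with $\lambda = 1/\varepsilon$ yields $\rho((u_n - u)/\varepsilon; B) \leq 1$ eventually, hence $\|u_n - u\|_{(B)} \leq \varepsilon$, which gives convergence in the Luxemburg (and thus Orlicz) norm by Theorem \ref{thm2}.

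First I would verify that $u \in L^G(\Omega)$ with a controlled norm. Since $\Omega$ has finite measure, convergence in measure yields an almost everywhere convergent subsequence $u_{n_k} \to u$. Setting $C = \sup_n \|u_n\|_G$, one has $\|u_{n_k}\|_{(G)} \leq C$ by Theorem \ref{thm2}, so $\rho(u_{n_k}/C; G) \leq 1$ by Theorem \ref{lem1}; Fatou's lemma then yields $\rho(u/C; G) \leq 1$, placing $u$ in $L^G(\Omega)$. Consequently $v_n := u_n - u$ is uniformly bounded in $L^G$, say $M := \sup_n \|v_n\|_{(G)} < \infty$, and $\rho(v_n/M; G) \leq 1$ for every $n$.

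The core estimate is as follows. Fix $\lambda > 0$ and $\eta > 0$. Using the hypothesis $B \prec\prec G$ with parameter $1/(\lambda M)$, there exists $T > 0$ such that $B(s) \leq \eta\, G(s/(\lambda M))$ for all $s \geq T$; equivalently $B(\lambda t) \leq \eta\, G(t/M)$ whenever $t \geq T/\lambda$. Pick $0 < \delta < T/\lambda$ and decompose $\Omega = A_n \cup C_n \cup D_n$ with $A_n = \{|v_n| \leq \delta\}$, $C_n = \{\delta < |v_n| \leq T/\lambda\}$ and $D_n = \{|v_n| > T/\lambda\}$. On $A_n$, $\int_{A_n} B(\lambda|v_n|)\,dx \leq B(\lambda\delta)\, mes(\Omega)$, which is small when $\delta$ is small. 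On $D_n$, the choice of $T$ gives $\int_{D_n} B(\lambda|v_n|)\,dx \leq \eta\, \rho(v_n/M; G) \leq \eta$. On the intermediate set $C_n$, $\int_{C_n} B(\lambda|v_n|)\,dx \leq B(T)\, mes(C_n)$, and $mes(C_n) \leq mes(\{|v_n| > \delta\}) \to 0$ by convergence of $v_n$ to $0$ in measure.

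Given $\varepsilon > 0$, I would choose the parameters in the order $\eta = \varepsilon/3$ (which fixes $T$), then $\delta$ so small that $B(\lambda\delta)\, mes(\Omega) \leq \varepsilon/3$, and finally $n_0$ so that $B(T)\, mes(C_n) \leq \varepsilon/3$ for $n \geq n_0$. Summing the three estimates gives $\rho(\lambda v_n; B) \leq \varepsilon$ for $n \geq n_0$, completing the argument. The hard part will be the correct coordination of the three small parameters and the precise scaling when invoking $B \prec\prec G$: the uniform $L^G$ bound is what makes the contribution on the large set $D_n$ uniform in $n$, while the finiteness of $mes(\Omega)$ handles $A_n$ and the convergence in measure controls the intermediate annulus $C_n$.
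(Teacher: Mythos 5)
Your proposal is correct and follows essentially the same route as the paper's proof: the identical three-region decomposition of $\Omega$ by the size of $|v_n|$, with $mes(\Omega)<\infty$ handling the small values, convergence in measure handling the intermediate annulus, and $B\prec\prec G$ together with the uniform modular bound $\rho(v_n/M;G)\leq 1$ handling the large values. The only (harmless) organisational difference is that the paper estimates the differences $u_j-u_m$ and concludes by a Cauchy argument in $L^{B}(\Omega)$, so it never needs your preliminary Fatou step placing the limit $u$ in $L^{G}(\Omega)$.
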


\begin{proof}
Soient $\epsilon>0$ et $v_{j,m}(x)=\displaystyle\frac{u_{j}(x)-u_{m}(x)}{\epsilon}$. La suite $(v_{j,m})$ est bornée dans $L^{G}(\Omega)$ donc il existe $K>0$ tel que $\|v_{j,m}\|_{G}\leq K$. On a $B\prec\prec G$ donc il existe $t_{0}>0$ tel que $\forall t\geq t_{0}$, $$B(t)\leq\frac{1}{4}G\bigg{(}\frac{t}{K}\bigg{)}.$$
Soient $\delta=\frac{1}{4B(t_{0})}$ et $$\Omega_{j,m}=\bigg{\{}x\in\Omega:\ |v_{j,m}(x)|\geq B^{-1}\bigg{(}\frac{1}{2mes(\Omega)}\bigg{)}\bigg{\}}.$$
Comme $(u_{j})$ converge en mesure, alors il existe $n\in\mathbb{N}$ tel que $\forall j,m\geq n$, $mes(\Omega_{j,m})\leq\delta$. Soit
$$\Omega_{j,m}^{'}=\{x\in\Omega_{j,m}:\ |v_{j,m}(x)|\geq t_{0}\},\ \ \Omega_{j,m}^{''}=\Omega_{j,m}\setminus\Omega_{j,m}^{'}.$$
Pour $j,m\geq n$ on a \begin{align*}
                         \int_{\Omega}B(|v_{j,m}(x)|)dx&= \int_{\Omega\setminus\Omega_{j,m}}B(|v_{j,m}(x)|)dx+ \int_{\Omega_{j,m}^{'}}B(|v_{j,m}(x)|)dx+ \int_{\Omega_{j,m}^{''}}B(|v_{j,m}(x)|)dx\\
                         &\leq\frac{mes(\Omega)}{2mes(\Omega)}+\frac{1}{4}\int_{\Omega_{j,m}^{'}}G\bigg{(}\frac{|v_{j,m}(x)|}{K}\bigg{)}dx+\delta B(t_{0})\leq1.
                      \end{align*}
                      Alors $\|u_{j}-u_{m}\|_{B}\leq\epsilon$, d'où $(u_{j})$ converge dans $L^{B}(\Omega)$.
\end{proof}

\begin{lem}\label{lem16}
  Soient $\Omega$ un ouvert de mesure fini et $B$ une $N$-Fonction telle que $B\prec\prec G$. Soit $S$ un sous ensemble de $L^{G}(\Omega)$, tel que
  \begin{enumerate}
    \item $S$ est borné dans $L^{G}(\Omega)$,
    \item $S$ est précompact dans $L^{1}(\Omega)$.
  \end{enumerate}
  Alors $S$ est précompact dans $L^{B}(\Omega)$.
\end{lem}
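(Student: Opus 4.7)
Mon plan est d'exploiter directement le lemme \ref{lem17} en montrant que toute suite de $S$ admet une sous-suite de Cauchy dans $L^{B}(\Omega)$, puisque la précompacité d'un sous-ensemble d'un espace métrique est équivalente à cette propriété séquentielle (et $L^{B}(\Omega)$ est complet, ce qui assure la convergence effective).

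Je commencerais par fixer une suite arbitraire $(u_{n})_{n\in\mathbb{N}}\subset S$. La première étape consiste à utiliser l'hypothèse $(2)$: puisque $S$ est précompact dans $L^{1}(\Omega)$, on peut extraire une sous-suite $(u_{n_{k}})_{k\in\mathbb{N}}$ qui converge dans $L^{1}(\Omega)$ vers une certaine fonction $u\in L^{1}(\Omega)$. La seconde étape est d'observer que la convergence dans $L^{1}(\Omega)$ entraîne la convergence en mesure sur $\Omega$: en effet, par l'inégalité de Tchebychev, pour tout $\varepsilon>0$,
$$mes\big(\{x\in\Omega:\ |u_{n_{k}}(x)-u(x)|\geq\varepsilon\}\big)\leq\frac{1}{\varepsilon}\|u_{n_{k}}-u\|_{L^{1}(\Omega)}\To0.$$

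La troisième étape utilise l'hypothèse $(1)$: la sous-suite $(u_{n_{k}})$, étant contenue dans $S$, reste bornée dans $L^{G}(\Omega)$. On est donc dans le cadre exact du lemme \ref{lem17} appliqué à $(u_{n_{k}})$ et à la $N$-fonction $B$ (qui vérifie $B\prec\prec G$ par hypothèse). La démonstration de ce lemme fournit en réalité le caractère de Cauchy de $(u_{n_{k}})$ dans $L^{B}(\Omega)$, donc, par complétude de $L^{B}(\Omega)$ (déjà établie précédemment), cette sous-suite converge dans $L^{B}(\Omega)$. Comme toute suite de $S$ admet ainsi une sous-suite convergente dans $L^{B}(\Omega)$, $S$ est précompact dans $L^{B}(\Omega)$.

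L'obstacle principal n'est pas d'ordre technique dans ce raisonnement, qui est essentiellement un enchaînement d'implications entre notions de convergence; il se trouve tout entier condensé dans le lemme \ref{lem17} précédent, qui est déjà démontré. Le seul point qu'il faut vérifier avec soin est la compatibilité entre la convergence en mesure extraite et la borne dans $L^{G}(\Omega)$ requise par le lemme \ref{lem17}: il faut bien appliquer ce dernier à la sous-suite et non à la suite initiale, et s'assurer que la borne dans $L^{G}(\Omega)$ se transmet à toute sous-suite (ce qui est trivial, car $(u_{n_{k}})\subset S$).
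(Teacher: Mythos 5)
Votre démonstration est correcte et suit essentiellement la même démarche que celle du papier : extraction d'une sous-suite convergente dans $L^{1}(\Omega)$ grâce à la précompacité, passage à la convergence en mesure via l'inégalité de Tchebychev, puis application du lemme \ref{lem17} à la sous-suite (bornée dans $L^{G}(\Omega)$ car incluse dans $S$) pour conclure à la convergence dans $L^{B}(\Omega)$. Rien à redire.
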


\begin{proof}
  Puisque $mes(\Omega)<\infty$ donc  $L^{G}(\Omega)\hookrightarrow  L^{1}(\Omega)$. Soit $(u_{n})$ est une suite de $S$, alors elle admet une sous-suite notée aussi $(u_{n})$ qui converge vers $u$ dans $L^{1}(\Omega)$. Soit $\epsilon,\delta>0$, il existe un entier $n_{0}$ tel que pour tout $n\geq n_{0}$, $$\|u_{n}-u\|_{1}\leq\epsilon\delta.$$ Par suite, $$mes(\{x\in\Omega:\ |u_{n}(x)-u(x)|\geq\epsilon\})\leq\delta.$$
  Ainsi $(u_{n})$ converge en mesure vers $u$ sur $\Omega$ et donc $(u_{n})$ converge vers $u$ dans $L^{B}(\Omega)$(d'après le lemme \ref{lem17}).
\end{proof}

\begin{thm}[de Rellich-Kondrachov]\label{thm30}
Soit $\Omega$ un ouvert borné vérifiant la propritée du c\^{o}ne. Sous les m\^{e}mes hypothèses que le théorème \ref{thm9} et si $B$ est une $N$-Fonction telle que $B\prec\prec G$ alors l'injection $$W^{1,G}(\Omega)\hookrightarrow L^{B}(\Omega)$$ est compact.
\end{thm}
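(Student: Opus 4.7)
Le plan consiste à réduire l'énoncé au lemme \ref{lem16} en établissant que toute partie bornée $S$ de $W^{1,G}(\Omega)$ vérifie les deux hypothèses de ce lemme : être bornée dans $L^{G}(\Omega)$ et être relativement compacte dans $L^{1}(\Omega)$. La première est immédiate puisque $\|u\|_{G}\leq\|u\|_{W^{1,G}(\Omega)}$ par définition de la norme sur l'espace d'Orlicz-Sobolev.

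Pour la précompacité dans $L^{1}(\Omega)$, l'idée est de se ramener au théorème de Rellich--Kondrachov classique pour l'espace $W^{1,1}(\Omega)$. D'abord, comme $\Omega$ est borné et $G$ est une $N$-Fonction, on dispose de l'inclusion continue $L^{G}(\Omega)\hookrightarrow L^{1}(\Omega)$ : cela découle de l'inégalité de Hölder \eqref{25} appliquée à $v\equiv 1$, qui donne $\|u\|_{1}\leq\|1\|_{G^{*}}\|u\|_{(G)}\leq c_{\Omega}\|u\|_{G}$, où $\|1\|_{G^{*}}<\infty$ car $mes(\Omega)<\infty$. En appliquant cette inclusion à $u$ et à ses dérivées faibles $\partial u/\partial x_{j}$, qui sont par définition dans $L^{G}(\Omega)$, on obtient que $S$ est borné dans $W^{1,1}(\Omega)$. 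Le théorème de Rellich--Kondrachov classique, valable pour $W^{1,1}(\Omega)$ sur un ouvert borné vérifiant la propriété du c\^one, assure alors que $S$ est relativement compact dans $L^{1}(\Omega)$.

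Il ne reste plus qu'à invoquer le lemme \ref{lem16} : comme $B\prec\prec G$, $S$ borné dans $L^{G}(\Omega)$ et précompact dans $L^{1}(\Omega)$ entraîne que $S$ est précompact dans $L^{B}(\Omega)$. Cela démontre la compacité de l'injection $W^{1,G}(\Omega)\hookrightarrow L^{B}(\Omega)$.

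La seule difficulté technique potentielle est la vérification de la continuité de $L^{G}(\Omega)\hookrightarrow L^{1}(\Omega)$ via $v\equiv 1$, qui exige $\|1\|_{G^{*}}<\infty$ ; ceci est bien garanti par la proposition \ref{pro5} avec $E=\Omega$ (car $mes(\Omega)<\infty$). Tout le contenu analytique non trivial (l'absorption de la décroissance relative $B\prec\prec G$) a déjà été encapsulé dans les lemmes \ref{lem17} et \ref{lem16}, de sorte que la démonstration du théorème \ref{thm30} se résume essentiellement à une application de ces lemmes combinée avec le Rellich--Kondrachov classique pour $p=1$.
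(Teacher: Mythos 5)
Votre démonstration est correcte et suit essentiellement la même démarche que celle du texte : réduction au lemme \ref{lem16} en combinant la bornitude dans $L^{G}(\Omega)$ avec la précompacité dans $L^{1}(\Omega)$ obtenue via l'injection $W^{1,G}(\Omega)\hookrightarrow W^{1,1}(\Omega)$ et le théorème de Rellich--Kondrachov classique. La seule différence, mineure et à votre avantage, est que vous utilisez directement la bornitude dans $L^{G}(\Omega)$ (ce qui correspond exactement à l'hypothèse du lemme \ref{lem16} avec $B\prec\prec G$), là où le texte invoque le théorème \ref{thm9} pour obtenir une bornitude dans $L^{G_{*}}(\Omega)$, plus forte que nécessaire.
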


\begin{proof}
  Comme $mes(\Omega)<\infty$; $$W^{1,G}(\Omega)\hookrightarrow W^{1,1}(\Omega)\hookrightarrow L^{1}(\Omega),$$ l'injection de $W^{1,1}(\Omega)$ dans $L^{1}(\Omega)$ est compact.
  D'après le théorème \ref{thm9}, un sous-ensemble $A$ borné de $W^{1,G}(\Omega)$ est borné dans $L^{G_{*}}(\Omega)$ et précompact dans $L^{1}(\Omega)$. D'après le lemme \ref{lem16} $A$ est précompact dans $L^{B}(\Omega)$.
\end{proof}

\chapter{Multiplicité de solution pour une classe d'équations non-linéaire et non-homogène dans l'espace d'Orlicz-Sobolev}

\section{Hypothèses et R\'esultat principal}

Soient $\Omega$ un ouvert borné de $\mathbb{R}^{N}$ à bord régulier et $a:]0,+\infty[\rightarrow\mathbb{R}$ une fonction telle que l'application $\varphi:\mathbb{R}\rightarrow\mathbb{R}$ définie par $$\varphi(t)=\begin{cases}
                                                               a(|t|)t & \mbox{si }\ t\neq0 \\
                                                               0 & \mbox{si}\ t=0
                                                             \end{cases}$$
    est un homéomorphisme impaire et croissant de $\mathbb{R}$ dans $\mathbb{R}$. D'après la proposition \ref{pro11}, les fonctions $\Phi$ et $\Phi^{*}$ définis par
    $$\Phi(t)=\int_{0}^{t}\varphi(s)ds,\ \ \Phi^{*}(t)=\int_{0}^{t}\varphi^{-1}(s)ds,\ \ \forall\ t\in\mathbb{R},$$ sont des $N$-Fonctions.
     On note par $$\|u\|=\||\nabla u|\|_{(\Phi)},\ \forall u\in W_{0}^{1,\Phi}(\Omega),$$
     d'après l'inégalité de Poincaré, on a, $\||\nabla u|\|_{(\Phi)}$ est une norme équivalente à la norme $\|u\|_{ W^{m,G}(\Omega)}$ de $W^{1,\Phi}(\Omega)$.\\
Dans ce chapitre on considère le système suivant $$(P)\quad \begin{cases}
                                                            -div(a(|\nabla u|)\nabla u)=\lambda(u^{p-1}-u^{q-1}) & \mbox{si}\ x\in\Omega  \\
                                                            u=0 & \mbox{si}\ x\in\partial\Omega \\
                                                            u\geq0 & \mbox{si}\ x\in\Omega.
                                                          \end{cases}$$
       On note par $$\varphi_{0}:=\ds\inf_{t>0}\frac{t\varphi(t)}{\Phi(t)}\ \ \text{et}\ \  \varphi^{0}:=\ds\sup_{t>0}\frac{t\varphi(t)}{\Phi(t)}.$$

    Dans le reste de ce chapitre, on suppose que
     \begin{equation}\label{122}
       1<\varphi_{0}\leq\frac{t\varphi(t)}{\Phi(t)}\leq \varphi^{0}<\infty,\ \forall t\geq0.
     \end{equation}

      \begin{equation}\label{126}
       1<q<p<\varphi_{0}
     \end{equation}
     et la fonction
     \begin{equation}\label{125}
       t\rightarrow \Phi(\sqrt{t}),\  [0,+\infty[
     \end{equation}
     est convexe.

\begin{rem}
 D'après le théorème \ref{thm7} et l'hypothèse \eqref{122}, $\Phi$ et $\Phi^{*}$ satisfaient la condition $\triangle_{2}$.
\end{rem}

\begin{pro}
   \begin{equation}\label{123}
       \|u\|^{\varphi^{0}}\leq \int_{\Omega}\Phi(|\nabla u|)dx\leq \|u\|^{\varphi_{0}},\ \forall u\in W_{0}^{1,\Phi}(\Omega),\ \|u\|<1,
     \end{equation}
     \begin{equation}\label{124}
       \|u\|^{\varphi_{0}}\leq \int_{\Omega}\Phi(|\nabla u|)dx\leq \|u\|^{\varphi^{0}},\ \forall u\in W_{0}^{1,\Phi}(\Omega),\ \|u\|>1.
     \end{equation}
\end{pro}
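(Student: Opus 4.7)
L'idée centrale est de transformer l'hypothèse \eqref{122} en une estimation ponctuelle de type homogénéité sur $\Phi$, puis de l'appliquer avec $\alpha=\|u\|$ à l'intégrale, en utilisant que le modulaire de $|\nabla u|/\|u\|$ vaut exactement $1$ (c'est ici que la condition $\triangle_{2}$ sur $\Phi$, vérifiée grâce à \eqref{122} et au théorème \ref{thm7}, sera cruciale).

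\textbf{Étape 1 : estimation ponctuelle.} L'hypothèse \eqref{122} s'écrit, pour tout $t>0$,
$$\frac{\varphi_{0}}{t}\;\leq\;\frac{\varphi(t)}{\Phi(t)}=\frac{\Phi'(t)}{\Phi(t)}\;\leq\;\frac{\varphi^{0}}{t}.$$
Pour $\alpha\geq 1$ et $t>0$, en intégrant $\dfrac{d}{ds}\log\Phi(s)$ sur $[t,\alpha t]$, on obtient
$$\varphi_{0}\log\alpha\;\leq\;\log\frac{\Phi(\alpha t)}{\Phi(t)}\;\leq\;\varphi^{0}\log\alpha,$$
d'où l'encadrement clé
$$\alpha^{\varphi_{0}}\Phi(t)\;\leq\;\Phi(\alpha t)\;\leq\;\alpha^{\varphi^{0}}\Phi(t),\qquad \alpha\geq 1,\ t\geq 0.$$
En remplaçant $\alpha$ par $1/\alpha$ et $t$ par $\alpha t$ avec $0<\alpha\leq 1$, on obtient l'encadrement renversé
$$\alpha^{\varphi^{0}}\Phi(t)\;\leq\;\Phi(\alpha t)\;\leq\;\alpha^{\varphi_{0}}\Phi(t),\qquad 0<\alpha\leq 1,\ t\geq 0.$$

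\textbf{Étape 2 : égalité modulaire.} Comme $\Phi$ satisfait $\triangle_{2}$ (théorème \ref{thm7} appliqué à \eqref{122}), pour tout $u\in W^{1,\Phi}_{0}(\Omega)$ avec $\|u\|\neq 0$, je prétends que
$$\int_{\Omega}\Phi\!\left(\frac{|\nabla u(x)|}{\|u\|}\right)dx=1.$$
L'inégalité $\leq 1$ provient directement de la définition de la norme de Luxemburg (remarque \ref{rem7}, \eqref{21}). Pour l'inégalité $\geq 1$, je prendrais une suite $\lambda_{n}\uparrow\|u\|$ avec $\lambda_{n}<\|u\|$ : par définition de l'infimum, $\rho(u/\lambda_{n};\Phi)>1$ pour tout $n$, et la condition $\triangle_{2}$ fournit une majoration $\Phi(|\nabla u|/\lambda_{n})\leq C\,\Phi(|\nabla u|/\|u\|)\in L^{1}(\Omega)$ (pour $\lambda_{n}\geq \|u\|/2$ disons), ce qui permet d'appliquer le théorème de convergence dominée et d'obtenir $\rho(u/\|u\|;\Phi)\geq 1$.

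\textbf{Étape 3 : conclusion.} Soit $u\in W^{1,\Phi}_{0}(\Omega)$ avec $\|u\|\neq 0$, et posons $\alpha=\|u\|$. On écrit
$$\int_{\Omega}\Phi(|\nabla u|)\,dx \;=\; \int_{\Omega}\Phi\!\left(\alpha\cdot\frac{|\nabla u|}{\alpha}\right)dx,$$
puis on applique l'encadrement de l'étape 1 avec $t=|\nabla u(x)|/\alpha$ selon que $\alpha>1$ ou $\alpha<1$, et on intègre en utilisant l'étape 2. Si $\|u\|>1$, la première forme donne
$$\|u\|^{\varphi_{0}}\;\leq\;\int_{\Omega}\Phi(|\nabla u|)\,dx\;\leq\;\|u\|^{\varphi^{0}},$$
ce qui est \eqref{124}. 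Si $\|u\|<1$, la forme renversée donne
$$\|u\|^{\varphi^{0}}\;\leq\;\int_{\Omega}\Phi(|\nabla u|)\,dx\;\leq\;\|u\|^{\varphi_{0}},$$
soit \eqref{123}.

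\textbf{Difficulté principale.} Les étapes 1 et 3 sont des manipulations élémentaires; le seul point délicat est l'égalité $\rho(u/\|u\|;\Phi)=1$ de l'étape 2, qui repose de manière essentielle sur $\triangle_{2}$ pour garantir l'applicabilité du théorème de convergence dominée lorsqu'on fait tendre $\lambda_{n}\to\|u\|^{-}$. Sans $\triangle_{2}$, on n'a que l'inégalité $\leq 1$, insuffisante pour obtenir les minorations de l'intégrale par les puissances de $\|u\|$.
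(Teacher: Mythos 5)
Votre démonstration est correcte, mais elle emprunte un chemin réellement différent de celui du texte. Le texte renvoie à [\cite{28}, lemme C.7] pour les deux inégalités faisant intervenir $\varphi_{0}$, puis en déduit les deux autres : pour la minoration de \eqref{123} il prend $\xi\in\left]0,\|u\|\right[$, pose $v=u/\xi$ (donc $\|v\|>1$), combine l'estimation $\Phi(t)\geq\xi^{\varphi^{0}}\Phi(t/\xi)$ avec la minoration déjà acquise $\int_{\Omega}\Phi(|\nabla v|)dx\geq\|v\|^{\varphi_{0}}>1$, puis fait tendre $\xi\nearrow\|u\|$ ; pour la majoration de \eqref{124} il n'utilise que l'inégalité modulaire $\int_{\Omega}\Phi(|\nabla u|/\|u\|)dx\leq1$. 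Vous, au contraire, établissez d'abord l'égalité modulaire exacte $\int_{\Omega}\Phi(|\nabla u|/\|u\|)dx=1$ (votre étape 2), ce qui vous permet de tirer les quatre inégalités d'un seul coup de l'encadrement ponctuel $\alpha^{\varphi_{0}}\Phi(t)\leq\Phi(\alpha t)\leq\alpha^{\varphi^{0}}\Phi(t)$ pour $\alpha\geq1$ et de sa version renversée pour $\alpha\leq1$, obtenus proprement par intégration de $\frac{d}{ds}\log\Phi(s)$ à partir de \eqref{122}. Votre preuve est ainsi autonome (elle ne repose pas sur le lemme extérieur), plus symétrique, et évite le passage à la limite $\xi\nearrow\|u\|$ ; celle du texte a pour elle de n'exiger que l'inégalité modulaire $\leq1$, valable sans aucune hypothèse de croissance, et d'isoler le fait que seules les bornes en $\varphi^{0}$ demandent un travail supplémentaire une fois les bornes en $\varphi_{0}$ admises. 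Le seul point délicat de votre argument, la minoration $\rho(|\nabla u|/\|u\|;\Phi)\geq1$, est correctement traité : pour $\lambda_{n}\uparrow\|u\|$ avec $\lambda_{n}\geq\|u\|/2$, la domination $\Phi(|\nabla u|/\lambda_{n})\leq2^{\varphi^{0}}\Phi(|\nabla u|/\|u\|)\in L^{1}(\Omega)$ — conséquence directe de votre étape 1, sans même invoquer la condition $\triangle_{2}$ abstraite — légitime la convergence dominée et le passage à la limite dans $\rho(u/\lambda_{n};\Phi)>1$.
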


\begin{proof}
  La preuve des deux inégalités
  $$\int_{\Omega}\Phi(|\nabla u|)dx\leq \|u\|^{\varphi_{0}},\ \forall u\in W_{0}^{1,\Phi}(\Omega),\ \|u\|<1;$$ et
  $$\|u\|^{\varphi_{0}}\leq \int_{\Omega}\Phi(|\nabla u|)dx,\ \forall u\in W_{0}^{1,\Phi}(\Omega),\ \|u\|>1,$$
   peut être réalisée comme dans [\cite{28}, lemme C.7].\\
   Ensuite, supposons $\|u\|<1$. Soit $\xi\in ]0,\|u\|[$. Par définition de $\varphi^{0}$, on a
   $$\Phi(t)\geq \tau^{\varphi^{0}}\Phi(\frac{t}{\tau}),\ \forall t>0,\ \tau\in]0,1[.$$
   En utilisant la relation ci-dessus, on a
   \begin{equation}\label{137}
     \int_{\Omega}\Phi(|\nabla u(x)|)dx\geq \xi^{\varphi^{0}}\int_{\Omega}\Phi(\frac{|\nabla u(x)|}{\xi})dx.
   \end{equation}
   Soit $v(x)=\displaystyle\frac{u(x)}{\xi}$, $\forall x\in\Omega$, on a $\|v\|=\frac{\|u\|}{\xi}>1.$ En utilisant la première inégalité de cette proposition, on obtient \begin{equation}\label{138}
             \int_{\Omega}\Phi(|\nabla v(x)|)dx\geq \|v\|^{\varphi_{0}}>1.
           \end{equation}
    Les relations \eqref{137} et \eqref{138} montrent que  $$\int_{\Omega}\Phi(|\nabla u(x)|)dx\geq\xi^{\varphi^{0}}.$$
    Si $\xi\nearrow\|u\|$ dans l'inégalité si-dessus, on obtient
    $$\int_{\Omega}\Phi(|\nabla u(x)|)dx\geq \|u\|^{\varphi^{0}},\ \forall u\in W_{0}^{1,\Phi}(\Omega),\ \|u\|<1.$$
    Enfin, nous montrons la dernière inégalité dans la proposition. Un calcul simple montre que
    \begin{equation}\label{139}
      \frac{\Phi(\sigma t)}{\Phi(t)}\leq \sigma^{\varphi^{0}},\ \forall t>0,\ \text{et}\ \sigma>1.
    \end{equation}
    Alors, $\forall u\in W_{0}^{1,\Phi}(\Omega)$ avec $\|u\|>1$, l'inégalité \eqref{139} donne
    \begin{align*}
       \int_{\Omega}\Phi(|\nabla u(x)|)dx&=\int_{\Omega}\Phi(\|u\|\frac{|\nabla u(x)|}{\|u\|})dx\\
                                         &\leq\|u\|^{\varphi^{0}}\int_{\Omega}\Phi(\frac{|\nabla u(x)|}{\|u\|})dx\\
                                         &\leq \|u\|^{\varphi^{0}}.
    \end{align*}
\end{proof}

\begin{dfn}
   Une fonction $u\in W_{0}^{1,\Phi}(\Omega)$ est dite solution faible de $(P)$ si $u\geq0$, $p.p$ sur $\Omega$ et vérifie
   $$\int_{\Omega}a(|\nabla u|)\nabla u.\nabla v dx-\lambda\int_{\Omega}u^{p-1}vdx+\lambda\int_{\Omega}u^{q-1}vdx=0,\ \forall v\in W_{0}^{1,\Phi}(\Omega).$$
\end{dfn}

 Le résultat principal est donné par le théorème suivant:

\begin{thm}\label{thm32}
    Supposons que \begin{equation}\label{127}
                    \varphi^{0}<min\bigg{\{}N,\frac{N\varphi_{0}}{N-\varphi_{0}}\bigg{\}}.
                  \end{equation}
   Alors il existe $\lambda^{*}>0$ telle que pour tout $\lambda>\lambda^{*}$ le problème $(P)$ admet au moins deux solutions faibles positives non trivials et distincts.
\end{thm}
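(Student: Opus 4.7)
The variational strategy is as follows. Define the energy functional $J_\lambda : W_0^{1,\Phi}(\Omega) \to \mathbb{R}$ by
$$J_\lambda(u) = \int_\Omega \Phi(|\nabla u|)\,dx - \frac{\lambda}{p}\int_\Omega (u^+)^p\,dx + \frac{\lambda}{q}\int_\Omega (u^+)^q\,dx,$$
on $X := W_0^{1,\Phi}(\Omega)$. By \eqref{122} and Theorem \ref{thm7}, both $\Phi$ and $\Phi^*$ satisfy the $\Delta_2$ condition, so $X$ is reflexive and separable. I will first verify that $J_\lambda \in C^1(X,\mathbb{R})$ and that any critical point $u$ is nonnegative: testing with $-u^-$ gives $\int_{\{u<0\}} a(|\nabla u|)|\nabla u|^2\,dx = 0$ (since on $\{u<0\}$ the $L^p,L^q$ contributions vanish), so $u^- \equiv 0$. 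Hence critical points are bona fide weak solutions of $(P)$.

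For the first solution I will use the direct method. Coercivity follows from \eqref{124} and the compact embedding $X \hookrightarrow L^p(\Omega)$ given by Theorem \ref{thm30} (applicable because \eqref{127} yields $p < \varphi^0 < N\varphi_0/(N-\varphi_0)$): for $\|u\| \geq 1$,
$$J_\lambda(u) \geq \|u\|^{\varphi_0} - C_p\lambda\|u\|^p,$$
which tends to $+\infty$ since $\varphi_0 > p$ by \eqref{126}. Weak lower semi-continuity of $u \mapsto \int_\Omega \Phi(|\nabla u|)\,dx$ follows from convexity (via \eqref{125} combined with the convexity of $\Phi$), while the polynomial terms are weakly continuous through the compact embeddings $X \hookrightarrow L^p,L^q$. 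The Weierstrass theorem yields a minimizer $u_1 \in X$. To show $u_1 \not\equiv 0$ for $\lambda$ large, I fix $u_0 \in C_c^\infty(\Omega)$ with $u_0 \geq 0$, $u_0 \not\equiv 0$ and examine $t \mapsto J_\lambda(tu_0)$: its optimal $t^* \sim \lambda^{1/(\varphi_0-p)}$ for large $\lambda$ yields $J_\lambda(t^*u_0) \sim -(t^*)^{\varphi_0} \to -\infty$ as $\lambda \to \infty$. Hence some $\lambda^*>0$ exists so that $J_\lambda(u_1)<0$ for all $\lambda>\lambda^*$, producing a non-trivial positive solution.

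For the second solution I will apply the Mountain Pass theorem. The delicate step is establishing the MP geometry, because the naive bound $J_\lambda(u) \geq \|u\|^{\varphi^0} - C\lambda\|u\|^p$ fails to be positive on small spheres since $\varphi^0 > p$. To recover a barrier I will exploit the positive $L^q$ contribution. Setting $r := N\varphi_0/(N-\varphi_0) > \varphi^0$ by \eqref{127}, so that $X \hookrightarrow L^r(\Omega)$, the interpolation inequality
$$\|u\|_p \leq \|u\|_q^{\theta}\|u\|_r^{1-\theta},\qquad \theta := \frac{q(r-p)}{p(r-q)} \in (0,q/p),$$
combined with Young's inequality applied to $\|u\|_q^{p\theta}\|u\|_r^{p(1-\theta)}$ with conjugate exponents $(r-q)/(r-p)$ and $(r-q)/(p-q)$ (whose matching powers come out to $q$ and $r$ respectively), gives after choosing $\varepsilon = p/(2q)$
$$-\frac{\lambda}{p}\|u\|_p^p + \frac{\lambda}{q}\|u\|_q^q \geq \frac{\lambda}{2q}\|u\|_q^q - C\lambda\|u\|_r^r \geq -C'\lambda\|u\|^r.$$
Combined with \eqref{123}, this yields $J_\lambda(u) \geq \|u\|^{\varphi^0} - C'\lambda\|u\|^r$; choosing $\rho_\lambda := (2C'\lambda)^{-1/(r-\varphi^0)}$ produces the barrier $J_\lambda(u) \geq \tfrac{1}{2}\rho_\lambda^{\varphi^0} =: \alpha_\lambda > 0$ on the sphere $\|u\|=\rho_\lambda$. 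As the ``point beyond the mountain'' I take $e := t^* u_0$: for $\lambda$ large, $\|e\| \sim \lambda^{1/(\varphi_0-p)}$ is much larger than $\rho_\lambda \sim \lambda^{-1/(r-\varphi^0)}$, and $J_\lambda(e) < 0 < \alpha_\lambda$.

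I will then verify the Palais--Smale condition at the MP level $c$: any PS sequence is bounded by coercivity, admits a weakly convergent subsequence in $X$, and strong convergence follows from the compact embeddings $X \hookrightarrow L^p, L^q$ together with the strict monotonicity of $\xi \mapsto a(|\xi|)\xi$ (via \eqref{125}). The Ambrosetti--Rabinowitz theorem then produces a critical point $u_2$ with $J_\lambda(u_2) = c \geq \alpha_\lambda > 0 > J_\lambda(u_1)$, so $u_2 \neq u_1$, and both $u_1, u_2$ are distinct positive weak solutions. The main obstacle throughout is the MP geometry: without the $L^q$ absorption exploited through the interpolation above and the $\lambda$-dependent radius $\rho_\lambda$, the positive barrier collapses, because near zero the gradient part contributes only $\|u\|^{\varphi^0}$ while the $p$-superlinear term is $\lambda\|u\|^p$ with $p < \varphi^0$.
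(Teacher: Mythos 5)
Your proposal is correct in substance and produces the same two critical points (a negative-energy global minimizer and a positive-energy mountain-pass point), but your route to the second solution is genuinely different from the paper's. The paper does not run the mountain-pass argument on $I$ itself: it truncates the nonlinearity at the first solution $u_{1}$, replacing $t^{p-1}-t^{q-1}$ by an $f(x,t)$ frozen at $u_{1}(x)^{p-1}-u_{1}(x)^{q-1}$ for $t>u_{1}(x)$, proves via the monotonicity of $\xi\mapsto a(|\xi|)\xi$ that every critical point of the truncated functional satisfies $0\leq u\leq u_{1}$ (lemme \ref{lem20}), and gets the mountain-pass geometry from the pointwise sign $\frac{1}{p}t^{p}-\frac{1}{q}t^{q}\leq0$ on $[0,1]$, which makes $F(x,u)\leq 0$ except on the set where $u>\min(1,u_{1})$; the leftover is bounded by $\lambda D_{1}\|u\|^{s}$ with $s\in]\varphi^{0},\frac{N\varphi_{0}}{N-\varphi_{0}}[$. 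You instead keep $I$ untruncated and recover the barrier by interpolating $\|u^{+}\|_{p}$ between $\|u^{+}\|_{q}$ and $\|u^{+}\|_{r}$ with $r=\frac{N\varphi_{0}}{N-\varphi_{0}}$ and absorbing $-\frac{\lambda}{p}\|u^{+}\|_{p}^{p}$ into the positive term $\frac{\lambda}{q}\|u^{+}\|_{q}^{q}$ by Young, leaving $-C\lambda\|u\|^{r}$ with $r>\varphi^{0}$ by \eqref{127}; your exponent bookkeeping checks out, and the embedding $E\hookrightarrow L^{r}(\Omega)$ you need is continuous by the lemme \ref{lem26} combined with the classical Sobolev embedding. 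Your version is arguably cleaner: no comparison lemma, no identification $J=I$ at the end, and the compactness of Palais--Smale sequences follows from the coercivity of $I$ itself together with the lemme \ref{lem22}. What the paper's truncation buys is the extra information $u_{2}\leq u_{1}$ and a geometry argument that avoids interpolation.

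One small inaccuracy to fix: your heuristic $J_{\lambda}(t^{*}u_{0})\sim-(t^{*})^{\varphi_{0}}$ with $t^{*}\sim\lambda^{1/(\varphi_{0}-p)}$ is not justified as written, because for $t>1$ the gradient term is only controlled from above by $t^{\varphi^{0}}\int_{\Omega}\Phi(|\nabla u_{0}|)dx$ and $\varphi^{0}$ may strictly exceed $\varphi_{0}$, so the claimed cancellation need not occur. The conclusion $\inf_{E}I<0$ for $\lambda$ large is nevertheless immediate: fix $t_{1}$ with $\frac{t_{1}^{p}}{p}\int_{\Omega}u_{0}^{p}dx>\frac{t_{1}^{q}}{q}\int_{\Omega}u_{0}^{q}dx$ (possible since $p>q$) and let $\lambda\to\infty$, which is exactly the argument of the proposition \ref{pro10}.
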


\begin{rem}
   Nous soulignons que cet résultat a été signalé par [\cite{17}, théorème 1.2], dans le cas de l'opérateur $p$-Laplacian.
\end{rem}

\section{Démonstration du résultat principal}

La d\'emonstration du Th\'eor\`eme \ref{thm32} s'effectue en deux \'etapes. En premier lieu, nous d\'emontrons l'existence d'une solution d'\'energie n\'{e}gative en utilisant la méthode de minimum global. Ensuite, moyennant le Th\'{e}or\`{e}me du Mountain pass sans la condition de Palais-Smale, nous \'etablissons l'existence d'une seconde solution d'\'energie positive.\\
On note par $E=W_{0}^{1,\Phi}(\Omega)$ et
on définit la fonctionnelle  d'\'energie $I$ associ\'ee \`a $(P)$ par:
$$I(u)=\int_{\Omega}\Phi(|\nabla u|)dx-\frac{\lambda}{p}\int_{\Omega}u_{+}^{p}dx+\frac{\lambda}{q}\int_{\Omega}u_{+}^{q}dx,\ u\in E,$$
où $u_{\pm}(x)=\max(\pm u(x),0).$

\begin{lem}\label{lem19}
  Soit la fonctionnelle $I_{0}:E\rightarrow\mathbb{R}$ définie par $$I_{0}(u)=\int_{\Omega}\Phi(|\nabla u|)dx.$$ Alors $I_{0}$ est de classe $C^{1}$.
\end{lem}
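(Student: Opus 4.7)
The plan is to follow the standard two-step procedure for $C^1$-regularity: first show that $I_0$ is Gâteaux differentiable on $E$ with
\[
\langle I_0'(u),v\rangle = \int_\Omega a(|\nabla u|)\nabla u \cdot \nabla v\,dx, \qquad u,v\in E,
\]
then show that the Gâteaux derivative $I_0': E \to E^*$ is continuous. This combination implies Fréchet differentiability and hence $I_0\in C^1(E,\mathbb{R})$.

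\textbf{Step 1 (Gâteaux differentiability).} Fix $u,v\in E$ and $0<|t|<1$. Since $\Phi\in C^1$ with $\Phi'=\varphi$ and $\varphi(s)s/|s| = a(|s|)s$, the chain rule gives, for a.e.\ $x\in\Omega$,
\[
\frac{\Phi(|\nabla u(x)+t\nabla v(x)|)-\Phi(|\nabla u(x)|)}{t} \;\longrightarrow\; a(|\nabla u(x)|)\nabla u(x)\cdot\nabla v(x) \quad \text{as } t\to 0.
\]
By convexity of $\Phi$ and monotonicity of $\varphi$, one gets the pointwise bound
\[
\left|\frac{\Phi(|\nabla u+t\nabla v|)-\Phi(|\nabla u|)}{t}\right| \leq \varphi\bigl(|\nabla u|+|\nabla v|\bigr)\,|\nabla v|.
\]
Using \eqref{122}, the function $\Phi$ lies in $\triangle_2$, so $\varphi(|\nabla u|+|\nabla v|) \in L^{\Phi^*}(\Omega)$ whenever $|\nabla u|,|\nabla v|\in L^{\Phi}(\Omega)$; the inequality of Hölder \eqref{24} then shows the dominating function is in $L^1(\Omega)$. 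Lebesgue's dominated convergence theorem yields the claimed formula, and linearity in $v$ is clear.

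\textbf{Step 2 (Continuity of $I_0'$).} Let $u_n\to u$ in $E$. By the Hölder inequality \eqref{24},
\[
\|I_0'(u_n)-I_0'(u)\|_{E^*} \leq \bigl\| a(|\nabla u_n|)\nabla u_n - a(|\nabla u|)\nabla u \bigr\|_{(\Phi^*)}.
\]
So it suffices to prove that the Nemytskii-type operator $\xi \mapsto \varphi(|\xi|)\xi/|\xi|$ is continuous from $(L^\Phi(\Omega))^N$ into $(L^{\Phi^*}(\Omega))^N$. Since $\Phi\in\triangle_2$ by \eqref{122} and Theorem \ref{thm7}(1), norm convergence $\nabla u_n\to\nabla u$ in $L^\Phi$ is equivalent to modular convergence (Theorem \ref{thm4}). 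Extract a subsequence $\nabla u_{n_k}\to\nabla u$ a.e.; continuity of $\xi\mapsto a(|\xi|)\xi$ gives pointwise a.e.\ convergence of $a(|\nabla u_{n_k}|)\nabla u_{n_k}$ to $a(|\nabla u|)\nabla u$. The key estimate, obtained from the equality case in Young's inequality \eqref{9} together with \eqref{122}, is
\[
\Phi^*(\varphi(t)) = t\varphi(t)-\Phi(t) \leq (\varphi^0-1)\,\Phi(t), \qquad t\geq 0,
\]
which controls $\Phi^*(|a(|\nabla u_n|)\nabla u_n|)$ by $\Phi(|\nabla u_n|)$. Since the latter is uniformly integrable (being modularly convergent), a Vitali-type argument shows $\int_\Omega \Phi^*\!\bigl(|a(|\nabla u_{n_k}|)\nabla u_{n_k}-a(|\nabla u|)\nabla u|\bigr)dx\to 0$. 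By Theorem \ref{thm7}(3), $\Phi^*\in\triangle_2$; hence modular convergence gives convergence in $\|\cdot\|_{(\Phi^*)}$, and a standard subsequence argument extends the conclusion to the whole sequence $(u_n)$.

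\textbf{Main obstacle.} The delicate point is the continuity of the Nemytskii operator $\xi\mapsto a(|\xi|)\xi$ from $L^\Phi$ to $L^{\Phi^*}$: one must combine the modular equivalence of norm convergence (Theorem \ref{thm4}, requiring $\Phi\in\triangle_2$), the inequality $\Phi^*(\varphi(t))\leq (\varphi^0-1)\Phi(t)$, and Vitali's theorem to pass from modular control in $L^\Phi$ of the $\nabla u_n$ to modular control in $L^{\Phi^*}$ of their images, then reconvert to the Luxemburg norm using $\Phi^*\in\triangle_2$. The hypothesis \eqref{122} is used crucially and repeatedly here.
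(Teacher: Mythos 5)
Your proof is correct, and Step 1 (Gâteaux differentiability via the bound $\bigl|\frac{1}{t}(\Phi(|\nabla u+t\nabla v|)-\Phi(|\nabla u|))\bigr|\leq\varphi(|\nabla u|+|\nabla v|)|\nabla v|$, Hölder, and dominated convergence) is essentially identical to the paper's. Step 2 takes a genuinely different route. The paper estimates the \emph{difference of the modulars},
$\bigl|\rho(\varphi(|\nabla u_{n}|);\Phi^{*})-\rho(\varphi(|\nabla u|);\Phi^{*})\bigr|=\bigl|\int_{\Omega}\int_{\varphi(|\nabla u|)}^{\varphi(|\nabla u_{n}|)}\varphi^{-1}(s)\,ds\,dx\bigr|\leq\int_{\Omega}|\nabla u_{n}|\,[\varphi(|\nabla u_{n}|)-\varphi(|\nabla u|)]\,dx$,
and then invokes $\Phi^{*}\in\triangle_{2}$ to pass to norm convergence. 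You instead prove continuity of the Nemytskii map $\xi\mapsto a(|\xi|)\xi$ from $(L^{\Phi})^{N}$ to $(L^{\Phi^{*}})^{N}$ via the equality case of Young's inequality combined with \eqref{122}, giving $\Phi^{*}(\varphi(t))=t\varphi(t)-\Phi(t)\leq(\varphi^{0}-1)\Phi(t)$, then a.e.\ convergence of a subsequence, a Vitali/uniform-integrability argument to get modular convergence of the images, $\Phi^{*}\in\triangle_{2}$ to convert to norm convergence, and a subsequence argument for the full sequence. Your version is longer but more robust: it works directly with the modular of the \emph{vector-valued difference} $a(|\nabla u_{n}|)\nabla u_{n}-a(|\nabla u|)\nabla u$ (which is what the operator-norm estimate actually requires), whereas the paper's computation only controls the gap between the two modulars of the scalar functions $\varphi(|\nabla u_{n}|)$ and $\varphi(|\nabla u|)$, and leaves implicit the passage from that to $\|\varphi(|\nabla u_{n}|)-\varphi(|\nabla u|)\|_{\Phi^{*}}\to0$. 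The price you pay is the extraction of subsequences and the Vitali step, both of which you correctly identify and which do go through under \eqref{122}.
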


\begin{proof}
Soient $u,h\in E$ et $0<t<1$ tel que $u+th\in\Omega$. On a
   $$\frac{1}{t}(I_{0}(u+th)-I_{0}(u))=\int_{\Omega}\frac{1}{t}\int_{|\nabla u|}^{|\nabla u+t\nabla h|}a(|s|)sdsdx.$$
D'autre part, $$|\nabla u+t\nabla h|\rightarrow|\nabla u|,\ \text{dans}\ L^{\Phi}(\Omega),\ t\rightarrow0,$$
et donc dans $L^{1}(\Omega)$. On a $$\bigg{|}\frac{1}{t}\int_{|\nabla u|}^{|\nabla u+t\nabla h|}a(|s|)sds\bigg{|}\leq \varphi(|\nabla u|+|\nabla h|)|\nabla h|,$$ comme $ \varphi(|\nabla u|+|\nabla h|)\in L^{\Phi^{*}}(\Omega)$ et $|\nabla h|\in L^{\Phi}(\Omega)$, d'après l'inégalité de H\"{o}lder, $$\int_{\Omega} |\varphi(|\nabla u|+t|\nabla h|)||\nabla h|dx\leq\| \varphi(|\nabla u|+t|\nabla h|)\|_{\Phi^{*}}
\||\nabla h|\|_{\Phi}<\infty,$$ donc $$ \varphi(|\nabla u|+|\nabla h|)|\nabla h|\in L^{1}(\Omega).$$
Soit $$\Psi(t)=|\nabla u+t\nabla h|\ \text{et}\ H_{x}(t)=\int_{|\nabla u|}^{\Psi(t)}a(|s|)sds$$
D'après le théorème de convergence dominé on a
\begin{align*}
   \ds\lim_{t\rightarrow0}\frac{1}{t}(I_{0}(u+th)-I_{0}(u))&=\ds\lim_{t\rightarrow0}\int_{\Omega}\frac{1}{t}H_{x}(t)dx\\
                                                           &=\int_{\Omega}\ds\lim_{t\rightarrow0}\frac{H_{x}(t)-H_{x}(0)}{t}dx
                                                           =\int_{\Omega}H_{x}^{'}(0)dx\\
                                                           &=\int_{\Omega}a(|\nabla u|)\nabla u.\nabla h dx\\&=I_{0}^{'}(u,h),
\end{align*}
donc $I_{0}$ est g\^{a}teaux différentiable.
De plus, $I_{0}^{'}$ est continue, en effet, soit $(u_{n})$ une suite de $E$ qui converge vers $u$. On a
$$|\langle I_{0}^{'}(u_{n})-I_{0}^{'}(u),h\rangle|=\int_{\Omega}[a(|\nabla u_{n}|)\nabla u_{n}-a(|\nabla u|)\nabla u].\nabla h dx,$$
d'après l'inégalité de H\"{o}lder, on a
 $$ \ds\sup_{\|h\|_{\Phi}\leq1}\bigg{|}\int_{\Omega}[a(|\nabla u_{n}|)\nabla u_{n}-a(|\nabla u|)\nabla u].\nabla hdx\bigg{|}\\
  \leq\|\varphi(|\nabla u_{n}|)-\varphi(|\nabla u|)\|_{\Phi^{*}}\ds\sup_{\|h\|_{\Phi}\leq 1}\|\nabla h\|_{\Phi}.$$
D'autre part, \begin{align*}
       \bigg{|}\rho(\varphi(|\nabla u_{n}|);\Phi^{*})-\rho(\varphi(|\nabla u|);\Phi^{*}) \bigg{|}&=\bigg{|}\int_{\Omega}\int_{\varphi(|\nabla u|)}^{\varphi(|\nabla u_{n}|)}\varphi^{-1}(s)dsdx \bigg{|}\\
               &\leq\int_{\Omega}|\nabla u_{n}|[\varphi(|\nabla u_{n}|)-\varphi(|\nabla u|)]dx\rightarrow0,\ n\rightarrow+\infty
     \end{align*}
     Puisque $\Phi^{*}$ satisfait la condition $\triangle_{2}$, alors
     $$\|\varphi(|\nabla u_{n}|)-\varphi(|\nabla u|)\|_{\Phi^{*}}\rightarrow0,\ n\rightarrow+\infty,$$

Ainsi $$\|I_{0}^{'}(u_{n})-I_{0}^{'}(u)\|_{E^{*}}\rightarrow0.$$ D'où le résultat.
\end{proof}

\begin{pro}\label{lem23}
  La fonctionnelle $I$ est de classe $C^{1}$ et $$\langle I^{'}(u),v\rangle=\int_{\Omega}a(|\nabla u|)\nabla u.\nabla v dx-\lambda\int_{\Omega}u_{+}^{p-1}vdx+\lambda\int_{\Omega}u_{+}^{q-1}vdx,\ \forall u,v\in E.$$
\end{pro}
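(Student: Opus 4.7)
La stratégie est de décomposer $I = I_{0} + F$ où $I_{0}(u) = \int_{\Omega}\Phi(|\nabla u|)dx$ et
$$F(u) = -\frac{\lambda}{p}\int_{\Omega}u_{+}^{p}\,dx + \frac{\lambda}{q}\int_{\Omega}u_{+}^{q}\,dx.$$
Par le lemme \ref{lem19}, $I_{0}\in C^{1}(E)$ avec $\langle I_{0}^{'}(u),v\rangle = \int_{\Omega}a(|\nabla u|)\nabla u\cdot\nabla v\,dx$. Il reste donc à établir que $F\in C^{1}(E)$ avec $\langle F^{'}(u),v\rangle = -\lambda\int_{\Omega}u_{+}^{p-1}v\,dx + \lambda\int_{\Omega}u_{+}^{q-1}v\,dx$.

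Je commencerais par exploiter l'hypothèse \eqref{127}: comme $\varphi^{0}<\frac{N\varphi_{0}}{N-\varphi_{0}}$ et $q<p<\varphi_{0}\leq\varphi^{0}$, on a $t^{p}\prec\prec \Phi_{*}(t)$ et $t^{q}\prec\prec \Phi_{*}(t)$; combiné au théorème \ref{thm9} et au théorème de Rellich--Kondrachov \ref{thm30}, ceci donne les injections compactes $E\hookrightarrow L^{p}(\Omega)$ et $E\hookrightarrow L^{q}(\Omega)$. En particulier, les injections continues $E\hookrightarrow L^{p}$ et $E\hookrightarrow L^{q}$ garantissent que les intégrales en jeu sont finies.

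Pour la différentiabilité au sens de Gâteaux, je poserais $J_{r}(u)=\int_{\Omega}u_{+}^{r}\,dx$ pour $r\in\{p,q\}$. Fixant $u,v\in E$ et $0<t<1$, le théorème des accroissements finis appliqué à $s\mapsto s_{+}^{r}$ (qui est $C^{1}$ sur $\bbR$ de dérivée $rs_{+}^{r-1}$ puisque $r>1$) donne
$$\left|\frac{(u+tv)_{+}^{r}-u_{+}^{r}}{t}\right|\leq r(|u|+|v|)^{r-1}|v|.$$
Le membre de droite est dans $L^{1}(\Omega)$ par l'inégalité de Hölder classique (avec exposants $r/(r-1)$ et $r$), puisque $u,v\in L^{r}(\Omega)$. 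Le théorème de convergence dominée fournit alors $\langle J_{r}^{'}(u),v\rangle = r\int_{\Omega}u_{+}^{r-1}v\,dx$, d'où l'expression annoncée pour $\langle F^{'}(u),v\rangle$.

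Pour la continuité de $F^{'}$, je supposerais $u_{n}\to u$ dans $E$. Par les injections compactes, $u_{n}\to u$ dans $L^{p}(\Omega)$ et $L^{q}(\Omega)$. La continuité de l'opérateur de Nemytskii $w\mapsto w_{+}^{r-1}$ de $L^{r}(\Omega)$ dans $L^{r/(r-1)}(\Omega)$ (pour $r=p$ et $r=q$) s'obtient par l'argument classique: toute sous-suite de $(u_{n})$ admet une sous-sous-suite convergeant p.p.\ vers $u$, et la domination $|u_{n,+}^{r-1}-u_{+}^{r-1}|^{r/(r-1)}\leq C(|u_{n}|^{r}+|u|^{r})$ jointe à la convergence dans $L^{r}$ permet d'appliquer une variante du théorème de Vitali (ou le théorème de convergence dominée généralisé) pour conclure $u_{n,+}^{r-1}\to u_{+}^{r-1}$ dans $L^{r/(r-1)}$; l'argument sous-suite/sous-sous-suite promeut ceci en convergence de la suite entière. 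Enfin, pour tout $v\in E$ avec $\|v\|\leq 1$, l'inégalité de Hölder donne
$$|\langle F^{'}(u_{n})-F^{'}(u),v\rangle|\leq \lambda\|u_{n,+}^{p-1}-u_{+}^{p-1}\|_{L^{p'}}\|v\|_{L^{p}}+\lambda\|u_{n,+}^{q-1}-u_{+}^{q-1}\|_{L^{q'}}\|v\|_{L^{q}},$$
qui tend vers $0$ grâce à la continuité des injections $E\hookrightarrow L^{p},L^{q}$.

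\textbf{Principal obstacle.} La difficulté n'est pas dans le calcul formel de la dérivée, mais dans la justification rigoureuse de la continuité de l'opérateur de Nemytskii $w\mapsto w_{+}^{r-1}$ de $L^{r}(\Omega)$ dans $L^{r/(r-1)}(\Omega)$: elle nécessite de combiner la convergence p.p.\ d'une sous-suite extraite, un argument de domination (ou d'équi-intégrabilité) pour appliquer la convergence dominée, puis un raisonnement par sous-suites pour passer à la convergence de toute la suite. C'est ce point qui exploite de façon essentielle l'hypothèse \eqref{127}, via la compacité de Rellich--Kondrachov.
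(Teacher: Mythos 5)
Votre démonstration est correcte et suit essentiellement la même voie que celle du papier : même décomposition $I=I_{0}+F$, avec le lemme \ref{lem19} pour la partie $I_{0}$. Le papier se contente d'affirmer sans justification que la fonctionnelle $u\mapsto-\frac{\lambda}{p}\int_{\Omega}u_{+}^{p}dx+\frac{\lambda}{q}\int_{\Omega}u_{+}^{q}dx$ est de classe $C^{1}$, tandis que vous fournissez les détails standards (accroissements finis, convergence dominée, continuité de l'opérateur de Nemytskii via les injections compactes) ; c'est une élaboration du même argument, pas une approche différente.
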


\begin{proof}
On a $$I(u)=I_{0}(u)-\frac{\lambda}{p}\int_{\Omega}u_{+}^{p}dx+\frac{\lambda}{q}\int_{\Omega}u_{+}^{q}dx,$$ d'après le lemme \ref{lem19}, $I_{0}$ est de classe $C^{1}$, d'autre part l'application $$u\rightarrow-\frac{\lambda}{p}\int_{\Omega}u_{+}^{p}dx+\frac{\lambda}{q}\int_{\Omega}u_{+}^{q}dx$$ est de classe $C^{1}$, d'où $I$ de classe $C^{1}$ et $$\langle I^{'}(u),v\rangle=\int_{\Omega}a(|\nabla u|)\nabla u.\nabla v dx-\lambda\int_{\Omega}u_{+}^{p-1}vdx+\lambda\int_{\Omega}u_{+}^{q-1}vdx,\ \forall u,v\in E.$$
\end{proof}

\begin{lem}\label{rem10}
Les points critiques de $I$ sont positives.
\end{lem}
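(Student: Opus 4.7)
The plan is to test the Euler--Lagrange identity against the negative part $u_{-}=\max(-u,0)$, which is an admissible element of $E=W_{0}^{1,\Phi}(\Omega)$ (the truncation lemma analogous to Lemme \ref{lem8} applied to the Lipschitz map $t\mapsto\max(-t,0)$ shows $u_{-}\in E$ whenever $u\in E$, with $\nabla u_{-}=-\nabla u\,\chi_{\{u<0\}}$). The key algebraic fact is that $u_{+}u_{-}\equiv 0$ pointwise almost everywhere, so the two polynomial terms in $\langle I'(u),u_{-}\rangle$ vanish and only the quasilinear term survives.

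Concretely, I would proceed as follows. Let $u\in E$ be a critical point of $I$. Setting $v=u_{-}$ in the formula from Proposition \ref{lem23} yields
\begin{equation*}
\int_{\Omega}a(|\nabla u|)\,\nabla u\cdot\nabla u_{-}\,dx-\lambda\int_{\Omega}u_{+}^{p-1}u_{-}\,dx+\lambda\int_{\Omega}u_{+}^{q-1}u_{-}\,dx=0.
\end{equation*}
The last two integrals vanish since $u_{+}u_{-}=0$ a.e. On the set $\{u\geq 0\}$ we have $\nabla u_{-}=0$, while on $\{u<0\}$ we have $\nabla u=-\nabla u_{-}$ and $|\nabla u|=|\nabla u_{-}|$, so
\begin{equation*}
\int_{\Omega}a(|\nabla u|)\,\nabla u\cdot\nabla u_{-}\,dx=-\int_{\Omega}a(|\nabla u_{-}|)\,|\nabla u_{-}|^{2}\,dx=-\int_{\Omega}\varphi(|\nabla u_{-}|)\,|\nabla u_{-}|\,dx.
\end{equation*}
Hence $\int_{\Omega}\varphi(|\nabla u_{-}|)\,|\nabla u_{-}|\,dx=0$.

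Next I invoke the lower bound in \eqref{122}: for every $t\geq 0$, $t\varphi(t)\geq \varphi_{0}\Phi(t)$ with $\varphi_{0}>1$. Therefore
\begin{equation*}
0\leq\varphi_{0}\int_{\Omega}\Phi(|\nabla u_{-}|)\,dx\leq\int_{\Omega}\varphi(|\nabla u_{-}|)\,|\nabla u_{-}|\,dx=0,
\end{equation*}
so $\Phi(|\nabla u_{-}|)=0$ a.e. Since $\Phi$ is a $N$-Fonction (hence strictly positive off $0$), this forces $\nabla u_{-}=0$ a.e. on $\Omega$. As $u_{-}\in W_{0}^{1,\Phi}(\Omega)$, the Poincaré inequality in Théorème \ref{thm37} applied to $u_{-}$ gives $\int_{\Omega}\Phi(|u_{-}|)\,dx\leq\int_{\Omega}\Phi(d|\nabla u_{-}|)\,dx=0$, whence $u_{-}=0$ a.e., i.e.\ $u\geq 0$ a.e.\ on $\Omega$.

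The only genuinely delicate point is the admissibility of $u_{-}$ as a test function and the correct pointwise identification of $\nabla u_{-}$; this is exactly the chain-rule statement already recorded in Lemme \ref{lem8}, applied to the Lipschitz function $f(s)=\max(-s,0)$. Everything else is routine once \eqref{122} is used to convert the degenerate quantity $\varphi(|\nabla u_{-}|)|\nabla u_{-}|$ into a control on $\Phi(|\nabla u_{-}|)$.
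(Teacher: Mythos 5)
Votre démonstration est correcte et suit essentiellement la même démarche que celle du texte : on teste $I'(u)=0$ contre $u_{-}$, les termes polynomiaux disparaissent car $u_{+}u_{-}=0$, puis l'hypothèse \eqref{122} ramène le terme quasilinéaire à un contrôle de $\int_{\Omega}\Phi(|\nabla u_{-}|)\,dx$. Vous êtes même plus soigneux que le texte sur le signe de $\int_{\Omega}a(|\nabla u|)\nabla u\cdot\nabla u_{-}\,dx$ et vous concluez par l'inégalité de Poincaré plutôt que par l'estimation \eqref{123}, ce qui évite la distinction $\|u_{-}\|<1$ ou $\|u_{-}\|>1$; ces différences sont purement cosmétiques.
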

\begin{proof}
 Soit $u\in E$ tel que $I^{'}(u)=0$ sur $E$,
  \begin{align*}
    0&=\langle I^{'}(u),u_{-}\rangle=\int_{\Omega}a(|\nabla u|)\nabla u.\nabla u_{-}dx-\lambda\int_{\Omega}(u_{+})^{p-1}u_{-}dx+\lambda
    \int_{\Omega}(u_{+})^{q-1}u_{-}dx\\&=\int_{\Omega}a(|\nabla u|)\nabla u.\nabla u_{-}dx=\int_{\Omega}a(|\nabla u_{-}|)|\nabla u_{-}|^{2}dx\geq \varphi_{0}\int_{\Omega}\Phi(|\nabla u_{-}|)dx\\&\geq \varphi_{0}\|u_{-}\|^{\phi_{0}} (\text{D'après} \eqref{123})
  \end{align*}
 on déduit que $u\geq0$.
\end{proof}

\subsection{Existence d'une premi\`{e}re solution du problème $ (P) $}

On applique la méthode du minimum global pour montrer l'existence d'une solution faible du problème $ (P) $ d'\'energie n\'egative.

\begin{lem}\label{lem26}
  $E\hookrightarrow W_{0}^{1,\varphi_{0}}(\Omega)\hookrightarrow L^{\varphi_{0}}(\Omega)$.
\end{lem}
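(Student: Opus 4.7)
The plan is to prove the two embeddings separately, with the first being the non-trivial part and the second following from the classical Poincaré inequality on a bounded domain.

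For the first embedding $E \hookrightarrow W_0^{1,\varphi_0}(\Omega)$, I would rely on the growth hypothesis \eqref{122}, which gives $\Phi'(t)/\Phi(t) \geq \varphi_0/t$ for every $t>0$. Integrating this differential inequality from $1$ to $t$ (for $t \geq 1$) yields
\begin{equation*}
\Phi(t) \geq \Phi(1)\, t^{\varphi_0}, \qquad \forall t \geq 1.
\end{equation*}
In particular, the $N$-function $t \mapsto t^{\varphi_0}$ is dominated by $\Phi$ at infinity in the sense of Definition \ref{dfn1}. Since $\operatorname{mes}(\Omega)<+\infty$, Theorem \ref{thm22} then provides the continuous inclusion $L^{\Phi}(\Omega) \hookrightarrow L^{\varphi_0}(\Omega)$. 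Applying this inclusion componentwise to $\nabla u$, and recalling that $\|u\|_E = \||\nabla u|\|_{(\Phi)}$, I obtain a constant $C_1 > 0$ such that $\||\nabla u|\|_{L^{\varphi_0}} \leq C_1 \|u\|_E$ for every $u \in E$. Since $C_c^\infty(\Omega)$ is dense in $E$ and the above estimate passes to the limit, every $u \in E$ lies in $W_0^{1,\varphi_0}(\Omega)$, which establishes the first continuous embedding.

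For the second embedding, I would invoke the classical Sobolev--Poincaré inequality on the bounded domain $\Omega$: since $1 < \varphi_0$, there exists $C_2 > 0$ such that
\begin{equation*}
\|u\|_{L^{\varphi_0}(\Omega)} \leq C_2 \||\nabla u|\|_{L^{\varphi_0}(\Omega)}, \qquad \forall u \in W_0^{1,\varphi_0}(\Omega),
\end{equation*}
which gives $W_0^{1,\varphi_0}(\Omega) \hookrightarrow L^{\varphi_0}(\Omega)$ continuously. Composing the two embeddings yields the claim.

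The only real obstacle is the quantitative comparison $\Phi(t) \geq \Phi(1)\,t^{\varphi_0}$ for $t \geq 1$, which follows directly from integrating the log-derivative inequality produced by \eqref{122}; everything else reduces to applying results already established in the text (Theorem \ref{thm22}) or classical facts on Sobolev spaces.
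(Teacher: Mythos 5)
Your proposal is correct and follows essentially the same route as the paper: the paper also sets $B(t)=t^{\varphi_{0}}$, uses \eqref{122} to show that $\Phi(t)/t^{\varphi_{0}}$ is increasing (equivalent to your integrated log-derivative inequality $\Phi(t)\geq\Phi(1)t^{\varphi_{0}}$ for $t\geq1$), concludes $B\prec\Phi$, and invokes Theorem \ref{thm22}. You merely spell out two steps the paper leaves implicit (the density argument for landing in $W_{0}^{1,\varphi_{0}}(\Omega)$ and the classical Poincaré inequality for the second embedding), which is fine.
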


\begin{proof}
Soit $B(t)=t^{\varphi_{0}}$. D'après \eqref{122}, on a $$\bigg{(}\displaystyle\frac{\Phi(t)}{B(t)}\bigg{)}^{'}>0,$$ donc la fonction
 $\displaystyle\frac{\Phi(t)}{B(t)}$ est croissante pour tout $t\in]0,+\infty[$.
  Alors $$B(t)\leq \frac{1}{\Phi(1)}\Phi(t),\ \forall t>1.$$
  D'où $B\prec \Phi$ et $E\hookrightarrow W_{0}^{1,\varphi_{0}}(\Omega)$.

\end{proof}

\begin{lem}\label{lem18}
  Il existe $\lambda_{1}>0$ telle que $$\lambda_{1}=\ds\inf_{u\in E,\ \|u\|>1}\frac{\displaystyle\int_{\Omega}\Phi(|\nabla u|)dx}{\displaystyle\int_{\Omega}|u|^{\varphi_{0}}dx}.$$
\end{lem}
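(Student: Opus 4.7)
My plan is to bound the Rayleigh-type quotient from below by a strictly positive constant, which in particular shows the infimum is not zero (and of course it cannot be negative since both numerator and denominator are non-negative). The two ingredients available are the continuous embedding $E \hookrightarrow L^{\varphi_0}(\Omega)$ provided by Lemma \ref{lem26}, and the modular inequality \eqref{124} which says that $\|u\|^{\varphi_0} \leq \int_\Omega \Phi(|\nabla u|)\,dx$ whenever $\|u\| > 1$.

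More precisely, I would proceed as follows. The embedding of Lemma \ref{lem26} supplies a constant $C>0$ such that
\begin{equation*}
\|u\|_{L^{\varphi_0}(\Omega)} \leq C\, \|u\|, \qquad \forall u \in E.
\end{equation*}
Raising this to the power $\varphi_0$ yields
\begin{equation*}
\int_\Omega |u(x)|^{\varphi_0}\, dx \leq C^{\varphi_0}\, \|u\|^{\varphi_0}.
\end{equation*}
On the other hand, for $\|u\| > 1$, inequality \eqref{124} gives
\begin{equation*}
\int_\Omega \Phi(|\nabla u(x)|)\, dx \geq \|u\|^{\varphi_0}.
\end{equation*}
Dividing the last two inequalities, for every $u \in E$ with $\|u\|>1$,
\begin{equation*}
\frac{\int_\Omega \Phi(|\nabla u|)\, dx}{\int_\Omega |u|^{\varphi_0}\, dx} \geq \frac{\|u\|^{\varphi_0}}{C^{\varphi_0} \|u\|^{\varphi_0}} = \frac{1}{C^{\varphi_0}}.
\end{equation*}
Taking the infimum over all such $u$ yields $\lambda_1 \geq C^{-\varphi_0} > 0$, which is exactly the required positivity.

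There is really no serious obstacle here: the only technical point to verify is that Lemma \ref{lem26} truly produces a continuous embedding into $L^{\varphi_0}(\Omega)$ with an explicit constant (which, once the inclusion $W^{1,\Phi}_0(\Omega) \hookrightarrow W^{1,\varphi_0}_0(\Omega)$ is established via the relation $B\prec\Phi$, follows from the classical Poincaré-Sobolev embedding $W^{1,\varphi_0}_0(\Omega) \hookrightarrow L^{\varphi_0}(\Omega)$). One should also note that the set over which the infimum is taken is non-empty (e.g., any sufficiently large multiple of a fixed $C_c^\infty$ test function), so the infimum is a well-defined real number, and the bound above then confirms $\lambda_1 > 0$.
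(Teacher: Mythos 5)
Your argument is correct and is essentially the paper's own proof: both combine the continuous embedding $E\hookrightarrow L^{\varphi_{0}}(\Omega)$ from Lemma \ref{lem26} with the modular estimate \eqref{124} for $\|u\|>1$ to bound the quotient below by $C^{-\varphi_{0}}>0$. Your additional remarks on non-emptiness of the admissible set and on the provenance of the embedding constant are sound but not needed beyond what the paper already records.
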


\begin{proof}
D'après le lemme \ref{lem26}, $E$ s'injecte continument dans $L^{\varphi_{0}}$,
il existe alors $C>0$ telle que $$\|u\|\geq C\|u\|_{L^{\varphi_{0}}(\Omega)},\ \forall u\in E.$$ D'autre part, on a $$\int_{\Omega}\Phi(|\nabla u|)dx\geq \|u\|^{\varphi_{0}},\ \forall u\in E\ \text{avec}\ \|u\|>1.$$ Combinant les deux inégalitées ci-dessus on obtient
$$\int_{\Omega}\Phi(|\nabla u|)dx\geq C^{\varphi_{0}}\int_{\Omega}|u|^{\varphi_{0}}dx,\ \forall u\in E\ \text{avec}\ \|u\|>1.$$ D'où le résultat.
\end{proof}

\begin{lem}\label{lem25}
  $I_{0}$ est faiblement semi-continue inférieurement sur $E$.
\end{lem}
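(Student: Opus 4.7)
Le plan est d'utiliser le principe classique : toute fonctionnelle convexe et fortement semi-continue inférieurement sur un espace de Banach est faiblement semi-continue inférieurement (conséquence du lemme de Mazur).

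D'abord, je montrerais que $I_{0}$ est convexe sur $E$. En effet, pour $u,v\in E$ et $\lambda\in[0,1]$, l'inégalité triangulaire donne $|\nabla(\lambda u+(1-\lambda)v)|\leq \lambda|\nabla u|+(1-\lambda)|\nabla v|$. Comme $\Phi$ est croissante (c'est une $N$-fonction), on a $\Phi(|\nabla(\lambda u+(1-\lambda)v)|)\leq \Phi(\lambda|\nabla u|+(1-\lambda)|\nabla v|)$, puis la convexité de $\Phi$ donne $\Phi(\lambda|\nabla u|+(1-\lambda)|\nabla v|)\leq \lambda\Phi(|\nabla u|)+(1-\lambda)\Phi(|\nabla v|)$. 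En intégrant sur $\Omega$, on obtient $I_{0}(\lambda u+(1-\lambda)v)\leq \lambda I_{0}(u)+(1-\lambda)I_{0}(v)$.

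Ensuite, je rappellerais que $I_{0}$ est fortement continue sur $E$, ce qui découle directement du lemme \ref{lem19} qui affirme que $I_{0}$ est de classe $C^{1}$.

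Enfin, soit $(u_{n})\subset E$ une suite telle que $u_{n}\rightharpoonup u$ faiblement dans $E$. Posons $\alpha=\liminf_{n\to\infty}I_{0}(u_{n})$. Quitte à extraire une sous-suite, on peut supposer $I_{0}(u_{n})\to\alpha$. D'après le lemme de Mazur, il existe une suite de combinaisons convexes $v_{m}=\sum_{k=m}^{N_{m}}\lambda_{m,k}u_{k}$, avec $\lambda_{m,k}\geq0$ et $\sum_{k=m}^{N_{m}}\lambda_{m,k}=1$, telle que $v_{m}\to u$ fortement dans $E$. Par convexité de $I_{0}$,
$$I_{0}(v_{m})\leq \sum_{k=m}^{N_{m}}\lambda_{m,k}I_{0}(u_{k}).$$
En passant à la limite quand $m\to+\infty$, le membre de gauche tend vers $I_{0}(u)$ par continuité forte, et le membre de droite tend vers $\alpha$ puisque $I_{0}(u_{k})\to\alpha$. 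Ainsi $I_{0}(u)\leq \alpha=\liminf_{n\to\infty}I_{0}(u_{n})$, ce qui établit la semi-continuité inférieure faible.

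La seule véritable difficulté technique réside dans l'argument de convexité qui repose de manière cruciale sur le fait que $\Phi$ est croissante (de sorte que l'inégalité triangulaire puisse être combinée avec la convexité de $\Phi$) ; les autres étapes sont des applications routinières de résultats standards déjà disponibles dans le texte (régularité $C^{1}$ de $I_{0}$) ou de l'analyse fonctionnelle classique (lemme de Mazur).
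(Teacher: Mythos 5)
Votre démonstration est correcte, mais elle suit une route réellement différente de celle du texte. Le texte procède par convergence faible des dérivées $\partial u_{n}/\partial x_{i}$ dans $L^{1}(\Omega)$ et par l'injection compacte $E\hookrightarrow L^{\Phi_{*}}(\Omega)$ pour obtenir $u_{n}\rightarrow u$ presque partout, puis affirme directement l'inégalité $\int_{\Omega}\Phi(|\nabla u|)dx\leq\liminf\int_{\Omega}\Phi(|\nabla u_{n}|)dx$; or la convergence presque partout de $u_{n}$ ne donne pas celle de $\nabla u_{n}$, de sorte que cette dernière étape ne relève pas d'un simple lemme de Fatou et repose implicitement sur la semi-continuité faible des fonctionnelles intégrales à intégrande convexe. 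Votre argument (convexité de $I_{0}$ via l'inégalité triangulaire, la croissance et la convexité de $\Phi$; continuité forte via le lemme \ref{lem19}; puis lemme de Mazur appliqué aux combinaisons convexes des termes de queue) est précisément le mécanisme qui justifie cette étape: il est autonome, n'utilise ni l'injection compacte ni la convergence presque partout, et comble en fait le point laissé dans l'ombre par le texte. En contrepartie, l'approche du texte, une fois complétée, met en évidence des informations supplémentaires (convergence forte dans $L^{\Phi_{*}}(\Omega)$ et presque partout) qui resservent ailleurs dans le chapitre, tandis que la vôtre est plus économe en hypothèses et vaut dans tout espace de Banach dès que la fonctionnelle est convexe et fortement continue.
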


\begin{proof}
Soit $(u_{n})$ une suite qui converge faiblement vers $u$ dans $E$, donc $$\int_{\Omega}u_{n}\omega dx\rightarrow\int_{\Omega}u\omega dx,\ \int_{\Omega}\frac{\partial u_{n}}{\partial x_{i}}\omega dx\rightarrow\int_{\Omega}\frac{\partial u}{\partial x_{i}}\omega dx,\ \forall\ \omega\in E^{\Phi^{*}}(\Omega),$$ en particulier $\forall\ \omega\in L^{\infty}(\Omega)$. Alors
$$ \frac{\partial u_{n}}{\partial x_{i}}\rightarrow\frac{\partial u}{\partial x_{i}}\ \text{dans}\ L^{1}(\Omega)\ \text{faiblement}.$$
D'autre part, $(u_{n})$ est bornée dans $E$ et l'injection $E\hookrightarrow L^{\Phi_{*}}(\Omega)$ est compacte. Alors à une sous-suite près $(u_{n})\rightarrow v$ dans $L^{\Phi_{*}}(\Omega)$. En particulier $(u_{n})\rightarrow v$ dans $L^{1}(\Omega)$ et donc dans $\mathfrak{D}^{'}(\Omega)$. Alors $u=v$ dans
$\mathfrak{D}^{'}(\Omega)$ ceci donne $u\in L^{\Phi_{*}}(\Omega)$ et $u_{n}\rightarrow u$ dans $L^{\Phi_{*}}(\Omega)$, $u_{n}\rightarrow u$, $p.p$ sur $\Omega$ et $$I_{0}(u)=\int_{\Omega}\Phi(|\nabla u|)dx\leq\liminf\int_{\Omega}\Phi(|\nabla u_{n}|)dx=\liminf I_{0}(u_{n}).$$
D'où  $I_{0}$ est semi-continue inférieurement sur $E$.
\end{proof}

\begin{pro}\label{pro9}
  \begin{enumerate}
    \item [(i)] La fonctionnelle $I$ est bornée inférieurement et coercive.
    \item [(ii)] La fonctionnelle $I$ est faiblement semi-continue inférieurement.
  \end{enumerate}
\end{pro}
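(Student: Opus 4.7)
Le plan consiste à traiter séparément les deux assertions, en exploitant les injections déjà établies pour $E = W^{1,\Phi}_{0}(\Omega)$ ainsi que la semi-continuité inférieure faible de $I_{0}$.

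Pour $(i)$, je séparerais l'analyse selon la taille de $\|u\|$. L'ingrédient clé est la chaîne d'injections continues $E \hookrightarrow L^{\varphi_{0}}(\Omega) \hookrightarrow L^{p}(\Omega)$, fournie par le lemme \ref{lem26} puis par le fait que $\Omega$ est borné et $p<\varphi_{0}$ (hypothèse \eqref{126}); il existe donc $C>0$ telle que $\int_{\Omega}u_{+}^{p}\,dx \leq C\|u\|^{p}$. Pour $\|u\|>1$, en laissant tomber le terme positif $\frac{\lambda}{q}\int_{\Omega}u_{+}^{q}\,dx$ et en utilisant \eqref{124}, on obtient
$$I(u) \;\geq\; \|u\|^{\varphi_{0}} - \frac{\lambda C}{p}\|u\|^{p},$$
ce qui donne la coercivité puisque $p<\varphi_{0}$. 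Sur la boule $\{\|u\|\leq 1\}$, chacune des trois intégrales est uniformément contrôlée (via \eqref{123} et les mêmes injections), d'où une minoration globale de $I$ sur tout $E$.

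Pour $(ii)$, j'écrirais $I = I_{0} - \frac{\lambda}{p}J_{p} + \frac{\lambda}{q}J_{q}$ avec $J_{r}(u) := \int_{\Omega}u_{+}^{r}\,dx$. Le lemme \ref{lem25} assure la semi-continuité inférieure faible de $I_{0}$; il suffit donc de vérifier la continuité faible de $J_{p}$ et $J_{q}$. Pour cela, j'invoquerais le théorème de Rellich-Kondrachov (théorème \ref{thm30}) : comme $p,q<\varphi_{0}$, les fonctions $t\mapsto t^{p}$ et $t\mapsto t^{q}$ vérifient $t^{p},t^{q}\prec\prec \Phi$, ce que l'on voit en remarquant que $\Phi(t) \geq \Phi(1)\,t^{\varphi_{0}}$ pour $t>1$ (comme dans la preuve du lemme \ref{lem26}) et donc $t^{p}/\Phi(\lambda t)\to 0$ à l'infini pour tout $\lambda>0$. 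On obtient ainsi les injections compactes $E \hookrightarrow L^{p}(\Omega)$ et $E\hookrightarrow L^{q}(\Omega)$. Si $u_{n}\rightharpoonup u$ dans $E$, alors $u_{n}\to u$ fortement dans $L^{p}$ et $L^{q}$; comme $v\mapsto v_{+}$ est $1$-lipschitzienne, on a $u_{n+}\to u_{+}$ dans ces mêmes espaces, d'où $J_{p}(u_{n})\to J_{p}(u)$ et $J_{q}(u_{n})\to J_{q}(u)$. On conclut $I(u) \leq \liminf I(u_{n})$.

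Le point délicat sera de justifier précisément l'applicabilité du théorème \ref{thm30} aux fonctions puissances $t^{p}$ et $t^{q}$ dans le cadre Orlicz-Sobolev : il faut à la fois vérifier la relation $\prec\prec$ et s'assurer que l'hypothèse \eqref{127} place $p$ et $q$ strictement en dessous de l'exposant critique de Sobolev $\frac{N\varphi_{0}}{N-\varphi_{0}}$ (ce qui résulte immédiatement de $p<\varphi^{0}<\frac{N\varphi_{0}}{N-\varphi_{0}}$). Le reste de l'argument se réduit ensuite à des vérifications de routine.
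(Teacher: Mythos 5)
Votre démonstration est correcte. Pour le point (ii), vous suivez essentiellement la même route que le texte : semi-continuité inférieure faible de $I_{0}$ par le lemme \ref{lem25}, puis passage à la limite dans les termes d'ordre inférieur grâce à la convergence forte de $u_{n+}$ vers $u_{+}$ dans $L^{p}(\Omega)$ et $L^{q}(\Omega)$ ; le texte se contente d'invoquer la compacité de ces injections, alors que vous la justifiez via $t^{p},t^{q}\prec\prec\Phi$ et le théorème \ref{thm30} (ou, de façon équivalente, via $E\hookrightarrow W_{0}^{1,\varphi_{0}}(\Omega)$ et Rellich--Kondrachov classique puisque $p<\varphi_{0}<\frac{N\varphi_{0}}{N-\varphi_{0}}$), ce qui est un complément bienvenu. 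Pour le point (i), en revanche, votre argument diffère réellement de celui du texte : vous laissez tomber le terme positif en $u_{+}^{q}$ et majorez $\int_{\Omega}u_{+}^{p}dx$ par $C\|u\|^{p}$ via l'injection continue $E\hookrightarrow L^{\varphi_{0}}(\Omega)\hookrightarrow L^{p}(\Omega)$, d'où $I(u)\geq\|u\|^{\varphi_{0}}-\frac{\lambda C}{p}\|u\|^{p}$ pour $\|u\|>1$ et la conclusion par $p<\varphi_{0}$ ; c'est exactement la stratégie que le texte emploie plus loin pour la coercivité de $J$ (lemme \ref{lem27}). Le texte, lui, établit d'abord l'inégalité ponctuelle $\lambda\left(\frac{1}{p}t^{p}-\frac{1}{q}t^{q}\right)\leq\frac{\lambda_{1}}{2}t^{\varphi_{0}}+C_{\lambda}$ pour tout $t\geq0$, où $\lambda_{1}$ est la constante du lemme \ref{lem18}, puis absorbe le terme $\frac{\lambda_{1}}{2}\int_{\Omega}|u|^{\varphi_{0}}dx$ dans la moitié de $\int_{\Omega}\Phi(|\nabla u|)dx$, d'où $I(u)\geq\frac{1}{2}\|u\|^{\varphi_{0}}-C_{\lambda}\,mes(\Omega)$. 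Votre version est plus directe et plus élémentaire ; celle du texte est plus robuste, car elle fonctionnerait encore pour une non-linéarité de croissance exactement $t^{\varphi_{0}}$ pourvu que son coefficient reste sous $\frac{\lambda_{1}}{2}$ --- robustesse inutile ici puisque $p<\varphi_{0}$ strictement. Les deux arguments sont valables.
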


\begin{proof}
  $\mathbf{(i)}$ Comme $1<q<p<\varphi_{0}$ alors $$\ds\lim_{t\rightarrow+\infty}\frac{\frac{1}{p}t^{p}-\frac{1}{q}t^{q}}{t^{\varphi_{0}}}=0,$$ Donc, il existe $t_{0}>0$, tel que $\forall\ t\geq t_{0}$, $$\lambda\bigg{(}\frac{1}{p}t^{p}-\frac{1}{q}t^{q}\bigg{)}\leq\frac{\lambda_{1}}{2}t^{\varphi_{0}}.$$
  D'autre part, il existe $0\leq\mu<t_{0}$ tel que  $\forall\ t\leq \mu$,
                \begin{align*}
                   \lambda\bigg{(}\frac{1}{p}t^{p}-\frac{1}{q}t^{q}\bigg{)}-\frac{\lambda_{1}}{2}t^{\varphi_{0}}&=
                   t^{q}\bigg{[}-\frac{\lambda}{p}+\frac{\lambda}{p}t^{p-q}-\frac{\lambda}{2}t^{\varphi_{0}-q}\bigg{]}\\
                   &=t^{q}\bigg{[}-\frac{\lambda}{p}+t^{p-q}\bigg{(}\frac{\lambda}{p}-\frac{\lambda}{2}t^{\varphi_{0}-p}\bigg{)}\bigg{]}\leq0.
                \end{align*}
  Soit  $C_{\lambda}=\ds\sup_{t\in[\mu,t_{0}]}  \lambda\bigg{(}\frac{1}{p}t^{p}-\frac{1}{q}t^{q}\bigg{)} $, alors $\forall\ \lambda>0$,
 $$\lambda\bigg{(}\frac{1}{p}t^{p}-\frac{1}{q}t^{q}\bigg{)}\leq\frac{\lambda_{1}}{2}t^{\varphi_{0}}+C_{\lambda},\ \forall t\geq0,$$ $\lambda_{1}$ est définie dans le lemme \ref{lem18}.
  D'après l'inégalité ci-desssus et la condition \eqref{124}, $\forall u\in E$, $\|u\|>1$
  \begin{align*} I(u)&\geq\int_{\Omega}\Phi(|\nabla u|)dx-\frac{\lambda_{1}}{2}\int_{\Omega}|u|^{\varphi_{0}}dx-C_{\lambda}mes(\Omega)\\&\geq\frac{1}{2}\int_{\Omega}\Phi(|\nabla u|)dx-C_{\lambda}mes(\Omega)\\&\geq\frac{1}{2}\|u\|^{\varphi_{0}}-C_{\lambda}mes(\Omega).\end{align*} D'où  $I$ est bornée inférieurement et coercive.\\
   $\mathbf{(ii)}$ Soit $(u_{n})\subset E$ une suite qui converge faiblement vers $u$ dans $E$. D'après le lemme \ref{lem25}, $I_{0}$ est semi-continue inférieurement, donc
   $$I_{0}(u)\leq\ds\liminf_{n\rightarrow\infty}I_{0}(u_{n}).$$
Comme l'injection de $E$ dans $L^{p}(\Omega)$ et dans $L^{q}(\Omega)$ est compacte alors $(u_{n^{+}})$ converge fortement vers $u_{+}$ dans $L^{p}(\Omega)$ et $L^{q}(\Omega)$. Ainsi $$I(u)\leq \ds\liminf_{n\rightarrow\infty}I(u_{n}).$$
\end{proof}

\begin{rem}
  D'après la proposition \ref{pro9} et le théorème \ref{thm34}, il existe $u_{1}\in E$ un minimum global de $I$, donc $u_{1}$ est une solution du problème $(P)$.
\end{rem}

\begin{pro}\label{pro10}
  Il existe $\lambda^{*}>0$ telle que $\ds\inf_{E}I<0$, $\forall \lambda>\lambda^{*}$.
\end{pro}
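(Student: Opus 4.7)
L'idée est purement constructive: il suffit d'exhiber une fonction test $u_0 \in E$, indépendante de $\lambda$, pour laquelle la partie polynomiale de $I(u_0)$ est strictement positive; un choix de $\lambda^*$ suffisamment grand garantira alors $I(u_0) < 0$ et donc $\inf_E I < 0$.

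Je commencerais par fixer une fonction $v \in C_c^\infty(\Omega)$ avec $v \geq 0$ et $v \not\equiv 0$. Puisque $v$ est lisse à support compact dans $\Omega$, on a $v \in E = W_0^{1,\Phi}(\Omega)$, et les intégrales $\int_\Omega v^p\,dx$ et $\int_\Omega v^q\,dx$ sont strictement positives et finies. Pour $t>0$ à choisir, je poserais $u_0 = tv$, qui reste dans $E$ et vérifie $u_0 \geq 0$ (donc $(u_0)_+ = u_0$). On écrit alors
\begin{equation*}
I(u_0) \;=\; \int_\Omega \Phi(t|\nabla v|)\,dx \;-\; \lambda\left(\frac{t^p}{p}\int_\Omega v^p\,dx \;-\; \frac{t^q}{q}\int_\Omega v^q\,dx\right).
\end{equation*}

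Le point clé est le comportement en $t$ de la partie polynomiale. Comme $p > q > 1$ d'après l'hypothèse \eqref{126}, la quantité
\begin{equation*}
P(t) \;:=\; \frac{t^p}{p}\int_\Omega v^p\,dx \;-\; \frac{t^q}{q}\int_\Omega v^q\,dx \;=\; t^q\left(\frac{t^{p-q}}{p}\int_\Omega v^p\,dx \;-\; \frac{1}{q}\int_\Omega v^q\,dx\right)
\end{equation*}
tend vers $+\infty$ lorsque $t\to+\infty$. Je choisirais donc $t_0>0$ suffisamment grand pour que $P(t_0)>0$ et poserais
\begin{equation*}
\lambda^* \;:=\; \frac{\displaystyle\int_\Omega \Phi(t_0 |\nabla v|)\,dx}{P(t_0)} \;>\; 0.
\end{equation*}
Comme $v \in C_c^\infty(\Omega)$, on a $|\nabla v| \in L^\infty(\Omega)$ à support compact, donc $\int_\Omega \Phi(t_0|\nabla v|)\,dx < \infty$, et $\lambda^*$ est bien défini.

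Pour tout $\lambda > \lambda^*$, on obtient immédiatement
\begin{equation*}
I(u_0) \;=\; \int_\Omega \Phi(t_0|\nabla v|)\,dx \;-\; \lambda\, P(t_0) \;<\; \int_\Omega \Phi(t_0|\nabla v|)\,dx \;-\; \lambda^* P(t_0) \;=\; 0,
\end{equation*}
et donc $\inf_{u\in E} I(u) \leq I(u_0) < 0$, ce qui achève la démonstration. Aucun obstacle technique sérieux n'est à prévoir: l'argument est purement élémentaire, la seule vérification à soigner étant que la fonction $u_0 = t_0 v$ appartient bien à $E$ et que le terme $\int_\Omega \Phi(t_0|\nabla v|)\,dx$ est fini, ce qui est garanti par le choix $v \in C_c^\infty(\Omega)$.
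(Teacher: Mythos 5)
Votre démonstration est correcte et suit pour l'essentiel la même stratégie que celle du texte : exhiber une fonction test positive explicite $u_{0}$ pour laquelle la quantité $\frac{1}{p}\int_{\Omega}u_{0}^{p}\,dx-\frac{1}{q}\int_{\Omega}u_{0}^{q}\,dx$ est strictement positive (ce qui est possible car $p>q$), puis prendre $\lambda$ assez grand pour que ce terme l'emporte sur $\int_{\Omega}\Phi(|\nabla u_{0}|)\,dx$. La seule différence est dans la construction de la fonction test — le texte utilise une fonction plateau valant une constante $t_{0}$ sur un grand compact $\Omega_{1}\subset\Omega$, tandis que vous dilatez une fonction fixe $v\in C_{c}^{\infty}(\Omega)$ par un grand facteur $t_{0}$ — votre variante étant même légèrement plus propre puisque $t_{0}v\in E$ et la finitude de $\int_{\Omega}\Phi(t_{0}|\nabla v|)\,dx$ sont immédiates.
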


\begin{proof}
  Soient $\Omega_{1}\subset\Omega$ un sous-ensemble compact, $u_{0}\in E$ telle que $u_{0}(x)=t_{0}$ sur $\Omega_{1}$ et $0\leq u_{0}(x)\leq t_{0}$ sur $\Omega\backslash\Omega_{1}$ où $t_{0}>1$ telle que $$\frac{1}{p}t_{0}^{p}-\frac{1}{q}t_{0}^{q}>0.$$
  On a \begin{align*}\frac{1}{p}\int_{\Omega}u_{0}^{p}dx-\frac{1}{q}\int_{\Omega}u_{0}^{q}dx&\geq \frac{1}{p}\int_{\Omega_{1}}u_{0}^{p}dx-\frac{1}{q}\int_{\Omega_{1}}u_{0}^{q}dx-\frac{1}{q}\int_{\Omega\backslash\Omega_{1}}u_{0}^{q}dx\\&
  \geq \frac{1}{p}\int_{\Omega_{1}}u_{0}^{p}dx-\frac{1}{q}\int_{\Omega_{1}}u_{0}^{q}dx-\frac{1}{q}t_{0}^{q}mes(\Omega\backslash\Omega_{1}),\end{align*}
  on choisit $\Omega_{1}$ tel que $\frac{1}{p}\int_{\Omega_{1}}u_{0}^{p}dx-\frac{1}{q}\int_{\Omega_{1}}u_{0}^{q}dx-\frac{1}{q}t_{0}^{q}mes(\Omega\backslash\Omega_{1})>0$ donc pour $\lambda$ assez grand on obtient $I(u_{0})<0$, d'où le résultat.
\end{proof}

\begin{rem}
  D'après la proposition \ref{pro10}, $u_{1}$ est une solution non triviale du problème $(P)$  d'\'energie n\'egative et d'après le lemme \ref{rem10}, $u_{1}>0$.
\end{rem}

\subsection{Existence d'une seconde solution du problème $ (P) $}

Soit $\lambda\geq \lambda^{*}$. On pose $$f(x,t)=\begin{cases}
                                             0 & \mbox{si }\ t<0, \\
                                             t^{p-1}-t^{q-1} & \mbox{if }\ 0\leq t\leq u_{1}(x), \\
                                             u_{1}(x)^{p-1}-u_{1}(x)^{q-1} & \mbox{si}\ t>u_{1}(x).
                                           \end{cases}$$
                                           et $$F(x,t)=\int_{0}^{t}f(x,s)ds.$$
On définit la fonctionnelle $J:E\rightarrow\mathbb{R}$ par $$J(u)=\int_{\Omega}\Phi(|\nabla u|)dx-\lambda\int_{\Omega}F(x,u)dx.$$
Les mêmes arguments utilisés pour la fonctionnelle $I$ s'appliquent à la fonctionnelle $J$ et $J\in C^{1}(E,\mathbb{R})$,
$$\langle J^{'}(u),v\rangle=\int_{\Omega}a(|\nabla u|)\nabla u.\nabla v dx-\lambda\int_{\Omega}f(x,u)vdx,\ \ \forall\ u,v\in E.$$

\begin{rem}
Soit $u$ un point critique de $J$, alors
 \begin{align*}
    0=\langle J^{'}(u),u_{-}\rangle&=\int_{\Omega}a(|\nabla u|)\nabla u.\nabla u_{-} dx-\lambda\int_{\Omega}f(x,u)u_{-}dx\\
                                   &=\int_{\Omega}a(|\nabla u|)\nabla u.\nabla u_{-} =\int_{\Omega}a(|\nabla u_{-}|)|\nabla u_{-}|^{2}dx\\
                                    &\geq \varphi_{0}\int_{\Omega}\Phi(|\nabla u_{-}|)dx\geq\varphi_{0}\|u_{-}\|^{\Phi_{0}} (\text{D'après} \eqref{123})
 \end{align*}
  donc $u_{-}\equiv0$ et par suite $u\geq0$.
\end{rem}

\begin{lem}\label{lem20}
  Si $u$ est un point critique de $J$ alors $u\leq u_{1}$.
\end{lem}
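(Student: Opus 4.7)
Le plan consiste à tester les conditions de criticalité de $J$ et de $I$ avec la fonction $v = (u-u_1)_+$, puis à exploiter la stricte monotonie de l'opérateur $\xi \mapsto a(|\xi|)\xi$ fournie par l'hypothèse \eqref{125}.

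D'abord, on observe que $(u-u_1)_+ \in W_0^{1,\Phi}(\Omega) = E$, car $u, u_1 \in E$ et la composition par l'application lipschitzienne $s \mapsto s_+$ préserve $W_0^{1,\Phi}(\Omega)$ (cela se démontre comme dans le lemme \ref{lem8}). Puisque $u$ est un point critique de $J$,
$$\int_{\Omega} a(|\nabla u|)\nabla u \cdot \nabla (u-u_1)_+ \, dx = \lambda \int_{\Omega} f(x,u)(u-u_1)_+ \, dx,$$
et par définition de $f$, sur l'ensemble $\{u > u_1\}$ on a $f(x,u(x)) = u_1(x)^{p-1} - u_1(x)^{q-1}$. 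Par ailleurs, $u_1$ étant solution faible de $(P)$, en testant la relation $\langle I^{'}(u_1),\cdot\rangle=0$ avec $(u-u_1)_+$ et en utilisant $u_1\geq 0$,
$$\int_{\Omega} a(|\nabla u_1|)\nabla u_1 \cdot \nabla (u-u_1)_+ \, dx = \lambda \int_{\Omega} \bigl(u_1^{p-1} - u_1^{q-1}\bigr)(u-u_1)_+ \, dx.$$

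En soustrayant ces deux égalités et en remarquant que $\nabla (u-u_1)_+$ s'annule en dehors de $\{u>u_1\}$, le membre de droite disparaît et il reste
$$\int_{\{u > u_1\}} \bigl[a(|\nabla u|)\nabla u - a(|\nabla u_1|)\nabla u_1\bigr] \cdot (\nabla u - \nabla u_1)\, dx = 0.$$
L'étape clé est alors l'invocation de la stricte monotonie. Sous l'hypothèse \eqref{125} que $t \mapsto \Phi(\sqrt{t})$ est convexe, l'application $\xi \mapsto \Phi(|\xi|)$ est strictement convexe sur $\mathbb{R}^N$, et son gradient vérifie
$$\bigl[a(|\xi|)\xi - a(|\eta|)\eta\bigr]\cdot(\xi-\eta) > 0 \quad \text{pour tout } \xi \neq \eta.$$
L'intégrande ci-dessus est donc positive et d'intégrale nulle, ce qui force $\nabla u = \nabla u_1$ presque partout sur $\{u>u_1\}$, d'où $\nabla (u-u_1)_+ \equiv 0$ sur $\Omega$. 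Enfin, l'inégalité de Poincaré (théorème \ref{thm37}) appliquée à $(u-u_1)_+ \in W_0^{1,\Phi}(\Omega)$ fournit
$$\int_{\Omega}\Phi\bigl(|(u-u_1)_+|\bigr)\, dx \;\leq\; \int_{\Omega}\Phi\bigl(d|\nabla(u-u_1)_+|\bigr)\, dx \;=\; 0,$$
d'où $(u-u_1)_+ = 0$ presque partout, soit $u \leq u_1$.

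Le principal obstacle technique est la justification de la stricte monotonie de $\xi \mapsto a(|\xi|)\xi$ à partir de la seule hypothèse \eqref{125}; on peut l'obtenir en combinant la convexité de $\Phi(\sqrt{t})$ avec le fait que $\Phi$ est strictement croissante et strictement convexe, ce qui garantit la stricte convexité de $\xi \mapsto \Phi(|\xi|)$ sur $\mathbb{R}^N$ et, par suite, la stricte monotonie de son gradient. Le reste de la preuve suit le schéma standard de comparaison pour les équations quasi-linéaires de type $\Phi$-Laplacien.
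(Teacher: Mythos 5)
Votre démonstration est correcte et suit essentiellement la même route que celle du texte : tester $J^{'}(u)-I^{'}(u_{1})$ contre $(u-u_{1})_{+}$, constater que les termes d'ordre inférieur se compensent sur $\{u>u_{1}\}$ puisque $f(x,u)=u_{1}^{p-1}-u_{1}^{q-1}$ y est vérifié, puis utiliser la stricte monotonie de $\xi\mapsto a(|\xi|)\xi$ pour forcer $\nabla(u-u_{1})_{+}=0$ et conclure. La seule différence, sans conséquence, est que le texte établit cette monotonie directement à partir de la stricte croissance de $\varphi$ combinée à l'inégalité de Cauchy--Schwarz (l'hypothèse \eqref{125} que vous invoquez n'est donc pas nécessaire pour cette étape), et conclut via \eqref{123} plutôt que par l'inégalité de Poincaré du théorème \ref{thm37}, les deux arguments étant également valables.
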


\begin{proof}
  On a
       \begin{equation}\label{135}
        \begin{aligned}
          0&=\langle J^{'}(u)-I^{'}(u_{1}),(u-u_{1})_{+}\rangle=\int_{\Omega}\big{(}a(|\nabla u|)\nabla u-a(|\nabla u_{1}|)\nabla u_{1}\big{)}.\nabla(u-u_{1})_{+}dx\\&-\lambda\int_{\Omega}[f(x,u)-(u_{1}^{p-1}-u_{1}^{q-1})](u-u_{1})_{+}dx\\
          &=\int_{[u>u_{1}]}\big{(}a(|\nabla u|)\nabla u-a(|\nabla u_{1}|)\nabla u_{1}\big{)}.\nabla(u-u_{1})dx.
        \end{aligned}
       \end{equation}
       Comme $\varphi$ est croissante sur $\mathbb{R}$ alors pour tout $\xi$ et $\psi\in\mathbb{R}^{N}$
       $$(\varphi(|\xi|)-\varphi(|\psi|))(|\xi|-|\psi|)\geq0,$$ avec égalité si et seulement si $\xi=\psi$. Donc,
       $$(a(|\xi|)|\xi|-a(|\psi|)|\psi|)(|\xi|-|\psi|)\geq0,\ \forall \xi,\psi\in\mathbb{R}^{N},$$ on a  égalité si et seulement si $\xi=\psi$. D'autre part,
       $$(a(|\xi|)\xi-a(|\psi|)\psi).(\xi-\psi)\geq(a(|\xi|)|\xi|-a(|\psi|)|\psi|)(|\xi|-|\psi|)\geq0,\ \forall \xi,\psi\in\mathbb{R}^{N}.$$
       En effet, \begin{align*}
                              &(a(|\xi|)\xi-a(|\psi|)\psi).(\xi-\psi)-(a(|\xi|)|\xi|-a(|\psi|)|\psi|)(|\xi|-|\psi|)=\\
                              &a(|\xi|)\xi.\xi-a(|\xi|)\xi.\psi-a(|\psi|)\psi.\xi+a(|\psi|)\psi.\psi-a(|\xi|)|\xi|^{2}+a(|\xi|)|\xi||\psi|+\\
                              &a(|\psi|)|\psi||\xi|-a(|\psi|)|\psi|^{2}=
                              a(|\xi|)[|\xi||\psi|-\xi.\psi]+a(|\psi|))[|\psi||\xi|-\xi.\psi]\geq0.
                           \end{align*}

        En déduit que $$(\varphi(|\xi|)\xi-\varphi(|\psi|)\psi).(\xi-\psi)\geq0\ \forall \xi,\psi\in\mathbb{R}^{N},$$
        avec égalité si et seulement si $\xi=\psi$. Donc de \eqref{135} on obtient $\nabla u=\nabla u_{1}$ sur
         $\omega:=\{y\in\Omega;\ u(y)>u_{1}(y)\}$ et $$\int_{\omega}\Phi(|\nabla(u-u_{1})|)dx=0$$
         $$\int_{\Omega}\Phi(|\nabla(u-u_{1})_{+}|)dx=0.$$ Et vertu de \eqref{123} on obtient $$\|(u-u_{1})_{+}\|=0.$$ Alors $(u-u_{1})_{+}=0$ sur $\Omega$, et par suite $u\leq u_{1}$ $p.p\ x\in\Omega$.
\end{proof}

Le lemme suivant  montre que $J$ admet la g\'{e}om\'{e}trie de Th\'{e}or\`{e}me du col.

\begin{lem}\label{lem21}
  Il existe $\rho\in]0,\|u_{1}\|[$ et $\delta>0$ tels que $J(u)\geq \delta$, $\forall u\in E$ avec $\|u\|=\rho$.
\end{lem}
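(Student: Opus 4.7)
Le plan consiste à trouver $\rho\in\,]0,\|u_1\|[$ suffisamment petit et $\delta>0$ tels que $J(u)\ge\delta$ dès que $\|u\|=\rho$. Le raisonnement reposera sur une majoration ponctuelle soigneuse de $F$ (exploitant la troncature par $u_1$) combinée à une injection de Sobolev à exposant strictement supérieur à $\varphi^0$, puis à la minoration modulaire \eqref{123}.

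Tout d'abord, j'établirais la majoration
$$F(x,t)\le t_+^{\,r}, \qquad \forall\, x\in\Omega,\ t\in\mathbb{R},$$
valable pour tout $r\ge p$. Cette étape, qui constitue l'obstacle principal, requiert un raisonnement cas par cas: pour $t\le 0$, $F=0$; pour $0<t\le 1$ avec $t\le u_1(x)$, $F(x,t)=t^p/p-t^q/q\le 0$ puisque $q<p$ et $t\le 1$; pour $0<t\le 1$ avec $t>u_1(x)$ (nécessairement $u_1(x)<1$), le coefficient $u_1^{p-1}-u_1^{q-1}$ est négatif, rendant $s\mapsto F(x,s)$ décroissante sur $[u_1(x),+\infty[$, d'où $F(x,t)\le F(x,u_1(x))\le 0$; enfin pour $t>1$, un calcul direct sur la formule tronquée (en minorant par zéro les termes en $-u_1^q/q$ et $-u_1^{q-1}(t-u_1)$) donne $F(x,t)\le t^p\le t^r$. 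L'aspect délicat ici est que la négativité de $F$ sur $\{t\le 1\}$ est essentielle pour pouvoir ensuite exploiter un exposant $r$ \emph{strictement supérieur} à $\varphi^0$; sans cette information, la meilleure borne $F\le u_+^p/p$ combinée à $p<\varphi^0$ ne donnerait pas la géométrie du col.

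Ensuite, grâce à l'hypothèse \eqref{127}, je fixerais $r\in\,]\varphi^0,\,N\varphi_0/(N-\varphi_0)[$, intervalle non vide. Ce choix rend $t^r\prec\prec\Phi_*(t)$ (puisque $\Phi_*(t)$ croît essentiellement comme $t^{N\varphi_0/(N-\varphi_0)}$ à l'infini), ce qui, combiné à l'injection $E\hookrightarrow L^{\Phi_*}(\Omega)$ du théorème \ref{thm24}, fournit l'injection continue $E\hookrightarrow L^r(\Omega)$ et donc une constante $K>0$ vérifiant
$$\int_{\Omega}F(x,u)\,dx\le\int_{\Omega}u_+^{\,r}\,dx\le K\|u\|^{r}.$$

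Enfin, en me restreignant à la boule unité $\|u\|<1$ et en appliquant \eqref{123}, j'obtiendrais
$$J(u)\ge\|u\|^{\varphi^0}-\lambda K\|u\|^{r} = \|u\|^{\varphi^0}\bigl(1-\lambda K\|u\|^{r-\varphi^0}\bigr).$$
Comme $r-\varphi^0>0$, il suffira de choisir $\rho_0\in\,]0,1[$ tel que $\lambda K\rho_0^{r-\varphi^0}\le 1/2$, puis de poser $\rho=\min\{\rho_0,\|u_1\|/2\}\in\,]0,\|u_1\|[$ et $\delta=\rho^{\varphi^0}/2>0$, obtenant ainsi $J(u)\ge\delta$ pour tout $u\in E$ avec $\|u\|=\rho$.
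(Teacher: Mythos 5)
Votre démonstration est correcte et suit essentiellement la même stratégie que celle du papier : montrer que $F(x,u(x))\le 0$ là où $u\le 1$ et $F\lesssim u^{s}$ (avec $\varphi^{0}<s<N\varphi_0/(N-\varphi_0)$) là où $u\ge 1$, puis combiner l'injection $E\hookrightarrow L^{s}(\Omega)$ avec la minoration \eqref{123} pour obtenir $J(u)\ge\|u\|^{\varphi^{0}}-\lambda C\|u\|^{s}$ et choisir $\rho$ petit. Votre reformulation de la discussion par cas en une seule inégalité ponctuelle $F(x,t)\le t_{+}^{r}$ est simplement un emballage plus propre de la décomposition $\Omega_{u}$, $\Omega_{u,1}$ du papier.
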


\begin{proof}
  Soit $u\in E$ telle que $\|u\|<1$. Il est clair que $$\frac{1}{p}t^{p}-\frac{1}{q}t^{q}\leq 0,\ \forall t\in[0,1].$$
  Soit $$\Omega_{u}:=\{x\in\Omega;\ u(x)>min(1,u_{1}(x))\}.$$
  Si $x\in\Omega\backslash\Omega_{u}$ alors $$F(x,u)=\frac{1}{p}u_{+}^{p}-\frac{1}{q}u_{+}^{q}\leq0.$$
  Si $x\in \Omega_{u}\cap\{x\in\Omega;\ u_{1}(x)<u(x)<1\}$ alors
  \begin{align*}
      F(x,u)&=\int_{0}^{u_{1}(x)}f(x,s)ds+\int_{u_{1}(x)}^{u}f(x,s)ds\\
            &=\int_{0}^{u_{1}(x)}s^{p-1}-s^{q-1}ds+\int_{u_{1}(x)}^{u}u_{1}(x)^{p-1}-u_{1}(x)^{q-1}ds\\
            &=\frac{1}{p}u_{1}^{p}-\frac{1}{q}u_{1}^{q}+\big{(}u_{1}^{p-1}-u_{1}^{q-1}\big{)}(u-u_{1})\leq0,
  \end{align*}
  Soit $$\Omega_{u,1}:=\Omega_{u}\setminus\{x\in\Omega,\ u_{1}(x)<u(x)<1\}.$$ D'après \eqref{123} on a
  \begin{equation}\label{128}
    J(u)\geq\int_{\Omega}\Phi(|\nabla u|)dx-\lambda\int_{\Omega_{u,1}}F(x,u)dx\geq\|u\|^{\varphi^{0}}-\lambda\int_{\Omega_{u,1}}F(x,u)dx.
  \end{equation}
  On a $\varphi^{0}<\varphi_{0}^{*}=\frac{N\varphi_{0}}{N-\varphi_{0}}$, donc $E\hookrightarrow W_{0}^{1,\varphi_{0}}(\Omega)$ et $\forall\  s\in]\varphi^{0},\frac{N\varphi_{0}}{N-\varphi_{0}}[$\\ $E\hookrightarrow L^{s}(\Omega)$ (injection continue).
  Il existe alors une constante $C>0$ telle que $$\|u\|_{L^{s}(\Omega)}\leq C\|u\|,\ \forall u\in E,\ \forall\  s\in]\varphi^{0},\frac{N\varphi_{0}}{N-\varphi_{0}}[.$$
  On obtient
  \begin{equation}\label{129}
    \begin{aligned}
     \lambda\int_{\Omega_{u,1}}F(x,u)dx&=\lambda\int_{\Omega_{u,1}\cap[u<u_{1}]}\bigg{(}\frac{1}{p}u_{+}^{p}-\frac{1}{q}u_{+}^{q}\bigg{)}dx+
     \lambda\int_{\Omega_{u,1}\cap[u>u_{1}]}\bigg{(}\frac{1}{p}u_{1}^{p}-\frac{1}{q}u_{1}^{q}\bigg{)}dx\\&+
     \lambda\int_{\Omega_{u,1}\cap[u>u_{1}]}\big{(}u_{1}^{p-1}-u_{1}^{q-1}\big{)}(u-u_{1})dx\\
     &\leq\frac{\lambda}{p}\int_{\Omega_{u,1}\cap[u<u_{1}]}u_{+}^{p}dx
     +\frac{\lambda}{p}\int_{\Omega_{u,1}\cap[u>u_{1}]}u_{1}^{p}dx+\lambda\int_{\Omega_{u,1}\cap[u>u_{1}]}u_{1}^{p-1}udx\\
     &+\lambda\int_{\Omega_{u,1}\cap[u>u_{1}]}u_{1}^{q}dx\\
     &\leq\lambda D\int_{\Omega_{u,1}}u_{+}^{p}dx\leq \lambda D\int_{\Omega_{u,1}}u_{+}^{s}dx\leq\lambda D_{1}\|u\|^{s},
    \end{aligned}
  \end{equation}
  où $D>0$, $D_{1}>0$. Combinant \eqref{128} et \eqref{129}, on obtient
  $$J(u)\geq(1-\lambda D_{1}\|u\|^{s-\varphi^{0}})\|u\|^{\varphi^{0}},$$
  soit $\rho\in]0,min(1,\|u_{1}\|)[$ tel que pour $\|u\|=\rho$, $1-\lambda D_{1}\|u\|^{s-\varphi^{0}}>0$ et prenons $\delta=(1-\lambda D_{1}\rho^{s-\varphi^{0}})\rho^{\varphi^{0}}>0$. D'où le résultat.
\end{proof}

\begin{lem}
  Il existe $e\in X$ tel que $\|e\|>\rho$, et $J(e)\leq0$.
\end{lem}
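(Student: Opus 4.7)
The natural candidate is simply $e = u_1$, the first solution obtained by global minimization. I would argue as follows.

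First I observe that for every $x \in \Omega$, since $0 \leq t = u_1(x)$ lies in the interval $[0, u_1(x)]$, the truncated nonlinearity coincides with the original one, i.e.\ $f(x, u_1(x)) = u_1(x)^{p-1} - u_1(x)^{q-1}$, so
\[
F(x, u_1(x)) = \int_0^{u_1(x)} (s^{p-1} - s^{q-1})\, ds = \frac{u_1(x)^p}{p} - \frac{u_1(x)^q}{q}.
\]
Substituting into the definition of $J$ and recalling that $u_1 \geq 0$ almost everywhere (so $(u_1)_+ = u_1$), one obtains directly
\[
J(u_1) = \int_\Omega \Phi(|\nabla u_1|)\,dx - \frac{\lambda}{p}\int_\Omega u_1^p\,dx + \frac{\lambda}{q}\int_\Omega u_1^q\,dx = I(u_1).
\]

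Next, I recall from the previous subsection that $u_1$ is the global minimizer of $I$, and by Proposition \ref{pro10} one has $I(u_1) = \inf_E I < 0$ for all $\lambda > \lambda^*$. Hence $J(u_1) = I(u_1) < 0 \leq 0$.

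Finally, the radius $\rho$ produced in Lemma \ref{lem21} was explicitly chosen so that $\rho \in \,]0, \|u_1\|[$, which gives $\|u_1\| > \rho$ by construction. Therefore $e := u_1$ satisfies both required conditions $\|e\| > \rho$ and $J(e) \leq 0$, and there is no genuine obstacle: the construction of the truncated nonlinearity $f$ was precisely designed so that $u_1$ itself witnesses the mountain pass geometry for the modified functional $J$.
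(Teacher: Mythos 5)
Your proposal is correct and follows exactly the paper's argument: the paper also takes $e=u_1$, noting $J(u_1)=I(u_1)<0$ and $\|u_1\|>\rho$. You simply spell out the details (why the truncation $F$ agrees with the original nonlinearity at $t=u_1(x)$, and why $\rho<\|u_1\|$ by the choice made in the mountain-pass geometry lemma), which the paper leaves implicit.
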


\begin{proof}
  On a $J(u_{1})=I(u_{1})<0$ et $\|u_{1}\|>\rho$. Alors on peut prendre $e=u_{1}$.
\end{proof}

\begin{lem}\label{lem27}
  La fonctionnelle $J$ est coercive.
\end{lem}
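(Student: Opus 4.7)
Mon plan est d'estimer par en haut la contribution du terme non linéaire $\int_{\Omega}F(x,u)\,dx$ par un terme \emph{au plus linéaire} en $\|u\|$, et de contrôler celui-ci par le terme modulaire $\int_{\Omega}\Phi(|\nabla u|)\,dx$ grâce à \eqref{124} et à l'inclusion $E\hookrightarrow L^{p}(\Omega)$.

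\textbf{Étape 1 : majoration ponctuelle de $F$.} Grâce à la troncature définissant $f$, on observe que pour tout $t\geq 0$ et tout $x\in\Omega$ :
\begin{itemize}
\item si $0\leq t\leq u_{1}(x)$, alors $F(x,t)=\tfrac{t^{p}}{p}-\tfrac{t^{q}}{q}\leq \tfrac{u_{1}(x)^{p}}{p}$ ;
\item si $t>u_{1}(x)$, un calcul direct donne
$$F(x,t)=\frac{u_{1}(x)^{p}}{p}-\frac{u_{1}(x)^{q}}{q}+\big(u_{1}(x)^{p-1}-u_{1}(x)^{q-1}\big)(t-u_{1}(x))\leq \frac{u_{1}(x)^{p}}{p}+u_{1}(x)^{p-1}\,t.$$
\end{itemize}
Comme $F(x,t)=0$ pour $t\leq 0$, on obtient dans tous les cas
\begin{equation}\label{136}
F(x,u(x))\leq \frac{u_{1}(x)^{p}}{p}+u_{1}(x)^{p-1}\,u_{+}(x),\quad p.p.\ x\in\Omega,\ \forall u\in E.
\end{equation}

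\textbf{Étape 2 : intégration et inégalité de Hölder.} En intégrant \eqref{136} sur $\Omega$ puis en appliquant l'inégalité de Hölder classique avec l'exposant $p$ et son conjugué $p'=p/(p-1)$, il vient
$$\int_{\Omega}F(x,u)\,dx\leq \frac{1}{p}\|u_{1}\|_{L^{p}(\Omega)}^{p}+\|u_{1}\|_{L^{p}(\Omega)}^{p-1}\,\|u\|_{L^{p}(\Omega)}.$$
Par l'hypothèse \eqref{126}, on a $p<\varphi_{0}$, et le lemme \ref{lem26} combiné à l'inclusion continue $L^{\varphi_{0}}(\Omega)\hookrightarrow L^{p}(\Omega)$ (valable car $\Omega$ est borné) fournissent $E\hookrightarrow L^{p}(\Omega)$, donc une constante $C>0$ telle que $\|u\|_{L^{p}(\Omega)}\leq C\|u\|$ pour tout $u\in E$. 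Posons $A:=\tfrac{1}{p}\|u_{1}\|_{L^{p}}^{p}$ et $B:=C\|u_{1}\|_{L^{p}}^{p-1}$, qui sont bien finies puisque $u_{1}\in E$.

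\textbf{Étape 3 : minoration de $J$ et conclusion.} Pour $u\in E$ avec $\|u\|>1$, la condition \eqref{124} donne
$$J(u)\geq \|u\|^{\varphi_{0}}-\lambda\int_{\Omega}F(x,u)\,dx\geq \|u\|^{\varphi_{0}}-\lambda A-\lambda B\,\|u\|.$$
Puisque $\varphi_{0}>1$ (hypothèse \eqref{122}), le terme $\|u\|^{\varphi_{0}}$ l'emporte sur $\lambda B\|u\|$ quand $\|u\|\to+\infty$, et donc $J(u)\to+\infty$. D'où la coercivité de $J$.

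\textbf{Remarque sur l'obstacle.} La seule subtilité est la justification de l'inclusion $E\hookrightarrow L^{p}(\Omega)$ : elle repose sur $p<\varphi_{0}$ et sur la mesure finie de $\Omega$ \emph{via} le lemme \ref{lem26}. Une fois cette inclusion admise, la preuve se réduit à l'estimation linéaire en $t$ du terme tronqué $F(x,t)$ (effet essentiel de la troncature : sans elle, la croissance en $t^{p}$ du terme serait du même ordre, voire plus forte, que la croissance modulaire contrôlée par $\varphi_{0}$, et la coercivité pourrait échouer).
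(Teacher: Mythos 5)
Votre démonstration est correcte et suit pour l'essentiel la même stratégie que celle du texte : exploiter la troncature pour majorer $F(x,t)$ par une fonction affine en $t$ à coefficients dépendant de $u_{1}$, minorer $\int_{\Omega}\Phi(|\nabla u|)\,dx$ par $\|u\|^{\varphi_{0}}$ via \eqref{124}, et conclure par l'injection $E\hookrightarrow L^{p}(\Omega)$. La seule différence est dans le traitement du terme croisé : le texte majore $u_{1}^{p-1}u\leq u^{p}$ sur $[u>u_{1}]$ et aboutit à une minoration du type $\|u\|^{\varphi_{0}}-C_{3}\|u\|^{p}-C_{2}$, qui requiert $p<\varphi_{0}$, tandis que vous appliquez Hölder avec les exposants $p$ et $p/(p-1)$ pour obtenir une perturbation seulement \emph{linéaire} en $\|u\|$, de sorte que $\varphi_{0}>1$ suffit à conclure. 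Votre estimation est donc légèrement plus fine, mais les deux hypothèses sont disponibles et les deux arguments sont valables ; votre justification de $E\hookrightarrow L^{p}(\Omega)$ (lemme \ref{lem26} plus $L^{\varphi_{0}}(\Omega)\hookrightarrow L^{p}(\Omega)$ pour $\Omega$ borné) est également celle implicitement utilisée dans le texte.
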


\begin{proof}
  Soit $u\in E$ telle que $\|u\|>1$. D'après \eqref{124} et l'inégalité de H\"{o}lder, on a
  \begin{align*}
     J(u)&\geq \int_{\Omega}\Phi(|\nabla u|)dx-\lambda\int_{[u>u_{1}]}F(x,u)dx-\lambda\int_{[u<u_{1}]}F(x,u)dx\\
     &\geq\|u\|^{\varphi_{0}}-\frac{\lambda}{p}\int_{[u>u_{1}]}u_{1}^{p}dx-\frac{\lambda}{p}\int_{[u>u_{1}]}u_{1}^{p-1}udx-\frac{\lambda}{p}\int_{[u<u_{1}]}u_{+}^{p}dx\\
     &\geq\|u\|^{\varphi_{0}}-\frac{\lambda}{p}\int_{\Omega}u_{1}^{p}dx-\frac{2\lambda}{p}\int_{\Omega}u_{+}^{p}dx\\
     &\geq \|u\|^{\varphi_{0}}-\frac{2\lambda}{p}C_{1}^{p}\|u\|^{p}-C_{2}\geq  \|u\|^{\varphi_{0}}-C_{3}\|u\|^{p}-C_{2},
  \end{align*}
  où $C_{1},C_{2}$ et $C_{3}$ sont des constantes positives. Comme $p<\varphi_{0}$, de l'inégalité ci-dessus en déduit $J(u)\rightarrow\infty$ si $\|u\|\rightarrow\infty$, d'où, $J$ est coercive.
\end{proof}

\begin{lem}\label{lem22}
  Soient $(u_{n})\in E$, $u_{n}\rightharpoonup u$ et
  \begin{equation}\label{136}
  \ds\limsup_{n\rightarrow\infty}\int_{\Omega}a(|\nabla u_{n}|)\nabla u_{n}.(\nabla u_{n}-\nabla u)dx\leq 0.
   \end{equation}
   Alors $(u_{n})$ converge fortement vers $u$ dans $E$.
\end{lem}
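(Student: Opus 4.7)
Le plan consiste à combiner la monotonie de l'application $\xi\mapsto a(|\xi|)\xi$ avec la condition $\triangle_{2}$ vérifiée par $\Phi$ et $\Phi^{*}$ pour passer de la convergence faible à la convergence forte via la convergence ponctuelle des gradients.

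Posons $P_{n}(x):=\bigl(a(|\nabla u_{n}|)\nabla u_{n}-a(|\nabla u|)\nabla u\bigr)\cdot(\nabla u_{n}-\nabla u)$. La monotonie stricte établie au cours de la preuve du lemme \ref{lem20} donne $P_{n}\geq 0$ presque partout, avec égalité seulement lorsque $\nabla u_{n}=\nabla u$. En développant le produit scalaire,
$$\int_{\Omega}P_{n}\,dx=\int_{\Omega}a(|\nabla u_{n}|)\nabla u_{n}\cdot(\nabla u_{n}-\nabla u)\,dx-\int_{\Omega}a(|\nabla u|)\nabla u\cdot(\nabla u_{n}-\nabla u)\,dx.$$
L'hypothèse \eqref{136} assure que le $\limsup$ du premier terme est négatif ou nul. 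Pour le second, l'inégalité \eqref{122} et l'inégalité de Young donnent $\Phi^{*}(\varphi(t))\leq t\varphi(t)\leq\varphi^{0}\Phi(t)$, d'où $a(|\nabla u|)\nabla u\in L^{\Phi^{*}}(\Omega)^{N}$; comme $\Phi$ et $\Phi^{*}$ sont dans $\triangle_{2}$, l'espace $E$ est réflexif et la convergence faible $u_{n}\rightharpoonup u$ dans $E$ entraîne $\nabla u_{n}\rightharpoonup\nabla u$ dans $L^{\Phi}(\Omega)^{N}$, de sorte que ce second terme tend vers $0$. Combiné à $P_{n}\geq 0$, cela force $P_{n}\to 0$ dans $L^{1}(\Omega)$, et donc, après extraction d'une sous-suite, $P_{n}(x)\to 0$ presque partout. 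La minoration scalaire $P_{n}\geq(\varphi(|\nabla u_{n}|)-\varphi(|\nabla u|))(|\nabla u_{n}|-|\nabla u|)$ empêche $|\nabla u_{n}(x)|$ de diverger presque partout; un argument de sous-suite couplé à l'équivalence $P_{n}(x)=0\Leftrightarrow\nabla u_{n}(x)=\nabla u(x)$ donne alors $\nabla u_{n}(x)\to\nabla u(x)$ presque partout dans $\Omega$.

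Je montrerais ensuite la convergence en module $\int_{\Omega}\Phi(|\nabla u_{n}-\nabla u|)\,dx\to 0$. La convexité de l'application $\xi\mapsto\Phi(|\xi|)$ sur $\mathbb{R}^{N}$ fournit l'inégalité du sous-gradient
$$\Phi(|\nabla u|)\geq\Phi(|\nabla u_{n}|)+a(|\nabla u_{n}|)\nabla u_{n}\cdot(\nabla u-\nabla u_{n}),$$
qui, intégrée et combinée à \eqref{136}, livre $\limsup_{n}\int_{\Omega}\Phi(|\nabla u_{n}|)\,dx\leq\int_{\Omega}\Phi(|\nabla u|)\,dx$; le lemme de Fatou appliqué à la convergence presque partout fournit le $\liminf$ inverse, d'où $\int_{\Omega}\Phi(|\nabla u_{n}|)\,dx\to\int_{\Omega}\Phi(|\nabla u|)\,dx$. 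Une convergence presque partout couplée à la convergence des intégrales entraîne l'équiintégrabilité de $(\Phi(|\nabla u_{n}|))$. Par convexité de $\Phi$ et la condition $\triangle_{2}$, on a la majoration $\Phi(|\nabla u_{n}-\nabla u|)\leq C\bigl(\Phi(|\nabla u_{n}|)+\Phi(|\nabla u|)\bigr)$, ce qui transfère l'équiintégrabilité au membre de gauche; jointe à la convergence presque partout vers $0$, le théorème de Vitali conclut.

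Enfin, par le théorème \ref{thm4} et le fait que $\Phi\in\triangle_{2}$, cette convergence en module se traduit en convergence en norme $\|\nabla u_{n}-\nabla u\|_{(\Phi)}\to 0$, soit $u_{n}\to u$ dans $E$. L'obstacle principal est l'étape de passage de $P_{n}\to 0$ dans $L^{1}$ à la convergence ponctuelle des gradients: il faut exclure le scénario où $|\nabla u_{n}(x)|$ explose sur un ensemble de mesure positive, ce qui repose de manière cruciale sur le comportement asymptotique de $\varphi$ à l'infini et sur la minoration scalaire de $P_{n}$.
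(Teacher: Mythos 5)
Votre démonstration est correcte, mais elle suit une route réellement différente de celle du texte. Le texte exploite la convexité uniforme de la modulaire : partant de la semi-continuité inférieure faible de $I_{0}$ (lemme \ref{lem25}) et de l'inégalité de convexité $I_{0}(u)\geq I_{0}(u_{n})+\langle I_{0}'(u_{n}),u-u_{n}\rangle$, l'hypothèse \eqref{136} donne $I_{0}(u_{n})\to I_{0}(u)$, puis l'inégalité de type Clarkson \eqref{132} --- qui repose de façon essentielle sur l'hypothèse \eqref{125} de convexité de $t\mapsto\Phi(\sqrt{t})$ et sur [\cite{23}, lemme 2.1] --- fournit directement $I_{0}((u_{n}-u)/2)\to 0$, donc la convergence en norme via \eqref{123}. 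Vous suivez au contraire le schéma de monotonie (type Browder / Leray--Lions) : positivité et convergence vers $0$ de $\int_{\Omega}P_{n}\,dx$, convergence presque partout des gradients après extraction, identification de la limite par stricte monotonie de $\varphi$, puis convergence des modulaires par l'inégalité du sous-gradient, Fatou et Vitali, et enfin passage à la norme par le théorème \ref{thm4}. Votre argument n'utilise ni \eqref{125} ni le lemme de Lamperti, au prix d'une analyse ponctuelle plus longue ; le point délicat que vous signalez vous-même (exclure l'explosion de $|\nabla u_{n}(x)|$ là où $P_{n}(x)\to 0$) se règle effectivement par la minoration scalaire jointe au fait que $\varphi$ est un homéomorphisme croissant, et il faut conclure pour la suite entière par l'argument classique des sous-sous-suites. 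Les deux preuves aboutissent : celle du texte est plus courte mais dépend d'une hypothèse structurelle supplémentaire et d'un résultat cité, la vôtre est plus standard et plus robuste puisqu'elle s'étendrait à des situations où \eqref{125} fait défaut.
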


\begin{proof}
   On a $(\|u_{n}\|)$ est bornée, d'après \eqref{123} et \eqref{124}, $(I_{0}(u_{n}))$ est bornée. \`{A} une sous-suite près, $I_{0}(u_{n})\rightarrow c$, donc $$I_{0}(u)\leq\ds\liminf_{n\rightarrow\infty}I_{0}(u_{n})=c.$$ Comme $I_{0}$ est convexe on a $$I_{0}(u)\geq I_{0}(u_{n})+\langle I_{0}^{'}(u_{n}),u-u_{n}\rangle.$$ donc d'après \eqref{136}$I_{0}(u)=c$.\\
   $\frac{u_{n}+u}{2}$ converge faiblement vers $u$ dans $E$, alors \begin{equation}\label{130}
                                                                         c=I_{0}(u)\leq\ds\liminf_{n\rightarrow\infty}I_{0}\bigg{(}\frac{u_{n}+u}{2}\bigg{)}.
                                                                       \end{equation}
        Supposons que $(u_{n})$ ne converge pas vers $u$ dans $E$, d'après \eqref{123}, il existe $\epsilon>0$ et une sous-suite de $(u_{n})$ notée aussi $(u_{n})$ tel que  \begin{equation}\label{131}
                            I_{0}\bigg{(}\frac{u_{n}-u}{2}\bigg{)}\geq\epsilon,\ \forall n\in\mathbb{N}.
                          \end{equation}
                          D'autre part, d'après [\cite{23}, lemme 2.1],
                          \begin{equation}\label{132}
                            \frac{1}{2}I_{0}(u)+\frac{1}{2}I_{0}(u_{n})-I_{0}\bigg{(}\frac{u_{n}+u}{2}\bigg{)}\geq I_{0}\bigg{(}\frac{u_{n}-u}{2}\bigg{)}\geq\epsilon,\ \forall n\in\mathbb{N}.
                          \end{equation}
                         Alors $$c-\epsilon\geq\ds\limsup_{m\rightarrow\infty}I_{0}\bigg{(}\frac{u_{n_{m}}+u}{2}\bigg{)},$$
                          ce qui contredit \eqref{130}, donc $(u_{n})$ converge fortement vers $u$ dans $E$.
\end{proof}

\begin{proof}[\textbf{Démonstration du théorème \ref{thm32}}]
   D'après le lemme \ref{lem21} et le théorème du Mountain pass \ref{thm33}, il existe une suite $(v_{n})\subset E$ telle que
\begin{equation}\label{134}
   J(v_{n})\rightarrow c>0\ \ \text{et}\ \ J^{'}(v_{n})\rightarrow 0
\end{equation}
où  $$c=\ds\inf_{\gamma\in\Gamma}\ds\sup_{t\in[0,1]}J(\gamma(t))$$ et $$\Gamma=\{\gamma\in C([0,1],E);\ \gamma(0)=0,\ \gamma(1)=u_{1}\}.$$
En vertu de \eqref{134} et le lemme \ref{lem27}, la suite $(v_{n})$ est bornée, donc il existe une sous suite notée aussi $(v_{n})$ qui converge faiblement vers $u_{2}\in E$. Comme l'injection de $E$ dans $L^{i}(\Omega),\ \forall\ i\in[1,\varphi_{0}]$ est compact, alors $(v_{n})$ converge fortement vers $u_{2}$ dans $L^{i}(\Omega),\ \forall\ i\in[1,\varphi_{0}]$. Ainsi
$$\langle I_{0}^{'}(v_{n})- I_{0}^{'}(u_{2}),v_{n}-u_{2}\rangle=\langle J^{'}(v_{n})-J^{'}(u_{2}),v_{n}-u_{2}\rangle+\lambda\int_{\Omega}
[f(x,v_{n})-f(x,u_{2})](v_{n}-u_{2})dx=o(1),$$ quand $n\rightarrow+\infty$. D'après le lemme \ref{lem22} en déduit que $(v_{n})$ converge fortement vers $u_{2}$ dans $E$ et en utilisant la relation \eqref{134} on obtient
$$ J(u_{2})= c>0\ \ \text{et}\ \ J^{'}(u_{2})= 0.$$
D'après le lemme \ref{lem20}, $0\leq u_{2}\leq u_{1}$ sur $\Omega$. Donc
$$f(x,u_{2})=u_{2}^{p-1}-u_{2}^{q-1}\ \ \text{et}\ \ F(x,u_{2})=\frac{1}{p} u_{2}^{p}-\frac{1}{q} u_{2}^{q}$$
d'où $$J(u_{2})=I(u_{2})\ \text{et}\ J^{'}(u_{2})=I^{'}(u_{2}).$$
On conclut que $u_{2}$ est un point critique de $I$ et une solution du problème $(P)$. De plus, $I(u_{2})=c>0$ et $I(u_{2})>0>I(u_{1})$. Ainsi $u_{2}$ est une solution non trivial et $u_{2}\neq u_{1}$.
\end{proof}

\chapter{Appendice}

\section{Outils d'int\'{e}gration}
\begin{dfn}
  Soit $(X,\Sigma,\mu)$ un espace mesuré $\sigma$-fini. Une fonction $s\ :\ X\rightarrow [0,+\infty[$ est dite fonction simple, s'il existe $m\in\mathbb{N}$, $\{a_{1},...,a_{m}\}$ des réels et des sous-ensembles de $X$ mesurables disjoints $\{E_{1},...,E_{m}\}$ tels que
  $$s(x)=\begin{cases}
      a_{j}, & \mbox{si }\ x\in E_{j},\ j=1,...,m  \\
      0, & \mbox{si}\ x\in X\setminus\cup_{j=1}^{m}E_{j}.
    \end{cases}  $$
\end{dfn}
\begin{4}[\textbf{Propriétés de la mesure de Lebesgue}]
\begin{enumerate}
  \item La mesure de Lebesgue a la propri\'et\'e des sous-ensembles de mesures finies, c.à.d, pour tout ensemble $E$ mesurable tel que $mes(E)>0$, il existe $F$ mesurable, $F\subset E$ et $0<mes(F)<\infty$.
\item Un ensemble mesurable $A$ est dit atome pour la mesure de Lebesgue si $mes(A)>0$ et pour tout ensemble $B\subset A$, $mes(B)=0$ ou $mes(A\setminus B)=0$.
\item Un ensemble mesurable $D$ est dit diffuse pour la mesure de Lebesgue si $D$ ne contient aucune atome, ceci donne, pour $0\leq \lambda\leq mes(D)$, il existe un ensemble mesurable $D_{1}\subset D$ tel que $mes(D_{1})=\lambda$.
\end{enumerate}

\end{4}
\begin{thm}[Inégalité de Jensen]
Si $0<mes(D)<\infty$ et si $\Phi : \mathbb{R}\rightarrow\mathbb{R}$ est une fonction convexe, alors pour toute fonction $f\in L^{1}(D)$, on a
$$\Phi\bigg{(}\frac{1}{mes(D)}\int_{D}f(x)dx\bigg{)}\leq\frac{1}{mes(D)}\int_{D}\Phi\circ f(x)dx.$$
\end{thm}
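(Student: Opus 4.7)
La stratégie consiste à exploiter la propriété de droite d'appui (inégalité de sous-gradient) d'une fonction convexe, puis à intégrer. Posons
\[
m := \frac{1}{\mathrm{mes}(D)}\int_{D}f(x)\,dx,
\]
qui est bien un réel puisque $f\in L^{1}(D)$ et $0<\mathrm{mes}(D)<\infty$.

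Tout d'abord, j'établis l'existence d'une droite d'appui de $\Phi$ au point $m$. D'après le théorème \ref{thm15}, $\Phi$ admet une dérivée à droite $\Phi'_{d}(m)$ finie. En appliquant l'inégalité des pentes \eqref{65} avec le triplet $m<x<t$ puis en faisant tendre $x\to m^{+}$, on obtient
\[
\Phi'_{d}(m)\leq\frac{\Phi(t)-\Phi(m)}{t-m},\ \forall\, t>m,
\]
d'où $\Phi(t)\geq \Phi(m)+\Phi'_{d}(m)(t-m)$ pour $t>m$. Un raisonnement symétrique avec \eqref{65} appliqué à $t<x<m$, combiné à l'inégalité $\Phi'_{g}(m)\leq \Phi'_{d}(m)$ du théorème \ref{thm15}, fournit la même minoration pour $t\leq m$. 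On obtient ainsi
\[
\Phi(t)\geq \Phi(m)+\Phi'_{d}(m)(t-m),\ \forall\, t\in\mathbb{R}.
\]

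Je substitue alors $t=f(x)$ et j'intègre sur $D$. La mesurabilité de $\Phi\circ f$ est assurée par la continuité de $\Phi$ sur $\mathbb{R}$ (théorème \ref{thm14}), et l'inégalité précédente montre que $\Phi\circ f$ est minorée par une fonction intégrable sur $D$ (combinaison affine de $f\in L^{1}(D)$ et de constantes), donc $\int_{D}\Phi(f(x))\,dx$ est bien définie dans $]-\infty,+\infty]$. L'intégration donne
\[
\int_{D}\Phi(f(x))\,dx\geq \Phi(m)\,\mathrm{mes}(D)+\Phi'_{d}(m)\int_{D}\bigl(f(x)-m\bigr)\,dx.
\]
Or $\int_{D}(f(x)-m)\,dx=\int_{D}f(x)\,dx-m\,\mathrm{mes}(D)=0$ par définition même de $m$, et en divisant par $\mathrm{mes}(D)>0$ on récupère exactement l'inégalité annoncée.

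La seule difficulté du plan est la justification de la droite d'appui; c'est un exercice standard sur les fonctions convexes qui découle immédiatement du théorème \ref{thm15} et de \eqref{65}, tous deux déjà démontrés au premier chapitre. Le reste de l'argument n'est que linéarité et monotonie de l'intégrale, sans obstacle technique.
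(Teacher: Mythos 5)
Le paper énonce l'inégalité de Jensen dans l'appendice comme un outil standard, sans en donner de démonstration ; il n'y a donc pas de preuve interne à laquelle comparer la vôtre, et elle doit être jugée sur ses propres mérites. Votre argument est correct et complet : c'est la démonstration classique par droite d'appui. L'existence de la minoration affine $\Phi(t)\geq\Phi(m)+\Phi'_{d}(m)(t-m)$ pour tout $t\in\mathbb{R}$ découle bien de l'inégalité des pentes \eqref{65} et de l'existence des dérivées latérales finies avec $\Phi'_{g}(m)\leq\Phi'_{d}(m)$, résultats effectivement établis au chapitre 1 pour une fonction convexe sur un intervalle (ici $\mathbb{R}$ tout entier, donc tout point est intérieur) ; le cas $t\leq m$ est correctement traité par symétrie. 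Vous prenez aussi soin du seul point délicat que l'énoncé passe sous silence, à savoir que $\int_{D}\Phi\circ f$ n'est a priori définie que dans $\left]-\infty,+\infty\right]$ : la minoration par une fonction affine de $f$, intégrable puisque $f\in L^{1}(D)$ et $mes(D)<\infty$, garantit que l'intégrale a un sens et que l'inégalité reste vraie même si elle vaut $+\infty$. Enfin l'annulation $\int_{D}(f(x)-m)\,dx=0$ conclut. Rien à redire.
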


\begin{thm}[Théorème de Radon-Nikodym \cite{6}]
Soit $\nu$ une mesure absolument continue par rapport à la mesure de Lebesgue. Alors il existe $f\in L^{1}(\Omega)$ tel que
$$\nu(E)=\int_{E}f(x)dx$$ pour tout sous ensemble mesurable $E$ de $\Omega$.
\end{thm}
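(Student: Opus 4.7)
Le plan est d'utiliser l'approche hilbertienne classique de Von Neumann, qui transforme l'énoncé en une application du théorème de représentation de Riesz dans $L^{2}$. On commencera par supposer que $\Omega$ est de mesure de Lebesgue finie (le cas général s'obtiendra par exhaustion à la fin). On peut supposer aussi, quitte à remplacer $\nu$ par $\nu(E \cap \cdot)$ pour des $E$ de mesure finie pour $\nu$, que $\nu$ est finie, ce qui est licite parce que $\nu$ est absolument continue par rapport à une mesure $\sigma$-finie.

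Première étape: former la mesure auxiliaire $\omega = \mu + \nu$ (où $\mu$ désigne la mesure de Lebesgue), qui est finie. Considérer la forme linéaire $L(\varphi) = \int_{\Omega} \varphi\, d\nu$ sur l'espace hilbertien $L^{2}(\Omega, \omega)$. Par Cauchy--Schwarz,
$$|L(\varphi)| \leq \int_{\Omega}|\varphi|\, d\nu \leq \bigg(\int_{\Omega}|\varphi|^{2}\, d\omega\bigg)^{1/2}\sqrt{\nu(\Omega)},$$
donc $L$ est continue. Le théorème de représentation de Riesz dans $L^{2}(\Omega,\omega)$ fournit $g \in L^{2}(\omega)$ telle que $L(\varphi) = \int_{\Omega}\varphi\, g\, d\omega$ pour toute $\varphi \in L^{2}(\omega)$. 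En choisissant $\varphi = \chi_{E}$ pour $E$ mesurable de mesure finie on obtient
$$\nu(E) = \int_{E} g\, d\mu + \int_{E} g\, d\nu, \quad \text{c.-à-d.} \quad \int_{E}(1-g)\, d\nu = \int_{E} g\, d\mu.$$

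Deuxième étape: établir que $0 \leq g < 1$ presque partout. Pour la minoration, prendre $E = \{g < 0\}$; alors $\nu(E) = \int_{E}g\, d\omega \leq 0$, d'où $\nu(E) = 0$, puis $\int_{E}g\, d\mu \leq 0$ donne $\mu(E) = 0$. Pour la majoration, prendre $E = \{g \geq 1\}$; alors $\int_{E}(1-g)\, d\nu \leq 0$, donc $\int_{E}g\, d\mu \leq 0$, d'où $\mu(E) = 0$, et par absolue continuité $\nu(E) = 0$. Poser alors $f = g/(1-g)$, qui est mesurable, positive et finie presque partout. En appliquant la relation ci-dessus aux fonctions tronquées et en utilisant le théorème de convergence monotone sur des sommes téléscopiques issues de l'itération de $\nu(E) = \int_{E}g\, d\mu + \int_{E}g\, d\nu$, on conclut $\nu(E) = \int_{E} f\, d\mu$ pour tout ensemble mesurable $E$; en particulier $f \in L^{1}(\Omega)$ puisque $\int_{\Omega}f\, d\mu = \nu(\Omega) < +\infty$.

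Troisième étape: traiter le cas où $\Omega$ est de mesure infinie. On écrit $\Omega = \bigcup_{n\geq1} \Omega_{n}$ avec $\Omega_{n} \subset \Omega_{n+1}$ et $\mu(\Omega_{n}) < +\infty$, on applique le résultat précédent sur chaque $\Omega_{n}$ pour obtenir $f_{n} \in L^{1}(\Omega_{n})$, puis on vérifie par unicité presque partout des densités que $f_{n+1}$ prolonge $f_{n}$ sur $\Omega_{n}$; cela définit globalement $f$, et la conclusion $\int_{\Omega}f\, d\mu = \nu(\Omega) < +\infty$ assure $f \in L^{1}(\Omega)$. L'obstacle principal est l'étape intermédiaire assurant $0 \leq g < 1$ $\mu$-presque partout, car c'est précisément à cet endroit que l'hypothèse d'absolue continuité de $\nu$ par rapport à $\mu$ est essentielle; sans elle, on n'aurait que $\omega$-presque partout, insuffisant pour définir $f$ comme densité par rapport à $\mu$.
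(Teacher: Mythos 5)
Le mémoire ne démontre pas ce théorème : il est énoncé en appendice comme un résultat classique, avec un simple renvoi à Kufner--John--Fu\v{c}\'{\i}k, et sert d'outil dans la preuve de la représentation du dual de $E^{G}(\Omega)$. Votre preuve par la méthode hilbertienne de von Neumann est donc un complément que le texte ne fournit pas ; elle suit le schéma standard et est pour l'essentiel correcte : bornitude de $L$ sur $L^{2}(\omega)$ avec $\omega=\mu+\nu$, représentation de Riesz, encadrement $0\leq g<1$ $\mu$-presque partout, puis passage à $f=g/(1-g)$ par sommation géométrique et convergence monotone. Deux points méritent d'être resserrés. D'abord, la réduction au cas où $\nu$ est finie : l'absolue continuité par rapport à une mesure $\sigma$-finie n'entraîne pas la $\sigma$-finitude de $\nu$ (prendre $\nu(E)=+\infty$ dès que $mes(E)>0$ et $\nu(E)=0$ sinon) ; il faut donc supposer $\nu$ finie, ce qui est de toute façon imposé par la conclusion $f\in L^{1}(\Omega)$, et c'est bien le cas dans l'application du mémoire où $\nu(M)=F(\chi_{M})$ --- mais cette mesure-là est signée, de sorte qu'une décomposition de Jordan préalable est nécessaire avant d'appliquer votre argument. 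Ensuite, dans la minoration $g\geq0$ : sur $E=\{g<0\}$, de $\nu(E)=0$ on tire $\int_{E}g\,d\mu=0$ (et non seulement $\leq0$), et c'est cette égalité, jointe à la stricte négativité de $g$ sur $E$, qui force $\mu(E)=0$. Ces retouches faites, la démonstration est complète et constitue la preuve usuelle du résultat cité.
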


\begin{thm}[Théorème de Fréchet \cite{5}] Soient $E$ un espace vectoriel et $A\subset E$. On suppose qu'il existe une famille de sous ensemble $(A_{n})_{n\in\mathbb{N}}$ compact dans $E$ et dense dans $A$. Alors $A$ est compact dans $E$.

\end{thm}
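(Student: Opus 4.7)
The plan is to establish that $A$ is totally bounded in $E$ (viewed as a complete metric / normed space, which is the setting in which the theorem is applied in the body of the paper), and then to invoke the classical characterization: in a complete metric space, total boundedness is equivalent to relative compactness. Thus the entire argument reduces to producing a finite $\varepsilon$-net for $A$ for every $\varepsilon>0$.

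First I would interpret the hypothesis in the natural Hausdorff sense: the family $(A_n)$ approximates $A$ arbitrarily well, i.e.\ for every $\varepsilon>0$ there exists an index $n=n(\varepsilon)$ such that $A_n$ is $\varepsilon$-dense in $A$, meaning every $a\in A$ admits some $x\in A_n$ with $d(a,x)<\varepsilon$. Fix $\varepsilon>0$. First select $n$ so that $A_{n}$ is $\varepsilon/2$-dense in $A$. Next, because $A_{n}$ is compact in $E$, it is itself totally bounded, so it admits a finite $\varepsilon/2$-net, i.e.\ finitely many points $x_1,\dots,x_k\in A_{n}$ with $A_{n}\subset\bigcup_{i=1}^{k}B(x_i,\varepsilon/2)$.

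Combining the two steps by the triangle inequality yields
\[
A\ \subset\ \bigcup_{i=1}^{k}B(x_i,\varepsilon),
\]
so $\{x_1,\dots,x_k\}$ is a finite $\varepsilon$-net of $A$. As $\varepsilon>0$ was arbitrary, $A$ is totally bounded in $E$. Since $E$ is complete, the standard characterization of compactness in complete metric spaces implies that $\overline{A}$ is compact, which is exactly the relative compactness of $A$ used in the compactness criteria (Theorem \ref{thm10} and the Rellich--Kondrachov-type result, Theorem \ref{thm30}).

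The only genuinely delicate point is not in the chain of estimates, which is routine, but in pinning down the correct reading of the hypothesis; once the approximation property is formulated precisely (each $A_n$ compact, with the family becoming arbitrarily $\varepsilon$-dense in $A$), the verification of total boundedness and the appeal to completeness are both immediate. No further tools beyond the definition of total boundedness and the equivalence with relative compactness in a complete metric space are needed.
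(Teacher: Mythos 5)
The paper itself gives no proof of this statement: it is listed in the appendix as a cited tool (attributed to \cite{5}), so there is nothing to compare your argument against line by line. Your proof is the standard one and it is correct: interpret the hypothesis as saying that for every $\varepsilon>0$ some $A_{n}$ is an $\varepsilon$-dense, totally bounded approximant of $A$; extract a finite $\varepsilon/2$-net of $A_{n}$; conclude by the triangle inequality that $A$ is totally bounded, hence relatively compact in the complete space $E$. This matches exactly how the result is invoked in the body of the paper (Théorème \ref{thm10}, where $\Re$ is approximated by the relatively compact sets $\Re_{r}$). Two small points are worth recording. First, the conclusion can only be that $\overline{A}$ is compact, i.e.\ that $A$ is \emph{relatively} compact, not compact, unless $A$ is assumed closed; this is also the form actually used later, so your reading is the right one. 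Second, the hypothesis ``$E$ un espace vectoriel'' is too weak as literally stated — completeness of the ambient metric is indispensable for the implication ``totalement borné $\Rightarrow$ relativement compact'' — and you are right to supply it; in the applications $E$ is the Banach space $E^{G}(\Omega)$, so nothing is lost. No gap beyond these necessary clarifications of the statement.
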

\section{Méthodes Variationnelles}

\subsection{Méthode du minimum global}

Si $K$ est un espace topologique, rappelons qu’une fonction $J$ de $K$ dans $\mathbb{R}$ est
dite semi-continue inf\'{e}rieurement (en abr\'{e}g\'{e} s.c.i.) si pour tout $\lambda\in\mathbb{R}$ l’ensemble $[J\leq\lambda]:=\{x\in K;\ J(x)\leq\lambda\}$ est fermé. Introduisons la définition suivante:

\begin{dfn}
  Soient $X$ un espace de Banach et $\omega$ une partie de $X$. Une
fonction $J : \omega\rightarrow \mathbb{R}$ est dite sequentiellement faiblement s.c.i. si pour toute
suite $(x_{n})$ de $\omega$ convergeant faiblement vers $x\in\omega$ on a $J(x)\leq\ds\liminf_{n\rightarrow\infty}J(x_{n})$.
\end{dfn}

On peut montrer que, dans les espaces de Banach r\'{e}flexifs, les fonctions faiblement
s.c.i. atteignent leur minimum, pourvu qu’elles sont coersives, c.à.d, tendent
vers $+\infty$ à l’infini.

\begin{thm}\label{thm34}[\cite{16}]
  Soient $X$ un espace de Banach réflexif, $K \subset X$ un convexe
fermé et $J : K \rightarrow\mathbb{R}$ une fonction faiblement s.c.i. De plus,
si $K$ est non borné, on suppose que pour toute suite $(x_{n})$ de $K$ telle que
$\|x_{n}\|\rightarrow+\infty,$ on a $J(x_{n})\rightarrow+\infty$. Alors $J$ atteint son minimum.
\end{thm}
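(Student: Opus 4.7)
Le plan consiste à appliquer la méthode directe du calcul des variations. Je poserais d'abord $m = \inf_{x \in K} J(x) \in [-\infty, +\infty[$ et considérerais une suite minimisante $(x_n) \subset K$ telle que $J(x_n) \to m$ quand $n \to +\infty$. La première étape consiste à établir que cette suite est bornée dans $X$. Si $K$ est borné, c'est immédiat. Sinon, on raisonne par l'absurde: si $\|x_n\| \to +\infty$ (à sous-suite près), l'hypothèse de coercivité imposerait $J(x_n) \to +\infty$, ce qui contredirait le fait que $J(x_n) \to m$ (noter que $m < +\infty$ car $K$ est non vide et $J$ est à valeurs dans $\mathbb{R}$).

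Ensuite, je m'appuierais sur la réflexivité de $X$ pour extraire de $(x_n)$ une sous-suite, notée encore $(x_n)$, qui converge faiblement vers un certain $x^{*} \in X$ (c'est le théorème d'Eberlein-Šmulian, ou simplement le fait que les bornés d'un Banach réflexif sont faiblement séquentiellement précompacts). Il faut alors vérifier que $x^{*} \in K$: puisque $K$ est convexe et fortement fermé, d'après le théorème de Mazur il est aussi faiblement fermé, donc faiblement séquentiellement fermé, et par conséquent $x^{*} \in K$.

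Pour conclure, j'utiliserais l'hypothèse de semi-continuité inférieure faible de $J$:
$$J(x^{*}) \leq \liminf_{n\to\infty} J(x_n) = \lim_{n\to\infty} J(x_n) = m.$$
Comme $x^{*} \in K$, on a aussi $J(x^{*}) \geq m$ par définition de l'infimum, donc $J(x^{*}) = m$. En particulier, $m > -\infty$ et le minimum est atteint en $x^{*}$.

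L'étape la plus délicate, conceptuellement, est celle qui relie la convexité à la fermeture faible (via Mazur), car sans cela on ne pourrait pas garantir que la limite faible reste dans $K$. Les autres étapes sont essentiellement des applications directes des hypothèses (coercivité pour la bornitude, réflexivité pour l'extraction, semi-continuité inférieure faible pour le passage à la limite inférieure). Aucune difficulté technique supplémentaire n'est à prévoir puisque le théorème est une version classique du principe de minimisation dans les espaces de Banach réflexifs.
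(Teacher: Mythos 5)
The paper gives no proof of this theorem — it is quoted from Struwe \cite{16} as a standard tool — so there is nothing to compare against; your argument is the classical direct method of the calculus of variations and it is correct. All the key points are in place: coercivity (or boundedness of $K$) bounds the minimizing sequence, reflexivity of $X$ yields a weakly convergent subsequence, Mazur's theorem guarantees the weak limit $x^{*}$ stays in the closed convex set $K$ so that the hypothesis of weak sequential semi-continuity applies, and the chain $m \leq J(x^{*}) \leq \liminf_{n\to\infty} J(x_{n}) = m$ shows simultaneously that the infimum is finite and that it is attained.
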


\subsection{Théorème de Mountain pass}

\begin{dfn}
Soit $ X $ un espace de Banach et $ J: X \rightarrow \mathbb{R} $ une fonction diff\'erentiable.
\begin{enumerate}
   \item Une suite $ (u_k)_k \subset X $ telle que $ (J(u_k))_k $ est born\'ee et $ J'(u_k) \to 0,\ k \to \infty $ est dite suite de Palais-Smale pour $ J. $ S'il existe $ c \in \mathbb{R} $ tel que $ J(u_k) \to c $ et $ J'(u_k) \to 0, $ alors $ (u_k)_k $ est dite suite de Palais-Smale pour $ J $ au niveau $ c. $
   \item On dit que $ J $ satisfait la condition de Palais-Smale (on \'ecrit $ J $ satisfait (PS)) si toute suite de Palais-Smale pour $ J $ poss\`ede une sous-suite convergente. On dit que $ J $ satisfait la condition de Palais-Smale au niveau $ c \in \mathbb{R} $ (on \'ecrit $ J $ satisfait (PS)$_c$) si toute suite de Palais-Smale pour $ J $ au niveau $ c $ poss\`ede une sous-suite convergente.
\end{enumerate}
\end{dfn}

\begin{thm}\label{thm33}[\cite{32}, \cite{15}]
Soient $ X $ un espace de Banach, $ J: X \rightarrow \mathbb{R} $ une fonction de classe $ C^1 $.  On suppose que $J(0)=0$ et que:
\begin{enumerate}
  \item il existe $$\rho>0\ \text{et}\ r>0\ \text{tels que si}\ \|u\|=\rho,\ J(u)\geq r,$$
  \item il existe $$e\in X\ \text{tel que}\ \|e\|>\rho,\ \text{et}\ J(e)\leq0.$$
\end{enumerate}
Alors, $ J $ poss\`ede une suite de Palais-Smale au niveau $$ c = \inf_{\gamma \in \Gamma} \max_{0 \leq t \leq 1} J( \gamma(t)) \geq r, $$
où $$ \Gamma= \left\{ \gamma \in \mathcal{C}([0,1],X),\ \gamma(0) = 0,\ \gamma(1) = e\right\}. $$
\end{thm}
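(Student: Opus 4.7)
The strategy is to produce the Palais--Smale sequence by applying Ekeland's variational principle to the path space $\Gamma$ endowed with a natural complete metric, rather than trying any compactness argument on $X$ directly (which is unavailable since (PS) is not assumed). The scheme has three stages: first, check that the min-max level is at least $r$; second, set up the variational principle on paths; third, extract a PS sequence from the almost-minimizing paths.

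\textbf{Step 1 (the level is $\geq r$).} For any $\gamma\in\Gamma$, the real-valued map $t\mapsto\|\gamma(t)\|_X$ is continuous on $[0,1]$, with $\|\gamma(0)\|=0$ and $\|\gamma(1)\|=\|e\|>\rho$. By the intermediate value theorem there is $t^{\ast}\in(0,1)$ with $\|\gamma(t^{\ast})\|=\rho$, whence $\max_{t}J(\gamma(t))\geq J(\gamma(t^{\ast}))\geq r$ by hypothesis (1). Taking the infimum yields $c\geq r>0$.

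\textbf{Step 2 (Ekeland on path space).} Equip $\Gamma$ with the uniform metric $d(\gamma_1,\gamma_2)=\max_{t\in[0,1]}\|\gamma_1(t)-\gamma_2(t)\|_X$. Then $(\Gamma,d)$ is a complete metric space (it is a closed subset of $C([0,1],X)$, since the endpoint conditions $\gamma(0)=0$, $\gamma(1)=e$ are preserved under uniform limits). Define $\varphi:\Gamma\to\mathbb{R}$ by
\[
\varphi(\gamma)=\max_{t\in[0,1]}J(\gamma(t)).
\]
Since $J\in C^{1}$, $\varphi$ is continuous on $(\Gamma,d)$, it is bounded below by $r$ by Step 1, and $\inf_{\Gamma}\varphi=c$. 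Applying Ekeland's variational principle with parameter $1/n$, for each $n\in\mathbb{N}^{\ast}$ we obtain $\gamma_n\in\Gamma$ such that
\[
\varphi(\gamma_n)\leq c+\tfrac{1}{n},\qquad \varphi(\gamma)\geq\varphi(\gamma_n)-\tfrac{1}{n}\,d(\gamma,\gamma_n)\ \ \forall\gamma\in\Gamma.
\]

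\textbf{Step 3 (extraction and the main obstacle).} Let $K_n=\{t\in[0,1]:J(\gamma_n(t))=\varphi(\gamma_n)\}$, a nonempty compact subset of $(0,1)$ (the endpoints are excluded for $n$ large since $J(0)=0<c$ and $J(e)\leq 0<c$). I claim there exists $t_n\in K_n$ with $\|J'(\gamma_n(t_n))\|_{X^{\ast}}\leq 1/n$; choosing $u_n=\gamma_n(t_n)$ then gives $J(u_n)=\varphi(\gamma_n)\to c$ and $J'(u_n)\to 0$, as required. To prove the claim, argue by contradiction: suppose $\|J'(\gamma_n(t))\|_{X^{\ast}}>1/n$ for every $t\in K_n$. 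Using the $C^{1}$ regularity of $J$ and a pseudo-gradient vector field on a neighborhood of the compact set $\gamma_n(K_n)$, produce a continuous map $V:[0,1]\to X$, supported in a small neighborhood of $K_n$ (so that the endpoint conditions are preserved) and such that $\langle J'(\gamma_n(t)),V(t)\rangle<-1/(2n)$ for $t$ in a neighborhood of $K_n$ and $\|V\|_{\infty}\leq 1$. Set $\gamma_n^{s}(t)=\gamma_n(t)+s\,V(t)$; then $\gamma_n^{s}\in\Gamma$, and for $s>0$ sufficiently small,
\[
J(\gamma_n^{s}(t))\leq J(\gamma_n(t))-\tfrac{s}{3n}\quad\text{on a neighborhood of }K_n,
\]
while on the complement of that neighborhood $J(\gamma_n^{s}(t))$ stays strictly below $\varphi(\gamma_n)$ by compactness. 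Taking the max gives $\varphi(\gamma_n^{s})\leq\varphi(\gamma_n)-s/(3n)$, which contradicts the Ekeland inequality since $d(\gamma_n^{s},\gamma_n)\leq s$.

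The main obstacle is precisely this deformation step: constructing $V$ carefully enough that (i) it cuts down $J$ uniformly on a neighborhood of the whole compact max-set $K_n$, (ii) it vanishes at the endpoints so that $\gamma_n^{s}$ remains admissible, and (iii) the resulting drop in $\varphi$ beats the Ekeland tolerance $d(\gamma_n^{s},\gamma_n)/n$. This requires a standard but delicate pseudo-gradient construction and a uniform continuity/compactness argument for the max over $t$; once achieved, the rest of the proof is formal.
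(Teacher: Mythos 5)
The paper itself offers no proof of this theorem: it is stated in the appendix as a quoted result, with references to Ambrosetti--Rabinowitz \cite{32} and Willem \cite{15} (Willem proves it via a quantitative deformation lemma rather than Ekeland's principle). Your route --- Ekeland's variational principle applied to $\varphi(\gamma)=\max_{t}J(\gamma(t))$ on the complete metric space $(\Gamma,d)$ --- is the other standard proof of the mountain-pass theorem without the (PS) condition, and Steps 1 and 2 are correct as written ($c\geq r$ by the intermediate value theorem; $(\Gamma,d)$ complete, $\varphi$ continuous and bounded below, so Ekeland applies). A small point: the endpoints are excluded from $K_n$ for \emph{every} $n$, not just $n$ large, since $\varphi(\gamma_n)\geq c\geq r>0\geq\max\{J(0),J(e)\}$.

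However, Step 3 as written contains a genuine quantitative gap: the claimed contradiction does not occur. Ekeland's inequality with parameter $1/n$ gives $\varphi(\gamma)\geq\varphi(\gamma_n)-\tfrac{1}{n}d(\gamma,\gamma_n)$, so with $d(\gamma_n^{s},\gamma_n)\leq s$ it only forbids $\varphi(\gamma_n^{s})<\varphi(\gamma_n)-\tfrac{s}{n}$. Your deformation produces $\varphi(\gamma_n^{s})\leq\varphi(\gamma_n)-\tfrac{s}{3n}$, and since $\tfrac{s}{3n}<\tfrac{s}{n}$ the two inequalities are perfectly compatible: any value in $[\varphi(\gamma_n)-\tfrac{s}{n},\,\varphi(\gamma_n)-\tfrac{s}{3n}]$ satisfies both. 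The root of the problem is that assuming only $\|J'(\gamma_n(t))\|>1/n$ on $K_n$, a pseudo-gradient can never give a descent rate exceeding $\|J'\|$ itself, so the drop in $\varphi$ is of order at most $s/n$ and cannot beat the Ekeland penalty, which has exactly the same slope. The fix is standard and purely a matter of bookkeeping: run the contradiction under the assumption $\|J'(\gamma_n(t))\|>3/n$ (or $C/n$ with $C>2$, to absorb the losses in the pseudo-gradient and the uniform $o(s)$ remainder); then the inner products can be made $<-2/n$, the descent rate exceeds $1/n$, and Ekeland is genuinely violated. One concludes there exists $t_n\in K_n$ with $\|J'(\gamma_n(t_n))\|\leq 3/n$, which still yields $J'(u_n)\to 0$. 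Equivalently, use the $(\varepsilon,\sqrt{\varepsilon})$ form of Ekeland's principle and the threshold $\sqrt{\varepsilon}$. With that correction, and the pseudo-gradient construction you sketch (finite subcover of the compact set $\gamma_n(K_n)$, partition of unity, cutoff in $t$ vanishing near the endpoints, uniform first-order expansion of $J$ along the compact image), the argument is complete and sound.
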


\end{document}